\def\l@subsection{\@tocline{2}{0pt}{2.0pc}{5pc}{}}
\newcommand{\myref}[2]{\hyperref[#2]{#1~\ref*{#2}}}
\newcommand{\myrefnospace}[2]{\hyperref[#2]{#1\ref*{#2}}}
\newcommand{\smashr}{\smashoperator[r]}   
\def\perm{{\rm perm}}
\def\el{l}
\def\mixed{^{\mathrm m}}
\numberwithin{equation}{section}
 \DeclareMathOperator{\A}{A}
 \DeclareMathOperator{\Z}{\mathbb{Z}}
\DeclareMathOperator{\End}{End}
\DeclareMathOperator{\Res}{Res}
\DeclareMathOperator{\Span}{Span}
\DeclareMathOperator{\ch}{char}
\newcommand{\la}{\lambda}
\newcommand{\ZZ}{{\mathbb{Z}}}
\newcommand{\F}{\mathbb F}
\newcommand{\FF}{\mathbb F}
\renewcommand{\ge}{\geqslant}
\renewcommand{\ge}{\geqslant}
\renewcommand{\geq}{\geqslant}
\renewcommand{\le}{\leqslant}
\renewcommand{\leq}{\leqslant}
\renewcommand{\unrhd}{\trianglerighteqslant}
\renewcommand{\unlhd}{\trianglelefteqslant}
\renewcommand{\succeq}{\succcurlyeq}
 \newcommand{\Std}{{\rm Std}}
 \newcommand{\tensor}[1]{^{\otimes #1}}
\theoremstyle{plain}
\newtheorem{thm}{Theorem}[section]
\newtheorem{cor}[thm]{Corollary}
\newtheorem{lem}[thm]{Lemma}
\newtheorem{prop}[thm]{Proposition}
 \theoremstyle{remark}
\newtheorem{rmk}[thm]{Remark}
\newtheorem{notation}[thm]{Notation}
\newtheorem{rem}[thm]{Remark}
\newtheorem*{Acknowledgements*}{Acknowledgements}
\theoremstyle{definition}
\newtheorem{defn}[thm]{Definition}
\newtheorem{eg}[thm]{Example}
\newtheorem*{ack}{Acknowledgements}
 \newcommand{\Q}{\mathbb Q}
\newcommand\C{\mathbb C}
\newcommand\RR{\mathbb R}
\renewcommand{\v}{v}  
\renewcommand{\c}{b}  
\renewcommand{\b}{b}  
\newcommand{\M}{m}  
\newcommand{\nn}{{\widetilde m}}  
\newcommand{\wbq}{H}  
\newcommand{\wbs}{\mathcal B}  
\newcommand{\dd}[3]{d_{{#1} \to {#2}}\power {#3}}
\newcommand{\uu}[3]{u_{{#1} \to {#2}}\power {#3}}
\newcommand{\bb}[3]{b_{{#1} \to {#2}}\power {#3}}
\newcommand{\vv}[3]{v_{{#1} \to {#2}}\power {#3}}
\newcommand{\stu}{\mathsf u}
\newcommand{\stv}{\mathsf v}
\newcommand{\mft}{\mathsf t}
\newcommand{\mfs}{\mathsf s}
\newcommand{\mfu}{\mathsf u}
\newcommand{\mfv}{\mathsf v}
\newcommand{\stt}{{\mathsf t}}
\newcommand{\sts}{{\mathsf s}}
\renewcommand{\k}{{r}}
\newcommand{\mfz}{\mathsf z}
\newcommand{ \Ind}{{\rm Ind}}
\renewcommand{\Res}{{\rm Res}}
\newcommand{\suchthat}{\;\ifnum\currentgrouptype=16 \middle\fi|\;} 
\newcommand\z[2]{z_{#1}^{#2}}   
\newcommand{\overbar}[1]{\mkern 1.5mu\overline{\mkern-1.5mu#1\mkern-1.5mu}\mkern 1.5mu}
\def\Kbar{\overbar {\mathbb{K}}}
\newcommand{\coldomeq}{\unrhd_{\rm col}}
\newcommand{\coldom}{\rhd_{\rm col}}
\def\power #1{^{(#1)}}
\def\powerr #1{(#1)}
\def\leftbrace{\left\{}
\def\rightbrace{\right\}}
\def\ignore#1{\relax}
\def\deltabold{{\bm \delta}}
\def\bolddelta{\deltabold}
\def\p #1{ \bm {#1}}
\def\pbar #1{\overline{\p {#1}}}
\def\qbold{{\bm q}}
\newcommand\boldq{\qbold}
\DeclareMathOperator{\im}{im}
\DeclareMathOperator{\id}{id}
\DeclareMathOperator{\node}{node}
\DeclareMathOperator{\spn}{span}
\DeclareMathOperator{\rad}{rad}
\DeclareMathOperator{\sgn}{sign}
\newcommand{\Stab}{{\rm Stab}}
\DeclareMathOperator{\GL}{\sf GL}
\DeclareMathOperator{\SL}{\sf SL}
\DeclareMathOperator{\Orth}{\sf O}
\DeclareMathOperator{\Symp}{\sf Sp}
\begin{document}

\title[The second fundamental theorem  of invariant theory for classical groups]{The cellular second fundamental theorem  of \\ invariant theory for classical groups
 }

\author[C. Bowman]{Christopher Bowman}
\address{School of Mathematics, Statistics and Actuarial Science, University of Kent, Canterbury
UK}
\email{C.D.Bowman@kent.ac.uk}

\author[J. Enyang]{John Enyang}
\address{Department of Mathematics, City University London,   London, United Kingdom}
\email{john.enyang.1@city.ac.uk}

\author[F.M.Goodman]{Frederick M. Goodman}
\address{Department of Mathematics, University of Iowa, Iowa City, IA, USA}
\email{frederick-goodman@uiowa.edu}

\begin{abstract}
 
We construct explicit integral bases 
 for the kernels and the images of diagram algebras (including the symmetric groups, orthogonal and symplectic Brauer algebras)
  acting on tensor space.  
We do this by providing an axiomatic framework for studying  
quotients of diagram algebras.
 \end{abstract}

\maketitle
\tableofcontents

  \section*{Introduction}

 Schur--Weyl duality relates  
the
classical matrix groups 
$\GL(V)$,  $\SL(V)$, $\Orth(V)$, or $\Symp(V)$, where $V$ 
is a finite dimensional  
  vector space,
with certain quotients of diagram algebras -- symmetric group algebras, Brauer algebras or walled Brauer algebras --
via mutually centralizing actions on tensor space.
The surjectivity of the map from the diagram algebra to the centralizer algebra of the matrix group is equivalent to the first fundamental theorem of invariant theory.  Any effective description of the kernel of the map is a form of the second fundamental theorem (SFT) of invariant theory.  

This paper studies the centralizer algebras and the second fundamental theorem from the point of view of cellularity  \cite{MR1376244}.  We construct integral cellular bases for the centralizer algebras, and simultaneously bases of the kernel of the map  from the diagram algebras to the centralizer algebras.  

  There are two remarkable cellular bases of the Iwahori Hecke algebras of  finite type $A$ -- the Kazhdan--Lusztig bases \cite{MR560412,MR1376244}  and the Murphy bases ~\cite{MR1327362}.  Each has its own merits.  The Kazhdan--Lusztig bases encode a great deal of representation theory and have a deep relation to geometry. 
  The Murphy  bases are simpler and   more explicit;  they  encode the  restriction of cell-modules
along the tower of Hecke algebras; they are related to the seminormal bases by a dominance triangular transformation and consequently the Jucys--Murphy elements act on the Murphy bases by dominance triangular matrices. 
Relationships between the two types of bases are investigated in~\cite{MR2266962}.
 As evidence of the enduring utility of the Murphy bases, we mention that they were used in \cite{MR2671176}  to construct graded cellular bases of the Hecke algebras.

   The Kazhdan--Lusztig bases have been generalized in ~\cite{MR2510062} to the Brauer centralizer algebras 
   and to many other examples
     using the theory of dual canonical bases  of quantum groups from \cite{MR1227098}. 
   In this paper we concentrate on generalizing the Murphy bases.  In previous work ~\cite{EG:2012}, we have already generalized the Murphy bases to the Brauer diagram algebras (and to other diagram 
    algebras related to the Jones basic construction).  In this paper we extend this analysis to encompass centralizer algebras for the classical groups.   The bases we obtain, for the diagram algebras and for the centralizer algebras, share all the properties of original Murphy bases mentioned above.

 We are using the phrase ``centralizer algebra" as a shorthand for the image of the diagram algebra (for example, the Brauer diagram algebra) acting on tensor space, over an arbitrary field or over the integers. In fact, these algebras are generically centralizer algebras in the classical sense, see \myref{Theorems}{symplectic SW duality} and  \myrefnospace{}{orthogonal SW duality}.
 
 In order to produce Murphy bases of centralizer algebras,  we first develop a quotient construction for cellularity of towers of diagram algebras.  We then  apply this construction to the integral versions of the Brauer algebras acting on orthogonal or symplectic tensor space.  
  The construction involves modifying the Murphy type basis of the tower of diagram algebras constructed following  ~\cite{EG:2012} in such a way that the modified basis splits into a basis of the kernel of the map $\Phi$ from the diagram algebra to endomorphisms of tensor space, and a subset which maps onto a cellular basis of the image of $\Phi$.\footnote{Integral bases of the Brauer diagram algebras with a similar splitting property were constructed in \cite{MR2500869, MR2430306}.}
  This construction thus provides simultaneously an integral cellular basis of the centralizer algebra, and a version of the SFT, namely an explicit description of the kernel of $\Phi$.   Moreover, it is evident from the construction that $\ker(\Phi)$ is generated as an ideal by certain ``diagrammatic minors" or ``diagrammatic Pfaffians",  so we also recover the version of the SFT from ~\cite{MR2434475}.   The combinatorics underlying our construction is the same as that in ~ \cite{MR951511}, namely  the cellular basis of the centralizer algebra is indexed by pairs of ``permissible paths" on the generic branching diagram for the tower of diagram algebras.      The cell modules of the integral centralizer algebras  are in general proper quotients of certain cell modules of the integral diagram algebra.

All of these results are compatible with reduction from $\ZZ$ to a field of arbitrary characteristic
(except that characteristic 2 is excluded in the orthogonal case).   For a symplectic or orthogonal bilinear form on a finite dimensional vector space $V$ over a field $\Bbbk$,  and for $\Phi$ the corresponding map from the Brauer diagram algebra to $\End(V\tensor r)$,  our bases of $\im(\Phi)$ and of $\ker(\Phi)$ are independent of the field, of the characteristic, and  of the choice of the bilinear form.  They depend only on the dimension of $V$.   It follows from our results that for a fixed field $\Bbbk$ and fixed $\dim(V)$,  and for fixed symmetry type of the form (symplectic or orthogonal)  the Brauer centralizer algebra $\im(\Phi)$  is independent, up to isomorphism, of the choice of the form.  For example, if the field is the real numbers,  and the form is symmetric, the Brauer centralizer algebra is independent, up to isomorphism, of the signature of the form.

We also explain in our context the well-known phenomenon that the seminormal representations of centralizer algebras of the classical groups are truncations of the seminormal representations of the corresponding diagram algebras.

We wish to remark upon our emphasis on working over the integers.  As noted in \cite{MR2336039},  cellularity ``provides a systematic framework for studying the representation theory of non-semisimple algebras which are deformations of semisimple ones."
 Typically, a ``cellular algebra" $A$ is actually a family of algebras $A_S$ defined over various ground rings $S$, and typically there is a generic ground ring $R$ such that:  each instance $A_S$ of $A$ is a specialization of $A_R$, i.e.  $A_S = A_R \otimes_R S$;  with $\FF$ the field of fractions of $R$,  $A_\FF$ is semisimple;  and  if  $k$ is any field, 
the cell modules and cellular basis of $A_k$ are obtained by specializing those of $A_R$, and the simple $A_k$ modules appear as heads of (some of) the cell modules. 
  This point of view was not stressed in the original papers  ~\cite{MR1376244, MR1327362}, but is a sort of folk wisdom. 
   In our applications, the integers are the generic ground ring for the centralizer algebras; it is not altogether obvious, but it follows from our results that the centralizer algebras over fields of prime characteristic are specializations of the integral versions; see \myref{Sections}{section Brauer symplectic} and  \myrefnospace{}{section Brauer orthogonal} for precise statements.

In the orthogonal and symplectic cases, our bases are new.  In the general linear case, our result is equivalent to \cite{MR1680384} for tensor space  and ~\cite{MR3473643, Werth:2015} for  mixed-tensor space, respectively.
   A completely different and very general approach to proving the existence of abstract cellular bases of centralizer algebras of quantum groups over a field has been developed in  ~\cite{MR3708257}.

Our method 
should apply to other examples as well. 
The case of the BMW algebra acting on symplectic tensor space should be straightforward, using the $q$--analogue of the diagrammatic Pfaffians obtained in \cite{MR2725207}.  The case of the BMW algebra acting on orthogonal tensor space could be more challenging as the appropriate $q$--analogues of the diagrammatic minors are not yet available.

\noindent{\bf Outline.}
  The paper is structured as follows.     In \myref{Section}{section diagram algebras} we recall  the necessary background material on diagram algebras and their branching graphs; this is  taken from 
\cite{BEG,EG:2012,MR2794027,MR2774622,MR1376244}.  
In \myref{Section}{section quotient framework}, we introduce an axiomatic framework 
for cellularity of a sequence of quotients of a sequence of diagram algebras.  This culminates in \myref{Theorem}{good paths basis theorem}, which contains the main result on cellular bases of quotient algebras as well as an abstract ``second fundamental theorem" --- that is, a description of the kernel of the quotient map.

In \myref{Section}{section Murphy symm} 
 we treat the Murphy basis  of the symmetric group algebras, and a dual  version, twisted by the automorphism $s_i \mapsto -s_i$ of the symmetric group.
 \myref{Section}{section: Murphy bases Brauer} 
treats   the Murphy and dual Murphy bases of  Brauer algebras.
Finally, in \myref{Sections}{section Brauer symplectic} 
and  \myrefnospace{}{section Brauer orthogonal} 
we apply our abstract theory to the main examples of interest in this paper, namely to the Brauer algebra acting on symplectic or orthogonal tensor space.

There are   four appendices  in the arXiv version of this paper. 
In  \iftoggle{arxiv} 
{\myref{Appendix}{symgroup} }   
{Appendix A}
 we review results of 
H\"arterich~\cite{MR1680384} regarding the action of the symmetric group   and the Hecke algebra on ordinary tensor space.  
In 
 \iftoggle{arxiv}
 {\myref{Appendices}  {section: walled Brauer}      }
 {Appendices B }
 \iftoggle{arxiv}
 {\myref{and} {appendix wba tensor space} }
 {and C}
 we construct Murpy bases of the walled Brauer algebras and of their quotients acting on mixed tensor space, following the techniques used in the main text.
In  
\iftoggle{arxiv} 
{\myref{Appendix}{appendix: diagrammatic minors}}   
{Appendix B} 
 we review results on diagrammatic minors and Pfaffians which are needed for our treatment of the SFT. 

\begin{ack}
We would like to thank 
the Royal Commission for the Exhibition of 1851 
and 
EPSRC grant EP/L01078X/1 for   financial support during this project.  We are grateful to the referees for many helpful suggestions and questions.
\end{ack}

\section{Diagram algebras}  \label{section diagram algebras}

For the remainder of the paper, we shall   let $R$ be an integral domain with field of fractions $\mathbb{F}$. 
In this section, we shall define  diagram algebras and recall the construction of their Murphy bases in terms of ``up'' and ``down'' branching factors, following \cite{EG:2012}.   
 As in \cite{BEG}, we emphasize 
  crucial factorization and compatibility relations between the  ``up'' and ``down'' branching factors.

\subsection{Cellular algebras}
We first recall the definition of a cellular algebra, as in \cite{MR1376244}. 

\begin{defn}\label{c-d}
Let $R$ be an integral domain and let $A$ be a unital algebra over $R$.    A {\sf cell datum }  for $A$ is a tuple $(A,*,\widehat{A},\unrhd, \Std(\cdot), \mathscr{A})$ where:
\begin{enumerate}[leftmargin=*,label=(\arabic{*}), ref=\arabic{*},  font=\normalfont, align=left, leftmargin=*, series= cellular defn]
\item  $*:A\to A$ is an algebra  involution, that is, an $R$--linear anti--automorphism of $A$ such that $(x^*)^* = x$ for $x \in A$.
\item $(\widehat{A},\unrhd)$ is a finite partially ordered set, and  for each $\lambda \in \widehat A$, $\Std(\lambda)$ is a finite indexing set.
\item The set
\begin{align*}
\mathscr{A}=\big\{c_\mathsf{st}^\lambda  \ \big | \  \text{$\lambda\in\widehat{A}$ and $\mathsf{s},\mathsf{t}\in\Std(\lambda)$}\big\},
\end{align*}
is an $R$--basis for $A$.

\medskip \noindent
  Let  $A^{\rhd\lambda}$ denote the $R$--module with basis 
\begin{align*}
\big\{
c^\mu_\mathsf{st}\ \mid \mu\rhd\lambda    \text{ and } \mathsf{s},\mathsf{t}\in\Std(\mu) 
\big\}.
\end{align*}
\item
The following  two conditions hold for the basis $\mathscr A$.

\begin{enumerate}[leftmargin=*,label=(\alph{*}), ref=\alph{*}, font=\normalfont, align=left, leftmargin=*]
\item\label{c-d-1} Given $\lambda\in\widehat{A}$, $\mathsf{t}\in\Std(\lambda)$, and $a\in A$, there exist 
coefficients $r(  a; \mathsf{t}, \mathsf{v}) \in R$, for $\mathsf{v}\in\Std(\lambda)$, such that, for all $\mathsf{s}\in\Std(\lambda)$, 
\begin{align}\label{r-act}
c_\mathsf{st}^\lambda a\equiv 
\sum_{\mathsf{v}\in\Std(\lambda)}
r( a; \mathsf{t}, \mathsf{v}) c_{\mathsf{sv}}^\lambda \mod{A^{\rhd\lambda}},
\end{align}

\item\label{c-d-2} If $\lambda\in\widehat{A}$ and $\mathsf{s},\mathsf{t}\in\Std(\lambda)$, then $(c_\mathsf{st}^\lambda)^*  \equiv (c_{\stt\sts}^\lambda)  \mod A^{\rhd \lambda}$.
\end{enumerate}
\end{enumerate}

\medskip \noindent
$A$ is called a {\sf cellular algebra} if it has a cell datum.
   The basis $\mathscr{A}$ is called a {\sf  cellular basis} of $A$.  
\end{defn}

 If $A$ is a cellular algebra over $R$, and $R \to S$ is a homomorphism of integral domains, then  the specialization $A^S = A\otimes_R S$ is a cellular algebra over $S$, with
 cellular basis $$\mathscr A^S = \{c^\lambda_{\sts\stt}	\otimes 1_S	\mid  \lambda\in\widehat{A},  \text{ and }\mathsf{s},\mathsf{t}\in\Std(\lambda) \}.$$
 In particular, $A^\FF$ is a cellular algebra.     Since the map $a \mapsto a \otimes 1_F$ is injective, we regard $A$ as contained in $A^\FF$ and  we identify $a \in A$ with $a \otimes 1_F \in A^\FF$.  
 
An order ideal $\Gamma \subset \widehat A$ is a subset with the property that if $\la \in \Gamma$ and 
$\mu \unrhd \la$, then $\mu \in \Gamma$.  It follows from the axioms of a cellular algebra that for any order ideal $\Gamma$ in $\widehat A$, 
$$
A^\Gamma = \Span \big\{c^\la_{\mfs \mft}  \ \big \vert \   \la \in \Gamma \text{ and }  \mfs, \mft \in \Std(\la)\big\}
$$
is an involution--invariant two sided ideal of $A$.  In particular $A^{\rhd \la}$, defined above,  and
$$
A^{\unrhd \la} = \Span \big\{
c^\mu_{\sf st}\ \big \vert \  \text{$\mu\unrhd\lambda$ and $\sts ,{\stt}\in \Std(\mu)$ }
\big\}
$$
are involution--invariant two sided ideals.

\newcommand\cell[3]{\Delta_{#1}^{#2}(#3)}

\begin{defn} \label{definition: cell module}
Let $A$ be a cellular algebra over $R$ and $\lambda\in\hat{A}$. The  {\sf cell module} $\cell  {} {}  \lambda$  is the right $A$--module defined as follows.  As an $R$--module, $\cell  {} {} \lambda$ is free with basis indexed by $\Std(\lambda)$,  say $\{c^\la_\mft  \mid \mft \in \Std(\la) \}$.  
The right $A$--action is given by 
$$
c_\mft^\la a =  \sum_{{{\sf v}\in\hat{A}^\lambda}} r( a; \mathsf{t}, \mathsf{v})  c^\la_\mfv,
$$
where the coefficients $r( a; \mathsf{t}, \mathsf{v})$  are those of Equation~\eqref{r-act}.
\end{defn}

Thus, for any $\mfs \in \Std(\la)$,    a model for the cell module $\cell {}{}\la$ is given by 
$$
\Span\{ c^\la_{\mfs \mft} + A^{\rhd \la}  \mid  \mft \in \Std(\lambda)\} \subseteq A^{\unrhd \la}/A^{\rhd \la}.
$$ 
 When we need to emphasize the algebra or the ground ring, we may write $\cell A {} \lambda$ or 
 $\cell {} R \lambda$.   Note that whenever $R \to S$ is a homomorphism of integral domains,  
  $\cell {} S \lambda = \cell {} {} \lambda \otimes_R  S$ is the cell module for $A^S$ corresponding to $\lambda$.  
 
If $A$ is an $R$--algebra with involution $*$,  then $*$ induces functors $M \to M^*$   interchanging left and right $A$--modules, and taking $A$--$A$ bimodules to $A$--$A$ bimodules.   We identify $M^{**}$ with 
$M$ via $x^{**} \mapsto x$  and for modules ${}_A M$ and $N_A$ we have 
$
(M \otimes_R N)^* \cong  N^* \otimes_R M^*,  
$
as $A$--$A$ bimodules, with the isomorphism determined by $(m \otimes n)^* \mapsto n^* \otimes m^*$. 
For a right $A$--module $M_A$, using both of these isomorphisms, we identify 
$(M^* \otimes M)^*$ with $M^{*} \otimes M^{**}  = M^* \otimes M$, via 
$(x^* \otimes y)^*  \mapsto y^* \otimes x$.
Now we apply these observations with $A$ a cellular algebra and $\cell {} {} \la$ a cell module.  The assignment $$\alpha_\la : c^\la_{\mfs \mft}  + A^{\rhd \la}  \mapsto  (c^\la_\mfs)^* \otimes (c^\la_\mft)$$ determines an
$A$--$A$ bimodule isomorphism from $A^{\unrhd\la}/\A^{\rhd\la}$ to $(\cell {} {} \la)^* \otimes_R \cell {}  {} \la$. Moreover,
we have   $*\circ \alpha_\la =  \alpha_\la \circ *$, which reflects the cellular algebra axiom
$(c^\la_{\mfs \mft})^* \equiv c^\la_{\mft \mfs} \mod  A^{\rhd \la}$.  

\def\inv{^{-1}}

A certain bilinear form on the cell modules plays an essential role in the theory of cellular algebras.    Let $A$ be a cellular algebra over $R$ and let $\lambda \in \widehat A$.   The cell module $\cell {}{}\la$ can be regarded as an $A/A^{\rhd \la}$ module.
 For
$x, y, z \in \cell {}{}\lambda$, it follows from the definition of  the cell module and the map $\alpha_\lambda$ that $x \alpha_\la\inv(y^* \otimes z) \in R z$.  Define $\langle x, y \rangle$ by
\begin{equation} \label{defn of bilinear form}
x \alpha_\la\inv(y^* \otimes z)  =  \langle x, y \rangle z.
\end{equation}
Then $ \langle x, y \rangle$  is $R$-linear in each variable and we have $\langle x a, y\rangle =
\langle x,  y a^* \rangle$  for $x, y \in \cell {}{}\lambda$ and $a \in A$.    Note that
$$
c^\lambda_{\mfs \mft}  c^\lambda_{\mfu \mfv} = \langle c^\la_\mft, c^\la_\mfu \rangle c^\la_{\mfs \mfv},
$$
which is the customary definition of the bilinear form.

\begin{defn}[\mbox{\cite{MR3065998}}]
A cellular algebra, $A$,  is said to be {\sf cyclic cellular} if every cell module  is cyclic as an $A$-module.   \end{defn}

If $A$ is cyclic cellular, $\la \in \widehat A$, and $\delta(\la)$ is a generator of the cell module $\cell {}{}\la$,
let $\M_\la$ be a lift in $A^{\unrhd \la}$ of $\alpha_\la\inv(\delta(\la)^* \otimes \delta(\la))$, i.e.
$\alpha_\la\inv(\delta(\la)^* \otimes \delta(\la)) = m_\lambda + A^{\rhd \la}$. 

\begin{lem} \label{properties of c lambda}
The element
$\M_\la$ has the following properties:
\begin{enumerate}[leftmargin=*,label=(\arabic{*}), font=\normalfont, align=left, leftmargin=*]
\item \label{cyclic gen a} $\M_\la \equiv \M_\la^* \mod{A^{\rhd \la}}$.
\item  \label{cyclic gen b} $A^{\unrhd \la} =  A \M_\la A +  A^{\rhd \la}$.
\item   \label{cyclic gen c}  $(\M_\la A + A^{\rhd \la})/A^{\rhd \la} \cong \cell {} {} \la$, as right $A$--modules. 
\end{enumerate}
\end{lem}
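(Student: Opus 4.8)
The plan is to unwind the definitions of $\M_\la$, the bimodule isomorphism $\alpha_\la$, and the cell module $\cell{}{}\la$, and then read off each of the three properties.

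First I would prove \ref{cyclic gen a}. We know $\alpha_\la\inv(\delta(\la)^* \otimes \delta(\la)) = \M_\la + A^{\rhd\la}$. Apply the involution $*$: since $* \circ \alpha_\la = \alpha_\la \circ *$ (equivalently $* \circ \alpha_\la\inv = \alpha_\la\inv \circ *$ as maps between $A^{\unrhd\la}/A^{\rhd\la}$ and $(\cell{}{}\la)^*\otimes\cell{}{}\la$), and since $*$ acts on the right-hand side by $(x^*\otimes y)^* \mapsto y^*\otimes x$, we get $(x^*\otimes x)^* \mapsto x^*\otimes x$ when $x = \delta(\la)$. Hence $\alpha_\la\inv(\delta(\la)^*\otimes\delta(\la))$ is $*$-fixed in $A^{\unrhd\la}/A^{\rhd\la}$, which says precisely $\M_\la + A^{\rhd\la} = \M_\la^* + A^{\rhd\la}$, i.e. $\M_\la \equiv \M_\la^* \bmod A^{\rhd\la}$. (Here one uses that $A^{\rhd\la}$ is $*$-invariant, which is part of the cellular axioms as noted in the excerpt.)

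Next, \ref{cyclic gen b} and \ref{cyclic gen c} together. The bimodule isomorphism $\alpha_\la \colon A^{\unrhd\la}/A^{\rhd\la} \xrightarrow{\ \sim\ } (\cell{}{}\la)^*\otimes_R \cell{}{}\la$ carries $\M_\la + A^{\rhd\la}$ to $\delta(\la)^*\otimes\delta(\la)$. Since $\delta(\la)$ generates $\cell{}{}\la$ as a right $A$-module and (by applying $*$) $\delta(\la)^*$ generates $(\cell{}{}\la)^*$ as a left $A$-module, the element $\delta(\la)^*\otimes\delta(\la)$ generates $(\cell{}{}\la)^*\otimes_R\cell{}{}\la$ as an $A$-$A$-bimodule: indeed $A\cdot(\delta(\la)^*\otimes\delta(\la))\cdot A = (A\,\delta(\la)^*)\otimes_R(\delta(\la)\,A) = (\cell{}{}\la)^*\otimes_R\cell{}{}\la$. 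Transporting back through $\alpha_\la$ gives $A\,\M_\la\,A + A^{\rhd\la} = A^{\unrhd\la}$, which is \ref{cyclic gen b}. For \ref{cyclic gen c}, restrict to the right-module structure: the right $A$-submodule of $(\cell{}{}\la)^*\otimes_R\cell{}{}\la$ generated by $\delta(\la)^*\otimes\delta(\la)$ is $R\,\delta(\la)^*\otimes_R\cell{}{}\la$, which is isomorphic as a right $A$-module to $\cell{}{}\la$ via $\delta(\la)^*\otimes v \mapsto v$. Pulling back along $\alpha_\la$ identifies this submodule with $(\M_\la A + A^{\rhd\la})/A^{\rhd\la}$, giving the stated right-module isomorphism.

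I do not anticipate a genuine obstacle here; the statement is essentially a repackaging of the cellular formalism already set up in the excerpt. The one point requiring a little care is the bookkeeping around the chain of natural identifications $M^{**}\cong M$ and $(M^*\otimes N)^*\cong N^*\otimes M^*$ used to define $\alpha_\la$ and to check its $*$-equivariance; one must make sure the generator $\delta(\la)^*\otimes\delta(\la)$ really is sent to itself under $*$ and really does generate the tensor product bimodule. Once those identifications are handled correctly, all three claims follow directly.
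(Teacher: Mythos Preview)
Your proof is correct. The paper itself does not give an argument here; it simply cites Lemma~2.5 of \cite{MR3065998}, so you have supplied exactly the unwinding of the definitions that the citation points to. Each step---the $*$-equivariance of $\alpha_\la$ for part~(1), the bimodule generation for part~(2), and the identification of the cyclic right submodule with $\cell{}{}\la$ for part~(3)---is sound.

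One small remark on part~(3): the injectivity of $v \mapsto \delta(\la)^* \otimes v$ (needed to conclude that the image is isomorphic to $\cell{}{}\la$ rather than a quotient) is implicit in your argument, and it holds because $\cell{}{}\la$ is free over the integral domain $R$ and $\delta(\la)^* \neq 0$. You might make that explicit, but it is not a gap.
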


\begin{proof}  Lemma 2.5 in ~\cite{MR3065998}.
\end{proof}

  \def\cellgenerator{cell generator}
 \def\cellgenerators{cell generators}
We will call the elements $\M_\la$  {\sf \cellgenerators}; 
in examples of interest to us,  they are  given explicitly and satisfy $\M_\la^* = \M_\la$.

We will need the  following elementary lemma regarding specializations of algebras.

\begin{lem} \label{lemma specialization cd}
Let $R$ be a commutative ring with identity, $A$ an $R$--algebra, and $M$ an $A$--module 
Let $\tau : R \to S$ be a unital ring homomorphism.   Note that $M \otimes_R S$ is an $A \otimes_R S$ module.
Let $\varphi : A \to \End_R(M)$ be the homomorphism corresponding to the $A$--module structure of $M$,   and 
$\varphi_S : A \otimes_R S \to \End_S(M \otimes_R S)$  the homomorphism corresponding to the $A \otimes_R S$--module structure of $M \otimes_R S$.  Then there exists an $R$--algebra homomorphism
$\theta : \varphi(A) \to \varphi_S(A \otimes_R S)$, making the following diagram commute:
\begin{equation} \label{specialization CD}
 \begin{minipage}{5cm}\begin{tikzpicture}
 \draw (0,0) node {$A$};\draw (3,0) node {$\varphi(A)$};
\draw[->](0.4,0) to node [above] {\scalefont{0.8} $\varphi$} (2.4,0);
\draw[->](0,-0.3) to node [right] {\scalefont{0.8}$\otimes 1_S$} (0,-1.2);
\draw[->](3,-0.3) to node [right] {\scalefont{0.8}$\theta$} (3,-1.2);
\draw[->](0.9,-1.5) to node [below] {\scalefont{0.8} $\varphi_S$} (1.9,-1.5);
\draw (0,-1.5) node {$A\otimes_R S$};\draw (3,-1.5) node {$\varphi_S(A\otimes_R S)$};
\end{tikzpicture}\end{minipage}.
\end{equation}
\end{lem}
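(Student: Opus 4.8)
The plan is to produce $\theta$ by the universal property of the surjection $\varphi \colon A \twoheadrightarrow \varphi(A)$. Concretely, I would like to define $\theta(\varphi(a)) = \varphi_S(a\otimes 1_S)$ for $a \in A$, and the whole content of the lemma reduces to checking that this is well defined, i.e.\ that the base-change map $a \mapsto a\otimes 1_S$ carries $\ker\varphi$ into $\ker\varphi_S$. Once that is known, the assignment $a \mapsto \varphi_S(a\otimes 1_S)$ is an $R$-algebra homomorphism $A \to \varphi_S(A\otimes_R S)$ (being a composite of the $R$-algebra maps $\cdot\otimes 1_S$ and $\varphi_S$) which kills $\ker\varphi$, hence factors through $\varphi(A) = A/\ker\varphi$ to give the desired $\theta$, and the square in \eqref{specialization CD} commutes by construction.

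First I would record the identification $\ker\varphi = \{a\in A : aM = 0\}$, which is immediate from the definition of $\varphi$ as the structure map of the $A$-module $M$. The key step is then: given $a\in\ker\varphi$, show $\varphi_S(a\otimes 1_S) = 0$, i.e.\ that $a\otimes 1_S$ annihilates $M\otimes_R S$. Since $\varphi_S(a\otimes 1_S)$ is an additive endomorphism of $M\otimes_R S$, and the simple tensors $m\otimes s$ span $M\otimes_R S$ over $\ZZ$, it suffices to note $(a\otimes 1_S)(m\otimes s) = (am)\otimes s = 0\otimes s = 0$ for all $m\in M$, $s\in S$, using $am = 0$. This gives $\ker\varphi \subseteq \ker\big(\varphi_S\circ(\cdot\otimes 1_S)\big)$, completing the argument.

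I do not anticipate a genuine obstacle: the statement is essentially bookkeeping about annihilators under base change. The only point that merits a moment's care is the observation that an endomorphism of $M\otimes_R S$ is determined by its values on simple tensors, so that the one-line verification $(a\otimes 1_S)(m\otimes s) = 0$ really does establish $\varphi_S(a\otimes 1_S) = 0$; everything else (additivity, multiplicativity, unitality, and $R$-linearity of $\theta$, and commutativity of the diagram) is inherited automatically from the corresponding properties of $\varphi$, $\varphi_S$, and $\cdot\otimes 1_S$.
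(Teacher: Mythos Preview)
Your proof is correct and follows essentially the same approach as the paper: define $\theta(\varphi(a)) = \varphi_S(a\otimes 1_S)$ and verify well-definedness by showing that $a\in\ker\varphi$ implies $a\otimes 1_S\in\ker\varphi_S$. The paper's proof is more terse, simply noting that $\varphi_S(a\otimes 1_S)(m\otimes 1_S) = \varphi(a)(m)\otimes 1_S$ makes the well-definedness immediate, but the content is identical.
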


\begin{proof}   
Note that $\varphi_S$ is defined by $\varphi_S(a \otimes 1_S)(m \otimes 1_S) = \varphi(a)(m) \otimes 1_S$.    Define $\theta(\varphi(a)) = \varphi_S(a \otimes 1_S)$.  This is well defined because if $a \in \ker(\varphi)$,  then $a \otimes 1_S \in \ker \varphi_S$.
\end{proof}

\begin{rem} \label{remark injectivity of specialization}
 In case $R \subset S$ are fields, the map $\theta$ in \eqref{specialization CD} is injective, because
$\theta(\varphi(a))(m \otimes 1_S) = \varphi(a)(m) \otimes 1_S$.   If $\theta(\varphi(a)) = 0$,  then $\varphi(a)(m) = 0$ for all $m \in M$, so $\varphi(a) = 0$.  
\end{rem}
 
\subsection{Sequences of diagram algebras}  \label{subsection diagram algebras}
 Here and in the remainder of the paper, we will consider    an   increasing sequence   $(A_r)_{r \ge 0}$
of  cellular  algebras over an integral domain $R$ with field of fractions $\FF$.   
We assume that all the inclusions  $A_r  \hookrightarrow A_{r+1}$  are unital and that the involutions are consistent; that is the involution on $A_{r+1}$, restricted to $A_r$, agrees with the involution on $A_r$.      We will establish a list of assumptions \eqref{diagram 1}--\eqref{diagram 6}.  For convenience, we call an increasing sequence of  cellular algebras satisfying these assumptions a {\sf sequence of diagram algebras}. 

Let $(\widehat A_r, \unrhd)$ denote the partially ordered set in the cell datum for $A_r$.   For $\lambda \in \widehat A_r$,  let $\Delta_r(\lambda)$  denote the corresponding  cell module.    If $S$ is an integral domain with a unital homomorphism $R \to S$,  write $A_r^S = A_r \otimes_R S$  and $\Delta_r^S(\lambda)$  for $\Delta_r(\lambda) \otimes_R S$.    In particular, write 
 $A_r^\FF = A_r \otimes_R \FF$  and $\Delta_r^\FF(\lambda)$  for $\Delta_r(\lambda) \otimes_R \FF$.

 \begin{defn} \label{definition: cell-filtration}  Let $A$ be a cellular  algebra over $R$.  
 If $M$ is a right $A$--module, a  {\sf  cell-filtration}  of  $M$  is a filtration by right $A$--modules
\begin{align*}
\{ 0\}= M_0  \subseteq   M_1  \subseteq \cdots  \subseteq    M_r=M,
\end{align*}
such that $  M_{i} / M_{i-1} \cong  \cell {} {} {\lambda\power i}$ for some $\lambda^{(i)} \in \widehat{A}$.   We  say that the filtration is {\sf  order preserving} if  $\lambda^{(i)}\rhd \lambda^{(i+1)}$ in
  $\widehat{A}$  for all $i\geq 1$.  
 \end{defn}

\begin{defn}  Let $A \subseteq B$ be a unital inclusion of  cellular algebras over an integral domain $R$ (with  consistent involutions).   
\begin{enumerate}[leftmargin=*,label=(\arabic{*}), font=\normalfont, align=left, leftmargin=*]
\item Say the inclusion is  {\sf  restriction--coherent}  if for every $\mu \in \widehat{B}$, the restricted module $\Res^B_A(\cell B {} \mu)$ has an order preserving cell-filtration (as an $A$--module). 
\item Say the inclusion is {\sf  induction--coherent} if  for every $\lambda \in \widehat{A}$, the induced module $\Ind^B_A(\cell A {} \lambda)$ has an order preserving cell-filtration (as a $B$--module).
\end{enumerate}
\end{defn}
 
 \begin{defn}[\cite{MR2794027,MR2774622}]  Let  $(A_r)_{r \ge 0}$ be an increasing sequence of cellular algebras over an integral domain $R$.  
 We say the tower is {\sf  restriction--coherent} if each inclusion $A_r \subseteq A_{r+1}$ is restriction coherent, and {\sf  induction--coherent} if each inclusion is induction coherent.  We say the tower is coherent if it is both restriction-- and induction--coherent.
 \end{defn}

 \begin{rem}  We have changed the terminology from ~\cite{MR2794027,MR2774622, EG:2012}, as the weaker notion of coherence, in which the order preserving requirement is omitted, plays no role here. 
 \end{rem}
 
 We now list the first of our assumptions for a sequence of diagram algebras:
 \begin{enumerate}[leftmargin=*,label=(D\arabic{*}), ref=D\arabic{*},  series = DiagramAlgebras]
\item  \label{diagram 1}  $A_0 = R$.
 \item   \label{diagram 2}  \label{diagram cyclic cellular}  The algebras $A_r$ are cyclic cellular  for all $\k\geq 0$.  
 \end{enumerate}
 
 For all $r$ and for all $\lambda \in \widehat A_r$, fix once and for all a bimodule isomorphism
$\alpha_\lambda: A_r^{\unrhd \la}/A_r^{\rhd \la} \to (\cell r {} \lambda)^* \otimes_R \cell r {} \lambda$, 
a generator $\delta_r(\la)$ of the cyclic $A_r$--module $\cell r {} \lambda$,  
and a \cellgenerator \ $\M_\lambda \in A_r^{\unrhd \la}$  satisfying  $\alpha_\la(\M_\la + A_r^{\rhd \la}) = (\delta_r(\la))^* \otimes \delta_r(\la)$, as in the discussion preceding
\myref{Lemma}{properties of c lambda}.  
We require the following mild assumption on the \cellgenerators.

 \begin{enumerate}[leftmargin=*,label=(D\arabic{*}), ref=D\arabic{*},  resume = DiagramAlgebras]
 \item  \label{diagram symmetry}  \label{diagram 5}  The cell generators satisfy
 $\M_\la = \M_\la^*$.
 \end{enumerate}
 
 Our list of assumptions continues as follows:

  \begin{enumerate}[leftmargin=*,label=(D\arabic{*}), ref=D\arabic{*},  resume = DiagramAlgebras]
 \item  \label{diagram 3}  \label{diagram semisimplicity}
 $A_r^\FF$ is split semisimple for all $\k\geq 0$.  
\item  \label{diagram 4} \label{diagram restriction coherent}
 The sequence of algebras  $(A_r)_{r \ge 0}$  is  restriction--coherent.  
\end{enumerate}

As discussed in ~\cite[Section 3]{EG:2012},  under the assumptions \eqref{diagram 1}--\eqref{diagram restriction coherent}
above, there exists a well-defined  
multiplicity--free branching diagram $\widehat A $ associated with the sequence $(A_r)_{r \ge 0}$.  The branching diagram is an infinite, graded, directed graph with vertices $\widehat A_r$ at level $r$ and edges determined as follows.  
If  $\lambda \in \widehat A_{r-1}$ and $\mu \in \widehat A_r$, there is an edge $\lambda \to \mu$ in $\widehat A$ if and only if $\Delta_{r-1}(\lambda)$  appears as a subquotient of an order preserving cell filtration of  $\Res^{A_{r}}_{A_{r-1}} (\Delta_r(\mu))$.   
 In fact, $\la \to \mu$ if and only if 
the simple $A^\FF_{r-1}$--module $\cell {\k-1} \FF \lambda$ is a direct summand of the restriction of  $\cell {\k} \FF \mu$ to $A_{r-1}^\FF$.   Note that $\widehat A_0$ is a singleton;  we denote   its unique element by $\varnothing$.   We can choose $\cell 0 {} \varnothing = R$,  $\delta_0(\varnothing) = 1$, and $\M_\varnothing = 1$.

  \begin{defn}
Given  $\nu \in \widehat{A}_{r}$, we  
define a  {\sf  standard   tableau} of shape $ \nu$  to be a 
directed path  $\stt$  on the branching diagram $\widehat A$ from $\varnothing \in \widehat A_0$ to $\nu$, 
\begin{equation} \label{path notation}
\stt = (\varnothing= \stt(0) \to \stt(1) \to    \stt(2)\to   \dots \to  \stt(r-1)\to \stt(r) = \nu). 
\end{equation}
We   let $\Std_r(\nu ) $     denote the set of all such paths and we set 
$\Std_{r}= \cup_{ \nu \in \widehat{A}_r}\Std_{r}(\nu )$.    \end{defn}

Given an algebra satisfying axioms \eqref{diagram 1} to   \eqref{diagram restriction coherent} it is shown in ~\cite[Section 3]{EG:2012} that 
there exist certain ``down--branching factors" 
$d_{\lambda \to \mu} \in A_r$,  for $\lambda \in \widehat A_{\k-1}$ and $\mu \in \widehat A_r$  with 
  $\lambda \to \mu$ in $\widehat A$, related to the cell filtration of $\Res^{A_{\k}}_{A_{\k-1}} (\Delta_r(\mu))$.  Given a  path
$\mft \in \Std_{r}(\nu )$ as in \eqref{path notation}
 define the ordered product $d_\stt$  of branching factors by
\begin{equation} \label{d of a path}
d_\mft =  d_{\stt({r-1}) \to \stt( r)}  d_{ \stt( {r-2}) \to \stt( {r-1})} \cdots  d_{\stt(0) \to \stt( 1)}.
\end{equation}
We say two cellular bases of an algebra $A$ with involution are {\sf equivalent} if they determine the same
two sided ideals $A^{\unrhd \lambda}$  and isomorphic cell modules.

\begin{thm}[\cite{EG:2012},  Section 3] \label{theorem abstract Murphy basis}  
Let $(A_\k)_{\k \ge 0}$ be a sequence of algebras satisfying assumptions \eqref{diagram 1}--\eqref{diagram restriction coherent}. 
\begin{enumerate}[leftmargin=*,label=(\arabic{*}), font=\normalfont, align=left, leftmargin=*]
\item Let $\lambda \in \widehat A_\k$.  The set  $\{ \M_\lambda d_\mft  +  A_\k^{\rhd \lambda} \suchthat    \mft \in \Std_\k(\lambda)  \}$   is a basis of the cell module $\Delta_\k(\lambda)$. 
\item The set  $\{ d_\mfs^* \M_\lambda d_\mft \suchthat  \lambda \in \widehat A_\k \text{ and } \mfs, \mft \in \Std_\k(\lambda) \} $ is  a cellular basis of $A_\k$, equivalent to the original cellular basis. 
\item  For a fixed $\lambda \in \widehat A_\k$, we let $\mu(1) \rhd \mu(2) \rhd \cdots  \rhd \mu(s)$ be a listing of the $\mu \in \widehat A_{\k-1}$ such that $\mu \to \lambda$.    Let
$$
M_j = \Span_R \leftbrace  \M_\lambda d_\mft +  A_\k^{\rhd \lambda} \suchthat  \mft \in \Std_\k(\lambda), \mft(k-1) \unrhd \mu(j)                         \rightbrace.
$$
Then
$$
(0) \subset  M_1 \subset  \cdots \subset  M_s = \Delta_\k(\lambda)
$$
is a filtration of $\Delta_\k(\lambda)$ by $A_{\k-1}$-submodules, and $M_j/M_{j-1} \cong \Delta_{\k-1}(\mu(j))$.  
\end{enumerate}
\end{thm}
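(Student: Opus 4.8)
The plan is to prove the three assertions simultaneously, by induction on $r$; the base case $r=0$ is immediate since $A_0 = R$, $\widehat A_0$ is a singleton, and the only path is the empty one. For the inductive step, fix $\lambda \in \widehat A_r$ and write $v_\lambda = \M_\lambda + A_r^{\rhd\lambda}$, a cyclic generator of $\Delta_r(\lambda)$. First I would invoke restriction--coherence \eqref{diagram restriction coherent}: $\Res^{A_r}_{A_{r-1}}\Delta_r(\lambda)$ admits an order--preserving cell filtration $\{0\} = N_0 \subset N_1 \subset \cdots \subset N_s = \Delta_r(\lambda)$ with $N_j/N_{j-1} \cong \Delta_{r-1}(\mu(j))$, where $\mu(1) \rhd \cdots \rhd \mu(s)$ lists the vertices of $\widehat A_{r-1}$ joined to $\lambda$. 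The crucial input is the defining property of the down--branching factors: $d_{\mu(j)\to\lambda} \in A_r$ can be chosen so that $v_\lambda\, d_{\mu(j)\to\lambda}$ lies in $N_j$ and represents, modulo $N_{j-1}$, the cyclic generator $\delta_{r-1}(\mu(j))$ of $N_j/N_{j-1} \cong \Delta_{r-1}(\mu(j))$. Granting this, for $\mft \in \Std_r(\lambda)$ ending $\cdots \to \mft(r-1) \to \lambda$, the factorization \eqref{d of a path} reads $d_\mft = d_{\mft(r-1)\to\lambda}\, d_{\mft'}$, where $\mft'$ is the truncation of $\mft$ in $\Std_{r-1}(\mft(r-1))$ and $d_{\mft'} \in A_{r-1}$, so that $v_\lambda\, d_\mft = (v_\lambda\, d_{\mft(r-1)\to\lambda})\, d_{\mft'}$.

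Next I would prove assertion~(3) by showing $M_j = N_j$ for all $j$, inducting on $j$ from $M_0 = N_0 = \{0\}$. Since $\mu(1) \rhd \cdots \rhd \mu(s)$ is a linear extension, the condition $\mft(r-1) \unrhd \mu(j)$ defining $M_j$ is equivalent to $\mft(r-1) \in \{\mu(1), \ldots, \mu(j)\}$. The displayed factorization shows each such $v_\lambda\, d_\mft$ lies in the $A_{r-1}$--submodule generated by $v_\lambda\, d_{\mft(r-1)\to\lambda}$, hence in some $N_i \subseteq N_j$, giving $M_j \subseteq N_j$. Conversely, modulo $N_{j-1} = M_{j-1}$ the elements $v_\lambda\, d_\mft$ with $\mft(r-1) = \mu(j)$ become $\M_{\mu(j)} d_{\mft'} + A_{r-1}^{\rhd\mu(j)}$, which by assertion~(1) at level $r-1$ run over a basis of $\Delta_{r-1}(\mu(j)) \cong N_j/N_{j-1}$ as $\mft'$ ranges over $\Std_{r-1}(\mu(j))$; so $M_j$ surjects onto $N_j$ modulo $M_{j-1}$, forcing $M_j = N_j$. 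This yields assertion~(3) --- each $M_j = N_j$ is an $A_{r-1}$--submodule with $M_j/M_{j-1} \cong \Delta_{r-1}(\mu(j))$ --- and, since these subquotients are free with the indicated bases, it also yields assertion~(1): $\{v_\lambda\, d_\mft \mid \mft \in \Std_r(\lambda)\}$ is an $R$--basis of $\Delta_r(\lambda)$.

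For assertion~(2) I would pass to the bimodule isomorphism $\alpha_\lambda : A_r^{\unrhd\lambda}/A_r^{\rhd\lambda} \to (\Delta_r(\lambda))^* \otimes_R \Delta_r(\lambda)$ with $\alpha_\lambda(\M_\lambda + A_r^{\rhd\lambda}) = \delta_r(\lambda)^* \otimes \delta_r(\lambda)$. Since $\alpha_\lambda$ intertwines the left and right actions and commutes with $*$, and $\M_\lambda^* = \M_\lambda$ by \eqref{diagram symmetry}, a short computation gives $d_\mfs^* \M_\lambda d_\mft + A_r^{\rhd\lambda} = \alpha_\lambda^{-1}\bigl((v_\lambda d_\mfs)^* \otimes (v_\lambda d_\mft)\bigr)$; by assertion~(1) these form a basis of $A_r^{\unrhd\lambda}/A_r^{\rhd\lambda}$, and summing over $\lambda \in \widehat A_r$ --- using that the ideals $A_r^{\unrhd\lambda}$ are determined by order ideals in the unchanged poset $\widehat A_r$ and hence stratify $A_r$ as before --- gives an $R$--basis of $A_r$. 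The cellular axioms are then routine: $(d_\mfs^* \M_\lambda d_\mft)^* = d_\mft^* \M_\lambda d_\mfs$ realizes the involution axiom exactly, and the right--multiplication axiom is the cell--module action on $\Delta_r(\lambda)$ transported through $\alpha_\lambda$, with the same structure constants; equivalence with the original basis follows because the new datum has the same ideals $A_r^{\unrhd\lambda}$ and, by \myref{Lemma}{properties of c lambda}, isomorphic cell modules.

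I expect the real obstacle to be exactly the input flagged above: producing down--branching factors $d_{\mu\to\lambda} \in A_r$ that steer the cyclic generator of $\Delta_r(\lambda)$ onto a cyclic generator of the subquotient $\Delta_{r-1}(\mu)$ of the restriction cell filtration. This is where cyclic cellularity \eqref{diagram cyclic cellular} and restriction--coherence \eqref{diagram restriction coherent} are genuinely used; once it is in hand, everything else is the bookkeeping above. In the present paper this construction is imported from \cite[Section~3]{EG:2012}, so a self--contained argument would have to reproduce it.
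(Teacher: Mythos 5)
Your reconstruction is correct and follows essentially the route of the cited source \cite{EG:2012}, which is also all the paper itself offers for this statement: restriction--coherence supplies the order--preserving filtration $N_\bullet$ of $\Res^{A_r}_{A_{r-1}}\Delta_r(\lambda)$, the down--branching factors are by construction exactly the elements steering $\M_\lambda + A_r^{\rhd\lambda}$ onto cyclic generators of the subquotients, and the identification $M_j = N_j$, the resulting path basis of the cell module, and the passage to a cellular basis of $A_r$ via $\alpha_\lambda$ and the ideal stratification are carried out just as you describe. The one ingredient you flag as imported --- the existence of $d_{\mu\to\lambda}$ with the stated property --- is precisely what the paper also takes from \cite[Section~3]{EG:2012} in the paragraph preceding the theorem, so relying on it is legitimate rather than a gap.
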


We will now continue with our list of assumed properties of the sequence of algebra 
$(A_\k)_{\k \ge 0}$ with one final key axiom.  
 
 \begin{enumerate}[leftmargin=*,label=(D\arabic{*}), ref=D\arabic{*},  resume=DiagramAlgebras]
  \item   \label{diagram 6} \label{diagram compatibility}
 There exist   ``up--branching factors"  $u_{\lambda \to \mu}\in A_{\k}^R$  for $\lambda \in \widehat A_{\k-1}$ and $\mu \in \widehat A_{\k}$ satisfying the compatibility relations
 \begin{equation} \label{abstract branching compatibility}
 \M_\mu d_{\lambda \to \mu} =   (u_{\lambda \to \mu})^*  \M_\lambda. 
\end{equation}
 \end{enumerate}

\begin{eg}\label{theoneexample}  It is shown in ~\cite{EG:2012} that the Hecke algebras of type $A$, the symmetric group algebras,  the   Brauer algebras, the Birman--Wenzl--Murakami algebras,  the partition algebras, and the Jones--Temperley--Lieb algebras  all are examples of sequences of algebras satisfying properties \eqref{diagram 1}--\eqref{diagram 6}.  In 
\iftoggle{arxiv}
 {\myref{Appendix}  {section: walled Brauer}      }
 {Appendix B in the arXiv version of this paper},
we show that one can extract single sequences from the double sequence of walled Brauer algebras, so that 
 properties \eqref{diagram 1}--\eqref{diagram 6} are satisfied.   
In each case the ground ring $R$ can be taken to be the generic ground ring for the class of algebras.  For example, for the Hecke algebras, this is $\Z[\boldq, \boldq\inv]$, and for the Brauer algebras it is $\Z[\deltabold]$, where  $\boldq$ and   $\deltabold$ are indeterminants.
\end{eg}

\begin{rmk} In all the examples listed above,  the branching factors $d_{\lambda \to \mu}$  and $u_{\lambda \to \mu}$ and the \cellgenerators \  $m_\lambda$ are determined explicitly.  For the symmetric group algebras and the Hecke algebras of finite type $A$, the branching factors can be chosen so that  the basis $\{d_\mfs^* \M_\lambda d_\mft \}$ coincides with  Murphy's cellular basis or its dual version, see  \myref{Section}{dual Murphy basis of S n}.   
 In all of these examples, 
$u$-branching factors are related to cell filtrations of induced cell modules; see \cite{EG:2012} for details.  However, for the purposes of this paper it is enough to know that the $u$-branching coefficients exist and are explicitly determined.

\end{rmk}

\begin{defn}
We write $m_{\mfs  \mft}^\lambda =   d_\mfs^*  \M_\lambda d_\mft$.    Also write
$m_\mft =  \M_\lambda d_\mft + A_\k^{\rhd \lambda} \in \Delta_\k(\lambda)$.   We refer to the cellular basis
$\{m_{\mfs  \mft}^\lambda  \suchthat  \la \in \widehat A_\k \text{ and }  \mfs, \mft \in \Std_\k(\la)\}$ as the {\sf Murphy  cellular basis} of $A_\k$  and $\{m^\la_\mft  \suchthat \mft \in \Std_\k(\la) \}$  as the 
{\sf Murphy  basis} of the cell module $\cell r {} \la$. 
\end{defn}


 \begin{rmk} (Remark on notation for branching factors)   Let $\lambda \in \widehat A_{\k-1}$  and $\mu \in \widehat A_r$  with $\lambda \to \mu \in \widehat A $.   In situations where it seems  helpful to emphasize the level on the branching diagram, we will write, for example,  $\dd \lambda \mu r$ instead of $d_{\lambda \to  \mu}$.   
See for instance, 
\myref{Theorem}{theorem:  closed form determination of the branching factors}.  
\end{rmk}

  \begin{defn}
Given $0\leq s \leq r$ and    $\lambda  \in   \widehat{A}_{s}$, $\nu \in \widehat{A}_{r}$, we  
define a  {\sf  skew  standard   tableau} of shape $ \nu \setminus  \lambda  $ and degree $r-s$  to be a directed path $\stt$  on the branching diagram $\widehat A$ from $\lambda$ to $\mu$, 
\begin{equation} \label{skew path notation}
\lambda = \stt(s) \to \stt(s+1) \to    \stt(s+2)\to   \dots \to  \stt(r-1)\to \stt(r) = \nu. 
\end{equation}
We let $\Std_{s,r}(\nu \setminus  \lambda)$ denote the set of all such paths with given $\lambda$ and $\nu$. 
 Given $0\leq s \leq r$, we set $\Std_{s,r}= \cup_{\la \in \widehat{A}_s,\nu \in \widehat{A}_r}\Std_{s,r}(\nu\setminus\lambda)$.  
 \end{defn}
 
\ignore{
\begin{rmk}Given two paths $\sts \in \Std_s(\la)$ and $\stt \in \Std_{s,r}(\nu\setminus\la)$, we let $\sts \circ \stt\in \Std_{r}(\nu )$ denote the obvious   path obtained by concatenation.  
\end{rmk}
}
  
   Given two paths $\sts \in \Std_{q, s}(\mu \setminus \lambda)$ and $\stt \in \Std_{s, r}(\nu \setminus \mu)$ such that the final point of $\sts$ is the initial point of $\stt$,  define $\sts \circ \stt \in \Std_{q, r}(\nu \setminus \lambda)$ to be the obvious path obtained by concatenation.
 
\begin{rmk} \label{factorization of Murphy basis}
  
 Given a  path
$\mft \in \Std_{s,r}(\nu \setminus  \lambda)$  as in \eqref{skew path notation}
define
\begin{align*}
d_\mft &=  d_{\stt({r-1}) \to \stt( r)}  d_{ \stt( {r-2}) \to \stt( {r-1})} \cdots  d_{\stt(s) \to \stt(s+ 1)},  \\
\intertext{and}
u_\mft  &=     u_{\stt(s) \to \stt( s+1)}  \cdots
u_{ \stt( {r-2}) \to \stt( {r-1})} u_{\stt({r-1}) \to \stt( r)}.
\end{align*}
  Then it follows from the compatibility relation \eqref{abstract branching compatibility} and induction on $r -s$ that
\begin{equation} \label{abstract branching compatibility for path}
u_\mft^*  \M_\la =  \M_\nu  d_\mft.
\end{equation}
Because $\M_\varnothing$ can be chosen to be $1$,  this gives in particular for $\mft \in \Std_r(\nu)$, 
\begin{equation} \label{abstract branching compatibility for path 2}
u_\mft^*  =  \M_\nu  d_\mft.
\end{equation}
Therefore the cellular basis $\{ m_{\mfs \mft}^\nu\}$  can also be written in the apparently asymmetric form
$$
 m_{\mfs  \mft}^\nu = d_\mfs^* \M_\nu d_\mft =  d_\mfs^*  u_\mft^*.  
$$
Using the symmetry of the cellular basis  $(m_{\mfs  \mft}^\nu)^* = m_{\mft \mfs}^\nu$ (which follows from the assumption \eqref{diagram symmetry}), we also get
$$
 m_{\mfs  \mft}^\nu = u_\mfs  d_\mft.
$$
Using \eqref{abstract branching compatibility for path 2}, we have the following form for the basis $\{m^\la_\mft \suchthat  \mft \in \Std_r(\nu) \}$ of the cell module $\cell r {} \nu$:
\begin{equation}\label{abstract branching compatibility for path 3}
m^\la_\mft =  u^*_\mft + A_r^{\rhd \nu}.
\end{equation}
Now, for any $0\leq q \le s \le r$,   let $\mft_{[q, s]}$ denote the truncated path,
$$
 \stt(q) \to \stt(q+1) \to    \stt(q+2)\to   \dots \to  \stt(s-1)\to \stt(s). 
 $$
The representative $u^*_\mft$ of $m_\mft$  has the remarkable property that  for any $0\leq s \le r$,
\begin{equation} \label{eqn factorization of u t}
u^*_\mft =  u^*_{\mft_{[s, r]}}   u^*_{\mft_{[0, s]}},
\end{equation}
and 
\begin{equation}  \label{eqn property of u t trunacation}
 u^*_{\mft_{[0, s]}} = \M_{\mft(s)} d_{\mft_{[0, s]}} \in  \M_{\mft(s)} A_s \subseteq A_s^{\unrhd \mft(s)}.
 \end{equation}
 Here, \eqref{eqn factorization of u t} follows directly from the definition of $u_{\stt}$, while \eqref{eqn property of u t trunacation}   comes from applying \eqref{abstract branching compatibility for path 2} to $ u^*_{\mft_{[0, s]}}$ in place of $u^*_{\mft}$.
   The compatibility relations \eqref{abstract branching compatibility for path} together with the factorizability 
\eqref{eqn factorization of u t}  
 of representatives $u^*_\stt$  of the Murphy basis play a crucial role in this paper. 
   In our view, these are the distinguishing properties of the Murphy bases of diagram algebras, and even in the original context of the Hecke algebras ~\cite{MR1327362} these properties provide  new insight. 
  \end{rmk}

\subsection{Seminormal bases, dominance triangularity, and restriction of cell modules}
\label{section some consequences from BEG}
We have explored certain consequences of our standing 
assumptions \eqref{diagram 1}--\eqref{diagram 6} in an companion paper ~\cite{BEG}.   We recall some of the results of that paper that will be applied here.

One can define analogues of seminormal bases in the algebras and the cell modules defined over the field of fractions $\FF$, as follows.  Let 
$\z r \lambda$ denote the minimal central idempotent in $A_r^\FF$  corresponding to the minimal two sided ideal labeled by $\lambda \in \widehat A_r$.     For $r \ge s$ and for a path
$\stt \in \Std_{s,r}(\nu \setminus \lambda)$ as in \eqref{skew path notation}, define
$$
F_\mft = \prod_{s \le j \le r}  \z j {{\mft\powerr j}}.
$$
The factors are mutually commuting so the order of the factors does not have to be specified.  In particular the set of $F_\mft$ for $\mft \in \Std_r(\nu)$  and $\nu \in \widehat A_r$,  is a family of mutually   orthogonal minimal idempotents, with $\sum_{\mft \in \Std_r(\nu) }  F_\mft = \z r \nu$.    The collection of idempotents
$F_\mft$   (for $r \ge 1$,  $\nu \in \hat A_r$, and $\mft \in \Std_r(\nu)$)  is called the family of {\em Gelfand-Zeitlin} idempotents for the tower $(A_r)_{r \ge 0}$.   The family is characterized in ~\cite[Lemma 3.10]{MR2774622}.

Define   $f_\mft = m_\mft F_\mft $ in $\Delta_r^\FF(\nu)$  and 
$F_{\mfs  \mft} =  F_\mfs  m_{\mfs  \mft}^\lambda F_\mft$, for $\nu \in \widehat A_r$ and
$\mfs, \mft \in \Std_r(\nu)$.    These are analogues for diagram algebras of the seminormal bases of the Hecke algebras of the symmetric groups.  This construction, and its relation to other constructions of seminormal bases, is discussed in detail in ~\cite{BEG}.

 The following two partial orders on standard tableaux play an important role in the theory of diagram algebras.

\begin{defn}[Dominance order for paths]  For $\mfs, \mft \in \Std_{s, r}$,   define $\mfs \unrhd \mft$ if $\mfs(j) \unrhd \mft(j)$ for all $s\leq j\leq \k$.  
\end{defn}

This is evidently a partial order, which we call the  dominance order.  In particular  the dominance order is defined on $\Std_r$ and on $\Std_r(\nu)$ for $\nu \in \widehat A_r$.    The corresponding strict partial order is denoted 
$\mfs \rhd \mft$ if $\mfs \ne \mft$ and $\mfs \unrhd \mft$.

\begin{defn}[Reverse lexicographic order for paths]   For $\mfs, \mft \in \Std_{s, r}$,   define $\mfs \succeq \mft$ if  $\mfs = \mft$ or if for the last index $j$ such that $\mfs(j) \ne \mft(j)$, we have
$\mfs(j) \rhd \mft(j)$.  
\end{defn}

This is also a partial order on paths.  The corresponding strict partial order is denoted $\mfs \succ \mft$ if $\mfs \ne \mft$ and $\mfs \succeq \mft$.   Evidently $\mfs \rhd \mft$ implies $\mfs \succ \mft$.

We now review several results from ~\cite{BEG}.  The most useful technical result is that the Murphy bases and the seminormal bases of the cell modules are related by a dominance--unitriangular transformation.

\begin{thm}[\cite{BEG}, Theorem 3.3]  \label{dominance triangularity}
 Fix $\la \in \widehat A_r$.     For all $\mft \in \Std_r(\la)$,  there exist coefficients 
 $r_\mfs, r'_\mfs  \in \FF$ such that
$$
m^\la_\mft =  f^\la_\mft  +  
\smashoperator[r]{\sum_{\begin{subarray}c \mfs \in \Std_r(\la) \\  \mfs \rhd \mft \end{subarray}}  } \ 
r_\mfs f^\la_\mfs 
\qquad \quad 
f^\la_\mft =  m^\la_\mft  +  
\smashoperator[r]{\sum_{\begin{subarray}c \mfs \in \Std_r(\la) \\  \mfs \rhd \mft \end{subarray}}}\ 
 r'_\mfs m^\la_\mfs. 
$$
 \end{thm}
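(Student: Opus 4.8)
The plan is to expand $m^\la_\mft$ in the seminormal basis of $\Delta^\FF_r(\la)$ and then control the support of this expansion by an induction along the tower. First, since $A_r^\FF$ is split semisimple by \eqref{diagram semisimplicity} and the $F_\mfu$ ($\mfu\in\Std_r(\la)$) are mutually orthogonal idempotents with $\sum_\mfu F_\mfu=\z r\la$, the vectors $\{f^\la_\mfu\mid\mfu\in\Std_r(\la)\}$ form an $\FF$--basis of $\Delta^\FF_r(\la)$, so we may write $m^\la_\mft=\sum_{\mfs}r_\mfs f^\la_\mfs$ uniquely, and the content of the theorem is that $r_\mft=1$ and $r_\mfs=0$ unless $\mfs\rhd\mft$. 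To pin down the diagonal coefficient I would multiply on the right by $F_\mft$: from $f^\la_\mfs F_\mft=\delta_{\mfs\mft}f^\la_\mft$ (immediate from $F_\mfs F_\mft=\delta_{\mfs\mft}F_\mft$ and $f^\la_\mft=m^\la_\mft F_\mft$) and $f^\la_\mft\neq 0$ one gets $r_\mft f^\la_\mft=m^\la_\mft F_\mft=f^\la_\mft$, hence $r_\mft=1$. Granting the support statement, the first displayed identity follows, and the second is its unitriangular inverse, the support pattern being preserved under inversion.

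For the support claim I would induct on $r$, the cases $r\le 1$ being trivial since each $\Std_r(\la)$ is then a singleton. In the inductive step put $\mu=\mft(r-1)$ and $\mft'=\mft_{[0,r-1]}\in\Std_{r-1}(\mu)$, and restrict $\Delta^\FF_r(\la)$ to $A^\FF_{r-1}$; by split semisimplicity and multiplicity--freeness of the branching diagram, $\Res^{A^\FF_r}_{A^\FF_{r-1}}\Delta^\FF_r(\la)=\bigoplus_{\nu\to\la}\Delta^\FF_{r-1}(\nu)$. The key point I would use is that, for each edge $\nu\to\la$, left multiplication by $u^*_{\nu\to\la}$ induces a well--defined $A^\FF_{r-1}$--module map $\iota_\nu\colon\Delta^\FF_{r-1}(\nu)\to\Delta^\FF_r(\la)$ with $\iota_\nu(m^\nu_\mfu)=m^\la_{\mfu\circ(\nu\to\la)}$; since $\Delta^\FF_{r-1}(\nu)$ is simple this $\iota_\nu$ is injective and identifies $\Delta^\FF_{r-1}(\nu)$ with the $\nu$--summand of the restriction, and it is compatible with the factorization $u^*_\mft=u^*_{\mu\to\la}u^*_{\mft'}$ of \eqref{eqn factorization of u t}, so in particular $m^\la_\mft=\iota_\mu(m^\mu_{\mft'})$ lies entirely in the $\mu$--summand. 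Because $\z r\la$ acts as the identity on $\Delta^\FF_r(\la)$, the idempotent $F_{\mfu\circ(\mu\to\la)}$ acts there exactly as $F_\mfu\in A^\FF_{r-1}$, and a short check then gives $\iota_\mu(f^\mu_\mfu)=f^\la_{\mfu\circ(\mu\to\la)}$. Feeding the inductive hypothesis for $\Delta^\FF_{r-1}(\mu)$ into $m^\la_\mft=\iota_\mu(m^\mu_{\mft'})$ now writes $m^\la_\mft$ as a combination of $f^\la_{\mfs'\circ(\mu\to\la)}$ with $\mfs'\unrhd\mft'$ in $\Std_{r-1}(\mu)$, and $\mfs'\unrhd\mft'$ together with the common final edge $\mu\to\la$ yields $\mfs'\circ(\mu\to\la)\unrhd\mft$ in $\Std_r(\la)$, which is exactly the required expansion.

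The main obstacle is the structural input used in the second paragraph: that $m^\la_\mft$ (for $\mft(r-1)=\mu$) has no component in the summands $\Delta^\FF_{r-1}(\nu)$ with $\nu\neq\mu$, equivalently that $m^\mu_\mfu\mapsto m^\la_{\mfu\circ(\mu\to\la)}$ really does extend to a module homomorphism $\Delta^\FF_{r-1}(\mu)\to\Delta^\FF_r(\la)$. Recalling $\M_\la d_{\mu\to\la}=u^*_{\mu\to\la}\M_\mu$ from \eqref{abstract branching compatibility}, this map is visibly ``left multiplication by $u^*_{\mu\to\la}$'' at the level of algebra representatives, so the issue is only well--definedness modulo the cell ideals, which I expect to reduce to the inclusion $u^*_{\mu\to\la}A^{\rhd\mu}_{r-1}\subseteq A^{\rhd\la}_r$; I would extract this from the construction of the cell--filtration of $\Res\Delta_r(\la)$ in \cite{EG:2012}. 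Without this refinement the restriction filtration only yields triangularity with respect to the coarser reverse lexicographic order $\succeq$ — the error terms it produces involve paths $\mfv$ with $\mfv(r-1)\rhd\mft(r-1)$, hence $\mfv\succ\mft$ but not necessarily $\mfv\rhd\mft$ — and promoting this to dominance triangularity is precisely what the compatibility above buys.
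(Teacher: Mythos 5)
The paper itself gives no proof of this statement (it is quoted from \cite{BEG}), so I can only assess your argument on its own terms — and the step you yourself flag as the "main obstacle" is not just unproved but false. The map $m^\mu_\mfu\mapsto m^\la_{\mfu\circ(\mu\to\la)}$ is \emph{not} an $A_{r-1}^\FF$--module homomorphism $\Delta_{r-1}^\FF(\mu)\to\Delta_r^\FF(\la)$, and $m^\la_\mft$ does \emph{not} lie in the $\mu$--isotypic component of the restriction. Already for $\ZZ\mathfrak S_3$ with $\la=(2,1)$ this fails: let $\mft$ be the path through $\mu=(1,1)$ and $\mfs$ the path through $(2)$, so $m^\la_\mfs=1+s_1$ and $m^\la_\mft=(1+s_1)s_2$. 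Since $(1+s_1)(1+s_2+s_2s_1)=x_{(3)}\in A_3^{\rhd(2,1)}$, one gets $m^\la_\mft s_1=-m^\la_\mfs-m^\la_\mft$ in the cell module, so $m^\la_\mft$ is not a sign vector for $s_1$ and has a nonzero component in the $(2)$--summand (in fact $f^\la_\mft=m^\la_\mft+\tfrac12 m^\la_\mfs$). Equivalently, the inclusion $u^*_{\mu\to\la}\bigl(\M_\mu A_{r-1}\cap A_{r-1}^{\rhd\mu}\bigr)\subseteq A_r^{\rhd\la}$ you hope to extract from \cite{EG:2012} is false: here $u^*_{(1,1)\to(2,1)}=(1+s_1)s_2$ and $1+s_1\in \M_{(1,1)}A_2\cap A_2^{\rhd(1,1)}$, yet $(1+s_1)s_2(1+s_1)=s_2+s_1s_2+s_2s_1+s_1s_2s_1$ is not a multiple of $x_{(3)}$. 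This is consistent with Proposition \ref{path basis dominance theorem}, whose conclusion explicitly allows error terms $u^*_\mfz$ with $\mfz(s)\rhd\la$; these are genuine Murphy basis elements of the top cell module, not elements of the ideal.

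What the restriction structure actually provides (Theorem \ref{theorem abstract Murphy basis}(3)) is an isomorphism $\Delta_{r-1}(\mu)\cong M_j/M_{j-1}$, i.e.\ a module map into the quotient of $\Delta_r(\la)$ by the span of the $m^\la_\mfv$ with $\mfv(r-1)\rhd\mu$, not a splitting into $\Delta_r(\la)$ itself. Your induction therefore only controls $m^\la_\mft$ up to cross terms $f^\la_\mfv$ with $\mfv(r-1)\rhd\mft(r-1)$, which — as you correctly observe — gives triangularity for $\succ$ but not for $\rhd$, and there is no cheap way to make those cross terms vanish: they are genuinely present. The actual content of the theorem is that these cross terms are supported on paths dominating $\mft$ at \emph{every} level, and establishing that requires propagating dominance through all levels simultaneously via the compatibility relation \eqref{abstract branching compatibility for path} and the factorization \eqref{eqn factorization of u t}, in the style of the bookkeeping in Proposition \ref{path basis dominance theorem}, rather than reading it off from the level-$(r-1)$ isotypic decomposition alone. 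As written, your inductive step does not close.
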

 
\begin{cor}[\cite{BEG}, Corollary 3.4]   \label{corollary seminormal basis}  For $r \ge 0$, we have that 
\begin{enumerate}[leftmargin=*,label=(\arabic{*}), font=\normalfont, align=left, leftmargin=*] 
\item
   $\{f^\la_\mft \suchthat \mft \in \Std_r(\la)\}$ is a basis of $\cell r \FF \la$ for all  $\la \in \widehat A_r$.
\item    $\{ F_{\mfs \mft}^\la \suchthat \la \in \widehat A_r \text{ and }  \mfs, \mft \in \widehat A_r\}$  is a cellular basis of $A_r^\FF$.  
\end{enumerate}
\end{cor}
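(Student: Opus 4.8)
The plan is to derive Corollary~\ref{corollary seminormal basis} directly from Theorem~\ref{dominance triangularity} together with the fact (Theorem~\ref{theorem abstract Murphy basis}) that $\{m^\la_\mft \suchthat \mft \in \Std_r(\la)\}$ is an $R$-basis of $\cell r {} \la$, hence, after extending scalars, an $\FF$-basis of $\cell r \FF \la$; and likewise $\{m^\la_{\mfs\mft}\}$ is a cellular basis of $A_r^\FF$. For part (1), fix $\la \in \widehat A_r$ and order $\Std_r(\la)$ by any total order refining the dominance order $\unrhd$ (say with the dominant paths first). With respect to this ordering, Theorem~\ref{dominance triangularity} says precisely that the transition matrix expressing $\{m^\la_\mft\}$ in terms of $\{f^\la_\mft\}$ is unitriangular over $\FF$ (ones on the diagonal, nonzero entries only in positions indexed by strictly more dominant paths). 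A unitriangular matrix over a field is invertible, so $\{f^\la_\mft \suchthat \mft \in \Std_r(\la)\}$ is again an $\FF$-basis of $\cell r \FF \la$.

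For part (2), I would run the same argument one level up. The Murphy cellular basis $\{m^\la_{\mfs\mft} = d_\mfs^* \M_\la d_\mft \suchthat \la \in \widehat A_r,\ \mfs,\mft \in \Std_r(\la)\}$ is a cellular basis of $A_r^\FF$ by Theorem~\ref{theorem abstract Murphy basis}(2) (extended to $\FF$), so in particular an $\FF$-basis of $A_r^\FF$. Now I claim the transition matrix from $\{m^\la_{\mfs\mft}\}$ to $\{F^\la_{\mfs\mft} = F_\mfs m^\la_{\mfs\mft} F_\mft\}$ is again unitriangular, with respect to a suitable ordering of the index set of pairs. The cleanest route is to use the bimodule isomorphism $\alpha_\la : A_r^{\unrhd\la}/A_r^{\rhd\la} \to (\cell r \FF \la)^* \otimes_\FF \cell r \FF \la$ from the cellular structure, under which (modulo $A_r^{\rhd\la}$) $m^\la_{\mfs\mft}$ corresponds to $(m^\la_\mfs)^* \otimes m^\la_\mft$ and $F^\la_{\mfs\mft}$ corresponds to $(f^\la_\mfs)^* \otimes f^\la_\mft$ — the latter because $F_\mfs m^\la_{\mfs\mft} F_\mft \equiv f^\la_\mfs{}^* \otimes f^\la_\mft$ modulo $A_r^{\rhd\la}$, which follows from $f^\la_\mft = m^\la_\mft F_\mft$ and the analogous formula on the left, once one checks the idempotents $F_\mft$ act on the cell module the way the notation suggests (this is part of the content of~\cite{BEG}). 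Then the two-variable triangularity in Theorem~\ref{dominance triangularity} gives: $m^\la_{\mfs\mft}$ equals $F^\la_{\mfs\mft}$ plus an $\FF$-combination of $F^\la_{\mfs'\mft'}$ with $(\mfs',\mft')$ strictly larger (in the product order $\mfs'\unrhd\mfs$, $\mft'\unrhd\mft$, not both equalities), plus an element of $A_r^{\rhd\la}$; ordering the index set $\{(\la,\mfs,\mft)\}$ compatibly with $\la$ (more dominant $\la$ first) and then with the product dominance order on pairs, the full transition matrix over $\FF$ is unitriangular, hence invertible. Therefore $\{F^\la_{\mfs\mft}\}$ is an $\FF$-basis of $A_r^\FF$, and the cellularity axioms for it follow from those of $\{m^\la_{\mfs\mft}\}$ together with the fact that it spans the same chain of ideals $A_r^{\unrhd\la}$.

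The main obstacle I expect is the bookkeeping in part (2): one has to be careful that the two-sided triangularity statement of Theorem~\ref{dominance triangularity} genuinely passes through the tensor-factorization $m^\la_{\mfs\mft} \leftrightarrow (m^\la_\mfs)^*\otimes m^\la_\mft$ without introducing cross terms that live below $A_r^{\rhd\la}$ in an uncontrolled way — i.e. one needs that $F_\mfs m^\la_{\mfs\mft} F_\mft$ really is congruent to $(f^\la_\mfs)^* \otimes f^\la_\mft$ modulo $A_r^{\rhd\la}$ and not merely modulo some larger ideal. This is a statement about how the Gelfand--Zeitlin idempotents $F_\mft$ act on cell modules versus on the algebra, and it is exactly the kind of compatibility established in~\cite{BEG}; so in the write-up I would cite the relevant lemma there rather than reprove it. Everything else — unitriangularity implies invertibility over a field, extension of scalars of a basis is a basis, inheritance of the cellular axioms along an equivalent basis — is routine and can be dispatched in a sentence each. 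A slicker alternative, if one wants to avoid the pairs bookkeeping entirely, is to invoke Theorem~\ref{theorem abstract Murphy basis}'s assertion that the Murphy basis is \emph{equivalent} to the original cellular basis, note that equivalence of cellular bases is preserved under the unitriangular change $m\leadsto f$ guaranteed by Theorem~\ref{dominance triangularity}, and conclude directly; but I would present the transition-matrix argument as the primary one since it is self-contained given the two cited results.
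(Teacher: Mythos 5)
Your argument is correct and is essentially the paper's (i.e.\ \cite{BEG}'s) intended route: the corollary is stated there precisely as a consequence of the dominance--unitriangular transition of Theorem~\ref{dominance triangularity}, and your part (1) is exactly the standard ``unitriangular over a field, hence invertible'' argument, while part (2) correctly reduces to the compatibility $F_\mfs m^\la_{\mfs\mft} F_\mft \equiv (f^\la_\mfs)^* \otimes f^\la_\mft \bmod A_r^{\rhd\la}$, which is legitimately deferred to \cite{BEG}. No gap.
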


 \begin{prop}[\cite{BEG}, Proposition 3.9] \label{path basis dominance theorem} Let $1 \le s < r$,  $\nu \in \widehat A_r$,  $\la  \in \widehat A_{s}$ and  $\mft  \in \Std_{s,r}(\nu \setminus \la )$.   Let $x \in \M_\la  A_{s}$ and write
 $$
 x = 
\smashoperator[r]{ \sum_{\mathsf s \in \Std_{s}(\la )}  }\ 
 \alpha_{\mathsf s}  u^*_{\mathsf s}  + y,
 $$
 with $y \in A_{s}^{\rhd \la }$.    Then there exist coefficients $r_{\mathsf z} \in R$, such that 
 $$
 u^*_\mft x \equiv 
 \smashoperator[r]{\sum_{\mathsf s \in \Std_{s}(\la )} }\  
 \alpha_{\mathsf s}  u^*_\mft u^*_{\mathsf s}  + 
 \sum_{\mathsf z}  r_{\mathsf z}  u^*_{\mathsf z}   \quad \mod A_r^{\rhd \nu},
 $$
 where the sum is over $\mathsf z \in \Std_r(\nu)$ such that $\mathsf z_{[s, r]} \rhd \mft$ and
 $\mathsf z(s) \rhd \la $.  
 \end{prop}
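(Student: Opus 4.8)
The plan is to peel off the main terms, reduce the claim to a support estimate for $u^*_\mft y$ with $y\in A_s^{\rhd\la}$, and establish that estimate by passing to the split semisimple algebra $A_r^\FF$ and reading off the seminormal coordinates of $u^*_\mft y$ by multiplying on the right by suitable Gelfand--Zeitlin idempotents.

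First I would set $y=x-\sum_{\mathsf s}\alpha_{\mathsf s}u^*_{\mathsf s}\in A_s^{\rhd\la}$ (as in the hypothesis) and invoke the factorisation identity $u^*_\mft u^*_{\mathsf s}=u^*_{\mathsf s\circ\mft}$ for $\mathsf s\in\Std_s(\la)$, a special case of \eqref{eqn factorization of u t}, to obtain $u^*_\mft x=\sum_{\mathsf s}\alpha_{\mathsf s}u^*_{\mathsf s\circ\mft}+u^*_\mft y$. Writing $x=\M_\la a$ and using \eqref{abstract branching compatibility for path}, $u^*_\mft x=\M_\nu d_\mft a\in A_r^{\unrhd\nu}$; since each Murphy basis element $u^*_{\mathsf s\circ\mft}$ lies in $A_r^{\unrhd\nu}$, so does $u^*_\mft y$, and $u^*_\mft y+A_r^{\rhd\nu}$ is a well-defined element of $\cell r{}\nu$. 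As $u^*_{\mathsf s\circ\mft}=u^*_\mft u^*_{\mathsf s}$, the proposition is reduced to the assertion that $u^*_\mft y+A_r^{\rhd\nu}$ lies in $\Span\{\,u^*_\mfz+A_r^{\rhd\nu}\mid \mfz\in\Std_r(\nu),\ \mfz_{[s,r]}\rhd\mft,\ \mfz(s)\rhd\la\,\}$.

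Since the coordinates of an element of the $R$-form $\cell r{}\nu$ with respect to $\{u^*_\mfz+A_r^{\rhd\nu}\}$ coincide with its coordinates over $\FF$, I would work in $\cell r\FF\nu$, expanding $u^*_\mft y+A_r^{\rhd\nu}=\sum_\mfw\gamma_\mfw f^\nu_\mfw$ in the seminormal basis of \myref{Corollary}{corollary seminormal basis}, so that $(u^*_\mft y+A_r^{\rhd\nu})F_\mfw=\gamma_\mfw f^\nu_\mfw$. Two constraints on the support then follow. \emph{At level $s$:} for the minimal central idempotent $z_s^\kappa$ of $A_s^\FF$, associativity in $A_r^\FF$ gives $u^*_\mft y\cdot z_s^\kappa=u^*_\mft(y\cdot z_s^\kappa)$, while $y$, lying in the $\FF$-span of the cellular ideal $A_s^{\rhd\la}$ --- which in the semisimple algebra $A_s^\FF$ equals $\bigoplus_{\mu\rhd\la}A_s^\FF z_s^\mu$ --- satisfies $y\cdot z_s^\kappa=0$ whenever $\kappa\not\rhd\la$; since $f^\nu_\mfw z_s^\kappa=\delta_{\kappa,\mfw(s)}f^\nu_\mfw$, this forces $\gamma_\mfw=0$ unless $\mfw(s)\rhd\la$. \emph{On the interval $[s,r]$:} for a skew tableau $\mathsf p$ with $\mathsf p(r)=\nu$, the idempotent $F_{\mathsf p}=\prod_{s\le j\le r}z_j^{\mathsf p(j)}$ is a product of commuting idempotents, each central in an algebra $A_j^\FF\supseteq A_s^\FF$, so $F_{\mathsf p}$ commutes with $A_s^\FF$ and $F_{\mathsf p}=\sum_{\mfw_{[s,r]}=\mathsf p}F_\mfw$. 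From $u^*_\mft y=u^*_\mft x-\sum_{\mathsf s}\alpha_{\mathsf s}u^*_{\mathsf s\circ\mft}$ we get $(u^*_\mft y)F_{\mathsf p}=(u^*_\mft x)F_{\mathsf p}-\sum_{\mathsf s}\alpha_{\mathsf s}u^*_{\mathsf s\circ\mft}F_{\mathsf p}$; modulo $A_r^{\rhd\nu}$, $u^*_{\mathsf s\circ\mft}F_{\mathsf p}$ is supported, by \myref{Theorem}{dominance triangularity}, on $f^\nu_\mfw$ with $\mfw\unrhd\mathsf s\circ\mft$, hence vanishes unless $\mathsf p=\mfw_{[s,r]}\unrhd\mft$, whereas $(u^*_\mft x)F_{\mathsf p}=\M_\nu d_\mft a\,F_{\mathsf p}=\M_\nu(d_\mft F_{\mathsf p})\,a$ vanishes modulo $A_r^{\rhd\nu}$ for $\mathsf p\not\unrhd\mft$ \emph{provided} $\M_\nu d_\mft+A_r^{\rhd\nu}$ is supported on $f^\nu_\mfw$ with $\mfw_{[s,r]}\unrhd\mft$. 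Granting this, $\gamma_\mfw=0$ unless $\mfw_{[s,r]}\unrhd\mft$, and since $\mfw(s)\rhd\la$ rules out $\mfw_{[s,r]}=\mft$ we in fact have $\gamma_\mfw\ne 0\Rightarrow\mfw_{[s,r]}\rhd\mft$ and $\mfw(s)\rhd\la$. Converting back via $f^\nu_\mfw\equiv u^*_\mfw+\sum_{\mfz\rhd\mfw}r'_\mfz u^*_\mfz\mod A_r^{\rhd\nu}$ (again \myref{Theorem}{dominance triangularity}), each $\mfz\rhd\mfw$ occurring satisfies $\mfz(s)\unrhd\mfw(s)\rhd\la$ and $\mfz_{[s,r]}\unrhd\mfw_{[s,r]}\rhd\mft$, so $\mfz(s)\rhd\la$ and $\mfz_{[s,r]}\rhd\mft$; this is the required support, and the coefficients lie in $R$ because they are the $\FF$-coordinates of an element of the $R$-module $\cell r{}\nu$.

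The ingredient left unproved, and which I expect to be the main difficulty, is the bracketed assertion above: that $\M_\nu d_\mft$ is supported, in the seminormal basis of $\cell r\FF\nu$, on $f^\nu_\mfw$ with $\mfw_{[s,r]}\unrhd\mft$ with respect to the \emph{dominance} order on skew tableaux. Support with respect to the weaker reverse-lexicographic order is immediate from iterating the restriction filtration of \myref{Theorem}{theorem abstract Murphy basis}(3) down from level $r$ to level $s$; upgrading it to dominance is the delicate point. I would attempt an induction on the length $r-s$ of $\mft$: at each edge $\mft(j)\to\mft(j+1)$ one has $\M_{\mft(j+1)}d_{\mft_{[s,j+1]}}=u^*_{\mft_{[j,j+1]}}\cdot\M_{\mft(j)}d_{\mft_{[s,j]}}$ by \eqref{abstract branching compatibility for path} and \eqref{eqn factorization of u t}, and $d_{\mft(j)\to\mft(j+1)}$ carries $\M_{\mft(j+1)}$ into the $A_j$-submodule of $\cell{j+1}{}{\mft(j+1)}$ spanned by the $u^*_\mfw$ with $\mfw(j)\unrhd\mft(j)$ --- which at a single level is cut out by dominance, not merely reverse-lex --- so feeding \myref{Theorem}{dominance triangularity} in at each stage should assemble the one-level statements into the global one. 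This Murphy-versus-seminormal bookkeeping along branchings is exactly the analysis of \cite{BEG} (building on \cite{EG:2012}), and in practice one would cite the relevant statement from there.
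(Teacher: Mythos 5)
The paper itself does not prove this proposition; it is imported verbatim from the companion paper \cite{BEG}, so there is no internal proof to compare against. Judged on its own terms, your proposal contains one genuine gap, which you yourself flag: the claim that $\M_\nu d_\mft + A_r^{\rhd\nu}$ is supported, in the seminormal basis of $\cell r{\FF}\nu$, only on those $f^\nu_\mfw$ with $\mfw_{[s,r]}\unrhd\mft$ in the \emph{dominance} order. Everything else is sound: the reduction to controlling $u^*_\mft y$ for $y\in \M_\la A_s\cap A_s^{\rhd\la}$, the observation that $u^*_\mft y\in \M_\nu A_r$ so that it defines an element of the cell module, the level-$s$ constraint via $y\,\z s\kappa=0$ for $\kappa\not\rhd\la$ together with $f^\nu_\mfw \z s\kappa=\delta_{\kappa,\mfw(s)}f^\nu_\mfw$ (this part is correct and clean), and the passage back to the Murphy basis with coefficients in $R$ via the $R$-form. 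But the flagged claim is not a technical loose end; it \emph{is} the proposition. If you try to verify it by expanding $\M_\la\equiv\sum_\mfv\alpha_\mfv \M_\la d_\mfv \mod A_s^{\rhd\la}$ and using $u^*_\mft u^*_\mfv=u^*_{\mfv\circ\mft}$ together with \myref{Theorem}{dominance triangularity}, you control the main terms but are left with $u^*_\mft y'$ for some $y'\in \M_\la A_s\cap A_s^{\rhd\la}$ --- exactly the quantity the proposition is about. So the argument, as structured, is circular.

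The edge-by-edge induction you sketch to repair this also has a hole at each step: to push $\M_{\mft(j)}d_{\mft_{[s,j]}}$ up one level you must multiply a coset modulo $A_j^{\rhd\mft(j)}$ by $u^*_{\mft(j)\to\mft(j+1)}$, and $u^*_{\mft(j)\to\mft(j+1)}\bigl(A_j^{\rhd\mft(j)}\cap \M_{\mft(j)}A_j\bigr)$ is not contained in $A_{j+1}^{\rhd\mft(j+1)}$; controlling that error is precisely the $r-s=1$ case of the proposition. The missing engine is the order-preserving restriction filtration of \myref{Theorem}{theorem abstract Murphy basis}(3): for a single edge $\la\to\mu$, the isomorphism $M_j/M_{j-1}\cong \cell s{}\la$ sends $\M_\mu d_{\la\to\mu}a+M_{j-1}$ to $\M_\la a+A_s^{\rhd\la}$, and since $M_{j-1}$ is spanned by the $m^\mu_\mfz$ with $\mfz(s)\rhd\la$ (dominance, because the filtration is order preserving), one gets the one-step statement with the correct dominance condition; the general case then follows by induction on $r-s$, with your level-$s$ idempotent argument (or a bookkeeping of which quotients can absorb the error terms) keeping track of the condition $\mfz(s)\rhd\la$. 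Without carrying out this filtration argument --- which is where all the work lies, and which you defer to a citation of \cite{BEG} --- the proof is incomplete.
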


Finally we mention, without going into details, the relation of the assumptions \eqref{diagram 1}--\eqref{diagram 6} to {\sf Jucys--Murphy elements}.    Assume that  $(A^S_\k)_{\k \ge 0}$ is a tower of algebras satisfying assumptions \eqref{diagram 1}--\eqref{diagram 6}  and assume in addition that the tower has Jucys--Murphy elements, in the sense of  \cite{MR2774622}.   This assumption holds for  Hecke algebras of type $A$, the symmetric group algebras,  the   Brauer algebras, the Birman--Wenzl--Murakami algebras,  the partition algebras, and the Jones--Temperley--Lieb algebras.  We will see in 
\myref{Sections}{section Brauer symplectic}  and \myrefnospace{}{section Brauer orthogonal}
that it also holds for the Brauer centralizer algebras acting on symplectic and orthogonal tensor spaces.  It is shown in ~\cite{BEG}
that the Jucys--Murphy elements act diagonally on the seminormal bases and dominance unitriangularly on the Murphy bases, generalizing a result of Murphy \cite[Theorem 4.6]{MR1194316}   for the Hecke algebras.

\newcommand{\sfR}{{R}}
\newcommand{\sfF}{{\mathbb F}}

\subsection{Cellularity and the Jones basic construction} \label{subsection: Jones}
In this section, we recall the framework of \cite{MR2794027,MR2774622,EG:2012}.  
This framework allows one to lift the cellular structure from a   coherent sequence $(H_\k)_{\k \ge 0}$ of cyclic cellular algebras to a second sequence $(A_\k)_{\k \ge 0}$, related to the first sequence by ``Jones basic constructions".   
Most importantly, we will recall how the branching factors and cell generators for the tower $(A_\k)_{\k \ge 0}$ can be explicitly constructed from those of the tower $(H_\k)_{\k \ge 0}$.

   The list of assumptions regarding the two sequence of algebras, from ~\cite[Section 5]{EG:2012}, is the following:    
$(H_\k)_{\k \ge 0}$ and $(A_\k)_{\k \ge 0}$ are  both sequences of algebras over an integral domain $\sfR$ with field of fractions $\sfF$.  The inclusions are unital, and both sequences of algebras have consistent algebra involutions $*$.   Moreover:
\begin{enumerate}[leftmargin=*,label=(J\arabic{*}), ref=J\arabic{*},  series =Jones]
\item  \label{J: A0 and A1}  \label{J-1}
$A_0 = H_0 = \sfR$   and $A_1 = H_1$  (as algebras with involution).
\item \label{J: idempotents}  \label{J-2}
 There is a $\delta \in S$ and for each $\k \ge 2$,  there is an element $e_{\k-1} \in A_\k$  satisfying $e_{\k-1}^* = e_{\k-1}$ and $e_{\k-1}^2 = \delta e_{\k-1}$.  
For $\k \ge 2$, 
$
e_{\k-1} e_\k e_{\k-1} = e_{\k-1}$ and  $ e_\k e_{\k-1} e_\k = e_\k $.
\item \label{J-3}
 For $\k \ge 2$, 
 $A_\k/(A_\k e_{\k-1} A_\k)  \cong H_\k$ as algebras with involution.

\item \label{J: en An en} For $\k \ge 1$,   $e_{\k}$ commutes with $A_{\k-1}$ and $e_{\k} A_{\k} e_{\k} \subseteq  A_{\k-1} e_{\k}$.
\item  \label{J:  An en}
For $\k \ge 1$,  $A_{\k+1} 	e_{\k} = A_{\k} e_{\k}$,  and the map $x \mapsto x e_{\k}$ is injective from
$A_{\k}$ to $A_{\k} e_{\k}$.

\item  \label{J:  Delta J}  \label{J-6}
For $\k \ge 2$,   $e_{\k-1} A_\k  e_{\k-1}  A_\k  =   e_{\k-1}  A_\k$.
\item  \label{J: semisimplicity}
For all $\k$,  $A_\k^\sfF : = A_\k \otimes_\sfR \sfF$   is split semisimple.  
\item \label{axiom Hn coherent}  \label{J-8}
$(H_\k)_{\k \ge 0}$ is a   coherent tower of cyclic cellular algebras.
\end{enumerate}

The conclusion (\cite[Theorem 5.5]{EG:2012}) is that $(A_\k)_{\k \ge 0}$  is a coherent tower of cyclic cellular algebras over $\sfR$ (in particular the tower $(A_\k)_{\k \ge 0}$ satisfies conditions  \eqref{diagram 1},  \eqref{diagram cyclic cellular},  \eqref{diagram semisimplicity}, and   \eqref{diagram restriction coherent}. 
We let $(\widehat H_\k, \trianglerighteq)$ denote the partially ordered set in the cell datum for $H_\k$.   Then
the partially ordered set in the cell datum for $A_\k$ is
$$
\widehat A_\k = \{ (\lambda, l) \suchthat  0 \le l \le \lfloor \k/2 \rfloor   \text{ and }  \lambda \in \widehat H_{\k-2 l}\},
$$   
with partial order $(\lambda, l) \trianglerighteq  (\mu, m)$ if $l > m$ or if $l = m$ and $\lambda \trianglerighteq \mu$.   The branching diagram for the tower $(A_\k)_{\k \ge 0}$ is $\widehat A = \bigsqcup_{\k \ge 0} \widehat A_\k$  with the branching rule  $(\la, l) \to (\mu, m)$ if $l = m$  and $\la \to \mu$ in $\widehat H$ or if $m = l+1$ and 
$\mu \to \lambda$ in $\widehat H$.    We call this the branching diagram obtained {\sf by reflections} from $\widehat H$.

We will now explain how the branching factors and cell generators for the tower $(A_\k)_{\k \ge 0}$ can be explicitly constructed from those of the tower $(H_\k)_{\k \ge 0}$.   
For $\k \ge 2$, let 
\begin{equation} \label{notation e sub i super ell}
e_{\k-1}^{(\el)} =  
\begin{cases}
1  & \text{ if }   \el = 0 \\
\underbrace{e_{\k-2\el+1}e_{\k-2\el+3}\cdots e_{\k-1}}_{\text{$\el$ factors}}
&\text{ if }  \el=1,\ldots,\lfloor \k/2\rfloor,  \text{ and }\\
0 &\text{ if }  \el>\lfloor \k/2\rfloor.
\end{cases}
\end{equation}
Let $d_{\la \to \mu}$ and $u_{\la \to \mu}$  denote down-- and up--branching factors, 
and let $m_\la$ denote cell generators   for $(H_\k)_{\k \ge 0}$.  Let 
 $\bar d_{\la \to \mu}$,  $\bar u_{\la \to \mu}$, and $\bar m_\la$   denote liftings of these elements in  the algebras $A_\k$.   Then we have the following two results:

\newcommand{\ddbar}[3]{\bar d_{{#1} \to {#2}}^{\power {#3}}}
\newcommand{\uubar}[3]{\bar u_{{#1} \to {#2}}^{\power {#3}}}
 
\begin{thm}[\cite{EG:2012}, Theorem 5.7] \label{theorem:  closed form determination of the branching factors}
The branching factors  for the tower $(A_\k)_{\k\ge 0}$ can be chosen to satisfy:
\begin{enumerate}[leftmargin=*,label=(\arabic{*}), font=\normalfont, align=left, leftmargin=*]
\item $\dd {(\la, \el)} {(\mu, \el)} {\k+1} = \ddbar \la \mu {\k + 1 - 2\el}  e_{\k-1} \power {\el}$.
\item $\uu {(\la, \el)} {(\mu, \el)} {\k+1} = \uubar \la \mu {\k + 1 - 2\el}  e_{\k} \power {\el}$.
\item $\dd {(\la, \el)} {(\mu, \el+1)} {\k+1}  = \uubar \mu \la {\k-2\el} e_{\k-1} \power \el$.  
\item   $\uu {(\la, \el)} {(\mu, \el+1)} {\k+1}  = \ddbar \mu \la {\k-2\el} e_{\k} \power {\el+1}$. 
\end{enumerate}
\end{thm}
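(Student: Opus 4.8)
The plan is to take formulas (1)--(4) as the \emph{definition} of the branching factors of the tower $(A_\k)_{\k\ge 0}$, and then to check directly that, with this choice, all the requirements of a sequence of diagram algebras are met --- principally the compatibility relation \eqref{abstract branching compatibility} and the requirement (via \myref{Theorem}{theorem abstract Murphy basis}) that the elements $\M_\nu d_\mft$ form bases of the cell modules; once that is done, equivalence with the branching factors produced abstractly in the proof of \cite[Theorem 5.5]{EG:2012} is automatic, since cellular data are unique up to equivalence. Throughout I would use the explicit cellular description of $(A_\k)$ from that theorem: the poset $\widehat A_\k = \{(\lambda,\el) : 0 \le \el \le \lfloor \k/2\rfloor,\ \lambda \in \widehat H_{\k-2\el}\}$ with the reflection order, the reflection branching diagram $\widehat A$, and the \cellgenerator\ for $(\lambda,\el) \in \widehat A_\k$, which is a lift $\M_{(\lambda,\el)} \in A_\k$ of $\bar m_\lambda\, e_{\k-1}\power\el$, with $\bar m_\lambda$, $\bar d_{\lambda\to\mu}$, $\bar u_{\lambda\to\mu}$ the fixed liftings of the $H$-data. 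By the symmetry $(m^\nu_{\mfs\mft})^* = m^\nu_{\mft\mfs}$ it is enough to establish the down-branching formulas (1) and (3) and to verify that the companions (2) and (4) are genuine liftings in $A_\k^R$.

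I would run the verification by induction on $\k$, splitting it along the reflection branching rule. In the \emph{length-preserving} case ($m=\el$ and $\lambda\to\mu$ in $\widehat H$, so $\mu \in \widehat H_{\k+1-2\el}$) the elements $\bar d_{\lambda\to\mu}^{(\k+1-2\el)}$, $\bar u_{\lambda\to\mu}^{(\k+1-2\el)}$, $\bar m_\lambda$, $\bar m_\mu$ all lie in $A_{\k+1-2\el}$ or below, hence by \eqref{J: en An en} commute with every factor of the cap-idempotent $e_{\k}\power\el$ (whose constituents $e_j$ have $j\ge \k+2-2\el$); substituting (1)--(2) into the compatibility relation and using $(e_{\k}\power\el)^* = e_{\k}\power\el$ reduces the identity to be proved to the $H$-level compatibility relation $m_\mu d_{\lambda\to\mu} = u_{\lambda\to\mu}^* m_\lambda$ multiplied on the right by the two trailing blocks of cap-idempotents --- a reduction one justifies using \eqref{J-3} to pass to the quotient $A_{\k+1-2\el}\twoheadrightarrow H_{\k+1-2\el}$, together with the relations \eqref{J-2}, \eqref{J: An en}, \eqref{J-6} and the inductive hypothesis to control the resulting lift discrepancy modulo the caps. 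That the $\M_{(\mu,\el)} d_\mft$ span $\cell {\k+1}{}{(\mu,\el)}$ then follows by transporting the corresponding statement for $\cell {\k+1-2\el}{}\mu$ through the description of the cell modules of $A_\bullet$ in terms of those of $H_\bullet$ (with $\cell {\k+1}{}{(\mu,\el)}$ obtained from $\cell {\k+1-2\el}{}\mu$ by $\el$ successive basic constructions); the sub-case $\el=0$ merely records compatibility of the $A$- and $H$-cellular data under $A_\k\twoheadrightarrow H_\k$.

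The \emph{reflected} case ($m=\el+1$ and $\mu\to\lambda$ in $\widehat H$, so $\mu\in\widehat H_{\k-1-2\el}$) is, to my mind, the crux, and is where the characteristic feature of the Jones basic construction enters. Under the reflection construction an edge of $\widehat A$ into $(\mu,\el+1)\in\widehat A_{\k+1}$ ending at a vertex $(\lambda,\el)\in\widehat A_\k$ records adjoining a cap and then \emph{restricting}, so the relevant branching is governed by the \emph{induction} of cell modules for the pair $H_{\k-1-2\el}\subset H_{\k-2\el}$; this is precisely why the $H$-up-branching factor $\bar u_{\mu\to\lambda}^{(\k-2\el)}$ appears in (3)--(4), with the arrow reversed. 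Concretely I would use the $H$-compatibility relation in the form $u_{\mu\to\lambda}^* m_\mu = m_\lambda\, d_{\mu\to\lambda}$ together with the ``capping'' containment $e_j A_j e_j\subseteq A_{j-1}e_j$ of \eqref{J: en An en}, applied inside the block of $\el$ nested caps, to rewrite $\M_{(\lambda,\el)}\cdot\bigl(\bar u_{\mu\to\lambda}^{(\k-2\el)}\, e_{\k-1}\power\el\bigr)$ as a lift of $\bar m_\mu\, e_{\k}\power{\el+1}$ modulo $A_{\k+1}^{\rhd(\mu,\el+1)}$ --- which is exactly the compatibility relation pairing (3) with (4). The basis condition is handled as before, with the induced-module description now one reflection further up.

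I expect the main obstacle to be the ``on the nose'' bookkeeping in both cases: one must show that every correction term generated when commuting the $H$-branching factors past the nested cap-idempotents $e_\bullet\power\el$ at two consecutive tensor degrees either vanishes or falls into $A_{\k+1}^{\rhd(\mu,m)}$, so that nothing survives that would spoil the exact compatibility relation \eqref{abstract branching compatibility}. This is where the full strength of the Jones-basic-construction relations is used --- in particular the identity $e_{\k-1}A_\k e_{\k-1}A_\k = e_{\k-1}A_\k$ of \eqref{J-6} and the injectivity of $x\mapsto xe_\k$ of \eqref{J: An en} --- and where the induction on $\el$, threading the argument through the tower of nested basic constructions, cannot be avoided. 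Full details are in \cite[Section 5]{EG:2012} and, for the underlying basic-construction cellularity, in \cite{MR2794027}.
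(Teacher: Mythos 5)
This statement is not proved in the paper at all: it is quoted from \cite[Theorem 5.7]{EG:2012}, and the proof lives in Section 5 of that reference. Judged against that source, the computational core of your sketch is sound and in the same spirit: the case split along the reflection branching rule, the use of \eqref{J-2}, \eqref{J: en An en}--\eqref{J-6} to move the $H$--level branching factors past the nested cap idempotents $e_\bullet\power\el$, the reduction of the length--preserving case to the $H$--level identities, and the identification of the reflected case with \emph{induction} for the pair $H_{\k-1-2\el}\subseteq H_{\k-2\el}$ are all the right ingredients.

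There is, however, a framing error that, followed literally, proves less than the theorem. \myref{Theorem}{theorem:  closed form determination of the branching factors} does not assert the compatibility relation \eqref{abstract branching compatibility}; in this paper that relation is the separate axiom \eqref{diagram compatibility}, and the Remark following \myref{Lemma}{lemma lifting cell generators} derives it for the tower $(A_\k)$ \emph{from} the theorem, given the $H$--level compatibility and a suitable choice of liftings. What the theorem actually asserts is that the displayed elements are branching factors in the sense of \myref{Theorem}{theorem abstract Murphy basis}\,(3): right multiplication by $\dd{(\la,\el)}{(\mu,m)}{\k+1}$ must realize the subquotient isomorphism $\Delta_\k(\la,\el)\cong M_j/M_{j-1}$ in an \emph{order-preserving cell filtration} of $\Res^{A_{\k+1}}_{A_\k}\Delta_{\k+1}(\mu,m)$. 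Verifying ``compatibility plus a basis of the cell module'' is strictly weaker: a path-indexed basis need not respect the restriction filtration, and restriction-coherence is exactly what the rest of the machinery (e.g.\ \myref{Proposition}{path basis dominance theorem}) consumes. Your sketch gestures at this (``transporting \dots\ through the description of the cell modules of $A_\bullet$ in terms of those of $H_\bullet$''), but that transport \emph{is} the proof, not an afterthought to it. Relatedly, the claim that agreement with the abstractly produced branching factors is ``automatic, since cellular data are unique up to equivalence'' is not a valid shortcut: branching factors are highly non-unique (the theorem only says they ``can be chosen''), and equivalence of cellular bases says nothing about which elements implement the filtration maps. To repair the write-up, make the filtration statement the thing you verify, and treat the compatibility relation as the downstream consequence it is in this paper.
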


 \begin{lem}[\cite{EG:2012}, Section 5.5]  \label{lemma lifting cell generators}
  For $(\la, l) \in \widehat A_\k$, the 
\cellgenerator\  $m_{(\la, l)}$  in $A_r^{\unrhd (\la, l)}$ can be chosen as
 $\M_{(\lambda, l)}   = \bar  \M_\lambda e_{\k-1}\power{l-1}$.
 \end{lem}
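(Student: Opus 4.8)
The plan is to verify directly that the element displayed in the statement --- writing $e_{\k-1}\power{l}$ for the chain of $l$ Jones idempotents of \eqref{notation e sub i super ell}, I abbreviate it $\M_{(\lambda,l)}=\bar\M_\lambda\,e_{\k-1}\power{l}$ --- may indeed be taken as a \cellgenerator\ for the vertex $(\lambda,l)\in\widehat A_\k$. Since $(A_\k)_{\k\ge0}$ is already a coherent tower of cyclic cellular algebras by \cite[Theorem 5.5]{EG:2012}, the cell module $\cell\k{}{\lambda,l}$ is cyclic, and by \myref{Lemma}{properties of c lambda} it is enough to produce a bimodule isomorphism $\alpha_{(\lambda,l)}\colon A_\k^{\unrhd(\lambda,l)}/A_\k^{\rhd(\lambda,l)}\to(\cell\k{}{\lambda,l})^{*}\otimes_R\cell\k{}{\lambda,l}$ sending $\M_{(\lambda,l)}+A_\k^{\rhd(\lambda,l)}$ to $\delta^{*}\otimes\delta$ for some $R$--module generator $\delta$ of $\cell\k{}{\lambda,l}$; properties \ref{cyclic gen a}--\ref{cyclic gen c} of that lemma then follow formally. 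The whole argument is an unwinding of the construction of the cellular datum of $(A_\k)$ from that of $(H_\k)$ in \cite[Section 5]{EG:2012}, where $\cell\k{}{\lambda,l}$ is realised --- via right multiplication by $e_{\k-1}\power{l}$ --- inside $A_\k^{\unrhd(\lambda,l)}/A_\k^{\rhd(\lambda,l)}$ out of the cell module of $\lambda$ for $H_{\k-2l}$, with $\alpha_{(\lambda,l)}$ assembled from $\alpha_\lambda$ and this realisation.

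First I would dispose of self--adjointness. The factors $e_{\k-2l+1},e_{\k-2l+3},\dots,e_{\k-1}$ of $e_{\k-1}\power{l}$ are at pairwise index distance at least $2$, so by \eqref{J: en An en} (each $e_j$ commutes with $A_{j-1}$) they commute with one another, and each of them commutes with $\bar\M_\lambda\in A_{\k-2l}$; since each $e_j^{*}=e_j$ by \eqref{J-2} and the lift $\bar\M_\lambda$ of the $H$--\cellgenerator\ may be chosen $*$--invariant (the $H$--\cellgenerators\ being self--adjoint in all examples of interest, e.g.\ the Murphy generators), one gets $\M_{(\lambda,l)}^{*}=\M_{(\lambda,l)}$ outright. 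This is property \ref{cyclic gen a}, and at the same time it verifies axiom \eqref{diagram 5} for $(A_\k)$, which is part of what the lemma is used for later.

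The substance is the identification of the class of $\M_{(\lambda,l)}$ with $\delta^{*}\otimes\delta$, and here I would lean on \myref{Theorem}{theorem:  closed form determination of the branching factors}. For a path $\mft\in\Std_\k((\lambda,l))$, the branching rule ``by reflections'' writes $\mft$ as a reflection pattern concatenated with a path to $\lambda$ in $\widehat H$; substituting the explicit formulas of that theorem into $d_\mft$ and forming $\M_{(\lambda,l)}d_\mft$ yields a word in the $e_j$'s and the lifted $H$--branching factors, which one reduces using the Jones relations $e_{j-1}e_je_{j-1}=e_{j-1}$, $e_je_{j-1}e_j=e_j$ of \eqref{J-2}, the injectivity of right multiplication by $e_\k$, and the relation $e_{\k-1}A_\k e_{\k-1}A_\k=e_{\k-1}A_\k$ of \eqref{J-6}. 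The outcome is that, modulo $A_\k^{\rhd(\lambda,l)}$, the family $\{\M_{(\lambda,l)}d_\mft\}_{\mft\in\Std_\k((\lambda,l))}$ is carried bijectively --- under the basic--construction isomorphism $e_{\k-1}\power{l}A_\k e_{\k-1}\power{l}\cong A_{\k-2l}$ and its quotient $H_{\k-2l}$ --- onto the Murphy basis $\{\M_\lambda d_{\mfs}+H_{\k-2l}^{\rhd\lambda}\}_\mfs$ of the cell module of $\lambda$ for $H_{\k-2l}$; hence it is an $R$--basis of $\cell\k{}{\lambda,l}$, and $\M_{(\lambda,l)}$ itself represents $\delta^{*}\otimes\delta$ with $\delta$ the image of the chosen generator of that $H$--cell module. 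Together with the self--adjointness this also gives $A_\k^{\unrhd(\lambda,l)}=A_\k\M_{(\lambda,l)}A_\k+A_\k^{\rhd(\lambda,l)}$.

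I expect the main obstacle to be the bookkeeping inside that last step: one must check that in each $e$--word reduction the terms \emph{other} than the intended leading Murphy term land in $A_\k^{\rhd(\lambda,l)}$, not merely in $A_\k^{\unrhd(\lambda,l)}$. This is a dominance--triangularity argument of the type used throughout \cite{EG:2012} (and abstracted as \myref{Proposition}{path basis dominance theorem} in \cite{BEG}): any such term either acquires strictly more than $l$ Jones idempotents --- so it sits over some $(\mu,m)$ with $m>l$ --- or keeps exactly $l$ of them but an $H$--label $\mu\rhd\lambda$, and in either case it belongs to $A_\k^{\rhd(\lambda,l)}$ by the explicit description of that ideal in the basic construction. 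Everything else is routine manipulation with the relations \eqref{J-2}, \eqref{J: en An en} and \eqref{J-6}.
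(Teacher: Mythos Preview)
The paper provides no proof; the lemma is simply cited from \cite[Section 5.5]{EG:2012}. You have correctly spotted and corrected the misprint in the exponent: consistency with \eqref{x and y generators Brauer} and the rest of the paper requires $e_{\k-1}\power{l}$, not $e_{\k-1}\power{l-1}$.

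Your self-adjointness argument is fine and matches the remark immediately following the lemma. For the main claim, however, your route runs backward relative to the source. In \cite{EG:2012} the cell generator is identified in Section~5.5 as part of the inductive construction of the cellular datum: for $l\ge1$ the cell module $\cell\k{}{\lambda,l}$ is built from $\cell{\k-2}{}{\lambda,l-1}$ via the map $x\mapsto xe_{\k-1}$ (using axioms \eqref{J: en An en} and \eqref{J:  An en}), and $\bar\M_\lambda e_{\k-1}\power{l}$ arises as its generator by induction on $l$. The explicit branching-factor formulas of \myref{Theorem}{theorem:  closed form determination of the branching factors} (\cite[Theorem 5.7]{EG:2012}) are established \emph{after} that, so invoking them to pin down $\M_{(\lambda,l)}$ risks circularity --- you would need to verify that the proof of Theorem~5.7 in \cite{EG:2012} does not already rely on the identification of $\M_{(\lambda,l)}$. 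Moreover, the ``bookkeeping'' you flag as the main obstacle --- that the extra terms in the $e$-word reductions land in $A_\k^{\rhd(\lambda,l)}$ --- is precisely what the inductive construction in \cite{EG:2012} handles directly; your sketch defers it to a dominance-triangularity argument that is only asserted, not supplied.
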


\begin{rmk}
Although these  results involve unspecified liftings of elements from $H_\k$ to $A_\k$,  in the examples, the liftings are chosen explicitly.  Moreover, the \cellgenerators \ $m_\la$ in $H_\k$ and
$m_{(\la, l)}$ in $A_\k$  are chosen  to be $*$--invariant, so that the tower $(A_r)$  satisfies axiom 
\eqref{diagram symmetry}.   Furthermore, in the examples, the branching factors and \cellgenerators \ in the algebras $H_r$ satisfy the compatibility relation \eqref{diagram 6},   and their liftings can be chosen to satisfy these relations as well.  It then follows  from \myref{Theorem}{theorem:  closed form determination of the branching factors} and \myref{Lemma}{lemma lifting cell generators}  that 
 the branching factors and \cellgenerators\ in the algebras $A_r$  also satisfy the compatibility relations  \eqref{diagram 6}.  
\end{rmk}

Now the tower $(A_\k)_{\k\ge 0}$ in particular satisfies the conditions \eqref{diagram 1}--\eqref{diagram compatibility} over $\sfR$,  so each $A_\k$ has a Murphy type  cellular basis obtained by the prescription of  
\myref{Theorem}{theorem abstract Murphy basis},
using ordered product of $d$--branching factors along paths on $\widehat A$.

\begin{rmk} For the standard examples of diagram algebras, for example the Brauer algebras, all this works not over the generic ground ring $R = \ZZ[\deltabold]$, but only over  $R[\deltabold\inv]$.  
  However, the branching factors and \cellgenerators \ obtained from \myref{Theorem}{theorem:  closed form determination of the branching factors} and \myref{Lemma}{lemma lifting cell generators} do lie in the algebras over the generic ground ring.  Furthermore, one can check  that the transition matrix between the diagram basis of the algebras and the Murphy type cellular basis is invertible over the generic ground ring; this step is case--by--case and somewhat {\em ad hoc}.  
It follows that  the tower of algebras $(A_\k^R)_{\k \ge 0}$ over the generic ground ring satisfies all of the conditions  \eqref{diagram 1}--\eqref{diagram compatibility}.  This is explained in detail in \cite[Sections 5 and 6]{EG:2012}.
  \end{rmk}

 \section{A framework for cellularity of quotient algebras}  \label{section quotient framework}
 
 \newcommand{\BbbK}{{\mathbb K}}
 
As explained in the introduction, cellularity does not pass to quotients in general, but nevertheless we intend to show that cellularity does pass to the quotients of certain abstract diagram or tangle algebras acting on tensor space.   In this section, we will develop an axiomatic framework for this phenomenon.
In the remainder of the paper, this framework will be applied   to Brauer's centralizer algebras acting on orthogonal or symplectic tensor space.  
In 
\iftoggle{arxiv}
 {\myref{Appendix}  {appendix wba tensor space}    }
 {Appendix C in the arXiv version of this paper}
we show that the walled Brauer algebras acting on mixed tensor space can be treated in an identical fashion.  
   
\subsection{A setting for quotient towers}\label{subsection setting quotient towers}
 We consider a tower of cellular algebras $(A_\k)_{\k \ge 0}$ over  an integral domain $R$ satisfying the properties
\eqref{diagram 1}--\eqref{diagram 6}   of 
\myref{Section}{subsection diagram algebras}.
 In particular, for each $r$, we have the cellular basis $$\{ d_\mfs^* \M_\lambda d_\mft \suchthat  \lambda \in \widehat A_\k \text{ and } \mfs, \mft \in \Std_\k(\lambda) \} $$  of $A_\k$ from 
 \myref{Theorem}{theorem abstract Murphy basis},
and we write
$m_{\mfs  \mft}^\lambda =   d_\mfs^*  \M_\lambda d_\mft$.

 Suppose that $S$  is an integral domain with field of fractions $\BbbK$ and that $\pi : R \to S$ is a surjective ring homomorphism.
We consider the  specialization $A_\k^S = A_\k \otimes_R S$ of the algebras $A_\k$.
Let  $(Q_\k^\BbbK)_{\k \ge 0}$ be a  tower  of unital algebras 
 over $\BbbK$, with common identity, 
 and  with surjective 
 homomorphisms 
 $\phi_\k : A_\k^\BbbK  \to  Q_\k^\BbbK$.      We denote $\phi_\k(A_\k^S) \subseteq  Q_\k^\BbbK$ by $Q_\k^S$.  
  
 We suppose that the homomorphisms are consistent with the inclusions of algebras,  $\phi_{\k+1} \circ \iota = \iota \circ \phi_\k$,  where $\iota$ denotes both the inclusions $\iota: A_\k^\BbbK \to A_{\k+1}^\BbbK$  and $\iota:  Q_\k^\BbbK \to Q_{\k+1}^\BbbK$.    In particular, this implies  that $\ker(\phi_\k) \subseteq \ker(\phi_{\k+1})$.  Because of this, we will  usually just write $\phi$ instead of writing $\phi_\k$.

 \begin{defn}\label{defn quotient tower}
We say that $(Q_\k^S)_{\k \ge 0}$ is a {\sf quotient tower} of $(A_\k^S)_{\k \ge 0}$ if the following axioms hold.  
\begin{enumerate}[leftmargin=*,label=(Q\arabic{*}), ref=Q\arabic{*},  series = QuotientAlgebras,leftmargin=*]
\item  \label{quotient axiom 1} There is a distinguished subset $\widehat A_{\k, \perm}$ of ``permissible" points in $\widehat A_\k$.  
The point    
  $\varnothing \in \widehat A_0$ is permissible,  and for each $\k$ and permissible $\mu$ in $\widehat A_\k$,  there exists  at least one permissible $\nu$ in $\widehat A_{\k+1}$ with $\mu \to \nu$ in 
$\widehat A$,  and (for $\k \ge 1$)  at least one permissible $\lambda$ in $\widehat A_{\k -1}$ with $\lambda \to \mu$ in $\widehat A$. 
\end{enumerate}

A path $\mft \in \Std_r(\nu)$ will be called permissible if $\mft(k)$ is permissible for all $0\leq k \leq r$.    Write 
$\Std_{r, \perm}(\nu)$ for the set of permissible paths in $ \Std_r(\nu)$.

\begin{enumerate}[leftmargin=*,label=(Q\arabic{*}), ref=Q\arabic{*},  resume = QuotientAlgebras]
\item  \label{quotient axiom 2} If $\mft \in \Std_r(\nu)$ is not permissible, let  $1\leq k \le r$  be the first index such that  $\mu = \mft(k)$ is not permissible.   Then  there exist elements $\mathfrak{b}_\mu$ and $ \mathfrak{b}'_\mu$  in $A_k^S$  such that 
\begin{enumerate}
\item  $\M_\mu =  \mathfrak{b}_\mu - \mathfrak{b}'_\mu$.
\item  $\mathfrak{b}_\mu \in \ker(\phi)$.
\item  $\mathfrak b'_\mu \in  \M_\mu A_k^\BbbK \cap (A_k^S)^{\rhd \mu}$.
\end{enumerate}

\item  \label{quotient axiom 3}  With $\Kbar$ the algebraic closure of $\BbbK$, we have
$$
\dim_{\Kbar}(Q_r^{\Kbar}) =  
\smashoperator[r]{\sum_{\nu \in \widehat A_{r, \perm}}}\ 
   (\sharp \Std_{r, \perm}(\nu))^2.
$$
\end{enumerate}
\end{defn}

 \begin{rmk}(Some notation and terminology)    \label{remark on evaluable elements}
  Let $\mathfrak p = \ker(\pi)$, a prime ideal in $R$, and  let $R_{\mathfrak p} \subset \FF$ be the localization of $R$ at $\mathfrak p$.    Thus $R_{\mathfrak p}$  is a local ring with unique maximal ideal $\mathfrak p R_{\mathfrak p}$ and residue field $\BbbK$, and
 $\pi: R \to S$ extends to a surjective ring homomorphism $\pi : R_{\mathfrak p} \to \BbbK$.  
  We have surjective evaluation maps, also denoted $\pi$  
   from $A_s^{R_{\mathfrak p}}$ to $A_s^\BbbK$ given by 
 $
 \pi(\sum  \alpha_{\mfu  \mfv}^\alpha    m_{\mfu  \mfv}^\alpha)   = 
 \sum  \pi(\alpha_{\mfu  \mfv}^\alpha)    m_{\mfu  \mfv}^\alpha.
 $
 and from $\Delta_{A_s}^{R_{\mathfrak p}}(\lambda)$  to $\Delta_{A_s}^{\BbbK}(\lambda)$ given by 
 $
 \pi(\sum_\mft  \alpha_\mft m^\lambda_\mft) =  \sum \pi(\alpha_\mft)  m^\lambda_\mft
 $.
 We often refer to  $R_{\mathfrak p}$,  or $A_s^{R_{\mathfrak p}}$,  or  $\Delta_{A_s}^{R_{\mathfrak p}}(\lambda)$  as the set of {\em evaluable} elements (in $\FF$,  or   $A_s^{\FF}$, or  $\Delta_{A_s}^\FF$, respectively). 
\end{rmk}
  
\subsection{Cellular bases of quotient towers}  \label{section Cellular bases of quotient towers}
 We are now going to show that under the assumptions \eqref{quotient axiom 1}--\eqref{quotient axiom 3}, 
 the quotient algebras $Q_\k^S$ are cellular algebras with a cellular basis $\{\phi(d_\mfs^* \M_\lambda  d_\mft) \mid \lambda \in \widehat A_{\k, \perm} \text{ and } \sts,\stt\in \Std_{\k, \perm}(\lambda)\}$.      Furthermore, we will produce a cellular basis $\{\nn_{\sts \stt}^\lambda\}$ of $A_\k^S$, equivalent to the  cellular basis $\{m_{\sts \stt}^\lambda \}$,  with the
  properties that $\nn_{\sts \stt}^\lambda = m_{\sts \stt}^\lambda$ in case both $\sts$ and $\stt$ are 
  permissible, and $\nn_{\sts \stt}^\lambda \in \ker(\phi)$ otherwise.   In particular,  the set of $\nn_{\sts \stt}^\lambda$ 
  such that at least one of $\sts$ and $\stt$ is not permissible constitutes an $S$--basis of $\ker(\phi)$.

 \begin{lem}   \label{path basis dominance lemma}   Assume as in the discussion above that
 $S$ is an integral domain with field of fractions $\BbbK$ and that $\pi : R \to S$ is a surjective ring homomorphism.    Let  $0\leq s < r$,  $\mu \in \widehat A_s$,   and $x \in \M_\mu A_s^\BbbK \cap (A_s^S)^{\rhd \mu}$. 
 Let $\lambda \in \widehat A_r$ and suppose $\mft \in \Std_{s,r}(\lambda \setminus \mu)$.   Then there exist coefficients
 $\alpha_\mfz \in S$  such that
 $$
 u^*_{\mft} x  \equiv  \sum_\mfz  \alpha_\mfz  \M_\lambda d_\mfz  \quad \mod ((A_r^S)^{\rhd \lambda} \cap \M_\lambda A_r^S ),
 $$
 where the sum is over $\mfz \in \Std_r(\lambda)$ with $\mfz_{[s, r]} \rhd \mft$ and $\mfz(s) \rhd \mu$.
 \end{lem}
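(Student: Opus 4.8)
The statement is essentially a version of Proposition~\ref{path basis dominance theorem} (``\cite{BEG}, Proposition 3.9'') transported from the generic ring $R$ to the quotient ring $S$, but with the cruder error term $(A_r^S)^{\rhd\lambda}$ replaced by the refined error term $(A_r^S)^{\rhd\lambda}\cap \M_\lambda A_r^S$. The plan is to lift everything to the localization $R_{\mathfrak p}$, apply the already-established Proposition~\ref{path basis dominance theorem} there, and then control the error term by combining it with the factorization identity \eqref{eqn property of u t trunacation}, namely $u^*_{\mathfrak z_{[0,s]}}=\M_{\mathfrak z(s)}d_{\mathfrak z_{[0,s]}}$.

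First I would write $x=\M_\mu a$ for some $a\in A_s^\BbbK$ and, using the surjectivity of $\pi:A_s^{R_{\mathfrak p}}\to A_s^\BbbK$ (Remark~\ref{remark on evaluable elements}), choose an evaluable lift $\tilde a\in A_s^{R_{\mathfrak p}}$ of $a$, so $\tilde x=\M_\mu \tilde a\in\M_\mu A_s^{R_{\mathfrak p}}$ reduces to $x$ under $\pi$. Since $x\in (A_s^S)^{\rhd\mu}$, I can moreover arrange (after subtracting from $\tilde a$ a lift of an element of $(A_s^{\BbbK})^{\rhd\mu}$ expressed in the $u^*_{\mathsf s}$-basis, using the path basis of $A_s$) that $\tilde x\in (A_s^{R_{\mathfrak p}})^{\rhd\mu}$ as well; equivalently, write $\tilde x=\sum_{\mathsf s\in\Std_s(\mu)}\tilde\alpha_{\mathsf s}u^*_{\mathsf s}+\tilde y$ with $\tilde y\in (A_s^{R_{\mathfrak p}})^{\rhd\mu}$ and all $\tilde\alpha_{\mathsf s}\in\mathfrak p R_{\mathfrak p}$. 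Now apply Proposition~\ref{path basis dominance theorem} over $R_{\mathfrak p}$ to $u^*_{\mathsf t}\tilde x$: this gives
$$
u^*_{\mathsf t}\tilde x\equiv \sum_{\mathsf s\in\Std_s(\mu)}\tilde\alpha_{\mathsf s}\,u^*_{\mathsf t}u^*_{\mathsf s}+\sum_{\mathsf z}\tilde r_{\mathsf z}u^*_{\mathsf z}\quad\mod A_r^{R_{\mathfrak p},\rhd\lambda},
$$
the sum over $\mathsf z\in\Std_r(\lambda)$ with $\mathsf z_{[s,r]}\rhd\mathsf t$ and $\mathsf z(s)\rhd\mu$. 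The point of the refined $u^*$-representatives is that, by \eqref{eqn factorization of u t} and \eqref{eqn property of u t trunacation}, each $u^*_{\mathsf t}u^*_{\mathsf s}$ is itself of the form $\M_\lambda d_{\mathsf w}$ for the concatenated path $\mathsf w=\mathsf s\circ\mathsf t$ (note $\mathsf w(s)=\mu$), and each $u^*_{\mathsf z}=\M_\lambda d_{\mathsf z}$; so the right-hand side already lies in $\M_\lambda A_r^{R_{\mathfrak p}}$, i.e.\ in $\M_\lambda A_r^{R_{\mathfrak p}}+A_r^{R_{\mathfrak p},\rhd\lambda}$.

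Next I would upgrade the error control. The congruence above holds modulo $A_r^{R_{\mathfrak p},\rhd\lambda}$; I want it modulo $A_r^{R_{\mathfrak p},\rhd\lambda}\cap\M_\lambda A_r^{R_{\mathfrak p}}$. Since the left-hand side $u^*_{\mathsf t}\tilde x=u^*_{\mathsf t}\M_\mu\tilde a=\M_\lambda d_{\mathsf t}\tilde a\in\M_\lambda A_r^{R_{\mathfrak p}}$ (by \eqref{abstract branching compatibility for path}), and the displayed sum on the right is in $\M_\lambda A_r^{R_{\mathfrak p}}$, their difference lies in $A_r^{R_{\mathfrak p},\rhd\lambda}\cap\M_\lambda A_r^{R_{\mathfrak p}}$ — exactly the desired error term. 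Finally, apply $\pi$: $\pi(u^*_{\mathsf t}\tilde x)=u^*_{\mathsf t}x$, $\pi$ carries $\M_\lambda A_r^{R_{\mathfrak p}}$ onto $\M_\lambda A_r^S$ and $A_r^{R_{\mathfrak p},\rhd\lambda}$ onto $(A_r^S)^{\rhd\lambda}$, hence the intersection onto (something inside) $(A_r^S)^{\rhd\lambda}\cap\M_\lambda A_r^S$; set $\alpha_{\mathsf z}=\pi(\text{coefficient of }\M_\lambda d_{\mathsf z})\in S$, collecting the $\mathsf w=\mathsf s\circ\mathsf t$ terms and the $\mathsf z$ terms together (both satisfy $\mathsf z(s)\rhd\mu$ or $=\mu$; but since we arranged $\tilde\alpha_{\mathsf s}\in\mathfrak p R_{\mathfrak p}$ the $\mathsf w$-terms vanish under $\pi$, leaving only $\mathsf z$ with $\mathsf z(s)\rhd\mu$ and $\mathsf z_{[s,r]}\rhd\mathsf t$). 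This yields the claimed congruence over $S$.

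The main obstacle I anticipate is bookkeeping around the two error terms and making sure the refined modulus $(A_r^S)^{\rhd\lambda}\cap\M_\lambda A_r^S$ is genuinely respected rather than merely $(A_r^S)^{\rhd\lambda}$: this is why it is essential to verify that \emph{both} sides of the congruence from Proposition~\ref{path basis dominance theorem}, when applied to the specific element $u^*_{\mathsf t}\M_\mu\tilde a$, lie in $\M_\lambda A_r^{R_{\mathfrak p}}$, so that their difference automatically falls in the intersection. A secondary technical point is the choice of an evaluable lift $\tilde x\in(A_s^{R_{\mathfrak p}})^{\rhd\mu}$ of $x$ with coefficients $\tilde\alpha_{\mathsf s}$ in the maximal ideal $\mathfrak p R_{\mathfrak p}$ (so that the $u^*_{\mathsf t}u^*_{\mathsf s}$ contributions disappear after reduction); this uses that $x$ itself, viewed in $A_s^S$, lies in $(A_s^S)^{\rhd\mu}$, i.e.\ has zero $u^*_{\mathsf s}$-components over $S$, together with surjectivity of $\pi$ on both the ideal $(A_s^{R_{\mathfrak p}})^{\rhd\mu}$ and the span of the $u^*_{\mathsf s}$.
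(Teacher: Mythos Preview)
Your proposal is correct and follows essentially the same approach as the paper: lift $x$ to $\tilde x=\M_\mu\tilde a$ over $R_{\mathfrak p}$, apply Proposition~\ref{path basis dominance theorem} there, observe that the coefficients $\tilde\alpha_{\mathsf s}$ of the $u^*_{\mathsf t}u^*_{\mathsf s}$ terms lie in $\mathfrak p R_{\mathfrak p}$ (since $x\in(A_s^S)^{\rhd\mu}$), so these terms die under $\pi$, and finally upgrade the error term to $(A_r^S)^{\rhd\lambda}\cap\M_\lambda A_r^S$ using that $u^*_{\mathsf t}x=\M_\lambda d_{\mathsf t}a\in\M_\lambda A_r^S$. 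The only cosmetic difference is the order of operations in the last step: the paper reduces to $S$ first and then observes that the residual term, being the difference of two elements of $\M_\lambda A_r^S$, lies in the intersection; you instead establish the intersection over $R_{\mathfrak p}$ and then reduce (correctly noting that $\pi$ of the intersection lands inside the intersection of the images).
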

 
 \begin{proof}   We will apply 
 \myref{Proposition}{path basis dominance theorem},
 but we cannot do so directly.      Recall the notation from \myref{Remark}{remark on evaluable elements}.
  By hypothesis, $x  = \M_\mu \beta$,  where $\beta \in A_s^\BbbK$  and $x \in (A_s^S)^{\rhd \mu}$.  Lift $\beta$ to an element $\beta_0 \in A_s^{R_\mathfrak p}$ and let $x_0 = \M_\mu \beta_0$.    Since $x_0 \in \M_\mu  A_s^{R_\mathfrak p}$, we can write
 $$
 x_0 \equiv  
 \smashr{\sum_{\mfv \in \Std_s(\mu) }}\ 
  r_\mfv  \M_\mu  d_\mfv  \quad \mod (A_s^{R_{\mathfrak p}})^{\rhd \mu}.
 $$
 Since $\pi(x_0) = x \in (A_s^S)^{\rhd \mu}$ it follows that $\pi(r_\mfv) = 0$ for  all $\mfv\in \Std_s(\mu)$.   Now we can apply 
 \myref{Proposition}{path basis dominance theorem}
 to $x_0$, with $R$ replaced by $R_\mathfrak p$, which gives us
 $$
 u_\mft^* x_0 \equiv 
 \smashr{ \sum_{\mfv \in \Std_s(\mu) } }\ r_\mfv  \M_\lambda  d_\mft  d_\mfv  + 
 \smashr{\sum_{
 \begin{subarray}c
\mfz \in \Std_r(\lambda) \\ 
\mfz_{[s, r]} \rhd \mft \\
\mfz(s) \rhd \mu
 \end{subarray}
 } }\ 
  r'_{\mfz}  \M_\lambda d_{\mfz}   \quad \mod  (A_r^{R_\mathfrak p})^{\rhd \lambda},
 $$
   Applying the evaluation map $\pi$ and recalling that $\pi(r_\mfv) = 0$ gives
\begin{equation} \label{eqn triangularity lemma 3.3}
 u^*_{\mft} x   =   
\smashr{ \sum_{
 \begin{subarray}c
\mfz \in \Std_r(\lambda) \\ 
\mfz_{[s, r]} \rhd \mft \\
\mfz(s) \rhd \mu
 \end{subarray}
 } }\ 
 \alpha_\mfz  \M_\lambda d_\mfz    + z
 \end{equation}
 where $\alpha_\mfz \in \BbbK$ and $z \in (A_r^\BbbK)^{\rhd \lambda}$.  But since $ u^*_{\mft} x \in A_r^S$, we must have $\alpha_\mfz \in S$ and $z \in (A_r^S)^{\rhd \lambda}$.  Finally, since $u^*_\mft x \in \M_\lambda A_r^S$, it follows from 
\eqref{eqn triangularity lemma 3.3} that $z \in  \M_\lambda A_r^S$. 
 \end{proof}

 \newcommand{\f}{r}
 \begin{lem} \label{good paths lemma 1}  Assume \eqref{quotient axiom 1}--\eqref{quotient axiom 3}.
Let $\lambda \in \widehat A_\f$ and let $ \mft  \in \Std_\f(\lambda)$.  If $\mft$ is not  permissible,  then there exist coefficients  $r_\mfv \in  S$ such that
$$
\M_\lambda d_\mft = 
\smashr{
 \sum_{
\begin{subarray}c
\mfv \in \Std_{\f,\perm}(\la) \\
\mfv \succ \mft
\end{subarray}
} 
}\ 
r_\mfv \M_\lambda d_\mfv +  x_1 + x_2, 
$$
where   $x_1 \in \ker(\phi)$, and 
$x_2 \in (A_\f^S)^{\rhd \lambda} \cap \M_\lambda A_\f^S$.   Hence for all 
for all $\mfs \in  \Std_\f(\lambda)$,
$$
m_{\mfs \mft}^\lambda  \equiv 
\smashr{ \sum_{
\begin{subarray}c
\mfv \in \Std_{\f,\perm}(\nu) \\
\mfv \succ \mft
\end{subarray}
} } \ 
 r_\mfv m_{\mfs \mfv}^\lambda  \quad \mod ((A_\f^S)^{\rhd \lambda} + \ker(\phi)).
$$ 
\end{lem}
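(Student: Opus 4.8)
The plan is to work with the representative $u^*_\mft=\M_\lambda d_\mft$ of $m^\lambda_\mft$ supplied by \eqref{abstract branching compatibility for path 2}, and to combine the factorization \eqref{eqn factorization of u t}--\eqref{eqn property of u t trunacation} of $u^*_\mft$ with axiom \eqref{quotient axiom 2}. Let $k$ (with $1\le k\le r$) be the first index for which $\mu:=\mft(k)$ fails to be permissible. Using \eqref{eqn factorization of u t} and \eqref{eqn property of u t trunacation} I would write
\[
\M_\lambda d_\mft \;=\; u^*_\mft \;=\; u^*_{\mft_{[k,r]}}\,u^*_{\mft_{[0,k]}} \;=\; u^*_{\mft_{[k,r]}}\,\M_\mu\, d_{\mft_{[0,k]}},
\]
and then substitute the decomposition $\M_\mu=\mathfrak b_\mu-\mathfrak b'_\mu$ from \eqref{quotient axiom 2}. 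This writes $\M_\lambda d_\mft$ as the difference $u^*_{\mft_{[k,r]}}\mathfrak b_\mu d_{\mft_{[0,k]}}-u^*_{\mft_{[k,r]}}\mathfrak b'_\mu d_{\mft_{[0,k]}}$. The first summand lies in $\ker(\phi)$, since $\mathfrak b_\mu\in\ker(\phi)$, $\ker(\phi_k)\subseteq\ker(\phi_r)$, and $\ker(\phi)$ is a two-sided ideal of $A_r^S$; it will contribute to $x_1$.

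For the second summand put $x:=\mathfrak b'_\mu d_{\mft_{[0,k]}}$. Since $\mathfrak b'_\mu\in\M_\mu A_k^\BbbK\cap (A_k^S)^{\rhd\mu}$, $(A_k^S)^{\rhd\mu}$ is a two-sided ideal, and $d_{\mft_{[0,k]}}\in A_k$, we have $x\in\M_\mu A_k^\BbbK\cap (A_k^S)^{\rhd\mu}$, and $\mft_{[k,r]}\in\Std_{k,r}(\lambda\setminus\mu)$. Assuming for the moment $k<r$, I would apply \myref{Lemma}{path basis dominance lemma} to $u^*_{\mft_{[k,r]}}x$, obtaining $u^*_{\mft_{[k,r]}}x=\sum_\mfz \alpha_\mfz \M_\lambda d_\mfz+w$ with $\alpha_\mfz\in S$, $w\in(A_r^S)^{\rhd\lambda}\cap\M_\lambda A_r^S$, and the sum over $\mfz\in\Std_r(\lambda)$ with $\mfz_{[k,r]}\rhd\mft_{[k,r]}$ and $\mfz(k)\rhd\mu$. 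Since $\mfz(k)\rhd\mu=\mft(k)$ forces $\mfz\neq\mft$ and the largest index of disagreement of $\mfz$ and $\mft$ to be $\geq k$ (where $\mfz\rhd\mft$, by the first condition), each such $\mfz$ satisfies $\mfz\succ\mft$ in reverse lexicographic order. At this stage $\M_\lambda d_\mft$ is an $S$-combination of $\M_\lambda d_\mfz$ with $\mfz\succ\mft$, modulo $\ker(\phi)$ and modulo $(A_r^S)^{\rhd\lambda}\cap\M_\lambda A_r^S$.

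The $\mfz$ occurring here need not be permissible, so I would finish by induction on $\mft$ with respect to $\succ$, a strict partial order on the finite set $\Std_r(\lambda)$: each non-permissible $\mfz$ is treated by the same recipe, being replaced by terms $\M_\lambda d_{\mfz'}$ with $\mfz'\succ\mfz\succ\mft$ together with further contributions to $\ker(\phi)$ and to $(A_r^S)^{\rhd\lambda}\cap\M_\lambda A_r^S$; as both of these are $S$-submodules stable under addition and scaling, and $\succ$ admits no infinite ascending chains, after finitely many steps only permissible $\mfv\in\Std_{r,\perm}(\lambda)$ with $\mfv\succ\mft$ survive, with coefficients $r_\mfv\in S$. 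This yields the asserted identity $\M_\lambda d_\mft=\sum_\mfv r_\mfv\M_\lambda d_\mfv+x_1+x_2$. The closing congruence is then immediate on left-multiplying by $d_\mfs^*$: as $\ker(\phi)$ and $(A_r^S)^{\rhd\lambda}$ are two-sided ideals, $d_\mfs^*x_1\in\ker(\phi)$ and $d_\mfs^*x_2\in(A_r^S)^{\rhd\lambda}$, whence $m^\lambda_{\mfs\mft}\equiv\sum_\mfv r_\mfv m^\lambda_{\mfs\mfv}\pmod{(A_r^S)^{\rhd\lambda}+\ker(\phi)}$.

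The step requiring the most care is the boundary case $k=r$, i.e.\ $\lambda=\mft(r)$ itself not permissible, where \myref{Lemma}{path basis dominance lemma} does not apply verbatim (it is stated for $s<r$). Here $u^*_{\mft_{[r,r]}}=1$, the sum over permissible $\mfv\succ\mft$ is empty, and one is reduced to verifying directly that $x_2:=-\mathfrak b'_\lambda d_\mft\in(A_r^S)^{\rhd\lambda}\cap\M_\lambda A_r^S$. That $\mathfrak b'_\lambda d_\mft\in(A_r^S)^{\rhd\lambda}$ is clear from the ideal property; the substantive point — that an element of $(A_r^S)^{\rhd\lambda}$ which happens to lie in $\M_\lambda A_r^\BbbK$ in fact lies in $\M_\lambda A_r^S$ — is exactly the degenerate ($s=r$, empty-sum) instance of \myref{Lemma}{path basis dominance lemma}, proved by the same specialization/lifting argument over the evaluable ring $R_{\mathfrak p}$ of \myref{Remark}{remark on evaluable elements}. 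Apart from this, the only remaining work is the bookkeeping that keeps all coefficients in $S$ and confines the two kinds of error term to $\ker(\phi)$ and to $(A_r^S)^{\rhd\lambda}\cap\M_\lambda A_r^S$ throughout the induction.
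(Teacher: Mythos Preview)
Your proof is correct and follows essentially the same approach as the paper: factor $\M_\lambda d_\mft = u^*_{\mft_{[k,r]}}\M_\mu d_{\mft_{[0,k]}}$ via the compatibility relations, substitute $\M_\mu=\mathfrak b_\mu-\mathfrak b'_\mu$ from \eqref{quotient axiom 2}, put the $\mathfrak b_\mu$-term into $\ker(\phi)$, apply \myref{Lemma}{path basis dominance lemma} to the $\mathfrak b'_\mu$-term, and then induct on the reverse lexicographic order $\succ$. Your attention to the boundary case $k=r$ (where \myref{Lemma}{path basis dominance lemma} is not literally applicable and one must argue the degenerate instance directly) is more careful than the paper, which tacitly treats that case as falling under the same lemma.
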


\begin{proof}  Since $\mft$ is not permissible,  by assumption \eqref{quotient axiom 2}  there exists $0\leq k \le \f$ such that $\mu = \mft(k)$ satisfies the following:  there are elements $\mathfrak{b}_\mu$ and $\mathfrak{b}'_\mu$ in 
$A_k^S$ such that $\M_\mu = \mathfrak{b}_\mu - \mathfrak{b}'_\mu$,    $\mathfrak{b}_\mu \in \ker(\phi)$,  and
\begin{align}\label{nbmvcxzbnxmzbvncxmzvbxnczm}\mathfrak{b}'_\mu \in  \M_\mu A_k^\BbbK \cap (A_k^S)^{\rhd \mu}.
\end{align}
Write $\mft_1 = \mft_{[0, k]}$ and  $\mft_2 = \mft_{[k, \f]}$
Using the branching compatibility relation \eqref{abstract branching compatibility for path},
\begin{equation} \label{eqn good paths 1:1}
\M_\lambda d_\mft  =  \M_\lambda d_{\mft_2} d_{\mft_1 }= u^*_{\mft_2} \M_\mu d_{\mft_1}
= u^*_{\mft_2} \mathfrak{b}_\mu d_{\mft_1} - u^*_{\mft_2} \mathfrak{b}'_\mu d_{\mft_1}
\end{equation}
The first term $u^*_{\mft_2} \mathfrak{b}_\mu d_{\mft_1}$   in \eqref{eqn good paths 1:1}  lies in $\ker(\phi)$.  
Recall $\mathfrak{b}'_\mu d_{\mft_1} \in \M_\mu A_k^\BbbK \cap (A_k^S)^{\rhd \mu}$ by  \eqref{nbmvcxzbnxmzbvncxmzvbxnczm}, and so we can apply
\myref{Lemma}{path basis dominance lemma}
to conclude that the second term $- u^*_{\mft_2} \mathfrak{b}'_\mu d_{\mft_1}$  in \eqref{eqn good paths 1:1}  satisfies
\begin{equation} \label{eqn good paths 1:2}
- u^*_{\mft_2} \mathfrak{b}'_\mu d_{\mft_1} \equiv  
\smashr{ \sum_
 {
 \begin{subarray}c
 \mfv \in  \Std_r(\lambda)\\
 \mfv_{[k, \f]} \rhd \mft_2
 \end{subarray}
 }  }\ 
 \alpha_\mfv  \M_\lambda d_\mfv  \quad \mod ((A_\f^S)^{\rhd \lambda} \cap \M_\lambda A_\f^S),
\end{equation}
where $\alpha_\stv \in S$.  Note that the condition $\mfv_{[k, \f]} \rhd \mft_2$  implies  that  $\mfv \succ \mft$.  
This gives us
$$
\M_\lambda d_\mft  \equiv 
\smashr{\sum_{
\begin{subarray}c 
\stv \in \Std_r(\lambda)\\ 
\mfv \succ \mft		 
\end{subarray}
}  }\ 
\alpha_\mfv \M_\lambda d_\mfv  \quad \mod (\ker(\phi) +   (A_\f^S)^{\rhd \lambda} \cap \M_\lambda A_\f^S).  
$$
    By  induction on the ordering, $\succ$, on $\Std_\f(\nu)$  we obtain
$$
\M_\lambda d_\mft  \equiv 
\smashr{\sum_{
\begin{subarray}c 
\stv \in \Std_{r,\perm} (\lambda)\\ 
\mfv \succ \mft		 
\end{subarray}
}   }\ 
r_\mfv \M_\lambda d_\mfv  \quad \mod (\ker(\phi) +   (A_\f^S)^{\rhd \lambda} \cap \M_\lambda A_\f^S),
$$
where now the sum is over permissible  paths only.   This gives the first assertion in the statement of the lemma. 
Finally, multiplying on the left by $d_\mfs^*$  yields the second statement.
\end{proof}

We are now going to produce the cellular basis $\{\nn_{\sts \stt}^\lambda\}$ of $A_\k$, equivalent to the original cellular basis $\{m_{\sts  \stt}^\lambda\}$ with the properties that $\nn_{\sts  \stt}^\lambda = m_{\sts \stt}^\lambda$ in case both $\sts$ and $\stt$ are permissible, and $\nn_{\sts  \stt}^\lambda \in \ker(\phi)$ otherwise. 

Let $\stt \in \Std_\f(\lambda)$ be a non--permissible path.  Let $1 \le k \le \f$ be the first index such that $\mu = \stt(k)$ is not permissible. 
It follows from \eqref{quotient axiom 2} that the element $\mathfrak b_\mu$ is in $\ker(\phi) \cap \M_\mu A_k^\BbbK$, so there exists a $\beta_\mu \in A_k^\BbbK$ with $\mathfrak b_\mu = \M_\mu \beta_\mu$.    Let $\stt_1 = \stt_{[0,k]}$ and $\stt_2 = \stt_{[k, \f]}$.    Following the proof of 
\myref{Lemma}{good paths lemma 1},
and using in particular \eqref{eqn good paths 1:1} and \eqref{eqn good paths 1:2}, we get
\begin{equation} \label{eqn n-basis 1}
\M_\lambda d_\stt \equiv  u^*_{\stt_2} \mathfrak{b}_\mu d_{\stt_1} + 
\smashr{\sum_{
\begin{subarray}c
\stv \in \Std_\f(\lambda) \\ 
\stv \succ \stt
\end{subarray}} 
}\ 
\alpha_\stv \M_\lambda d_\stv \quad \mod (A_\f^S)^{\rhd \lambda},
\end{equation}
 for $\alpha_\stv \in S$.   Since  $\mathfrak b_\mu = \M_\mu \beta_\mu$, we have
$u^*_{\stt_2} \mathfrak{b}_\mu d_{\stt_1} = \M_\lambda d_{\stt_2} \beta_\mu d_{\stt_1}$, using \eqref{abstract branching compatibility for path}.    Substitute this into \eqref{eqn n-basis 1} and transpose to get 
\begin{equation} \label{eqn n-basis 2}
\M_\lambda d_{\stt_2} \beta_\mu d_{\stt_1} \equiv \M_\lambda d_\stt - 
\smashr{\sum_{
\begin{subarray}c
\stv \in \Std_\f(\lambda) \\ 
\stv \succ \stt
\end{subarray}}  
}\ 
\alpha_\stv \M_\lambda d_\stv \quad \mod (A_\f^S)^{\rhd \lambda}.
\end{equation}
Note that the left hand expression is in $\ker(\phi)$.  For any  non-permissible path  $\stt$, we define $a_\stt = d_{\stt_2} \beta_\mu d_{\stt_1} $
to be the element which we arrived at in \eqref{eqn n-basis 2}.
Although $a_\stt$  is {\em a priori}  in $A_\f^\BbbK$, \eqref{eqn n-basis 2} shows that $\M_\lambda a_\stt  \in   \M_\lambda A_r^S$.
Passing to the cell module $\cell r {} \lambda $,
 we have
\begin{equation}  \label{eqn n-basis 3}
\M_\lambda a_\stt  + (A_\f^S)^{\rhd \lambda} = m^\lambda_\stt - 
 \smashr{\sum_{
\begin{subarray}c
\stv \in \Std_\f(\lambda) \\ 
\stv \succ \stt
\end{subarray}}  
}\ 
\alpha_\stv  m^\lambda_\stv .  
\end{equation}
For $\stt \in \Std_\f(\lambda)$ permissible, define $a_\stt = d_\stt$.  For any $\stu, \stv \in \Std_\f(\lambda)$, permissible or not, define $\nn^\lambda_\stv = \M_\lambda a_\stv +  (A_\f^S)^{\rhd \lambda} $, 
and   $\nn^\lambda_{\stu  \stv} = a_\stu^* \M_\lambda a_\stv$. 
We remark that in all   examples, the elements 
$ \mathfrak b_\mu, \mathfrak b_\mu',$ and $\beta_\mu$ 
 will be explicitly described as elementary sums of Brauer-type diagrams.

\begin{thm}  \label{basis theorem}  
Assume \eqref{quotient axiom 1}--\eqref{quotient axiom 3}.
The set 
$$
\mathbb B_\f = 
\leftbrace \nn_{\mfs  \mft}^\lambda  \suchthat 
 \nn_{\mfs  \mft}^\lambda:=
a_\sts^* \M_\lambda a_\stt, 
 \lambda \in \widehat A_{\f } \text{ and }   
   \mfs, \mft \in \Std_{\f}(\lambda) \rightbrace 
$$
is a cellular basis of $A_\f^S$ equivalent to the original cellular basis.  It has the property that
$\nn_{\mfs  \mft}^\lambda  = m_{\mfs  \mft}^\lambda $ if both $\sts$ and $\stt$ are permissible and 
$\nn_{\mfs  \mft}^\lambda \in \ker(\phi)$ otherwise.
  \end{thm}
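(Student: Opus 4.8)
The main analytic work is already done in \myref{Lemma}{good paths lemma 1} and in the construction of the elements $a_\stt$ leading to \eqref{eqn n-basis 3}; the proof of the theorem then consists of assembling those facts. \textbf{First} I would fix $\lambda\in\widehat A_\f$ and show that $\{\nn^\lambda_\mft\mid \mft\in\Std_\f(\lambda)\}$ is an $S$-basis of the cell module $\cell\f{}\lambda$. If $\mft$ is permissible this is immediate, since $a_\mft=d_\mft$ gives $\nn^\lambda_\mft=\M_\lambda d_\mft+(A_\f^S)^{\rhd\lambda}=m^\lambda_\mft$. If $\mft$ is not permissible, then \eqref{eqn n-basis 3} reads $\nn^\lambda_\mft=m^\lambda_\mft-\sum_{\mfv\succ\mft}\alpha_\mfv m^\lambda_\mfv$ with all $\alpha_\mfv\in S$. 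Hence the transition matrix between the Murphy basis $\{m^\lambda_\mft\}$ and $\{\nn^\lambda_\mft\}$ is unitriangular with respect to the reverse lexicographic order $\succeq$ on the finite set $\Std_\f(\lambda)$, so it is invertible over $S$, and $\{\nn^\lambda_\mft\}$ is an $S$-basis of $\cell\f{}\lambda$.

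\textbf{Next} I would pass from the cell modules to the algebra. The essential input, recorded just after \eqref{eqn n-basis 2}, is that $\M_\lambda a_\mft\in\M_\lambda A_\f^S$ for every $\mft$. Choosing $c_\mft\in A_\f^S$ with $\M_\lambda c_\mft=\M_\lambda a_\mft$ and using $\M_\lambda=\M_\lambda^*$ (axiom \eqref{diagram symmetry}),
\[
\nn^\lambda_{\mfs\mft}=a_\mfs^*\M_\lambda a_\mft=(\M_\lambda a_\mfs)^*a_\mft=(\M_\lambda c_\mfs)^*a_\mft=c_\mfs^*\M_\lambda a_\mft=c_\mfs^*\M_\lambda c_\mft\in (A_\f^S)^{\unrhd\lambda}.
\]
Applying the fixed bimodule isomorphism $\alpha_\lambda\colon (A_\f^S)^{\unrhd\lambda}/(A_\f^S)^{\rhd\lambda}\to(\cell\f{}\lambda)^*\otimes_S\cell\f{}\lambda$, which sends $\M_\lambda+(A_\f^S)^{\rhd\lambda}$ to $\delta(\lambda)^*\otimes\delta(\lambda)$, one finds that $\nn^\lambda_{\mfs\mft}+(A_\f^S)^{\rhd\lambda}$ is carried to $(\nn^\lambda_\mfs)^*\otimes\nn^\lambda_\mft$. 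By the previous step these tensors form an $S$-basis of $(\cell\f{}\lambda)^*\otimes_S\cell\f{}\lambda$, so the cosets $\nn^\lambda_{\mfs\mft}+(A_\f^S)^{\rhd\lambda}$ form an $S$-basis of every layer $(A_\f^S)^{\unrhd\lambda}/(A_\f^S)^{\rhd\lambda}$. Running up the filtration of $A_\f^S$ by the ideals $(A_\f^S)^{\unrhd\lambda}$, with $\lambda$ ranging over the finite poset $\widehat A_\f$, then shows that $\mathbb B_\f$ is an $S$-basis of $A_\f^S$ which determines the same two-sided ideals $(A_\f^S)^{\unrhd\lambda}$ as the Murphy basis.

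\textbf{Then} I would check the cellular axioms, the equivalence, and the kernel property. The involution acts exactly: $(\nn^\lambda_{\mfs\mft})^*=(c_\mfs^*\M_\lambda c_\mft)^*=c_\mft^*\M_\lambda^* c_\mfs=\nn^\lambda_{\mft\mfs}$. For the right-action axiom, under $\alpha_\lambda$ the right $A_\f^S$-action on $(A_\f^S)^{\unrhd\lambda}/(A_\f^S)^{\rhd\lambda}$ corresponds to the action on the second tensor factor $\cell\f{}\lambda$; expanding the action of $a\in A_\f^S$ in the basis $\{\nn^\lambda_\mfv\}$ of $\cell\f{}\lambda$ and transporting back shows that $\nn^\lambda_{\mfs\mft}a$ is, modulo $(A_\f^S)^{\rhd\lambda}$, an $S$-combination of the $\nn^\lambda_{\mfs\mfv}$ with coefficients depending only on $a$ and $\mft$. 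Thus $\mathbb B_\f$ is a cellular basis, and since it has the same ideals and the same cell modules as the Murphy basis $\{m^\lambda_{\mfs\mft}\}$ it is equivalent to it. For the last assertion: if $\mfs$ and $\mft$ are both permissible then $\nn^\lambda_{\mfs\mft}=m^\lambda_{\mfs\mft}$, since $a_\mfs=d_\mfs$ and $a_\mft=d_\mft$. If $\mft$ is not permissible, take $k$, $\mu=\mft(k)$, $\mft_1=\mft_{[0,k]}$, $\mft_2=\mft_{[k,\f]}$ and $\beta_\mu$ as in the construction, so that $\mathfrak b_\mu=\M_\mu\beta_\mu\in\ker(\phi)$; then \eqref{abstract branching compatibility for path} gives
\[
\M_\lambda a_\mft=\M_\lambda d_{\mft_2}\beta_\mu d_{\mft_1}=u^*_{\mft_2}\M_\mu\beta_\mu d_{\mft_1}=u^*_{\mft_2}\mathfrak b_\mu d_{\mft_1}\in\ker(\phi),
\]
whence $\nn^\lambda_{\mfs\mft}=c_\mfs^*(\M_\lambda a_\mft)\in\ker(\phi)$. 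If $\mfs$ is not permissible (and $\mft$ is), then $\nn^\lambda_{\mfs\mft}=(\nn^\lambda_{\mft\mfs})^*\in\ker(\phi)$ by the previous case together with the $*$-invariance of $\ker(\phi)$.

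\textbf{The main obstacle} is to keep all coefficients over $S$ rather than over $\BbbK$. The crucial point $\M_\lambda a_\mft\in\M_\lambda A_\f^S$, used throughout, rests on the specialization argument in \myref{Lemma}{path basis dominance lemma}: one lifts to $A_s^{R_{\mathfrak p}}$, applies \myref{Proposition}{path basis dominance theorem} over $R_{\mathfrak p}$, then evaluates via $\pi$ and uses $\pi(r_\mfv)=0$, which holds because $\mathfrak b'_\mu\in(A_k^S)^{\rhd\mu}$. Beyond the axioms \eqref{quotient axiom 1}--\eqref{quotient axiom 3}, the only further ingredient is that $\ker(\phi)$ is $*$-invariant, which in the intended applications holds because $\phi$ intertwines the involution of $A_\f^\BbbK$ with the adjoint involution of the centralizer algebra $Q_\f^\BbbK$.
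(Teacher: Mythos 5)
Your proof is correct in substance and follows the same route as the paper: a unitriangular (with respect to $\succ$) change of basis on each cell module via \eqref{eqn n-basis 3}, followed by transporting the new cell-module bases through the bimodule isomorphisms $\alpha_\lambda$ to get an equivalent cellular basis of $A_\f^S$. The only real difference is that where the paper invokes \cite[Lemma 2.3]{MR3065998} to conclude cellularity from the identity $\alpha_\lambda(\nn^\lambda_{\mfs\mft}+(A_\f^S)^{\rhd\lambda})=(\nn^\lambda_\mfs)^*\otimes\nn^\lambda_\mft$, you verify the cellular axioms directly (filtration by the ideals $(A_\f^S)^{\unrhd\lambda}$, exact $*$-symmetry, right-action axiom). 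That is a perfectly good, self-contained substitute.

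One point needs repair. For the case where $\mfs$ is impermissible you write $\nn^\lambda_{\mfs\mft}=(\nn^\lambda_{\mft\mfs})^*$ and appeal to the $*$-invariance of $\ker(\phi)$. In this paper that invariance is \myref{Theorem}{good paths basis theorem}\ref{good paths 1}, and its proof there rests on the present theorem (via the basis $\kappa_r$ of $\ker(\phi_\f)$ and the identity $(\nn^\lambda_{\mfs\mft})^*=\nn^\lambda_{\mft\mfs}$), so invoking it here is circular within the paper's development; it is also not among the axioms \eqref{quotient axiom 1}--\eqref{quotient axiom 3}. The fix is to run your computation for $\M_\lambda a_\mft$ in mirror image: with $\mu=\mfs(k)$ the first impermissible point of $\mfs$ and $\mathfrak b_\mu=\M_\mu\beta_\mu$, one gets
$$
a_\mfs^*\M_\lambda \;=\; d_{\mfs_1}^*\beta_\mu^*\,d_{\mfs_2}^*\M_\lambda
\;=\; d_{\mfs_1}^*\beta_\mu^*\M_\mu\, u_{\mfs_2}
\;=\; d_{\mfs_1}^*\,\mathfrak b_\mu^*\, u_{\mfs_2},
$$
using $\M_\lambda=\M_\lambda^*$ and \eqref{abstract branching compatibility for path}. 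So the needed input is $\mathfrak b_\mu^*\in\ker(\phi)$, not the $*$-invariance of the whole kernel. This holds in every application treated in the paper because the elements $\mathfrak b_\mu$ (sums of Brauer diagrams, signed sums of walled Brauer diagrams, etc.) are themselves $*$-invariant, and it is evidently what the paper's ``evident from the construction'' is tacitly using; but you should state it as the hypothesis you need, rather than assuming $\ker(\phi)$ is already known to be $*$-closed.
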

 \begin{proof}
  Equation \eqref{eqn n-basis 3} shows that  $\{\nn^\lambda_\stv \suchthat \stv \in \Std_\f(\lambda)\}$  is related to the $S$--basis $\{m^\lambda_\stv \suchthat \stv \in \Std_\f(\lambda)\}$  of the cell module $\cell r S \lambda$ by a unitriangular transformation with coefficients in $S$, and therefore $\{\nn^\lambda_\stv \suchthat \stv \in \Std_\f(\lambda)\}$ is also an
 $S$--basis of the cell module.
 
For $\stu$ and $\stv$  arbitrary elements of $\Std_\f(\lambda)$  
  we have 
 $\alpha_\lambda(\nn^\lambda_{\stu  \stv}  + (A_\f^S)^{\rhd \lambda})  = (\nn^\lambda_\stu)^* \otimes \nn^\lambda_\stv$.  It follows from 
  \cite[Lemma 2.3]{MR3065998}  that  $\{\nn^\lambda_{\stu  \stv} \suchthat  \lambda \in \widehat A_\f, \stu, \stv \in \Std_\f(\lambda)\}$  is a cellular basis of $A_\f^S$ equivalent to the original cellular basis 
  $\{m^\lambda_{\stu  \stv} \}$.

   It is evident from the construction that
  $\nn_{\mfs  \mft}^\lambda  = m_{\mfs  \mft}^\lambda $ if both $\sts$ and $\stt$ are permissible and 
$\nn_{\mfs  \mft}^\lambda \in \ker(\phi)$ otherwise.
  \end{proof}

 \begin{defn} \label{marginal point}
Call  $\mu \in \widehat A_s$ a {\sf marginal point} if $\mu$ is not permissible and there exists a path
$\mft \in \Std_s(\mu)$ such that $\stt(k)$ is permissible for all $k < s$.
\end{defn}

\begin{thm}  \label{good paths basis theorem}  
Assume \eqref{quotient axiom 1}--\eqref{quotient axiom 3}. Then
\begin{enumerate}[leftmargin=*,label=(\arabic{*}), font=\normalfont, align=left, leftmargin=*]
\item \label{good paths 1}
 $\ker(\phi_r)$ is globally invariant under the involution $*$.  Hence one can define an algebra involution on  $Q_\f^S = \phi(A_\f^S)$  by $(\phi(a))^* = \phi(a^*)$.
\item   \label{good paths 2}
 The algebra   $Q_\f^S = \phi(A_\f^S)$ is a cellular algebra over $S$ with  cellular basis 
$$
\mathbb A_\f = \leftbrace \phi( m_{\mfs \mft}^\lambda) \suchthat  \lambda \in \widehat A_{\f, \perm} \text{ and }  \mfs, \mft \in \Std_{\f, \perm}(\lambda) \rightbrace .
$$
More precisely, the cell datum is the following: the   involution *  on $Q_r^S$ defined in part  {\rm (1)};   the partially ordered set  $(\widehat A_{\f, \perm}, \unrhd)$   of permissible points in $\widehat A_\f$;   for each $\lambda \in  \widehat A_{\f, \perm}$,   the index set $\Std_{\f, \perm}(\lambda)$ of permissible paths of shape $\lambda$; and finally the basis $\mathbb A_\f$.
\item    \label{good paths 3}
The set 
$$
 \kappa_r = 
\{\nn^\lambda_{\sts \stt} \suchthat  \lambda \in \widehat A_\f \text{ and } \sts  \text{ or  } \stt \text{ is not permissible}\}
$$
is an $S$--basis of $\ker(\phi_\f)$.   
\item   \label{good paths 4}
$\ker(\phi_\f)$ is the ideal $\mathfrak I_\f$  in $A_\f^S$  generated by the set 
of $\mathfrak b_\mu$,  where $\mu$ is a marginal point of $\widehat A_s$ for some $0 <s \le \f$.  
\end{enumerate}
 \end{thm}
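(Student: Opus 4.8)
The plan is to deduce all four parts from \myref{Theorem}{basis theorem}, which produces a cellular basis $\mathbb B_\f=\{\nn^\lambda_{\mfs\mft}\}$ of $A_\f^S$ equivalent to the Murphy basis, with $\nn^\lambda_{\mfs\mft}=m^\lambda_{\mfs\mft}$ when $\mfs,\mft$ are both permissible and $\nn^\lambda_{\mfs\mft}\in\ker(\phi)$ otherwise, together with the dimension hypothesis \eqref{quotient axiom 3}. I would treat parts \ref{good paths 3} and \ref{good paths 1} first. The inclusion $\Span_S(\kappa_\f)\subseteq\ker(\phi_\f)$ is immediate from \myref{Theorem}{basis theorem}. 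For the reverse inclusion, expand $a\in A_\f^S$ in the basis $\mathbb B_\f$ and apply $\phi$: only the terms indexed by permissible triples $(\lambda,\mfs,\mft)$ survive, and $\phi(\nn^\lambda_{\mfs\mft})=\phi(m^\lambda_{\mfs\mft})$ for such triples. The set $\{\phi(m^\lambda_{\mfs\mft})\mid\lambda\in\widehat A_{\f,\perm},\ \mfs,\mft\in\Std_{\f,\perm}(\lambda)\}$ spans $Q_\f^{\Kbar}$ over $\Kbar$ (because $\mathbb B_\f$ spans $A_\f^{\Kbar}$ and $\phi$ kills the non-permissible vectors), and by \eqref{quotient axiom 3} it has cardinality $\dim_{\Kbar}Q_\f^{\Kbar}$, so it is a $\Kbar$-basis of $Q_\f^{\Kbar}$ and in particular linearly independent over $S$; hence $\phi(a)=0$ forces $a\in\Span_S(\kappa_\f)$. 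This is part \ref{good paths 3}, and it exhibits $\ker(\phi_\f)$ as the span of a subset of $\mathbb B_\f$ symmetric under $\mfs\leftrightarrow\mft$; since $(\nn^\lambda_{\mfs\mft})^*=(a_\mfs^*\M_\lambda a_\mft)^*=a_\mft^*\M_\lambda a_\mfs=\nn^\lambda_{\mft\mfs}$ by \eqref{diagram symmetry}, this subset is $*$-stable, which gives part \ref{good paths 1} and the well-definedness of the induced involution on $Q_\f^S$.

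For part \ref{good paths 2}, the point to establish is that $\phi\big((A_\f^S)^{\rhd\lambda}\big)$ equals the $S$-span of $\{\phi(m^\mu_{\mfu\mfv})\mid\mu\rhd\lambda,\ \mfu,\mfv\text{ permissible}\}$: expand $(A_\f^S)^{\rhd\lambda}$ in the $\nn$-basis (legitimate since $\mathbb B_\f$ is equivalent to the Murphy basis, so it defines the same ideals) and note that $\phi$ kills the vectors indexed by non-permissible pairs while acting as $\phi\circ m$ on the rest. Granting this, apply $\phi$ to the straightening relation \eqref{r-act} for $\mathbb B_\f$ with $\mft$ permissible: the non-permissible paths $\mfv$ on the right-hand side contribute $0$, and one obtains the straightening axiom for $\mathbb A_\f$ with structure constants inherited from $\mathbb B_\f$, hence independent of $\mfs$. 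The cellular $*$-axiom follows at once from $(m^\lambda_{\mfs\mft})^*=m^\lambda_{\mft\mfs}$. That $\mathbb A_\f$ is an $S$-basis of $Q_\f^S$ was shown above, and $\Std_{\f,\perm}(\lambda)\neq\varnothing$ for permissible $\lambda$ by tracing a permissible path back to $\varnothing$ using \eqref{quotient axiom 1}; this completes the verification of the cell datum.

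Finally, part \ref{good paths 4}. Since $\mathfrak b_\mu\in\ker(\phi)$ by \eqref{quotient axiom 2} and $\phi_\f$ restricts to $\phi_s$ on $A_s^S$, every generator of $\mathfrak I_\f$ lies in $\ker(\phi_\f)$, so $\mathfrak I_\f\subseteq\ker(\phi_\f)$. For the reverse inclusion, by part \ref{good paths 3} it suffices to show $\nn^\lambda_{\mfs\mft}\in\mathfrak I_\f$ whenever $\mfs$ or $\mft$ is non-permissible. If $\mft$ is non-permissible with first non-permissible vertex $\mu=\mft(k)$, then $\mu$ is a marginal point (witnessed by $\mft_{[0,k]}$), and the construction preceding \myref{Theorem}{basis theorem} together with the compatibility relation \eqref{abstract branching compatibility for path} gives
\begin{equation*}
\M_\lambda a_\mft=\M_\lambda d_{\mft_{[k,\f]}}\,\beta_\mu\,d_{\mft_{[0,k]}}=u^*_{\mft_{[k,\f]}}\,\mathfrak b_\mu\,d_{\mft_{[0,k]}}\in A_\f^S\,\mathfrak b_\mu\,A_\f^S\subseteq\mathfrak I_\f,
\end{equation*}
so for $\mfs$ permissible, $\nn^\lambda_{\mfs\mft}=d_\mfs^*(\M_\lambda a_\mft)\in\mathfrak I_\f$; the case $\mfs$ non-permissible with $\mft$ permissible is reduced to the previous one via $(\nn^\lambda_{\mfs\mft})^*=\nn^\lambda_{\mft\mfs}$ and the $*$-invariance of $\mathfrak I_\f$.

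I expect the $*$-invariance of $\mathfrak I_\f$ — equivalently, $\mathfrak b_\mu^*\in\mathfrak I_\f$ for every marginal $\mu$ — to be the principal obstacle, since it is not formal: starting from $\mathfrak b_\mu-\mathfrak b_\mu^*=\mathfrak b'_\mu-(\mathfrak b'_\mu)^*$ (using $\M_\mu^*=\M_\mu$) one must control $\mathfrak b'_\mu\in\M_\mu A_k^{\BbbK}\cap(A_k^S)^{\rhd\mu}$ with the help of \myref{Lemma}{path basis dominance lemma} and part \ref{good paths 3} at level $k$, presumably by an induction over the levels $s\leq\f$ and over the dominance order. A related bookkeeping nuisance is that for non-permissible $\mft$ the elements $a_\mft$ a priori lie in $A_\f^{\BbbK}$ rather than $A_\f^S$, so one has to confirm that the products in question land in the $S$-ideal $\mathfrak I_\f$ rather than merely in $\mathfrak I_\f\otimes_S\BbbK$; on the $\mft$-side this is precisely what the displayed identity $\M_\lambda a_\mft=u^*_{\mft_{[k,\f]}}\mathfrak b_\mu d_{\mft_{[0,k]}}$ secures, and the matching control on the $\mfs$-side is exactly where the $*$-invariance input is needed.
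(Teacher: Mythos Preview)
Your argument for parts \ref{good paths 1}--\ref{good paths 3} is essentially the paper's: show $\mathbb A_\f$ is $\Kbar$-independent via the dimension count \eqref{quotient axiom 3}, deduce that $\kappa_\f$ is an $S$-basis of $\ker(\phi_\f)$, note that $\kappa_\f$ is $*$-stable since $(\nn^\lambda_{\mfs\mft})^*=\nn^\lambda_{\mft\mfs}$, and push the cellular axioms through $\phi$ using that $\mathbb B_\f$ is equivalent to the Murphy basis.

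For part \ref{good paths 4} the paper uses the same sandwich you propose but is terser: it simply asserts that ``by construction of the basis $\mathbb B_\f$'' one has $\kappa_\f\subseteq\mathfrak I_\f\subseteq\ker(\phi_\f)$, and then invokes part \ref{good paths 3}.  The paper does not unpack the first inclusion.  Your explicit verification is more careful, and the obstacle you isolate---needing $\mathfrak I_\f$ to be $*$-invariant, together with the $S$ versus $\BbbK$ bookkeeping when a left factor $a_\mfs$ lies only in $A_\f^\BbbK$---is genuine in the abstract framework.  (Your case split also does not explicitly treat the case where \emph{both} $\mfs$ and $\mft$ are non-permissible; there $\nn^\lambda_{\mfs\mft}=a_\mfs^*\,u^*_{\mft_{[k,\f]}}\,\mathfrak b_\mu\,d_{\mft_{[0,k]}}$ has left factor $a_\mfs^*u^*_{\mft_{[k,\f]}}\in A_\f^\BbbK$, so the same issue arises.)  In all of the paper's applications the elements $\mathfrak b_\mu$ are visibly $*$-invariant (sums of diagrams closed under top--bottom reflection), so $\mathfrak I_\f^*=\mathfrak I_\f$ and, moreover, in the orthogonal and walled cases one even has $\beta_\mu\in A_k^S$; either fact dissolves the difficulty.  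This is presumably what the authors have in mind with ``by construction'', but in the bare axiomatic setting of \myref{Section}{section quotient framework} the point you flag is exactly where the paper's one-line justification is thin.
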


\begin{proof}  
Since $\mathbb B_\f$ is a basis of $A_\f^S$, by 
\myref{Theorem}{basis theorem},
it follows that 
$\phi(\mathbb B_\f)$ spans $Q_\f^S = \phi(A_\f^S)$ over $S$.  But
$\phi(\nn^\lambda_{\sts \stt} )  = \phi(m^\lambda_{\sts \stt} )$ if both $\sts$ and $\stt$ are permissible, and 
$\phi(\nn^\lambda_{\sts \stt} )  = 0$ otherwise.   It follows that $\mathbb A_\f$ spans 
$Q_\f^S $ over $S$, hence spans $Q_\f^{\Kbar} $ over $\Kbar$.  Since by assumption 
  \eqref{quotient axiom 3},  $\dim_{\Kbar}(Q_\f^{\Kbar}) = \sharp (\mathbb A_\f),$ it follows that $\mathbb A_\f$ is linearly independent over $\Kbar$.    Thus $\mathbb A_\f$ is an $S$--basis  of $Q_r^S$.  
 
The $S$--span of  $\kappa_r$  is contained in $\ker(\phi_r)$ by  
\myref{Theorem}{basis theorem}.   
On the other hand, it  follows from the linear independence of $\mathbb A_\f$ that  $\ker(\phi_\f)$ has trivial intersection with the $S$--span of $\{\nn^\lambda_{\sts \stt} \suchthat \lambda \in \widehat A_\f \text{ and } \sts, \stt \in \Std_{\f, \perm}(\lambda)\}$.   It follows from this that  
 $\kappa_r$ 
spans, and hence is a basis of, $\ker(\phi_\f)$. 

The cellular basis $\mathbb B_r$ of $A_r^S$ satisfies $ (\widetilde m_{\sts\stt}^\la )^\ast = \widetilde m_{\stt\sts}^\la$, and it follows that 
$\ker(\phi_r)$, namely  the  $S$-span of $\kappa_r$,   is globally invariant under $*$.       Hence one can define an algebra involution on  $Q_\f^S = \phi(A_\f^S)$  by $(\phi(a))^* = \phi(a^*)$.

So far, we have proved points  \ref{good paths 1} and  \ref{good paths 3}, and shown that  $\mathbb A_r$ is an $S$--basis of $Q_r^S$. Next we check that $\mathbb A_r$ is a cellular basis of $Q_r^S$,  by appealing to  \myref{Theorem}{basis theorem}. 
   For $\lambda \in \widehat A_{\f, \perm}$ and
$\mfs, \mft \in \Std_{\f, \perm}(\lambda)$, and for $a \in  A_\f^S$, we have by cellularity of $A_\f^S$  with respect to the basis   $\mathbb B_r$,
$$
\nn_{\mfs  \mft}^\lambda a \equiv 
\smashr{\sum_{\mfv \in \Std_\f(\lambda)} } \ 
 r_\mfv  \nn_{\mfs  \mfv}^\lambda  \quad \mod (A_\f^S)^{\rhd \lambda},
$$
where the coefficients are in $S$ and independent of $\mfs$, and the sum goes over all $\stv \in \Std_\f(\lambda)$.   When we apply $\phi$, only those terms with permissible $\stv$ survive:
$$
\phi(m_{\mfs  \mft}^\lambda) \phi( a)  \equiv 
\smashr{\sum_{\mfv \in \Std_{\f, \perm}(\lambda)} } \ 
 r_\mfv  \phi(m_{\mfs  \mfv}^\lambda)  \quad \mod  \phi((A_\f^S)^{\rhd \lambda}).
$$
Again by \myref{Theorem}{basis theorem},
 we have
$\phi((A_\f^S)^{\rhd \lambda}) = (Q_\f^S)^{\rhd \lambda}$.  This verifies the multiplication axiom for a cellular basis.  The involution axiom is easily verified using part \ref{good paths 1}, namely
$ 
\phi(m_{\mfs \mft}^\lambda) ^* = \phi({(m_{\mfs \mft}^\lambda)}^*)  =
\phi(m_{\mft  \mfs}^\lambda).  
$   This completes the proof of part \ref{good paths 2}.

It remains to check part \ref{good paths 4}.   By construction of the basis $\mathbb B_\f$, we have that
$$ 
\kappa_r \subseteq \mathfrak I_\f \subseteq \ker(\phi_\f).$$  Therefore it follows from part \ref{good paths 3} that
 $\ker(\phi_\f) = \mathfrak I_\f$.  
\end{proof}
  
 Since $Q_r^S$ is a quotient of $A_r^S$, in particular its cell modules are $A_r^S$--modules.  We observe that the cell modules of  $Q_r^S$  are quotients of  cell modules of $A_r^S$, when regarded as $A_r^S$--modules. 
    
\begin{cor}   \label{good paths cor}
Assume \eqref{quotient axiom 1}--\eqref{quotient axiom 3}. Then
\begin{enumerate}[leftmargin=*,label=(\arabic{*}), font=\normalfont, align=left, leftmargin=*]
\item \label{good paths 5}  For $\lambda$  a permissible point in $\widehat A_r$, 
$\kappa(\lambda) = \Span_S\{ \nn^\lambda_\stt \suchthat \stt \text{ is not permissible} \}$  is an $A_r^S$--submodule of the cell module $\cell {A_r} S \lambda$, and   $\cell {Q_r} S \lambda \cong \cell {A_r} S \lambda/ \kappa(\lambda)$ as $A_r$--modules.
\item  \label{good paths 6}  Assume $Q_r^\BbbK$ is split semisimple and $\lambda$ is a permissible point in $\widehat A_r$.  Then $\kappa(\lambda) \otimes_S K = \Span_\BbbK\{ \nn^\lambda_\stt \suchthat \stt \text{ is not permissible} \}$  is the radical of $\cell {A_r} \BbbK \lambda$.
\end{enumerate}
\end{cor}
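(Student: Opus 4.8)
The plan is to read off both assertions from the cellular structure already established in \myref{Theorem}{basis theorem} and \myref{Theorem}{good paths basis theorem}. I work throughout with the basis $\{\nn^\lambda_\stt \mid \stt \in \Std_r(\lambda)\}$ of $\cell {A_r} S \lambda$ coming from \myref{Theorem}{basis theorem}, recalling that $\nn^\lambda_\stt = m^\lambda_\stt$ whenever $\stt$ is permissible, and that by \eqref{quotient axiom 1} the set $\Std_{r,\perm}(\lambda)$ is nonempty (choose permissible predecessors from $\lambda$ down to $\varnothing$); fix one permissible path $\sts_0 \in \Std_{r,\perm}(\lambda)$.

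For part~\ref{good paths 5}, I would first exhibit $\kappa(\lambda)$ as an intersection of submodules. Realise $\cell {A_r} S \lambda$ concretely as the right submodule $V = \Span_S\{\nn^\lambda_{\sts_0\stt} + (A_r^S)^{\rhd\lambda} \mid \stt \in \Std_r(\lambda)\}$ of $(A_r^S)^{\unrhd\lambda}/(A_r^S)^{\rhd\lambda}$, via $\nn^\lambda_\stt \leftrightarrow \nn^\lambda_{\sts_0\stt} + (A_r^S)^{\rhd\lambda}$. Since $\ker(\phi_r)$, $(A_r^S)^{\unrhd\lambda}$ and $(A_r^S)^{\rhd\lambda}$ are two-sided ideals, the image $J$ of $\ker(\phi_r)$ in $(A_r^S)^{\unrhd\lambda}/(A_r^S)^{\rhd\lambda}$ is a (two-sided, in particular right) submodule. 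Using that $\kappa_r$ is an $S$-basis of $\ker(\phi_r)$ (\myref{Theorem}{good paths basis theorem}, part~\ref{good paths 3}), that $\{\nn^\lambda_{\stu\stv}+(A_r^S)^{\rhd\lambda}\}_{\stu,\stv \in \Std_r(\lambda)}$ is an $S$-basis of $(A_r^S)^{\unrhd\lambda}/(A_r^S)^{\rhd\lambda}$, and that the span of a subset of a basis meets the span of another subset in the span of their intersection, one finds $J = \Span_S\{\nn^\lambda_{\stu\stv}+(A_r^S)^{\rhd\lambda}\mid \stu\text{ or }\stv\text{ not permissible}\}$ and hence $V \cap J = \Span_S\{\nn^\lambda_{\sts_0\stt} + (A_r^S)^{\rhd\lambda} \mid \stt \text{ not permissible}\}$, which is exactly $\kappa(\lambda)$ under $V \cong \cell {A_r} S \lambda$ (here $\sts_0$ permissible is used). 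So $\kappa(\lambda)$ is an $A_r^S$-submodule. The quotient $\cell {A_r} S \lambda/\kappa(\lambda)$ then has $S$-basis $\{m^\lambda_\stt + \kappa(\lambda) \mid \stt \text{ permissible}\}$, and I would define the comparison map to $\cell {Q_r} S \lambda$ by $m^\lambda_\stt + \kappa(\lambda) \mapsto \phi(m^\lambda_\stt)$, which is a bijection of bases; it is $A_r^S$-linear because applying $\phi$ to the cellular action $\nn^\lambda_\stt a \equiv \sum_{\stv \in \Std_r(\lambda)} r_\stv\, \nn^\lambda_\stv$ kills the non-permissible summands and reproduces, with the same coefficients $r_\stv$, the cellular action of $\phi(a)$ on $\cell {Q_r} S \lambda$ --- this is precisely the computation in the proof of \myref{Theorem}{good paths basis theorem}, part~\ref{good paths 2}.

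For part~\ref{good paths 6}, I would compute the Gram matrix of the bilinear form on $\cell {A_r} \BbbK \lambda$ in the $\nn$-basis, starting from $\nn^\lambda_{\sts_0\stt}\,\nn^\lambda_{\stu\sts_0} \equiv \langle \nn^\lambda_\stt, \nn^\lambda_\stu\rangle\, \nn^\lambda_{\sts_0\sts_0} \pmod{(A_r^S)^{\rhd\lambda}}$. If $\stt$ is not permissible then $\nn^\lambda_{\sts_0\stt} \in \ker(\phi_r)$, so applying $\phi$ and using $\phi((A_r^S)^{\rhd\lambda}) = (Q_r^S)^{\rhd\lambda}$ together with the fact that $\phi(m^\lambda_{\sts_0\sts_0})$ is a cellular basis element of $Q_r^S$ outside $(Q_r^S)^{\rhd\lambda}$ forces $\langle \nn^\lambda_\stt, \nn^\lambda_\stu\rangle = 0$; by symmetry of the form the same holds if $\stu$ is not permissible. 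For $\stt,\stu$ both permissible the same computation instead identifies $\langle \nn^\lambda_\stt, \nn^\lambda_\stu\rangle$ with the Gram entry $\langle \phi(m^\lambda_\stt), \phi(m^\lambda_\stu)\rangle$ of $\cell {Q_r} \BbbK \lambda$. Thus, listing permissible paths first, the Gram matrix of $\cell {A_r} \BbbK \lambda$ is block-diagonal with diagonal blocks the Gram matrix $G_Q$ of $\cell {Q_r} \BbbK \lambda$ and a zero block indexed by the non-permissible paths. Since $Q_r^\BbbK$ is split semisimple, the bilinear form on each of its cell modules is non-degenerate, so $G_Q$ is invertible; hence $\rad \cell {A_r} \BbbK \lambda$, the kernel of the Gram matrix, is the $\BbbK$-span of the non-permissible $\nn^\lambda_\stt$, which is $\kappa(\lambda) \otimes_S \BbbK$ (realised inside $\cell {A_r} \BbbK \lambda$ because $\kappa(\lambda)$ is spanned by part of an $S$-basis, and the Gram matrix over $\BbbK$ is the base change of the one over $S$).

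The step I expect to require the most care is checking that the cellular structure constants of $Q_r^S$ agree on permissible paths with those of $A_r^S$ --- this is what makes the comparison map in part~\ref{good paths 5} a genuine module homomorphism --- but it is essentially already recorded in the proof of \myref{Theorem}{good paths basis theorem}. The remaining ingredients (the linear algebra of spans of basis subsets, and base change of Gram matrices to the fraction field) are routine.
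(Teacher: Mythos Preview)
Your proposal is correct. Both parts take a somewhat different route from the paper, so a brief comparison is in order.

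For part~\ref{good paths 5}, the paper argues more directly: since $\phi$ carries $(A_r^S)^{\unrhd\lambda}$ onto $(Q_r^S)^{\unrhd\lambda}$ and $(A_r^S)^{\rhd\lambda}$ onto $(Q_r^S)^{\rhd\lambda}$, it induces a bimodule map on the subquotients; fixing a permissible $\sts$ and restricting to the $\sts$-row gives a right $A_r^S$-module surjection $\cell{A_r}{S}{\lambda}\to\cell{Q_r}{S}{\lambda}$ whose kernel is read off from the bases as $\kappa(\lambda)$. This avoids your separate intersection-of-submodules step, though your argument is equally valid and perhaps makes the submodule property of $\kappa(\lambda)$ more transparent on its own.

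For part~\ref{good paths 6}, the paper's argument is more structural: having established the surjection over $\BbbK$ with kernel $\kappa(\lambda)\otimes_S\BbbK$, it simply observes that split semisimplicity of $Q_r^\BbbK$ forces $\cell{Q_r}{\BbbK}{\lambda}$ to be simple, so $\kappa(\lambda)\otimes_S\BbbK$ is a maximal submodule, and then invokes the standard cellular fact that the radical of the bilinear form is the \emph{unique} maximal submodule of a cell module. Your explicit block Gram-matrix computation bypasses that quoted fact, deriving the radical directly from the invertibility of the $Q$-Gram matrix; this is slightly longer but more self-contained, and has the pleasant side effect of showing at once that the bilinear forms on $\cell{A_r}{\BbbK}{\lambda}$ and $\cell{Q_r}{\BbbK}{\lambda}$ agree on permissible paths.
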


\begin{proof}    By \myref{Theorem}{basis theorem},
 we have
$\phi((A_\f^S)^{\unrhd \lambda}) = (Q_\f^S)^{\unrhd \lambda}$ and $\phi((A_\f^S)^{\rhd \lambda}) = (Q_\f^S)^{\rhd \lambda}$, so $\phi$ induces an $A_r^S$--$A_r^S$ bimodule homomorphism
$(A_\f^S)^{\unrhd \lambda}/(A_\f^S)^{\rhd \lambda} \to (Q_\f^S)^{\unrhd \lambda}/(Q_\f^S)^{\rhd \lambda}$.
For a fixed permissible $\sts \in \Std_{r, \perm}(\lambda)$, 
$$
\widetilde m^\lambda_{\sts \stt} + (A_\f^S)^{\rhd \lambda} \mapsto \phi(\widetilde m^\lambda_{\sts \stt}) + (Q_\f^S)^{\rhd \lambda} 
$$
 defines   a right  $A_r$--module homomorphism from $\cell {A_r} S \lambda$ to $\cell {Q_r} S \lambda$, with kernel $\kappa(\lambda)$.     It follows that
$\cell {Q_r} \BbbK \lambda \cong \cell {A_r} \BbbK \lambda/ (\kappa(\lambda) \otimes_S \BbbK)$.   If 
$Q_r^\BbbK$ is split semisimple, then its cell modules are simple, so $\cell {Q_r} \BbbK \lambda$ is the simple head of $\cell {A_r} \BbbK \lambda$ and $\kappa(\lambda) \otimes_S K$ is the radical.
\end{proof}  

Let us review what we have accomplished here, with a view towards our applications in   
\myref{Sections}{section Brauer symplectic} and \myrefnospace{}{section Brauer orthogonal}.
Suppose we have a tower $(A^R_\k)_{\k \ge 0}$ of diagram algebras, satisfying axioms \eqref{diagram 1} to \eqref{diagram 6} of 
\myref{Section}{subsection diagram algebras}.
and specializations $A_\k^S$ together with maps $\phi_\k: A_\k^S \to Q_\k^S$ which satisfy the conditions of  \myref{Definition}{defn quotient tower}.
Then we can produce all of the following:
\begin{enumerate}
\item A modified Murphy basis $\{\widetilde m_{\sts \stt}^\lambda\}$  of each of the algebras $A_\k^S$ which is equivalent to the basis $\{ m_{\sts \stt}^\lambda\}$ of 
\myref{Theorem} {theorem abstract Murphy basis}.
If $\sts$ and $\stt$ are both permissible, then 
$\widetilde m_{\sts \stt}^\lambda=  m_{\sts \stt}^\lambda$.  However, if either  $\sts$ or $\stt$ is impermissible, then $\widetilde m_{\sts \stt}^\lambda $ belongs to the  kernel of $\phi_\k$.  
\item An $S$-linear basis of $\ker(\phi_\k)$ consisting of  those $\widetilde m_{\sts \stt}^\lambda$ with at least one of $\sts$ or $\stt$ not permissible.
\item A (small) generating set for $\ker(\phi_\k)$ as an ideal in $A_\k^S$.
\item A cellular basis of  $Q_\k^S$  consisting of the image under $\phi_\k$ of Murphy basis elements
$m_{\sts \stt}^\lambda$ such that both $\sts$ and $\stt$ are permissible.
\end{enumerate}

In the applications,  points (2) and (3) of this list are two different versions of a second fundamental theorem of invariant theory, while point (4) shows that the classical centralizer algebras -- Brauer's centralizer algebras on orthogonal or symplectic tensor space, or the image of the  
walled Brauer algebras on mixed tensor space -- are cellular algebras over the integers. 

 \section{Supplements on quotient towers}
 In this section, we provide some supplementary material on quotient towers.  This material is not strictly needed to appreciate the applications in the subsequent sections, so it could be safely skipped on the first reading.
  
 \subsection{Quotient towers are themselves towers of diagram algebras}

 In this section, we show that the tower  $(Q_\k^S)_{\k \ge 0}$  is  restriction coherent, and that the $d$--branching factors associated to restrictions of cell modules in this tower are just those obtained by applying $\phi$ to the $d$--branching factors of the tower $(A_\k^S)_{\k \ge 0}$.  It follows  that the 
 tower  $(Q_\k^S)_{\k \ge 0}$ satisfies all of the axioms 
 \eqref{diagram 1} to \eqref{diagram 6} of 
 \myref{Section}{subsection diagram algebras},
 with the possible exception of axiom   \eqref{diagram semisimplicity}.   If we assume that the quotient algebras
 $Q_\k^\BbbK$ are split semisimple -- and this will be valid in our applications -- then all the consequences of  
  \eqref{diagram 1} to \eqref{diagram 6} are available to us;  see 
  \myref{Section}{section some consequences from BEG} 
 and ~\cite{BEG}.

First we demonstrate that the tower of cellular algebras $(Q_\k^S)_{\k\ge 0}$ is restriction coherent.   
  We write $Q_k$ for $Q_k^S$.   Write $\widehat Q_\k$ for $\widehat A_{\k, \perm}$.    We have the branching diagram
$\widehat Q = \bigsqcup_\k \widehat Q_\k$, with the branching rule $\lambda \to \mu$ for $\lambda  \in \widehat{Q}_{r-1}$ and $\mu    \in \widehat{Q}_r$ if and only if $\lambda \to \mu$ in $\widehat A$.   For $\nu \in \widehat Q_\k$,  the set of $\mu \in \widehat  Q_{\k-1}$ such that
$\mu \to \nu$ is totally ordered, because it is a subset of  the set of $\mu \in \widehat  A_{\k-1}$ such that
$\mu \to \nu$.  For $\nu \in \widehat Q_\k$ let $\Delta_{Q_\k}(\nu)$ denote the corresponding cell module of $Q_\k$,
$$
 \Delta_{Q_\k}(\nu) = \Span_R \leftbrace  \phi(\M_\nu d_\mft) +  Q_\k^{\rhd \nu} \suchthat  \mft \in \Std_{\k, \perm}(\nu)             \rightbrace.
$$

\begin{lem} \label{coherence of Q tower 1}    Let  $r \ge 1$, let $\nu \in \widehat Q_r$ and $\mu \in \widehat Q_{r-1}$ with $\mu \to \nu$.   Let $u = u_{\mu \to \nu}$.
Let $x \in \phi(\M_\mu) Q_{r-1} \cap Q_{r-1}^{\rhd \mu}$.   Then
$$
\phi(u^*) x \equiv \sum_\mfz  r_\mfz \phi(\M_\nu d_\mfz)  \mod Q_r^{\rhd \nu},
$$
where the sum is over $\mfz \in \Std_{r, \perm}(\nu)$ such that $\mfz(r-1) \rhd \mu$.  
\end{lem}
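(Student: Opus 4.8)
The plan is to deduce this from \myref{Lemma}{path basis dominance lemma}, which is the corresponding triangularity statement inside the tower $(A_\k^S)_{\k \ge 0}$ itself. The only real work is to lift $x$ along $\phi$ to an element $x'$ of $A_{r-1}^S$ that lies simultaneously in $\M_\mu A_{r-1}^\BbbK$ and in $(A_{r-1}^S)^{\rhd \mu}$ and satisfies $\phi(x') = x$; after that, everything is bookkeeping with $\phi$ and the two partial orders $\rhd$ and $\succ$.

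To produce the lift, first use surjectivity of $\phi$ together with $x \in \phi(\M_\mu) Q_{r-1}$ to pick $\beta \in A_{r-1}^S$ with $x = \phi(\M_\mu \beta)$, and set $y = \M_\mu \beta \in \M_\mu A_{r-1}^S$. Expanding $y + (A_{r-1}^S)^{\rhd \mu} \in \cell {A_{r-1}} {S} {\mu}$ in the modified Murphy basis $\{\nn^\mu_\mfv \suchthat \mfv \in \Std_{r-1}(\mu)\}$ of the cell module supplied by \myref{Theorem}{basis theorem}, write $y + (A_{r-1}^S)^{\rhd \mu} = \sum_\mfv c_\mfv \nn^\mu_\mfv$ with $c_\mfv \in S$. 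Now apply the $\phi$--induced quotient map $\cell {A_{r-1}} {S} {\mu} \to \cell {Q_{r-1}} {S} {\mu}$ of \myref{Corollary}{good paths cor}: the impermissible $\nn^\mu_\mfv$ are killed, the permissible ones go to the cellular basis of $\cell {Q_{r-1}} {S} {\mu}$, and the image of $y$ is $x + Q_{r-1}^{\rhd \mu}$, which vanishes because $x \in Q_{r-1}^{\rhd \mu}$. Hence $c_\mfv = 0$ for every permissible $\mfv$, so $y \equiv \sum_{\mfv \text{ not permissible}} c_\mfv \M_\mu a_\mfv \mod (A_{r-1}^S)^{\rhd \mu}$, where $a_\mfv$ is the element occurring in the construction preceding \myref{Theorem}{basis theorem}, so that $\M_\mu a_\mfv \in \ker(\phi) \cap \M_\mu A_{r-1}^S$ for impermissible $\mfv$ by \eqref{eqn n-basis 2}. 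Setting $x' = y - \sum_{\mfv \text{ not permissible}} c_\mfv \M_\mu a_\mfv = \M_\mu\bigl(\beta - \sum_\mfv c_\mfv a_\mfv\bigr)$ then gives an element of $\M_\mu A_{r-1}^\BbbK$ lying in $(A_{r-1}^S)^{\rhd \mu}$ with $\phi(x') = x$, as wanted.

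With the lift in hand, I would apply \myref{Lemma}{path basis dominance lemma} with $s = r-1$, $\lambda = \nu$, and $\mft$ the one--step path $\mu \to \nu$ (for which $u_\mft = u$, and the conditions ``$\mfz_{[r-1, r]} \rhd \mft$'' and ``$\mfz(r-1) \rhd \mu$'' coincide), obtaining $u^* x' \equiv \sum_\mfz \alpha_\mfz \M_\nu d_\mfz \mod \bigl((A_r^S)^{\rhd \nu} \cap \M_\nu A_r^S\bigr)$ with $\alpha_\mfz \in S$ and the sum over $\mfz \in \Std_r(\nu)$ with $\mfz(r-1) \rhd \mu$. Applying $\phi$, the left side becomes $\phi(u^*)\phi(x') = \phi(u^*) x$; the error term lies in $\phi\bigl((A_r^S)^{\rhd \nu}\bigr) = Q_r^{\rhd \nu}$ (cf. the proof of \myref{Theorem}{good paths basis theorem}); and for each impermissible $\mfz$ surviving on the right, \myref{Lemma}{good paths lemma 1} rewrites $\M_\nu d_\mfz$ modulo $(A_r^S)^{\rhd \nu} + \ker(\phi)$ as an $S$--combination of $\M_\nu d_\mfw$ with $\mfw$ permissible and $\mfw \succ \mfz$, whence transitivity of $\rhd$ forces $\mfw(r-1) \rhd \mu$ for every such $\mfw$. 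Collecting terms yields $\phi(u^*) x \equiv \sum_\mfz r_\mfz \phi(\M_\nu d_\mfz) \mod Q_r^{\rhd \nu}$ with the sum over $\mfz \in \Std_{r, \perm}(\nu)$ satisfying $\mfz(r-1) \rhd \mu$, which is the assertion. I expect the lifting step to be the main obstacle: neither the naive lift $\M_\mu \beta$ (which need not sit in $(A_r^S)^{\rhd \mu}$) nor an arbitrary lift in $(A_{r-1}^S)^{\rhd \mu}$ (which need not have $\M_\mu$ as a left factor) can be fed into \myref{Lemma}{path basis dominance lemma} directly, and the correction by $\sum c_\mfv \M_\mu a_\mfv$ --- which lies in $\ker(\phi)$ yet preserves the left factor $\M_\mu$ --- is precisely what the modified Murphy basis $\{\nn^\mu_\mfv\}$ was designed to provide.
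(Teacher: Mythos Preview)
Your proof is correct and follows essentially the same route as the paper's. The paper lifts $x$ by applying \myref{Lemma}{good paths lemma 1} directly to $\M_\mu b$ (obtaining $\M_\mu b = y_1 + y_2$ with $y_1 \in \ker(\phi)$ and $y_2 \in \M_\mu A_{r-1}^S \cap (A_{r-1}^S)^{\rhd\mu}$, the vanishing of the permissible coefficients forced by $x \in Q_{r-1}^{\rhd\mu}$), whereas you achieve the same lift by expanding in the modified basis $\{\nn^\mu_\mfv\}$ and invoking \myref{Corollary}{good paths cor}; since the modified basis is built from the very ingredients of \myref{Lemma}{good paths lemma 1}, these are two packagings of the same idea. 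One small imprecision: when you pass from $\mfw \succ \mfz$ to $\mfw(r-1) \rhd \mu$, the step $\mfw(r-1) \unrhd \mfz(r-1)$ comes from the definition of $\succ$ (both paths end at $\nu$, so they agree beyond the last index of disagreement), not from transitivity of $\rhd$ alone.
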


\begin{proof}  Since $x \in \phi(\M_\mu) Q_{r-1} $,  there exists $b \in A_{r-1}^S$ such that $x = \phi(\M_\mu b)$.   Using 
\myref{Lemma}{good paths lemma 1},
we can write
$$
\M_\mu b =   
\smashr{\sum_{\mfs \in \Std_{r-1, \perm}(\mu)} }\ 
 r_\mfs \M_\mu d_\mfs + y_1 + y_2, 
$$
where $y_1 \in \ker(\phi)$  and $y_2 \in (A_{r-1}^S)^{\rhd \mu} \cap \M_\mu A_{r-1}^S$.   But since
$x = \phi(\M_\mu b) \in Q_{r-1}^{\rhd \mu}$, all the coefficients $r_s$ are zero and 
$$
\M_\mu b =   y_1 + y_2
$$
Now apply 
\myref{Lemma}{path basis dominance lemma}
to $y_2$,  
$$
u^* \M_\mu b =  u^* y_1 +  \sum_\mfv  \alpha_\mfv \M_\nu d_\mfv + y_3, 
$$
where the sum is over $\mfv \in \Std_{r}(\nu)$ such that  $\mfv(r-1) \rhd \mu$, and $y_3 \in (A_r^S)^{\rhd \nu}$.       The $\mfv$ appearing in the sum may not be permissible,  but we can apply 
\myref{Lemma}{good paths lemma 1}
to any term $\M_\nu d_\mfv$ with $\mfv$ not permissible, to replace it with a linear combination of terms $\M_\nu d_\mfz$, modulo
$(\ker(\phi)  +  (A_r^S)^{\rhd \nu})$, where  $\mfz \in \Std_r(\nu)$ is permissible and satisfies $\mfz \succ \mfv$.  But if $\mfz \succ \mfv$, then $\mfz(r-1) \unrhd \mfv(r-1) \rhd \mu$.
\end{proof}

\begin{cor}   \label{coherence of Q tower 2}   Let $r \ge 1$,  $\nu \in \widehat Q_r$, $\mft \in \Std_{r, \perm}(\nu)$,  and $a \in Q_{r-1}$.    Write $\mu = \mft(r-1)$  and $\mft' =  \mft_{[0, r-1]}$.  Suppose
$$
\phi(\M_\mu d_{\mft'}) a \equiv  
\smashr{\sum_{\mfs \in \Std_{r-1, \perm}(\mu)}  }\ 
r_\mfs \phi(\M_\mu d_\mfs)   \mod  Q_{r-1}^{\rhd \mu}.
$$ 
Then
$$
\phi(\M_\nu d_\mft) a  \equiv  
\smashr{\sum_{\mfs \in \Std_{r-1, \perm}(\mu)}} \ 
 r_\mfs \phi(\M_\nu d_{ \mu \to \nu}
d_{\mfs} )+  
\sum_\mfz  r_\mfz \phi( \M_\nu d_\mfz )  \mod  Q_r^{\rhd \nu},
$$
where the sum is over $\mfz \in \Std_{r, \perm}(\nu)$ such that $\mfz(r-1) \rhd \mu$.  
\end{cor}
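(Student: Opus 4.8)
The plan is to lift the hypothesis --- an identity about multiplication inside $Q_{r-1}$ --- along the single edge $\mu \to \nu$, using only the branching compatibility relation \eqref{abstract branching compatibility} together with \myref{Lemma}{coherence of Q tower 1}.

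First I would unwind the left-hand side. Since $\mft$ is the path $\mft'$ followed by the edge $\mu\to\nu$, the definition \eqref{d of a path} gives $d_\mft = d_{\mu\to\nu}\, d_{\mft'}$, while \eqref{abstract branching compatibility} gives $\M_\nu d_{\mu\to\nu} = u_{\mu\to\nu}^*\,\M_\mu$. Writing $u = u_{\mu\to\nu}$ and applying the algebra homomorphism $\phi$, I would obtain
\[
\phi(\M_\nu d_\mft)\, a = \phi(u^*)\, \phi(\M_\mu d_{\mft'})\, a .
\]
By hypothesis, $\phi(\M_\mu d_{\mft'})\, a = \sum_{\mfs\in\Std_{r-1,\perm}(\mu)} r_\mfs\, \phi(\M_\mu d_\mfs) + w$ for some $w \in Q_{r-1}^{\rhd\mu}$. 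Substituting and distributing $\phi(u^*)$, each term $\phi(u^*)\phi(\M_\mu d_\mfs) = \phi\big(u^* \M_\mu d_\mfs\big) = \phi(\M_\nu d_{\mu\to\nu} d_\mfs)$, again by \eqref{abstract branching compatibility}, which is exactly the first sum appearing in the assertion. So it remains only to control $\phi(u^*)\, w$.

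For that step I would first check that $w$ lies in $\phi(\M_\mu)Q_{r-1}\cap Q_{r-1}^{\rhd\mu}$: it lies in $Q_{r-1}^{\rhd\mu}$ by construction, and it lies in $\phi(\M_\mu)Q_{r-1}$ because $\phi(\M_\mu d_{\mft'})\,a = \phi(\M_\mu)\bigl(\phi(d_{\mft'})\,a\bigr)$ and each $\phi(\M_\mu d_\mfs) = \phi(\M_\mu)\,\phi(d_\mfs)$ lie in $\phi(\M_\mu)Q_{r-1}$. Then \myref{Lemma}{coherence of Q tower 1} applies verbatim, yielding
\[
\phi(u^*)\, w \equiv \sum_\mfz r_\mfz\, \phi(\M_\nu d_\mfz) \mod Q_r^{\rhd\nu},
\]
with the sum over $\mfz\in\Std_{r,\perm}(\nu)$ such that $\mfz(r-1)\rhd\mu$. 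Adding the two contributions gives the desired congruence.

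I do not expect a genuine obstacle here; the corollary is essentially the ``edge version'' of its own hypothesis, and the machinery of \myref{Lemma}{coherence of Q tower 1} is tailored for it. The points that require a little care are purely bookkeeping: confirming that the remainder $w$ produced by the hypothesis really sits in the intersection $\phi(\M_\mu)Q_{r-1}\cap Q_{r-1}^{\rhd\mu}$ demanded by \myref{Lemma}{coherence of Q tower 1}, and matching the dominance condition $\mfz(r-1)\rhd\mu$ in that lemma's conclusion with the index set stated in the corollary. One also implicitly uses the identifications $\phi\bigl((A_{r-1}^S)^{\rhd\mu}\bigr) = Q_{r-1}^{\rhd\mu}$ and $\phi\bigl((A_r^S)^{\rhd\nu}\bigr) = Q_r^{\rhd\nu}$ from \myref{Theorem}{good paths basis theorem}, which are already baked into the notation $Q_k^{\rhd\,\cdot}$.
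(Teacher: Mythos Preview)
Your proof is correct and follows essentially the same route as the paper: write the hypothesis as an equality with a remainder $w\in \phi(\M_\mu)Q_{r-1}\cap Q_{r-1}^{\rhd\mu}$, multiply on the left by $\phi(u_{\mu\to\nu}^*)$, and apply \myref{Lemma}{coherence of Q tower 1} to the remainder term. Your explicit verification that $w$ lies in the required intersection is a detail the paper leaves implicit.
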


\begin{proof}  Write 
$$
 \phi(\M_\mu d_{\mft'}) a \equiv  
\smashr{ \sum_{\mfs \in \Std_{r-1, \perm}(\mu)}  }\ 
 r_\mfs \phi(\M_\mu d_\mfs)  + y, 
$$ 
where $y \in Q_{r-1}^{\rhd \mu} \cap \phi(\M_\mu) Q_{r-1}$.   Multiply both sides  on the left by $\phi(u^*)$,  where 
$u = u_{\mu \to \nu}$, and apply 
\myref{Lemma}{coherence of Q tower 1}
to $\phi(u^*) y$.  
\end{proof}

\begin{prop}  \label{coherence of Q tower 3} 
  Let $r \ge 1$, let $\nu \in \widehat Q_r$, and let 
$\mu(1) \rhd \mu(2) \rhd \cdots \rhd \mu(s)$  be the list of $\mu \in \widehat Q_{r-1}$ such that $\mu \to \nu$.    Define
$$
M_j = \Span_R \leftbrace  \phi(\M_\nu d_\mft) +  Q_r^{\rhd \nu} \suchthat  \mft \in \Std_{r, \perm}(\nu), \mft(r-1) \unrhd \mu(j)                         \rightbrace.
$$
and $M_0 = (0)$.   Then
$$
M_0 \subseteq M_1 \subseteq \cdots  \subseteq M_s = \Delta_{Q_r}(\nu)
$$
is a filtration of $ \Delta_{Q_r}(\nu)$ by $Q_{r-1}$ submodules, and $M_j/M_{j-1} \cong \Delta_{Q_{r-1}}(\mu(j))$.  
\end{prop}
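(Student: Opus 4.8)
The plan is to run, for the quotient tower $(Q^S_\k)_{\k \ge 0}$, the exact analogue of the proof of \myref{Theorem}{theorem abstract Murphy basis}(3), with \myref{Corollary}{coherence of Q tower 2} playing the role that \myref{Proposition}{path basis dominance theorem} plays there. I will use throughout that $\Delta_{Q_r}(\nu)$ is a free $S$-module on the basis $\{\phi(\M_\nu d_\mft) + Q_r^{\rhd \nu} \suchthat \mft \in \Std_{r, \perm}(\nu)\}$, which is part of the cellular structure of $Q_r^S$ recorded in \myref{Theorem}{good paths basis theorem}. Since each $M_j$ is by definition the span of a subset of this basis, $M_j$ is itself free on that subset, and hence $M_j/M_{j-1}$ is free on the images of those $\phi(\M_\nu d_\mft) + Q_r^{\rhd \nu}$ with $\mft(r-1) = \mu(j)$. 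I will also record the elementary facts that $\mft' \mapsto \mft' \circ (\mu(j) \to \nu)$ is a bijection from $\Std_{r-1, \perm}(\mu(j))$ onto $\{\mft \in \Std_{r, \perm}(\nu) \suchthat \mft(r-1) = \mu(j)\}$ (permissibility is preserved because $\nu$ is permissible) and that $d_{\mft' \circ (\mu(j) \to \nu)} = d_{\mu(j) \to \nu}\, d_{\mft'}$.

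First I would check that each $M_j$ is a $Q_{r-1}$-submodule. Given a basis element $\phi(\M_\nu d_\mft) + Q_r^{\rhd \nu}$ with $\mft(r-1) = \mu(i)$ for some $i \le j$ and an $a \in Q_{r-1}$, I would expand the cell-module action $\phi(\M_{\mu(i)} d_{\mft_{[0,r-1]}})\, a \equiv \sum_\mfs r_\mfs\, \phi(\M_{\mu(i)} d_\mfs) \bmod Q_{r-1}^{\rhd \mu(i)}$ over $\mfs \in \Std_{r-1, \perm}(\mu(i))$ and feed it into \myref{Corollary}{coherence of Q tower 2} to obtain
\[
\phi(\M_\nu d_\mft)\, a \equiv \sum_\mfs r_\mfs\, \phi(\M_\nu d_{\mfs \circ (\mu(i) \to \nu)}) + \sum_\mfz r_\mfz\, \phi(\M_\nu d_\mfz) \bmod Q_r^{\rhd \nu},
\]
the second sum over permissible $\mfz$ with $\mfz(r-1) \rhd \mu(i)$. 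Each path in the first sum has $(r-1)$-st vertex $\mu(i) \unrhd \mu(j)$, and each $\mfz$ in the second sum has $\mfz(r-1) = \mu(l)$ with $l < i \le j$; so all terms lie in $M_j$. That $M_0 \subseteq M_1 \subseteq \cdots \subseteq M_s = \Delta_{Q_r}(\nu)$ then follows because the index sets $\{\mft \suchthat \mft(r-1) \unrhd \mu(j)\}$ grow with $j$ (as $\mu(1) \rhd \cdots \rhd \mu(s)$) and exhaust $\Std_{r, \perm}(\nu)$ at $j = s$, since every $\mft(r-1)$ is one of the $\mu(i)$ and hence $\unrhd \mu(s)$.

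Finally I would produce the isomorphism. Define the $S$-linear map $\theta_j \colon \Delta_{Q_{r-1}}(\mu(j)) \to M_j/M_{j-1}$ on basis elements by $\phi(\M_{\mu(j)} d_{\mft'}) + Q_{r-1}^{\rhd \mu(j)} \mapsto \phi(\M_\nu d_{\mft' \circ (\mu(j) \to \nu)}) + M_{j-1}$; by the freeness remarks this carries a basis bijectively to a basis, so it is an $S$-linear isomorphism, and it remains only to verify $Q_{r-1}$-equivariance. For this I would rerun the computation of the previous paragraph with $i = j$: the $\mfz$-terms now satisfy $\mfz(r-1) \rhd \mu(j)$, hence lie in $M_{j-1}$ and vanish modulo $M_{j-1}$, leaving $\phi(\M_\nu d_{\mft' \circ (\mu(j) \to \nu)})\, a \equiv \sum_\mfs r_\mfs\, \phi(\M_\nu d_{\mfs \circ (\mu(j) \to \nu)}) \bmod M_{j-1}$, which is exactly $\theta_j$ applied to $\phi(\M_{\mu(j)} d_{\mft'})\, a$. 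The one point requiring genuine care is the bookkeeping of these lower-order correction terms from \myref{Corollary}{coherence of Q tower 2}: one must use that the set of $\mu$ with $\mu \to \nu$ in $\widehat Q_{r-1}$ is totally ordered (being a subset of the predecessors of $\nu$ in $\widehat A$), so that $\mfz(r-1) \rhd \mu(i)$ really does force $\mfz(r-1) = \mu(l)$ with $l < i$; and one must run the whole argument over $S$, with freeness of the $M_j$ substituting for a dimension count.
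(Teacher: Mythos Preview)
Your proof is correct and is exactly the argument the paper has in mind: the paper's proof is the single line ``Immediate from \myref{Corollary}{coherence of Q tower 2}'', and you have simply unpacked the standard filtration argument that this corollary was designed to feed into (your $\theta_j$ is the inverse of the isomorphism recorded immediately afterward in \myref{Corollary}{coherence of Q tower 4}). Your care with the total ordering of the $\mu(i)$ and the freeness over $S$ is appropriate and fills in points the paper leaves implicit.
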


\begin{proof} Immediate from  \myref{Corollary}{coherence of Q tower 2}.
\end{proof}

\begin{cor} \label{coherence of Q tower 4}   The branching factors associated to the filtrations in 
\myref{Proposition}{coherence of Q tower 3}
 are 
$\phi(d_{\mu \to \nu})$  for $\mu \in \widehat Q_{r-1}$ and   $\nu \in \widehat Q_{r}$ with $\mu \to \nu$.
\end{cor}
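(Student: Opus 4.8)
The plan is to show that $\phi(d_{\mu \to \nu})$ satisfies the defining properties of a down-branching factor for the tower $(Q_r^S)_{r \ge 0}$, in the sense of the construction of \cite[Section 3]{EG:2012} recalled before \myref{Theorem}{theorem abstract Murphy basis}. Concretely, for $\nu \in \widehat Q_r$ and a covering $\mu = \mu(j) \to \nu$ in $\widehat Q$, what has to be verified is that the subquotient $M_j / M_{j-1}$ of the filtration of $\Delta_{Q_r}(\nu)$ built in \myref{Proposition}{coherence of Q tower 3} is spanned by the images of $\phi(\M_\nu)\,\phi(d_{\mu \to \nu})\,\phi(d_{\mft'})$ for $\mft' \in \Std_{r-1, \perm}(\mu)$, and that under the isomorphism $M_j/M_{j-1} \cong \Delta_{Q_{r-1}}(\mu)$ this spanning set matches the Murphy basis $\{\phi(\M_\mu d_{\mft'}) + Q_{r-1}^{\rhd \mu}\}$ of $\Delta_{Q_{r-1}}(\mu)$.

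First I would unwind the combinatorics. Since $\mu = \mu(j)$ is permissible, the permissible paths $\mft \in \Std_{r,\perm}(\nu)$ with $\mft(r-1) = \mu$ are precisely the concatenations $\mft = \mft' \circ (\mu \to \nu)$ with $\mft' = \mft_{[0, r-1]} \in \Std_{r-1,\perm}(\mu)$; these are exactly the paths indexing the part of the spanning set of $\Delta_{Q_r}(\nu)$ surviving modulo $M_{j-1}$. The ordered product formula \eqref{d of a path} gives $d_\mft = d_{\mu \to \nu}\, d_{\mft'}$ in $A_r^S$, and since $\phi$ is an algebra homomorphism, $\phi(\M_\nu d_\mft) = \phi(\M_\nu)\,\phi(d_{\mu \to \nu})\,\phi(d_{\mft'})$. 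This already puts the spanning set of $M_j/M_{j-1}$ in the desired form.

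Next I would identify the subquotient isomorphism. The map $M_j/M_{j-1} \to \Delta_{Q_{r-1}}(\mu)$ produced in the proof of \myref{Proposition}{coherence of Q tower 3} via \myref{Corollary}{coherence of Q tower 2} is exactly the $Q_{r-1}$-module map sending $\phi(\M_\nu d_\mft) + M_{j-1} + Q_r^{\rhd \nu}$ to $\phi(\M_\mu d_{\mft'}) + Q_{r-1}^{\rhd \mu}$ --- indeed \myref{Corollary}{coherence of Q tower 2} is precisely the statement that this assignment intertwines the $Q_{r-1}$-actions modulo lower terms. Combining this with the previous paragraph gives the claim.

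The point needing the most care is that branching factors are not canonical: the recipe of \cite[Section 3]{EG:2012} pins them down only after one fixes a cyclic generator of each cell module and an isomorphism onto each subquotient, so one must check that the choices implicit in \myref{Proposition}{coherence of Q tower 3} --- namely the cell generator $\phi(\M_\nu) + Q_r^{\rhd \nu}$ and the isomorphism of \myref{Corollary}{coherence of Q tower 2} --- are the $\phi$-images of the corresponding choices for $(A_r^S)_{r \ge 0}$, so that no spurious scalar or lower-order correction intrudes. This holds because $\phi$ intertwines all the relevant structure: it carries $\M_\nu$ to $\phi(\M_\nu)$, the ideals $A_r^{\unrhd \nu}$ and $A_r^{\rhd \nu}$ onto $Q_r^{\unrhd \nu}$ and $Q_r^{\rhd \nu}$ (by \myref{Theorem}{basis theorem}), and the cell module $\Delta_{A_r}(\nu)$ onto $\Delta_{Q_r}(\nu)$ (by \myref{Corollary}{good paths cor}); hence the entire branching-factor construction for the quotient tower is the $\phi$-image of that for $(A_r^S)_{r \ge 0}$, and $\phi(d_{\mu \to \nu})$ is a legitimate choice of down-branching factor.
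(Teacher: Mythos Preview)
Your proposal is correct and follows essentially the same approach as the paper: the paper's proof simply records the explicit form of the isomorphism $M_j/M_{j-1} \to \Delta_{Q_{r-1}}(\mu(j))$, namely $\phi(\M_\nu d_{\mu(j) \to \nu} d_\mfs) + M_{j-1} + Q_r^{\rhd \nu} \mapsto \phi(\M_{\mu(j)} d_\mfs) + Q_{r-1}^{\rhd \mu(j)}$, which is exactly the identification you spell out. Your additional discussion of why this pins down $\phi(d_{\mu \to \nu})$ as the branching factor (via the compatibility of $\phi$ with cell generators, ideals, and the construction of \cite[Section 3]{EG:2012}) is more detailed than what the paper writes, but the underlying argument is the same.
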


\begin{proof}  The isomorphism 
$
M_j/M_{j-1} \to \cell {Q_{r-1}} {} {\mu(j)}
$
is
$$
\phi(\M_\nu d_{\mu(j) \to \nu} d_\mfs) + M_{j-1} + Q_r^{\rhd \nu} \mapsto
\phi(\M_{\mu(j)} d_\mfs) + Q_{r-1}^{\rhd \mu(j)}.
$$
\end{proof}

\begin{prop}   \label{coherence of Q tower 5}
Assume \eqref{quotient axiom 1}--\eqref{quotient axiom 3}, and that $Q_r^\BbbK$ is split semisimple for all $r$.   It follows that the tower $(Q_r^S)_{r \ge 0}$  satisfy axioms \eqref{diagram 1}--\eqref{diagram 6} of \myref{Section}{subsection diagram algebras}.  
\end{prop}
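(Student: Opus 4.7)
The plan is to verify the six axioms \eqref{diagram 1}--\eqref{diagram 6} for the tower $(Q_r^S)_{r \ge 0}$ one by one, using the cellular basis $\mathbb A_r = \{\phi(m_{\mathsf s \mathsf t}^\lambda)\}$ of $Q_r^S$ provided by \myref{Theorem}{good paths basis theorem}, together with the corresponding branching data already identified in \myref{Corollary}{coherence of Q tower 4}. Most of the work is essentially bookkeeping: we transport the cell generators and the branching factors from the tower $(A_r^S)_{r \ge 0}$ along $\phi$ and check that the defining identities descend.

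For \eqref{diagram 1}, since $\varnothing$ is permissible by \eqref{quotient axiom 1} and $A_0^S = S$, the cellular basis $\mathbb A_0$ reduces to $\{\phi(m^\varnothing_{\varnothing \varnothing})\} = \{1_{Q_0^S}\}$, giving $Q_0^S = S$. For \eqref{diagram 3} nothing has to be done: this is the extra standing hypothesis of the proposition. For \eqref{diagram 4}, restriction coherence is precisely the content of \myref{Proposition}{coherence of Q tower 3}, where the filtrations and the isomorphisms $M_j/M_{j-1} \cong \cell{Q_{r-1}}{}{\mu(j)}$ are explicitly constructed.

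The remaining axioms concern cell generators and compatibility. For \eqref{diagram 2}, I take $\widetilde m_\lambda := \phi(m_\lambda) \in Q_r^S$ as a candidate cell generator for each permissible $\lambda \in \widehat Q_r$; the description of the cell module
$$
\cell{Q_r}{S}{\lambda} = \Span_S\{\phi(m_\lambda d_\mathsf t) + Q_r^{\rhd \lambda} \mid \mathsf t \in \Std_{r,\perm}(\lambda)\}
$$
combined with $\phi(m_\lambda d_\mathsf t) = \phi(m_\lambda)\phi(d_\mathsf t)$ shows that $\widetilde m_\lambda + Q_r^{\rhd\lambda}$ is a cyclic generator, yielding cyclic cellularity. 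For \eqref{diagram 5}, the involution $*$ on $Q_r^S$ is well-defined (by part (1) of \myref{Theorem}{good paths basis theorem}), and $\widetilde m_\lambda^* = \phi(m_\lambda)^* = \phi(m_\lambda^*) = \phi(m_\lambda) = \widetilde m_\lambda$, using $m_\lambda^* = m_\lambda$ for the original tower. For \eqref{diagram 6}, the natural candidates for the $d$- and $u$-branching factors are $\widetilde d_{\lambda \to \mu} := \phi(d_{\lambda \to \mu})$ and $\widetilde u_{\lambda \to \mu} := \phi(u_{\lambda \to \mu})$; the compatibility identity \eqref{abstract branching compatibility} is then obtained by applying $\phi$ to the corresponding relation $m_\mu d_{\lambda \to \mu} = (u_{\lambda \to \mu})^* m_\lambda$ in $A_r^S$ and using that $\phi$ intertwines the two involutions. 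One should finally observe, by invoking \myref{Corollary}{coherence of Q tower 4}, that the $\widetilde d_{\lambda\to\mu}$ chosen here genuinely are the branching factors attached to the cell filtration of $\Res^{Q_r}_{Q_{r-1}} \cell{Q_r}{}{\mu}$, so that they are the \emph{same} objects produced by the general construction of ~\cite{EG:2012} applied to $(Q_r^S)$; this ensures that axioms \eqref{diagram 2}--\eqref{diagram 6} are satisfied by mutually consistent data.

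There is no genuine obstacle: the only potentially subtle step is checking that $\phi(u_{\lambda \to \mu})$ can legitimately serve as an up-branching factor (since the axiom requires existence of up-branching factors with a specific compatibility relation), but this is immediate once the cell generator has been fixed as $\phi(m_\lambda)$ and one applies $\phi$ to the identity already available in $A_r^S$ from \eqref{diagram 6}.
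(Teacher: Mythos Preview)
Your proposal is correct and follows essentially the same approach as the paper. The paper's own proof is a one-liner that only displays the compatibility relation $\phi(m_\nu)\phi(d_{\mu\to\nu}) = \phi(u_{\mu\to\nu})^* \phi(m_\mu)$ for \eqref{diagram 6}, taking \eqref{diagram 1}--\eqref{diagram 5} as already established by the preceding results (Theorem~\ref{good paths basis theorem}, Proposition~\ref{coherence of Q tower 3}, Corollary~\ref{coherence of Q tower 4}) and the hypothesis on semisimplicity; your proof simply spells these out axiom by axiom.
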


\begin{proof}  One only has to observe that the branching coefficients satisfy
$$
\phi(\M_\nu) \phi(d_{\mu \to \nu}) =  \phi(u_{\mu \to \nu})^*  \phi(\M_\mu).
$$
\end{proof}

\begin{rmk} \label{coherence of Q tower 6}  The assumption that $Q_r^\BbbK$ is split semisimple for all $r$
implies that $\cell {Q_{r-1}} {S} \mu$ appears as a subquotient in a cell filtration of $\Res(\cell {Q_r} {S} \nu)$  if and only if the simple  $Q_{r-1}^\BbbK$--module $\cell {Q_{r-1}} {K} \mu$  is a direct summand of 
$\Res(\cell {Q_r} {K} \nu)$, if and only if  $\mu \to \nu$ in $\widehat Q$.  See ~\cite[Lemma 2.2]{MR2794027}.
\end{rmk}

\subsection{Jucys--Murphy elements in quotient towers}  \label{subsection JM elements quotients}
   Consider a sequence of cellular algebras $(A_\k)_{\k \ge 0}$ over  an integral domain $R$ with field of fractions $\FF$,  satisfying the properties
\eqref{diagram 1}--\eqref{diagram 6}   of 
\myref{Section}{subsection diagram algebras}.  In ~\cite[Definition 4.1]{BEG}, following ~\cite{MR2774622}, one defines a sequence $\{L_i\}_{i \ge 1}$  of {\em additive Jucys--Murphy elements}  by the two conditions:
\begin{enumerate}[leftmargin=*,label=(JM\arabic{*}), ref=JM\arabic{*},  series = JM conditions]
\item \label{cond JM1} For $r \ge 1$, $L_r \in A_r^R$,  $L_r = L_r^*$, and $L_r$ commutes pointwise with $A_{r-1}^R$.
\item  \label{cond JM2} For all $r \ge 1$ and   $\la \in\widehat{A}_r$,   there exists  $d(\la) \in R$  such that    $ L_1 +  \cdots + L_r$   acts as the scalar   $d(\la)$  on the cell module $\Delta^R_r(\la)$.
\end{enumerate}
Using that $A_r^\FF$ is split semisimple for all $r$, condition \eqref{cond JM2}  is equivalent to 
\begin{enumerate}[leftmargin=*,label=(JM\arabic{*}), ref=JM\arabic{*},  resume = JM conditions]
\item \label{cond JM3} For  all $r \ge 1$, $ L_1 +  \cdots + L_r$  is in the center of $A_r^R$.
\end{enumerate}
For each edge  $\la \to \mu$ in the branching diagram $\hat A$ for $(A_r^\FF)_{r \ge 0}$, write 
 $\kappa(\la \to \mu) = d(\mu) - d(\la) \in R$, where by convention $d(\varnothing) = 0$.  For  a path $\mft \in \Std_r$ and $i \le r$, write
 $ \kappa_\mft(i) $ for $\kappa(\mft(i-1) \to \mft(i))$.
 The elements  $\kappa(\la \to \mu)$  or  $ \kappa_\mft(i) $  are called {\sf contents} since they generalize the contents of standard tableaux in the theory of the symmetric group.
It is shown in ~\cite[Section 4]{BEG}, strengthening  results of ~\cite{MR2774622}, that 
\begin{enumerate}[leftmargin=*,label=(JM\arabic{*}), ref=JM\arabic{*},  resume = JM conditions]
\item  \label{cond JM4} $f^\la_\mft L_i = \kappa_\mft(i) f_\mft$  for all paths $\mft \in \Std_r(\la)$ and for $i \le r$, and
\item \label{cond JM5} $m^\la_\mft L_i = \kappa_\mft(i) m^\la_\mft  + \sum_{\mfs \rhd \mft}  r_\mfs m^\la_\mfs$, for some coefficients $r_\mfs \in R$.
\end{enumerate}
  Condition  \eqref{cond JM5}  is an instance of Mathas's abstraction of Jucys--Murphy elements from ~\cite{MR2414949}.  We note that conditions \eqref{cond JM4} and \eqref{cond JM5}  do not depend on Mathas's separation condition being satisfied, as in ~\cite[Section 3]{MR2414949}.

Suppose we are given an additive sequence of JM elements.    Then
 conditions  \eqref{cond JM1}, \eqref{cond JM2}, \eqref{cond JM3}, and \eqref{cond JM5}
  remain valid in any specialization $A_r^S = A_r^R \otimes_R S$, where $L_i$ is  replaced by $L_i \otimes 1_S$ and $d(\la)$ and $\kappa(\la \to \mu)$ by their images in $S$, so in particular every specialization has JM elements in the sense of Mathas.

 Now, finally, suppose the hypotheses \eqref{quotient axiom 1}--\eqref{quotient axiom 3} are satisfied and that   the quotient  algebras $Q_r^\BbbK$ are split semisimple, so that the quotient tower $(Q_r^S)_{r \ge 0}$  is a sequence of diagram algebras  satisfying the properties
\eqref{diagram 1}--\eqref{diagram 6}.  
 Clearly, the defining conditions \eqref{cond JM1} and \eqref{cond JM2}  for JM elements are satisfied, with $L_i$ replaced by $\phi(L_i \otimes 1_S)$, and \eqref{cond JM3}   follows.     (Of course, versions of \eqref{cond JM4} and \eqref{cond JM5} must hold as well, but this is not very useful in this generality as we cannot relate the seminormal bases of the quotient tower with that of the original tower.  This defect is removed in \myref{Section}{subsection seminormal quotient}.)

 This discussion holds just as well for {\em multiplicative Jucys--Murphy elements}, see ~\cite[Definition 4.3]{BEG} and  ~\cite{MR2774622}.

\subsection{Seminormal bases of quotient towers}  \label{subsection seminormal quotient}
In this section we examine seminormal bases and seminormal representations in quotient towers.  We work in the following setting:  we assume the setting of 
\myref{Section}{subsection setting quotient towers}, in particular that  \eqref{quotient axiom 1}--\eqref{quotient axiom 3} are satisfied.   We assume this existence of additive or multiplicative JM elements for the tower $(A_r^R)_{r \ge 0}$, as in \myref{Section}{subsection JM elements quotients}; in particular, conditions \eqref{cond JM4}  and \eqref{cond JM5}  of \myref{Section}{subsection JM elements quotients} hold.  We assume, moreover, that the separation condition of Mathas holds; that is if $\mfs$ and $\mft$ are distinct paths in $\Std_r$, then there exists $i \le r$ such that $\kappa_\mft(i) \ne \kappa_\mfs(i)$.  We assume that the quotient
algebras $Q_r^\BbbK$ are split semisimple, so that the quotient tower $(Q_r^S)_{r \ge 0}$ is a tower of diagram algebras satisfying  \eqref{diagram 1}--\eqref{diagram 6}.  As a tower of diagram algebras, $(Q_r^S)_{r \ge 0}$ has its own seminormal bases.  Finally, we assume the following condition, which will   allow us to connect these seminormal bases  to those of the original tower:

\begin{enumerate}[label = (SN), ref = SN, leftmargin=*]
\item  \label{assumption SN} Whenever $\mft \in \Std_{r, \perm}$, it follows that
$F_\mft$ is evaluable.
\end{enumerate}

It follows from \eqref{assumption SN}   that if $\mfs, \mft \in \Std_r(\la)$ are permissible, then $f^\la_\mft$  and $F^\la_{\mfs \mft}$ are evaluable, and also that $\langle f^\la_\mfs, f^\la_\mft \rangle \in R_{\mathfrak p}$.

We are interested in verifying these assumptions, and in particular condition \eqref{assumption SN},  for quotients of diagram algebras acting on tensor spaces.
The following lemma provides a sufficient condition for \eqref{assumption SN} to hold.  Associate to each path $\mft \in \Std_r$ its  {\em content} sequence $\kappa_\mft(i)$  for ${1\le i \le r}$ and its {\em residue} sequence $r_\mft(i) = \pi(\kappa_\mft(i))$.    Say two paths $\mfs, \mft$ are {\em residue equivalent}, and write $\mft \approx \mfs$,  if they have the same residue sequences. 

\begin{lem}  \label{suff cond for SN}
Suppose that each permissible path in $\Std_r$ is residue equivalent only to itself.  Then \eqref{assumption SN} holds.
\end{lem}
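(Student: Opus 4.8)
The plan is to exhibit $F_\mft$ as a polynomial in the Jucys--Murphy elements $L_1,\dots,L_r$ of the tower $(A_r^R)_{r\ge 0}$ with coefficients in the local ring $R_{\mathfrak p}$; since $L_i\in A_r^{R_{\mathfrak p}}$, this immediately gives that $F_\mft$ is evaluable. Two standing facts are used throughout. First, the $L_i$ commute with one another (as $L_i\in A_i^R$ commutes with $A_{i-1}^R\supseteq A_j^R$ for $j<i$) and with every central idempotent $\z j \nu$ (which is central in $A_j^\FF$ when $j\ge i$ and lies in $A_{i-1}^\FF$ when $j<i$), hence with each Gelfand--Zeitlin idempotent $F_\mfs=\prod_j\z j {\mfs(j)}$. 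Second, by \eqref{cond JM4} and split semisimplicity of $A_r^\FF$, $L_i$ acts on $F_\mfs$ by the scalar $\kappa_\mfs(i)$ for every $\mfs\in\Std_r$: indeed $L_iF_\mfs=F_\mfs L_iF_\mfs\in F_\mfs A_r^\FF F_\mfs=\FF F_\mfs$, and \eqref{cond JM4} applied to $f_\mfs=m_\mfs F_\mfs$ identifies the scalar as $\kappa_\mfs(i)$. Since $\sum_\mfs F_\mfs=1$, it follows that for any $U$ with $R_{\mathfrak p}\subseteq U\subseteq\FF$ and any $P\in U[x_1,\dots,x_r]$, the element $P(L_1,\dots,L_r)$ lies in $A_r^{U}$ and acts on $F_\mfs$ by the scalar $P(\kappa_\mfs(1),\dots,\kappa_\mfs(r))$.

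Next I would build, for each level $i$, a Lagrange polynomial over $R_{\mathfrak p}$ isolating a prescribed residue. Let $\mathcal R_i=\{\,r_\mfs(i):\mfs\in\Std_r\,\}\subseteq\BbbK$, a finite set, and for each $\rho\in\mathcal R_i$ fix an evaluable content $c^{(i)}_\rho\in R_{\mathfrak p}$ with $\pi(c^{(i)}_\rho)=\rho$ (e.g.\ $c^{(i)}_\rho=\kappa_\mfs(i)$ for some $\mfs$ with $r_\mfs(i)=\rho$). As $\BbbK$ is a field, $\pi(c^{(i)}_\rho-c^{(i)}_\sigma)=\rho-\sigma\ne 0$ for $\rho\ne\sigma$, so $c^{(i)}_\rho-c^{(i)}_\sigma$ is a unit of $R_{\mathfrak p}$, and
\[
P^{(\rho)}_i(x)=\prod_{\sigma\in\mathcal R_i\setminus\{\rho\}}\frac{x-c^{(i)}_\sigma}{c^{(i)}_\rho-c^{(i)}_\sigma}\ \in\ R_{\mathfrak p}[x]
\]
satisfies $\pi\bigl(P^{(\rho)}_i(\kappa_\mfs(i))\bigr)=\delta_{r_\mfs(i),\,\rho}$ for all $\mfs\in\Std_r$, because $r_\mfs(i)\in\mathcal R_i$. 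Now fix $\mft\in\Std_{r,\perm}$, put $\mathbf r=(r_\mft(1),\dots,r_\mft(r))$ and $P_{\mathbf r}=\prod_{i=1}^{r}P^{(r_\mft(i))}_i(L_i)\in R_{\mathfrak p}[L_1,\dots,L_r]$. Then $P_{\mathbf r}$ acts on $F_\mfs$ by $\gamma_\mfs:=\prod_i P^{(r_\mft(i))}_i(\kappa_\mfs(i))\in R_{\mathfrak p}$, with $\pi(\gamma_\mfs)=\prod_i\delta_{r_\mfs(i),r_\mft(i)}$, which is $1$ when $\mfs$ is residue equivalent to $\mft$ and $0$ otherwise; so $\gamma_\mfs$ is a unit of $R_{\mathfrak p}$ when $\mfs$ is residue equivalent to $\mft$, and lies in $\mathfrak p R_{\mathfrak p}$ otherwise.

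Here the hypothesis enters: as $\mft$ is permissible it is residue equivalent only to itself, so $\gamma_\mft$ is a unit and $\gamma_\mfs\in\mathfrak p R_{\mathfrak p}$ for every $\mfs\ne\mft$. I would then separate off $F_\mft$ with one further, univariate, interpolation: setting $\Gamma_0=\{\gamma_\mfs:\mfs\ne\mft\}\cup\{0\}\subseteq\mathfrak p R_{\mathfrak p}$, the denominators in
\[
T(y)=\prod_{c\in\Gamma_0}\frac{y-c}{\gamma_\mft-c}\ \in\ R_{\mathfrak p}[y]
\]
are units of $R_{\mathfrak p}$ (since $\gamma_\mft$ is a unit and $\Gamma_0\subseteq\mathfrak p R_{\mathfrak p}$), while $T(\gamma_\mft)=1$ and $T(\gamma_\mfs)=0$ for $\mfs\ne\mft$. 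Hence $T(P_{\mathbf r})=\sum_{\mfs\in\Std_r}T(\gamma_\mfs)F_\mfs=F_\mft$, and $T(P_{\mathbf r})\in R_{\mathfrak p}[L_1,\dots,L_r]\subseteq A_r^{R_{\mathfrak p}}$; thus $F_\mft$ is evaluable, which is \eqref{assumption SN}.

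The one delicate point — and the reason for working in two stages rather than interpolating $F_\mft$ directly in the $L_i$ — is that distinct contents may share the same residue, so the naive interpolation formula for $F_\mft$ carries denominators lying in $\mathfrak p$. Building $P_{\mathbf r}$ purely out of residue data avoids bad denominators at the first stage, and the residue-rigidity of permissible paths is precisely what ensures that after applying $P_{\mathbf r}$ only the single idempotent $F_\mft$ survives with a unit eigenvalue, so the final separation $T$ again uses only unit denominators. I expect this bookkeeping around which denominators are units to be the crux; the rest is the standard seminormal-idempotent calculus of \cite{BEG}.
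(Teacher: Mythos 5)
Your proof is correct, and it takes a genuinely different route from the paper's. The paper handles this in one line: it cites Mathas's Lemma~4.2 and the level-by-level recursion $F_\stt = F_{\stt'}\prod_{\sts\ne\stt,\,\sts'=\stt'}\bigl(L_{k+1}-\kappa_\sts(k+1)\bigr)/\bigl(\kappa_\stt(k+1)-\kappa_\sts(k+1)\bigr)$, whose denominators are content differences of sibling paths; the hypothesis, in the form of \myref{Remark}{suff cond for SN 1}, makes these units of $R_{\mathfrak p}$, and induction on the level gives evaluability of $F_\mft$ together with all its partial idempotents $F_{\mft_{[0,k]}}$. You instead perform a single global interpolation in $L_1,\dots,L_r$ at the top level, organized in two stages so that every denominator is a unit for structural reasons (differences of representatives of \emph{distinct} residues in stage one; a unit minus an element of $\mathfrak p R_{\mathfrak p}$ in stage two), with the residue-rigidity of permissible paths entering only to ensure that exactly one $F_\mfs$ survives modulo $\mathfrak p$ after stage one. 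Your stage-one element $P_{\mathbf r}$ is essentially Mathas's residue-class idempotent, so in effect you reprove the relevant part of his Lemma~4.2 rather than quoting it. What your version buys: it is self-contained, and it uses the hypothesis only at level $r$ itself, whereas the recursion needs residue separation among siblings at every level $k\le r$ (which is what the applications verify anyway, and which is why the paper restates the hypothesis with a universal quantifier over $r$ in the Remark). What it costs: more bookkeeping, and you do not get evaluability of the truncated idempotents for free. All the individual steps check out — in particular the identity $P(L_1,\dots,L_r)=\sum_\mfs P(\kappa_\mfs(1),\dots,\kappa_\mfs(r))F_\mfs$, the unitality of the Lagrange denominators, and the well-definedness of $T$ (since $\gamma_\mft\notin\Gamma_0$).
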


\begin{proof}  This follows from \cite[Lemma~4.2]{MR2414949}.  It is also easy to prove directly using the following recursive  formula for the idempotents $F_\stt$.  For $\stt \in \Std_{k+1}$,  let $\stt'$ denote the truncation $\stt' = \stt_{[0, k]}$.  Then
 \begin{equation*} \label{interpolation}
 F_\stt = F_{\stt'} \prod_{\begin{subarray} c  \sts \ne \stt \\  \sts' = \stt' \end{subarray}}  
 \frac{L_{r} - \kappa_\sts(r)}{\kappa_\stt(r)-\kappa_\sts(r)}.
 \end{equation*}
\end{proof}

\begin{rmk} \label{remark: permissible paths residue class}   \label{suff cond for SN 1}
 We will use the hypothesis of  \myref{Lemma}{suff cond for SN} in the following equivalent form: For all $r \ge 1$ and for  all $\mft, \mfs \in \Std_{r}$   with $\mfs' = \mft'$, if  at least one of $\mfs, \mft$  is permissible,  then
$\pi(\kappa_t(r)) \ne \pi(\kappa_s(r))$. 
\end{rmk}

\begin{eg}  In the case of the symmetric group algebras acting on ordinary or mixed tensor space,  no specialization of the ground ring is involved, so condition \eqref{assumption SN} is vacuous.   The sufficient condition of \myref{Lemma}{suff cond for SN}  is verified for the Brauer algebras acting on symplectic tensor space in \myref{Section}{symplectic Brauer seminormal}, and for the walled Brauer algebras in 
 \iftoggle{arxiv}
 {\myref{Appendix}  {subsection wba seminormal}   }
 {Appendix C.4 in the arXiv version of this paper}.  It is verified for the partition algebras, with an appropriate permissibility condition, in  \cite[Section 6]{BDE}.   For the Brauer algebra acting on orthogonal tensor space, the sufficient condition of \myref{Lemma}{suff cond for SN} holds for odd integers values, but fails for even integer values; in the latter case   a more subtle argument is needed in order to verify \eqref{assumption SN}; for this, see  \cite{EG:2017}.
\end{eg}

\begin{lem} Let $a \in A_r^R$ and let $a(\mfs, \mft)$ denote the matrix coefficients of $a$ with respect to the seminormal basis $\{f_\mft^\la\}$ of $\Delta_{A_r}^\FF(\la)$, 
$$
f^\la_\mft a = \sum a(\mfs, \mft)  f^\la_\mfs.
$$   
If $\mfs, \mft$ are permissible paths in $\Std_r(\lambda)$,  then $a(\mfs, \mft) \in R_{\mathfrak p}$.
\end{lem}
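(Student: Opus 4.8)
The plan is to pin down $a(\mfs,\mft)$, up to a unit of $R_{\mathfrak p}$, as a single Murphy-basis coefficient of an element that is visibly evaluable. I work inside the split semisimple algebra $A_r^\FF$, using the seminormal idempotents $F_\mfu$, the seminormal basis $\{F^\la_{\mfu\mfv}\}$ of $A_r^\FF$, and the seminormal basis $\{f^\la_\mfv\}$ of $\cell {A_r} \FF \la$ of \myref{Corollary}{corollary seminormal basis}. Since each $f^\la_\mfv$ is a common weight vector for the Jucys--Murphy elements $L_i$ and $L_i = L_i^*$, the separation condition forces $\langle f^\la_\mfu, f^\la_\mfv\rangle = \delta_{\mfu\mfv}\gamma_\mfv$ with $\gamma_\mfv := \langle f^\la_\mfv, f^\la_\mfv\rangle \ne 0$; consequently $f^\la_\mfw\, F^\la_{\mfu\mfv} = \langle f^\la_\mfw, f^\la_\mfu\rangle f^\la_\mfv = \delta_{\mfw\mfu}\gamma_\mfu f^\la_\mfv$, so the $F^\la_{\mfu\mfv}$ act on $\cell {A_r} \FF \la$ as scaled matrix units. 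Since each $F^\la_{\mfu\mfv}$ lies in the block $\z r \la A_r^\FF$, which acts faithfully on $\cell {A_r} \FF \la$, comparing actions yields the identity
\begin{equation*}
F^\la_{\mft\mft}\, a\, F^\la_{\mfs\mfs} \;=\; \gamma_\mfs\, a(\mfs,\mft)\, F^\la_{\mft\mfs}.
\end{equation*}

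First I would deduce from this that $\gamma_\mfs\, a(\mfs,\mft) \in R_{\mathfrak p}$. As $\mfs$ and $\mft$ are permissible, assumption \eqref{assumption SN} (and the observations following it) gives that $F^\la_{\mfs\mfs}$, $F^\la_{\mft\mft}$ and $F^\la_{\mft\mfs}$ are evaluable, so that, since $a \in A_r^R \subseteq A_r^{R_{\mathfrak p}}$, the left-hand side of the identity lies in $A_r^{R_{\mathfrak p}}$. On the other hand, expanding $f^\la_\mfu$ and $f^\la_\mfv$ via the dominance--triangularity of \myref{Theorem}{dominance triangularity} and transporting through the bimodule isomorphism $\alpha_\la$ (under which $F^\la_{\mfu\mfv} + A_r^{\rhd\la}$ and $m^\la_{\mfu\mfv} + A_r^{\rhd\la}$ correspond respectively to $(f^\la_\mfu)^* \otimes f^\la_\mfv$ and $(m^\la_\mfu)^* \otimes m^\la_\mfv$) shows that $F^\la_{\mft\mfs}$ equals $m^\la_{\mft\mfs}$ plus a combination of Murphy basis elements $m^\mu_{\mfu\mfv}$ with $\mu \rhd \la$, or with $\mu = \la$ and $(\mfu,\mfv)$ strictly dominating $(\mft,\mfs)$; in particular the $m^\la_{\mft\mfs}$-coefficient of $F^\la_{\mft\mfs}$ is $1$. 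Reading off the coefficient of $m^\la_{\mft\mfs}$ on both sides of $\gamma_\mfs a(\mfs,\mft) F^\la_{\mft\mfs} = F^\la_{\mft\mft} a F^\la_{\mfs\mfs} \in A_r^{R_{\mathfrak p}}$, and using that the Murphy basis is an $R_{\mathfrak p}$-basis of $A_r^{R_{\mathfrak p}}$, then gives $\gamma_\mfs\, a(\mfs,\mft) \in R_{\mathfrak p}$.

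The remaining, and harder, point is that $\gamma_\mfs$ is a \emph{unit} of $R_{\mathfrak p}$, that is $\pi(\gamma_\mfs) \ne 0$. By the remarks after \eqref{assumption SN}, $\gamma_\mfs \in R_{\mathfrak p}$, and the cellular form commutes with the specialization $\pi\colon R_{\mathfrak p}\to\BbbK$; hence the induced form on $\cell {A_r} \BbbK \la$, restricted to $V := \Span_\BbbK\{\pi(f^\la_\mfv) : \mfv\in\Std_{r,\perm}(\la)\}$, is diagonal with entries $\pi(\gamma_\mfv)$. Combining the dominance--triangular relation $f^\la_\mfv = m^\la_\mfv + \sum_{\mfu\rhd\mfv} r'_\mfu m^\la_\mfu$ (with $R_{\mathfrak p}$-coefficients, as $f^\la_\mfv$ is evaluable for permissible $\mfv$) with the expansion $\nn^\la_\mfv = m^\la_\mfv - \sum_{\mfu\succ\mfv}\alpha_\mfu m^\la_\mfu$ of \eqref{eqn n-basis 3} for non-permissible $\mfv$, one checks that $\{\,\pi(f^\la_\mfv) : \mfv \text{ permissible}\,\} \cup \{\,\nn^\la_\mfv : \mfv \text{ not permissible}\,\}$ is a $\BbbK$-basis of $\cell {A_r} \BbbK \la$, the transition matrix to the Murphy basis being unitriangular for a linear refinement of the order $\succeq$. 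By the second part of \myref{Corollary}{good paths cor}, the span of the non-permissible $\nn^\la_\mfv$ is $\kappa(\la)\otimes_S\BbbK$, which is the radical of the $A_r^\BbbK$-module $\cell {A_r} \BbbK \la$; since $Q_r^\BbbK$ is split semisimple the head $\cell {Q_r} \BbbK \la$ is simple, and a standard argument for cellular algebras identifies this module radical with the radical of the bilinear form. Therefore $V$ is a vector-space complement of the radical of the form, so the form on $V$ is non-degenerate; being diagonal, all of its entries $\pi(\gamma_\mfv)$ are non-zero. In particular $\gamma_\mfs$ is a unit of $R_{\mathfrak p}$, so $a(\mfs,\mft) = \gamma_\mfs^{-1}\bigl(\gamma_\mfs a(\mfs,\mft)\bigr) \in R_{\mathfrak p}$, as required. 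The main obstacle is exactly this last step --- converting the semisimplicity of the quotient algebras $Q_r^\BbbK$ into the non-vanishing, after specialization, of the seminormal form-scalars $\gamma_\mfv$ attached to permissible paths.
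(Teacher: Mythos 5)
Your argument is correct, but it takes a noticeably longer route than the paper's. The paper simply multiplies inside the cell module by the Gelfand--Zeitlin idempotent: $f^\la_\mft a F_\mfs = a(\mfs,\mft) f^\la_\mfs$, observes that the left side is evaluable by \eqref{assumption SN}, and reads off the coefficient of $m^\la_\mfs$ using \myref{Theorem}{dominance triangularity} --- two lines, using nothing beyond (SN) and dominance triangularity. By instead truncating with the seminormal cellular basis element $F^\la_{\mfs\mfs} = \langle f^\la_\mfs, f^\la_\mfs\rangle F_\mfs$, you introduce the form-scalar $\gamma_\mfs$ and are then forced into the substantial second half of your argument, showing $\pi(\gamma_\mfs)\ne 0$. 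That second half is sound: identifying $\Span_\BbbK\{\nn^\la_\mfv : \mfv \text{ not permissible}\} = \kappa(\la)\otimes_S\BbbK$ with the radical of the form via \myref{Corollary}{good paths cor} and the standard Graham--Lehrer argument, and deducing non-degeneracy of the (diagonal) form on the complement spanned by the permissible $\pi(f^\la_\mfv)$, is a legitimate proof that $\pi(\langle f^\la_\mfs, f^\la_\mfs\rangle)\ne 0$ --- indeed you have essentially proved part \ref{quotient sn 3} of \myref{Theorem}{theorem quotient seminormal} ahead of time, by a different mechanism than the paper uses there. What this costs you is reliance on the split semisimplicity of $Q_r^\BbbK$ and on \myref{Corollary}{good paths cor}, neither of which the lemma actually needs; what it buys you is the extra non-vanishing statement as a by-product. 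Had you right-multiplied by $F_\mfs$ rather than $F^\la_{\mfs\mfs}$ (or worked in the cell module from the start), the scalar $\gamma_\mfs$ would never have appeared and your first paragraph alone would have finished the proof.
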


\begin{proof}  We have $f^\la_\mft a F_\mfs =  a(\mfs, \mft) f^\la_\mfs = a(\mfs, \mft)  m^\la_\mfs + \sum_{\mfv \rhd \mfs}  \gamma_\mfv m^\la_\mfv$, using \myref{Theorem}{dominance triangularity}.
By assumption (SN),  the element on the left side of the equation is evaluable, and hence the coefficients on the right side  lie in $R_{\mathfrak p}$.
\end{proof}

We will now construct a cellular basis $\{h^\la_{\mfs \mft} \suchthat \la \in \hat A_r \text{ and } \mfs, \mft \in \Std_r(\la)\}$  with the properties that 
\begin{enumerate}
\item If both $\mfs, \mft$ are permissible paths, then $h^\la_{\mfs \mft} =  \pi(F^\la_{\mfs \mft})$.
\item If at least one of $\mfs, \mft$ is not permissible, then  $h^\la_{\mfs \mft} \in \ker(\phi)$.
\end{enumerate}
If $\mft \in \Std_r$ is permissible, define $[\mft] = \{\mft\}$.   If $\mft$ is not permissible, then 
let  $[\mft]$ denote the set of paths $\mfs \in \Std_r$ such that   $\mfs \approx \mft$. 
 By assumption (SN) and \cite[Lemma~4.2]{MR2414949}, 
 $F_{[\mft]} :=  \sum_{\mfs \in [\mft]} F_\mfs$ is an evaluable idempotent.   
For any $\mfs, \mft \in \Std_r(\lambda)$, permissible or not, define $h^\la_\mft =  \nn^\la_\mft \pi(F_{[\stt]})$, 
and $h^\la_{\mfs \mft} =  \pi(F_{[\mfs]}) \nn^\la_{\mfs \mft} \pi(F_{[\mft]})$.

\begin{lem} \label{lemma h basis triangular transition 1}
For $\la \in \hat A_r$ and $\mft \in \Std_r(\la)$  there exist coefficients $\beta_\mfs \in \BbbK$ such that
\begin{align}  \label{unitriangular transition 3}
h^\la_\mft=
m^\la_\mft
+\sum_{\substack{\mfs\succ\mft}} \beta_\mfs m^\la_\mfs.
\end{align}
\end{lem}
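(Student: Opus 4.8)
The plan is to evaluate $h^\la_\mft = \nn^\la_\mft\,\pi(F_{[\mft]})$ by transporting the computation to the seminormal basis $\{f^\la_\mfs\}$ of $\Delta_{A_r}^\FF(\la)$, where the idempotent $F_{[\mft]}$ acts as a coordinate projection. First I would record, from \eqref{eqn n-basis 3} (equivalently, from the unitriangularity established in the proof of \myref{Theorem}{basis theorem}), that
\[
\nn^\la_\mft = m^\la_\mft + \sum_{\mfv \succ \mft} c_\mfv\, m^\la_\mfv, \qquad c_\mfv \in S \subseteq \BbbK,
\]
the sum being empty when $\mft$ is permissible (in which case $a_\mft = d_\mft$ and $\nn^\la_\mft = m^\la_\mft$). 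Hence it suffices to understand $m^\la_\mfv\,\pi(F_{[\mft]})$ for each $\mfv$ with $\mfv \succeq \mft$, as any contribution supported on paths strictly above $\mft$ in the order $\succ$ can be absorbed into the error term.

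Next I would analyse $m^\la_\mfv F_{[\mft]}$ over $\FF$. By \myref{Theorem}{dominance triangularity} we have $m^\la_\mfv = f^\la_\mfv + \sum_{\mfw \rhd \mfv} r_\mfw f^\la_\mfw$, while from $f^\la_\mfs = m^\la_\mfs F_\mfs$ together with the mutual orthogonality and idempotence of the $F_\mfs$ one gets $f^\la_\mfs F_{[\mft]} = f^\la_\mfs$ if $\mfs \in [\mft]$ and $0$ otherwise. So $m^\la_\mfv F_{[\mft]}$ is $f^\la_\mfv$ (when $\mfv \in [\mft]$) plus an $\FF$-combination of $f^\la_\mfw$ with $\mfw \rhd \mfv$ and $\mfw \in [\mft]$. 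Re-expanding each such $f^\la_\mfu$ (with $\mfu \unrhd \mfv$) in the Murphy basis via the inverse transition of \myref{Theorem}{dominance triangularity} and using that $\rhd$ implies $\succ$, I obtain
\[
m^\la_\mfv F_{[\mft]} = \varepsilon_\mfv\, m^\la_\mfv + \sum_{\mfz \succ \mfv} \rho_\mfz\, m^\la_\mfz,
\]
where $\varepsilon_\mfv = 1$ if $\mfv \in [\mft]$ and $0$ otherwise, with a priori $\rho_\mfz \in \FF$. Now I would invoke \eqref{assumption SN} (and \cite[Lemma~4.2]{MR2414949}): $F_{[\mft]}$ is evaluable, i.e.\ lies in $A_r^{R_{\mathfrak p}}$; since $m^\la_\mfv \in \Delta_{A_r}^{R_{\mathfrak p}}(\la)$ as well, the left-hand side is evaluable, and because $\{m^\la_\mfz\}$ is simultaneously a free $R_{\mathfrak p}$-basis of $\Delta_{A_r}^{R_{\mathfrak p}}(\la)$ and a free $\FF$-basis of $\Delta_{A_r}^\FF(\la)$, all $\rho_\mfz$ must in fact lie in $R_{\mathfrak p}$. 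Applying the evaluation map $\pi$ then gives the same identity in $\Delta_{A_r}^{\BbbK}(\la)$ with $\pi(\rho_\mfz) \in \BbbK$.

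Finally I would assemble the pieces:
\[
h^\la_\mft = m^\la_\mft\,\pi(F_{[\mft]}) + \sum_{\mfv \succ \mft} c_\mfv\, m^\la_\mfv\,\pi(F_{[\mft]}).
\]
Since $\mft \in [\mft]$ always, the first term equals $m^\la_\mft$ plus a $\BbbK$-combination of $m^\la_\mfz$ with $\mfz \succ \mft$; and for each $\mfv \succ \mft$ the term $m^\la_\mfv\,\pi(F_{[\mft]})$ is a $\BbbK$-combination of $m^\la_\mfz$ with $\mfz \succeq \mfv$, hence with $\mfz \succ \mft$. Collecting, $h^\la_\mft = m^\la_\mft + \sum_{\mfs \succ \mft} \beta_\mfs\, m^\la_\mfs$ with $\beta_\mfs \in \BbbK$, as claimed. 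The only point requiring genuine care — and the one I would flag as the main obstacle, though it is bookkeeping rather than anything deep — is keeping straight the passage between the computation performed over $\FF$ with the original seminormal data and the statement to be proved over $\BbbK$; this is exactly what the evaluability of $F_{[\mft]}$ in \eqref{assumption SN}, together with the double role of the Murphy basis over $R_{\mathfrak p}$ and over $\FF$, is there to license.
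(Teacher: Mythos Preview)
Your proof is correct and follows essentially the same approach as the paper's: pass to the seminormal basis via \myref{Theorem}{dominance triangularity}, use that $F_{[\mft]}$ projects onto the $f^\la_\mfs$ with $\mfs \in [\mft]$, convert back to the Murphy basis, and invoke evaluability of $F_{[\mft]}$ to descend to $\BbbK$. The only organizational difference is that the paper first lifts the relation $\nn^\la_\mft = m^\la_\mft + \sum_{\mfs \succ \mft} \alpha_\mfs m^\la_\mfs$ from $S$ to $R$ (choosing $\alpha'_\mfs \in R$ with $\pi(\alpha'_\mfs) = \alpha_\mfs$) and then carries out the whole computation at once over $\FF$, whereas you compute $m^\la_\mfv F_{[\mft]}$ for each $\mfv$ separately and assemble over $\BbbK$; your version avoids the lifting step, but the substance is identical.
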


\begin{proof}  We know from  Equation \eqref{eqn n-basis 3} that  the basis elements $\nn^\lambda_\stt $
of $\Delta_{A_r}^S(\lambda)$ 
 are related to  the basis elements  
$m^\lambda_\stt $  
by a unitriangular transformation with coefficients in $S$, 
$$
\nn^\la_\mft =  m^\la_\mft   +\sum_{\substack{\mfs\succ\mft}} \alpha_\mfs m^\la_\mfs.
$$
Lift this relation to $\Delta_{A_r}^R(\lambda)$, defining $\nn^\la_\mft $ by
$
\nn^\la_\mft =  m^\la_\mft   +\sum_{\substack{\mfs\succ\mft}} \alpha'_\mfs m^\la_\mfs
$,
where $\alpha'_\mfs  \in R$ and $\pi(\alpha'_\mfs) = \alpha_\mfs$.  Apply  \myref{Theorem}{dominance triangularity}  to get 
$
\nn^\la_\mft =  f^\la_\mft   +\sum_{\substack{\mfs\succ\mft}}
\gamma_\mfs f^\la_\mfs,
$
where the coefficients are now in $\FF$.   Applying  $F_{[\stt]}$  yields
$
\nn^\la_\mft  F_{[\stt]} =  f^\la_\mft   +\sum_{\begin{subarray}c
\mfs\succ\mft ,  \mfs \in  [\mft]
\end{subarray}}
\gamma_\mfs f^\la_\mfs,  
$
and using \myref{Theorem}{dominance triangularity}  again produces
$$
\nn^\la_\mft  F_{[\stt]} =  m^\la_\mft   +\sum_{\mfs \succ \mft} \beta'_\mfs m^\la_\mfs.
$$
The coefficients are {\em a priori} in $\FF$, but since $\nn^\la_\mft  F_{[\stt]}$ is evaluable, the coefficients are  evaluable.   Finally applying $\pi$ yields  \eqref{unitriangular transition 3}, with $\beta_\mfs = \pi(\beta'_\mfs)$.
\end{proof}

 \begin{cor}   \label{seminormal basis lemma 1} \mbox{}
\begin{enumerate}[leftmargin=*,label=(\arabic{*}), font=\normalfont, align=left, leftmargin=*]
\item  For $\la \in \hat A_r$, 
 $\{h^\la_\mft \suchthat \mft \in \Std_r(\lambda)\}$   is a basis of the cell module $\Delta_{A_r}^\BbbK(\lambda)$ and  $\{h^\la_{\mfs \mft} \suchthat  \la \in \hat A_r \text{ and } \mfs, \mft \in \Std_r(\la)  \}$ is a cellular basis of $A_r^\BbbK$ equivalent to the Murphy basis. 
 \item
 If both $\mfs, \mft$ are permissible paths, then $h^\la_{\mfs \mft} =  \pi(F^\la_{\mfs \mft})$.   If at least one of $\mfs, \mft$ is not permissible, then  $h^\la_{\mfs \mft} \in \ker(\phi)$.
 \end{enumerate}
\end{cor}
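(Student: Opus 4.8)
The plan is to deduce both parts from \myref{Lemma}{lemma h basis triangular transition 1} and \myref{Theorem}{basis theorem}, following the strategy used to prove \myref{Theorem}{basis theorem} itself.

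For part~(1), the unitriangular transition of \myref{Lemma}{lemma h basis triangular transition 1} shows immediately that $\{h^\la_\mft \suchthat \mft \in \Std_r(\la)\}$ is obtained from the Murphy basis $\{m^\la_\mft\}$ of $\cell {A_r} \BbbK \la$ by a transformation unitriangular with respect to any linear extension of $\succ$, hence is a $\BbbK$--basis of $\cell {A_r} \BbbK \la$. To upgrade this to a cellular basis of $A_r^\BbbK$ I would verify
$$
\alpha_\la\bigl(h^\la_{\mfs \mft} + (A_r^\BbbK)^{\rhd \la}\bigr) = (h^\la_\mfs)^* \otimes h^\la_\mft
$$
and then quote \cite[Lemma~2.3]{MR3065998}, exactly as in the proof of \myref{Theorem}{basis theorem}. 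Since $\alpha_\la$ is an $A_r^\BbbK$--$A_r^\BbbK$--bimodule map and $\alpha_\la(\nn^\la_{\mfs \mft} + (A_r^\BbbK)^{\rhd \la}) = (\nn^\la_\mfs)^* \otimes \nn^\la_\mft$ by construction of the $\nn$--basis, applying $\alpha_\la$ to $h^\la_{\mfs \mft} = \pi(F_{[\mfs]})\, \nn^\la_{\mfs \mft}\, \pi(F_{[\mft]})$ reduces the claim to the identity $\pi(F_{[\mfs]}) \cdot (\nn^\la_\mfs)^* = (h^\la_\mfs)^*$ in the left module $(\cell {A_r} \BbbK \la)^*$, which by the rule $a \cdot x^* = (x a^*)^*$ is simply the $*$--invariance of $\pi(F_{[\mfs]})$.

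Thus the only point requiring an argument is the $*$--invariance of the idempotents $F_{[\mfs]}$, and this is where I expect the (mild) work to lie. I would note that $F_{[\mfs]}$ is a sum of Gelfand--Zeitlin idempotents $F_\mfv$ and that each $F_\mfv$ is $*$--invariant: by induction on $r$, using the interpolation formula for $F_\mfv$ recorded in the proof of \myref{Lemma}{suff cond for SN}, $F_{\mfv'}$ is $*$--invariant by induction and lies in $A_{r-1}^\FF$, hence commutes with $L_r$, while $L_r = L_r^*$ by \eqref{cond JM1}, so $F_\mfv$ is a product of pairwise commuting $*$--invariant elements and is therefore $*$--invariant. (Equivalently, $F_\mfv = \prod_j \z j {\mfv(j)}$ is a product of minimal central idempotents of the split semisimple algebras $A_j^\FF$, each of which is $*$--fixed, being the identity element of the unique complement of the $*$--invariant ideal $(A_j^\FF)^{\rhd \mfv(j)}$ inside the $*$--invariant ideal $(A_j^\FF)^{\unrhd \mfv(j)}$.) Since $F_{[\mfs]}$ is evaluable and $\pi$ intertwines the two $R$--defined involutions, $\pi(F_{[\mfs]})$ is $*$--invariant, which finishes part~(1).

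For part~(2): if $\mfs$ and $\mft$ are both permissible then $[\mfs] = \{\mfs\}$, $[\mft] = \{\mft\}$ and $\nn^\la_{\mfs \mft} = m^\la_{\mfs \mft}$ by \myref{Theorem}{basis theorem}, so, using that $F_\mfs, m^\la_{\mfs \mft}, F_\mft$ are evaluable (the first by \eqref{assumption SN}) and that $\pi$ is a ring homomorphism,
$$
h^\la_{\mfs \mft} = \pi(F_{[\mfs]})\, \nn^\la_{\mfs \mft}\, \pi(F_{[\mft]}) = \pi\bigl(F_\mfs\, m^\la_{\mfs \mft}\, F_\mft\bigr) = \pi(F^\la_{\mfs \mft}).
$$
If instead at least one of $\mfs, \mft$ is not permissible, then $\nn^\la_{\mfs \mft} \in \ker(\phi)$ by \myref{Theorem}{basis theorem}; since $\ker(\phi_r)$ is a two--sided ideal of $A_r^\BbbK$ and $\pi(F_{[\mfs]}), \pi(F_{[\mft]}) \in A_r^\BbbK$, the element $h^\la_{\mfs \mft} = \pi(F_{[\mfs]})\, \nn^\la_{\mfs \mft}\, \pi(F_{[\mft]})$ again lies in $\ker(\phi)$. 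The main obstacle is the $*$--invariance step; the rest is routine bookkeeping with the constructions of \myref{Theorem}{basis theorem} and \myref{Lemma}{lemma h basis triangular transition 1}.
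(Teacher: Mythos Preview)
Your proposal is correct and follows essentially the same approach as the paper: the paper simply says part~(1) ``follows from \myref{Lemma}{lemma h basis triangular transition 1} by familiar argument, compare \myref{Theorem}{basis theorem}'' and that part~(2) ``is evident from the construction and properties of $\{\nn^\la_{\mfs\mft}\}$''. You have spelled out that familiar argument in full, including the one detail the paper leaves implicit, namely the $*$--invariance of $\pi(F_{[\mfs]})$ needed to verify $\alpha_\la(h^\la_{\mfs\mft} + (A_r^\BbbK)^{\rhd\la}) = (h^\la_\mfs)^*\otimes h^\la_\mft$ before invoking \cite[Lemma~2.3]{MR3065998}; your justification of this via the Gelfand--Zeitlin idempotents (as products of $*$--fixed central idempotents, or via the interpolation formula and $L_r^* = L_r$) is sound.
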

 
\begin{proof}  The first assertion follows from  \myref{Lemma}{lemma h basis triangular transition 1} by  familiar argument, compare \myref{Theorem}{basis theorem}.   The second assertion is evident from the construction and properties of $\{\nn^\la_{\mfs \mft}\}$.  
\end{proof}

When $\mfs, \mft \in \Std_r(\la)$ are permissible, 
define $\bar F^\la_{\mfs \mft} =  \phi\circ\pi(F^\la_{\mfs \mft}) = \phi(h^\la_{\mfs \mft})$  and   $\bar F_\mft =  \phi\circ\pi(F_\mft)$.

\begin{cor}\label{seminormal basis lemma 2} \mbox{}
\begin{enumerate}[leftmargin=*,label=(\arabic{*}), font=\normalfont, align=left, leftmargin=*]
\item  The set $\{ \bar F^\la_{\mfs \mft} \suchthat \la \text{ is permissible and }  \mfs, \mft \in \Std_{r, \perm}(\la)\}$  is a basis of $Q_r^\BbbK$.
\item  The set 
$\{h^\lambda_{\sts \stt} \suchthat  \lambda \in \widehat A_\f \text{ and } \sts  \text{ or  } \stt \text{ is not permissible}\}$ is a $\BbbK$--basis of  $\ker(\phi_r)$.  
\end{enumerate}
\end{cor}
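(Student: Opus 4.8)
The plan is to run the same bookkeeping argument that proves \myref{Theorem}{good paths basis theorem}, but now using the cellular basis $\{h^\la_{\mfs\mft}\}$ of $A_r^\BbbK$ supplied by \myref{Corollary}{seminormal basis lemma 1} in place of the $\{\nn^\la_{\mfs\mft}\}$-basis of $A_r^S$.

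\emph{Part (1).} By \myref{Corollary}{seminormal basis lemma 1}, the set $\{h^\la_{\mfs\mft} \suchthat \la \in \widehat A_r,\ \mfs,\mft \in \Std_r(\la)\}$ is a $\BbbK$-basis of $A_r^\BbbK$, so its image under the surjection $\phi = \phi_r$ spans $Q_r^\BbbK$ over $\BbbK$. By part (2) of that corollary, $\phi(h^\la_{\mfs\mft}) = 0$ unless both $\mfs$ and $\mft$ are permissible, in which case $\phi(h^\la_{\mfs\mft}) = \phi\circ\pi(F^\la_{\mfs\mft}) = \bar F^\la_{\mfs\mft}$. Hence $\{\bar F^\la_{\mfs\mft} \suchthat \la \text{ permissible},\ \mfs,\mft \in \Std_{r,\perm}(\la)\}$ spans $Q_r^\BbbK$, and therefore spans $Q_r^{\Kbar} = Q_r^\BbbK \otimes_\BbbK \Kbar$ over $\Kbar$. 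By axiom \eqref{quotient axiom 3} the cardinality of this set is exactly $\dim_{\Kbar}(Q_r^{\Kbar})$, so it is $\Kbar$-linearly independent, hence a $\BbbK$-basis of $Q_r^\BbbK$. This is the same reduction to the dimension hypothesis used in the proof of \myref{Theorem}{good paths basis theorem}.

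\emph{Part (2).} Split the $\BbbK$-basis $\{h^\la_{\mfs\mft}\}$ of $A_r^\BbbK$ into the subset $\mathcal H_{\mathrm{imp}}$ of those elements for which $\mfs$ or $\mft$ is not permissible, and its complement $\mathcal H_{\mathrm{perm}}$. By \myref{Corollary}{seminormal basis lemma 1}(2), $\Span_\BbbK(\mathcal H_{\mathrm{imp}}) \subseteq \ker(\phi_r)$. On the other hand, $\phi$ carries $\mathcal H_{\mathrm{perm}}$ bijectively onto $\{\bar F^\la_{\mfs\mft}\}$, which is $\BbbK$-linearly independent by part (1); hence $\phi$ restricted to $\Span_\BbbK(\mathcal H_{\mathrm{perm}})$ is injective. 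Given $x \in \ker(\phi_r)$, write $x = x_{\mathrm{imp}} + x_{\mathrm{perm}}$ with $x_{\mathrm{imp}} \in \Span_\BbbK(\mathcal H_{\mathrm{imp}})$ and $x_{\mathrm{perm}} \in \Span_\BbbK(\mathcal H_{\mathrm{perm}})$; then $\phi(x_{\mathrm{perm}}) = \phi(x) - \phi(x_{\mathrm{imp}}) = 0$, so $x_{\mathrm{perm}} = 0$ and $x = x_{\mathrm{imp}}$. Thus $\mathcal H_{\mathrm{imp}}$ spans $\ker(\phi_r)$, and being part of a basis it is linearly independent, so it is a $\BbbK$-basis of $\ker(\phi_r)$.

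I do not anticipate a genuine obstacle: once \myref{Corollary}{seminormal basis lemma 1} is available, everything is linear algebra. The only point that must be handled with a little care is the linear independence in part (1), which — exactly as in the proof of \myref{Theorem}{good paths basis theorem} — is forced by the dimension equality \eqref{quotient axiom 3} after extending scalars from $\BbbK$ to its algebraic closure $\Kbar$.
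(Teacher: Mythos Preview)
Your proposal is correct and takes essentially the same approach as the paper, which simply says to adapt the proof of \myref{Theorem}{good paths basis theorem} parts \ref{good paths 2} and \ref{good paths 3}. You have written out exactly that adaptation: replace the $\{\nn^\la_{\mfs\mft}\}$--basis of $A_r^S$ by the $\{h^\la_{\mfs\mft}\}$--basis of $A_r^\BbbK$ from \myref{Corollary}{seminormal basis lemma 1}, then use the dimension count \eqref{quotient axiom 3} over $\Kbar$ for linear independence and the splitting argument for the kernel.
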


\begin{proof}  Adapt the proof of \myref{Theorem}{good paths basis theorem}  parts \ref{good paths 2} and
\ref{good paths 3}. 
\end{proof}

In \myref{Corollary}{good paths cor}, for $\lambda \in \hat A_r$ permissible, we identified $\Delta_{Q_r}^\BbbK(\lambda)$ with the simple head of $\Delta_{A_r}^\BbbK(\lambda)$, and we showed that the radical of $\Delta_{A_r}^\BbbK(\lambda)$ is  $\kappa^\BbbK(\la) := \Span_\BbbK\{ \nn^\lambda_\stt \suchthat \stt \text{ is not permissible} \}$.   Overloading notation, let us write $\phi$ for the quotient map
$\phi: \Delta_{A_r}^\BbbK(\lambda) \to \Delta_{A_r}^\BbbK(\lambda)/\kappa^\BbbK(\lambda)$.   

For $\mft \in \Std_r(\lambda)$ permissible, write $\bar f^\la_\mft = \phi(h^\la_\mft) = \phi(\nn^\la_\mft \pi(F_\mft))$.  

\begin{cor}\label{seminormal basis lemma 3} \mbox{}
\begin{enumerate}[leftmargin=*,label=(\arabic{*}), font=\normalfont, align=left, leftmargin=*]
\item  For $\la \in \hat A_r$ permissible,  $\{\bar f^\la_\mft \suchthat \mft \text{ is permissible}\}$ is a basis
of $\Delta_{Q_r}^\BbbK(\la)$. 
\item The set $\{h^\la_\mft \suchthat \mft \text{ is  not permissible}\}$ is a basis of $\rad(\Delta_{A_r}^\BbbK(\la))$.
\end{enumerate}
\end{cor}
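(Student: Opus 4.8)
The plan is to deduce both parts from \myref{Corollary}{good paths cor}, \myref{Corollary}{seminormal basis lemma 1}, and \myref{Corollary}{seminormal basis lemma 2} by a dimension count, once we know the one substantive fact that $h^\la_\mft \in \rad(\cell {A_r} \BbbK \la)$ whenever $\mft$ is not permissible. (As in the first statement, $\la$ is a permissible point of $\widehat A_r$.) Recall that \myref{Corollary}{good paths cor} provides a surjection $\cell {A_r} \BbbK \la \twoheadrightarrow \cell {Q_r} \BbbK \la$ with kernel exactly $\rad(\cell {A_r} \BbbK \la) = \kappa^\BbbK(\la) = \Span_\BbbK\{\nn^\la_\mfv \suchthat \mfv \text{ not permissible}\}$, and that $\{h^\la_\mft \suchthat \mft \in \Std_r(\la)\}$ is a basis of $\cell {A_r} \BbbK \la$.

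For the key fact I would mimic the proof of \myref{Corollary}{good paths cor}, using the cellular basis $\{h^\la_{\mfs\mft}\}$ of $A_r^\BbbK$ (from \myref{Corollary}{seminormal basis lemma 1}) in place of $\{\nn^\la_{\mfs\mft}\}$. Fix a permissible path $\mfu \in \Std_{r,\perm}(\la)$; one exists by \eqref{quotient axiom 1}. If $\mft \in \Std_r(\la)$ is not permissible, then $h^\la_{\mfu\mft} \in \ker(\phi_r)$ by part (2) of \myref{Corollary}{seminormal basis lemma 1}. Now I would transport everything through the bimodule isomorphism $\alpha_\la \colon A_r^{\unrhd\la,\BbbK}/A_r^{\rhd\la,\BbbK} \to (\cell {A_r} \BbbK \la)^*\otimes_\BbbK \cell {A_r} \BbbK \la$: since each idempotent $F_{[\mfs]}$ is $*$--invariant, one checks that $h^\la_{\mfu\mft} + A_r^{\rhd\la,\BbbK}$ corresponds to $(h^\la_\mfu)^*\otimes h^\la_\mft$, while by \myref{Theorem}{basis theorem} we have $\phi(A_r^{\unrhd\la,\BbbK}) = Q_r^{\unrhd\la,\BbbK}$ and $\phi(A_r^{\rhd\la,\BbbK}) = Q_r^{\rhd\la,\BbbK}$, so $\phi$ induces on these quotients the map $\phi\otimes\phi$. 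Hence $(\bar f^\la_\mfu)^*\otimes\phi(h^\la_\mft) = 0$ in $(\cell {Q_r} \BbbK \la)^*\otimes_\BbbK \cell {Q_r} \BbbK \la$. Finally $\bar f^\la_\mfu \neq 0$: the element $\bar F^\la_{\mfu\mfu} = \phi(h^\la_{\mfu\mfu})$ belongs to the basis of $Q_r^\BbbK$ of part (1) of \myref{Corollary}{seminormal basis lemma 2}, hence is nonzero, and it corresponds to $(\bar f^\la_\mfu)^*\otimes \bar f^\la_\mfu$ under $\alpha_\la$. Therefore $\phi(h^\la_\mft) = 0$, i.e.\ $h^\la_\mft \in \ker(\phi) = \rad(\cell {A_r} \BbbK \la)$.

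With the key fact in hand, the remainder is bookkeeping. For part (2): the set $\{h^\la_\mft \suchthat \mft \text{ not permissible}\}$ is linearly independent, being a subset of the basis in part (1) of \myref{Corollary}{seminormal basis lemma 1}, and is contained in $\rad(\cell {A_r} \BbbK \la)$ by the above; since \myref{Corollary}{good paths cor} shows $\dim_\BbbK \rad(\cell {A_r} \BbbK \la)$ equals the number of non--permissible paths of shape $\la$, this set is a basis of the radical. For part (1): applying the surjection $\cell {A_r} \BbbK \la \to \cell {Q_r} \BbbK \la$ to the basis $\{h^\la_\mft \suchthat \mft \in \Std_r(\la)\}$ and using that the non--permissible basis vectors are killed, we get that $\{\bar f^\la_\mft \suchthat \mft \text{ permissible}\}$ spans $\cell {Q_r} \BbbK \la$; and $\dim_\BbbK \cell {Q_r} \BbbK \la = \#\Std_r(\la) - \#\{\text{non--permissible paths of shape }\la\} = \#\Std_{r,\perm}(\la)$ by \myref{Corollary}{good paths cor}, so this spanning set has exactly the right cardinality and is therefore a basis. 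I expect the key fact of the second paragraph to be the main obstacle --- in particular, carefully matching $\phi$ on the cell module with $\phi_r$ on the algebra through $\alpha_\la$, and extracting the non--vanishing $\bar f^\la_\mfu\neq 0$ from part (1) of \myref{Corollary}{seminormal basis lemma 2}.
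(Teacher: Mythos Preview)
Your proof is correct, and the bookkeeping in the final paragraph matches the paper exactly. However, your route to the key fact---that $h^\la_\mft \in \rad(\cell {A_r} \BbbK \la)$ for $\mft$ not permissible---is considerably more elaborate than the paper's.

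The paper obtains this in one line from the \emph{definition} of $h^\la_\mft$: by construction $h^\la_\mft = \nn^\la_\mft\,\pi(F_{[\mft]})$, and for $\mft$ not permissible we already know $\nn^\la_\mft \in \rad(\cell {A_r} \BbbK \la)$ from \myref{Corollary}{good paths cor}~\ref{good paths 6}. Since the radical is a submodule, it is closed under the right action of $\pi(F_{[\mft]}) \in A_r^\BbbK$, and the claim follows immediately. No passage through the algebra basis $\{h^\la_{\mfs\mft}\}$, the bimodule isomorphism $\alpha_\la$, or the non-vanishing of $\bar f^\la_\mfu$ is needed.

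Your argument instead lifts the problem to the algebra via $h^\la_{\mfu\mft}$, uses \myref{Corollary}{seminormal basis lemma 1}~(2) and \myref{Corollary}{seminormal basis lemma 2}~(1), and pushes everything through $\alpha_\la$ on both the $A_r$ and $Q_r$ sides. This works (your checks that $F_{[\mfs]}$ is $*$-invariant, that $\alpha_\la(h^\la_{\mfu\mft} + A_r^{\rhd\la}) = (h^\la_\mfu)^* \otimes h^\la_\mft$, and that $\bar f^\la_\mfu \neq 0$ are all fine), but it is effectively reproving part of the structure that \myref{Corollary}{good paths cor} already packaged. The paper's approach buys a much shorter argument by exploiting the module-theoretic definition of $h^\la_\mft$ rather than its algebra-level counterpart.
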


\begin{proof}  Write $\Delta$ for $\Delta_{A_r}^\BbbK(\la)$.  For $\stt$ not permissible,
we have $h^\la_\mft = \nn^\la_\mft \pi(F_{[\mft]})  \in \rad(\Delta)$ because $\nn^\la_\mft  \in \rad(\Delta)$ by \myref{Corollary}{good paths cor}, and $\rad(\Delta)$ is a submodule.  The two conclusions follow by a dimension argument as in the proof of
 \myref{Theorem}{good paths basis theorem}  parts \ref{good paths 2} and
\ref{good paths 3}. 
\end{proof}

The following result says that the seminormal representations of the quotient algebras $Q_r^\BbbK$ are
obtained as truncations of the seminormal representations of the diagram algebras $A_r^\FF$.   The 
application of this result to the Brauer algebras and their quotients acting on orthogonal or symplectic 
tensor space recovers  a well-known phenomenon, which is implicit in \cite{MR1398116, MR1427801, MR1144939}, and explicit in ~\cite[Theorem 5.4.3]{MR3581213}.  See also \cite{EG:2017}.

\begin{thm}\label{theorem quotient seminormal}       \mbox{}
\begin{enumerate}[leftmargin=*,label=(\arabic{*}), font=\normalfont, align=left, leftmargin=*, series = quotient sn]
\item  \label{quotient sn 1}
The family of idempotents $\bar F_\mft  = \phi\circ\pi(F_\mft)$, where $r \ge 1$ and  $\mft \in \Std_r$ is permissible, 
  is the family of Gelfand-Zeitlin idempotents for the tower $Q_r^\BbbK$.
\end{enumerate}
In the following statements, let $\la$ and $\mu$ be permissible points in $\hat A_r$ for some $r$,  and let
$\mfs, \mft \in \Std_{r, \perm}(\la)$, and $\mfu, \mfv \in \Std_{r, \perm}(\mu)$.
\begin{enumerate}[leftmargin=*,label=(\arabic{*}), font=\normalfont, align=left, leftmargin=*, resume = quotient sn]
\item  \label{quotient sn 2}
 $\bar f^\la_\mft \bar F^\mu_{\mfu \mfv} = \delta_{\la, \mu} \delta_{\mft, \mfu}\pi(\langle  f^\la_\mft,   f^\la_\mft \rangle)  \bar f^\la_\mfv$,  \ \ and  \ \
 $\bar F^\la_{\mfs \mft} \bar F^\mu_{\mfu \mfv} = \delta_{\la, \mu} \delta_{\mft, \mfu}\pi(\langle f^\la_\mft,  f^\la_\mft \rangle ) \bar F^\la_{\mfs \mfv}$.  
 \item  \label{quotient sn 3}  $\pi(\langle f^\la_\mft, f^\la_\mfs\rangle) \ne 0$ if and only if $\mft = \mfs$. 
  \item  \label{quotient sn 4} $ \pi(\langle f^\la_\mft, f^\la_\mft \rangle)\inv  \bar F^\la_{\mft \mft} = \bar F_\mft$
    \item  \label{quotient sn 5}  
    The set of elements $\bar E^\la_{\mfs \mft} = \pi(\langle f^\la_\mfs,  f^\la_\mfs \rangle)\inv  \bar F^\la_{\mfs \mft} $ for $\la \in \widehat A_r$ and $\mfs, \mft \in \Std_{r, \perm}(\la)$  is a complete family of matrix units with
 $
 \bar E^\la_{\mfs \mft} \bar E^\mu_{\mfu \mfv} = \delta_{\la, \mu} \delta_{\mft, \mfu}  \bar E^\la_{\mfs \mfv}$,  and 
$\bar E^\la_{\mfs \mfs}  =  \bar F_\mfs$.
 \item  \label{quotient sn 6}  
 Suppose that $a \in A_r^R$ has matrix coefficients $a(\mfs,\mft)$ with respect to the seminormal basis  $\{f_\mft^\la\}$ of $\Delta_{A_r}^\FF(\la)$, 
$$
f^\la_\mft a = \sum_{\mfs \in \Std_r(\la)}  a(\mfs, \mft)  f^\la_\mfs. 
$$  
Then for $\mft \in \Std_r(\la)$ permissible, we have
\begin{equation} \label{eqn: quotient sn 1}
\bar f^\la_\mft \phi\circ \pi(a) = 
\smashr{\sum_{\mfs \in \Std_{r, \perm}(\la )} }\  
\pi(a(\mfs, \mft) ) \bar f^\la_\mfs. 
\end{equation}

\end{enumerate}
\end{thm}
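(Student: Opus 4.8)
The plan is to obtain all six parts by transporting, through the composite evaluation-then-quotient map $\phi\circ\pi$, identities that are already known for the seminormal basis of the split semisimple algebra $A_r^\FF$; the single point that is not purely formal is the non-vanishing asserted in part~\ref{quotient sn 3}, which I expect to be the main obstacle and which will be forced by the split semisimplicity of $Q_r^\BbbK$.

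First I would pin down the bookkeeping. For a permissible path $\mft$ one has $[\mft]=\{\mft\}$ and, by the constructions preceding Corollary~\ref{seminormal basis lemma 1}, $\nn^\la_\mft = m^\la_\mft$ and $\nn^\la_{\mfs\mft}=m^\la_{\mfs\mft}$; hence, using the identity $\nn^\la_\mft F_{[\mft]}=f^\la_\mft$ obtained in the proof of Lemma~\ref{lemma h basis triangular transition 1}, $h^\la_\mft=\pi(f^\la_\mft)$ and $h^\la_{\mfs\mft}=\pi(F^\la_{\mfs\mft})$, so that $\bar f^\la_\mft=\phi\circ\pi(f^\la_\mft)$ and $\bar F^\la_{\mfs\mft}=\phi\circ\pi(F^\la_{\mfs\mft})$ for permissible paths, all the elements $f^\la_\mft$, $F_\mft$, $F^\la_{\mfs\mft}$ involved being evaluable by~\eqref{assumption SN}. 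Writing $\gamma^\la_\mft=\langle f^\la_\mft,f^\la_\mft\rangle$, the seminormal basis of $A_r^\FF$ satisfies the matrix-unit type relations
\[
f^\la_\mft F^\mu_{\mfu\mfv}=\delta_{\la,\mu}\,\delta_{\mft,\mfu}\,\gamma^\la_\mft\,f^\la_\mfv,
\qquad
F^\la_{\mfs\mft}F^\mu_{\mfu\mfv}=\delta_{\la,\mu}\,\delta_{\mft,\mfu}\,\gamma^\la_\mft\,F^\la_{\mfs\mfv},
\]
which come from the general cellular relation together with the diagonality of the seminormal form (see~\cite{BEG}), and $F^\la_{\mft\mft}=\gamma^\la_\mft F_\mft$, since $F_\mft$ is a minimal idempotent so $F^\la_{\mft\mft}\in F_\mft A_r^\FF F_\mft=\FF F_\mft$ and the scalar is read off by squaring. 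Applying $\phi\circ\pi$ to these identities of evaluable elements gives part~\ref{quotient sn 2} and the auxiliary relation $\bar F^\la_{\mft\mft}=\pi(\gamma^\la_\mft)\bar F_\mft$.

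Next I would establish parts~\ref{quotient sn 3}--\ref{quotient sn 5}. Since the seminormal basis is orthogonal over $\FF$, only $\pi(\gamma^\la_\mft)\neq 0$ for permissible $\mft$ remains to be shown; for this, recall that by Corollary~\ref{seminormal basis lemma 2} the set $\{\bar F^\la_{\mfs\mft}\}$ is a cellular basis of $Q_r^\BbbK$ whose cell module $\Delta_{Q_r}^\BbbK(\la)$ has basis $\{\bar f^\la_\mft\}$ (Corollary~\ref{seminormal basis lemma 3}), and comparing the second relation of part~\ref{quotient sn 2} with the definition of the cell form shows that the Gram matrix of that form in the basis $\{\bar f^\la_\mft\}$ is $\diag(\pi(\gamma^\la_\mft))$. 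As $Q_r^\BbbK$ is split semisimple, all of its cell forms are nondegenerate, forcing $\pi(\gamma^\la_\mft)\neq 0$ --- this is part~\ref{quotient sn 3} --- and part~\ref{quotient sn 4} is then immediate from the auxiliary relation. Setting $\bar E^\la_{\mfs\mft}=\pi(\gamma^\la_\mfs)\inv\bar F^\la_{\mfs\mft}$, the matrix-unit relations and $\bar E^\la_{\mfs\mfs}=\bar F_\mfs$ follow by direct computation from parts~\ref{quotient sn 2} and~\ref{quotient sn 4}, and completeness holds because the $\bar F^\la_{\mfs\mft}$ span $Q_r^\BbbK$ and their number equals $\dim_{\Kbar}Q_r^{\Kbar}$ by~\eqref{quotient axiom 3}; this gives part~\ref{quotient sn 5}.

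Finally, for part~\ref{quotient sn 1} I would note that by part~\ref{quotient sn 5} the elements $\bar F_\mft=\bar E^{\mft(r)}_{\mft\mft}$ (for permissible $\mft$) are mutually orthogonal minimal idempotents, that $\sum_{\mft\in\Std_{r,\perm}(\nu)}\bar F_\mft$ is the minimal central idempotent of the $\nu$-block of $Q_r^\BbbK$, and that applying $\phi\circ\pi$ to the identity $F_\mft F_{\mft_{[0,r-1]}}=F_\mft$ in $A_r^\FF$ (valid because $F_\mft=F_{\mft_{[0,r-1]}}z_r^{\mft(r)}$ with $F_{\mft_{[0,r-1]}}$ idempotent) yields the tower-compatibility $\bar F_\mft\bar F_{\mft_{[0,r-1]}}=\bar F_\mft$; these are exactly the properties characterizing the Gelfand--Zeitlin idempotents of $(Q_r^\BbbK)_{r\ge 0}$, cf.~\cite[Lemma~3.10]{MR2774622}. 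For part~\ref{quotient sn 6} I would multiply $f^\la_\mft a=\sum_{\mfs\in\Std_r(\la)}a(\mfs,\mft)f^\la_\mfs$ on the right by the evaluable idempotent $F_\mfv$ ($\mfv$ permissible) and use $f^\la_\mfs F_\mfv=\delta_{\mfs,\mfv}f^\la_\mfs$ to get $f^\la_\mft a F_\mfv=a(\mfv,\mft)f^\la_\mfv$, an identity of evaluable elements (which re-proves $a(\mfv,\mft)\in R_{\mathfrak p}$); applying $\phi\circ\pi$ gives $\bar f^\la_\mft\,\phi\circ\pi(a)\,\bar F_\mfv=\pi(a(\mfv,\mft))\bar f^\la_\mfv$, and since $\bar f^\la_\mfu\bar F_\mfv=\delta_{\mfu,\mfv}\bar f^\la_\mfv$ (from parts~\ref{quotient sn 2} and~\ref{quotient sn 4}) one reads off the coefficients of $\bar f^\la_\mft\,\phi\circ\pi(a)$ in the basis $\{\bar f^\la_\mfv\}_{\mfv\in\Std_{r,\perm}(\la)}$ of $\Delta_{Q_r}^\BbbK(\la)$, giving~\eqref{eqn: quotient sn 1}. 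The only genuinely non-formal step, and hence the main obstacle, is part~\ref{quotient sn 3}, where semisimplicity of $Q_r^\BbbK$ enters via the nondegeneracy of the cell forms of the basis $\{\bar F^\la_{\mfs\mft}\}$.
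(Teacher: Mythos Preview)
Your proposal is correct and follows essentially the same route as the paper's proof: transport the seminormal identities from $A_r^\FF$ through $\phi\circ\pi$ using evaluability, then invoke the characterization of Gelfand--Zeitlin idempotents from \cite[Lemma~3.10]{MR2774622} for part~\ref{quotient sn 1} and the action of $\sum_\mfs \bar F_\mfs$ as identity for part~\ref{quotient sn 6}.

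The one noticeable divergence is in part~\ref{quotient sn 3}. The paper argues more directly: from $\bar F^\la_{\mft\mft}=\pi(\gamma^\la_\mft)\bar F_\mft$, vanishing of $\pi(\gamma^\la_\mft)$ would force $\bar F^\la_{\mft\mft}=0$, contradicting linear independence from Corollary~\ref{seminormal basis lemma 2}. Your route via nondegeneracy of the cell form works too, but note that Corollary~\ref{seminormal basis lemma 2} only asserts that $\{\bar F^\la_{\mfs\mft}\}$ is a \emph{basis}, not a cellular basis; to read off the Gram matrix as you do, you implicitly need that this basis is cellular (equivalent to the Murphy basis of $Q_r^\BbbK$), which is true but requires an extra sentence. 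The paper's argument sidesteps this and does not even need split semisimplicity of $Q_r^\BbbK$ at that step.
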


\begin{proof}
We remark that if $\stt$ is permissible, then 
  statement \ref{quotient sn 2} follows from the definitions and the corresponding properties of the elements $f^\la_\mft$ in $\Delta_{A_r}^\FF(\la)$  and $F^\mu_{\mfu \mfv}$ in $A_r^\FF$, see  ~\cite[Lemma 3.8]{BEG}.  
  (This follows because  $[\stt]=\{\stt\}$ by definition, when  $\stt$ is permissible.)
  We already know from ~\cite[Lemma 3.8]{BEG} that $\langle f^\la_\mft, f^\la_\mfs\rangle = 0$ if $\mfs \ne \mft$ and also that for all $\mft$,  $F^\la_{\mft \mft} = \langle f^\la_\mft, f^\la_\mft \rangle F_\mft$.      It follows that 
$\bar F^\la_{\mft \mft} = \pi(\langle f^\la_\mft, f^\la_\mft \rangle) \bar F_\mft$.   If for some $\mft$, 
$ \pi(\langle f^\la_\mft, f^\la_\mft \rangle) = 0$, then $\bar F^\la_{\mft \mft} = 0$, contradicting \myref{Corollary}{seminormal basis lemma 2}.   This proves points \ref{quotient sn 3} and  \ref{quotient sn 4} and point \ref{quotient sn 5} also follows from the previous statements.    

For $\mft \in \Std_{r, \perm}(\la)$ and $\mfv \in \Std_{r, \perm}(\mu)$, we have 
 $\bar f^\mu_\mfv  \bar F_\mft = \delta_{\la, \mu} \delta_{\mfv, \mft} \bar f^\mu_\mfv$,   and it follows that
 $\sum_{\mft \in \Std_{r, \perm}(\la)} \bar F_\mft$ is the minimal central idempotent in $Q_r^\BbbK$ corresponding to the simple module $\Delta_{Q_r}^\BbbK(\la)$.    
 Let  $s \le r$, $\mfs \in  \Std_{s, \perm}$, $\mft \in \Std_{r, \perm}$.
We have $F_\mfs F_\mft = \delta_{ \mfs,\stt_{[0, s]}}F_\mft$,  in $A_r^\FF$ from the definition of the Gelfand-Zeitlin idempotents.   But then the corresponding property  
$\bar F_\mfs  \bar F_\mft = \delta_{ \mfs,\stt_{[0, s]}}\bar F_\mft$ holds in $Q_r^\BbbK$.    By ~\cite[Lemma 3.10]{MR2774622}, these properties characterize the family of Gelfand-Zeitlin idempotents, so point \ref{quotient sn 1} holds.

 For point \ref{quotient sn 6}, suppose that $a \in A_r^R$ and that $a$ has matrix coefficients $a(\mfs, \mft)$ with respect to the  seminormal basis  $\{f_\mft^\la\}$ of $\Delta_{A_r}^\FF(\la)$.   Then when $\mft$ and $\mfs$ are both permissible elements of $\Std_r(\la)$, we have $m^\la_\mft F_\mft a F_\mfs = a(\mfs, \mft) m^\la_\mfs F_\mfs$.  As this equality involves evaluable elements, we can apply $\phi\circ \pi$ to get
 $\bar f^\la_\mft \phi\circ\pi(a) \bar F_\mfs =  \pi(a(\mfs, \mft))\bar f^\la_\mfs $. Now sum over $\mfs$ and use that  $\sum_{\mfs \in \Std_{r, \perm}(\la)} \bar F_\mfs$ acts as the identity on the cell module
 $\Delta_{Q_r}^\BbbK(\la)$.  This yields \eqref{eqn: quotient sn 1}.
\end{proof}

\section{The   Murphy and dual Murphy bases of the symmetric groups}
\label{dual Murphy basis of S n}\label{subsection Murphy basis of S n}\label{section Murphy symm}

   A \emph{partition} $\lambda$ of $r$, denoted $\lambda\vdash r$, is defined to be a weakly decreasing  sequence $\lambda=(\lambda_1,\lambda_2,\dots,\lambda_\ell)$ of non-negative integers such that the sum $|\lambda|=\lambda_1+\lambda_2+\dots +\lambda_\ell$ equals $r$.   
   Let $\widehat{\mathfrak S}_r$ denote the set of all partitions of $r$.
   With a partition, $\lambda$, is associated its \emph{Young diagram}, which is the set of nodes
\[[\lambda]=\left\{(i,j)\in\mathbb{Z}_{>0}^2\ \left|\ j\leq \lambda_i\right.\right\}.\]   We identify partitions with their Young diagrams. 
There is a unique partition of size zero, which we denote $\varnothing$.   We let $\lambda'$ denote the \emph{conjugate partition} obtained by flipping the Young diagram $[\lambda]$ through the diagonal.  
Let $\lambda,\mu \in   \widehat{\mathfrak{S}}_r$,  we say that $\lambda$
dominates $\mu$ and write $\lambda \unrhd  \mu$ if, for all $1\leq k\leq r$, we have 
$$  \sum_{i=1}^k \lambda_i  \geq  
\sum_{i=1}^k \mu_i .$$
Define column dominance order, denoted by $\coldomeq$,  by $\la \coldomeq \mu$ if and only if $  \la' \unrhd \mu'$. It is known that column dominance order is actually the opposite order to dominance order.

 {\sf Young's graph} or {\sf  lattice},  $\widehat{\mathfrak S}$, is the branching diagram with vertices 
$\widehat{\mathfrak S}_r$
on level $r$ and a directed edge $\lambda \to \mu$  if $\mu$ is obtained from $\lambda$ by adding one box.   We define a {\sf standard tableau} of shape $\lambda$ to be a directed path  on $\widehat{\mathfrak S}$ from $\varnothing$ to $\lambda$.  (Such paths are easily identified with the usual picture of standard tableaux as fillings of the Young diagram of $\lambda$ with the numbers $1$ through $r$, so that the entries are increasing in rows and columns.) 
For $\lambda \vdash r$,  denote the set of standard tableaux of shape $\lambda$ by $\Std_r(\lambda)$.  
If $\mfs \in \Std_{r}(\lambda)$ is 
the path
 $$
 \varnothing= \sts(0)\to  \sts(1)\to  \sts(2)\to \dots \to  \sts(r)  = \lambda,
 $$
 then the conjugate standard tableaux
$\mfs' \in \Std_{ r}(\lambda')$  is 
the path
 $$
 \varnothing= \sts(0)\to  \sts(1)'\to  \sts(2)'\to \dots \to  \sts(r)'  = \lambda'.
 $$
 
For any ring $R$, and for all $r\geq 0$,   the group algebra $R \mathfrak S_r$ has an algebra involution determined by $w^*  = w\inv$ and an automorphism $\#$   determined by $w^\# = \sgn(w) w$  for $w \in \mathfrak S_r$.  The involution $*$,  the automorphism $\#$,  and the inclusions $R\mathfrak S_r \hookrightarrow R\mathfrak S_{r+1}$ are mutually commuting.  
Let $s_1, \dots, s_{r-1}$ be the usual generators of the symmetric group $\mathfrak S_r$,   $s_i = (i, i+1)$.
If $1 \le a \le i$, define
\begin{equation} \label{cyclic permutation}
s_{a, i} = s_a s_{a+1} \cdots s_{i-1} = (i, i-1, \dots, a)
\end{equation}
and
$
s_{i, a} = s_{a, i}^*  
$.  Therefore $s_{a, a} = 1$, the identity in the symmetric group.
 We let 
\begin{equation} \label{symmetrizer and antisymmetrizer}
    x_\lambda  = \sum_{w \in \mathfrak{S}_{\lambda}}   w
   \qquad
   y_\lambda=    \sum_{w \in \mathfrak{S}_{\lambda'}} \sgn(w)  w
\end{equation}
   where 
$\mathfrak{S}_{\lambda}=\mathfrak{S}_{\{1,2, \dots ,  \lambda_1\}} \times \mathfrak{S}_{\{\lambda_1+1 , \dots ,  \lambda_1+\lambda_2\}} \times \ldots$ is the Young subgroup labeled by $\lambda$ and $\mathfrak{S}_{\lambda'}$ is the Young subgroup labeled by $\lambda'$.  
 Given  $\mu \vdash i-1$ and $\lambda \vdash i$  with $\lambda=\mu\cup \{ j, \lambda_j\}$, we 
set $a=\sum_{r=1}^j \lambda_r$ and let 
  $b=\sum_{r=1}^{\lambda_j} \lambda'_r$.  
   We define the branching factors     as follows:
\begin{align}\label{branching factors for S n}
  {d}_{\mu \to \lambda}  = s_{a , i}  
\quad {u}_{\mu \rightarrow  \lambda  }= 
 s_{  i,a}  \sum_{ r=0}^{\mu_j} 
 s_{a,a-r}  \end{align} 
 and (conjugating and applying the automorphism 
 $\#$) we obtain the dual branching factors 
\begin{align} \label{dual branching coefficients for S n}
  {b} _{\mu \to \lambda}  = (-1)^{b-i}s_{b , i}  
\quad {v} _{\mu \rightarrow  \lambda  }= 
 s_{  i,b}  \sum_{ r=0}^{ j} 
(-1)^{r + b - i} s_{b,b-r}. \end{align}

For $ \lambda  \in \widehat{\mathfrak{S}}_r$ and $\stt \in \Std_r(\lambda )$  let $d_\stt$ be the ordered product of the $d$--branching factors along $\stt$ and let  $b_\stt$ be the ordered product of $b$--branching factors along $\stt$, i.e. $b_\stt =  (d_{\stt'})^\#$. 
Given $\sts,\stt \in \Std(\lambda)$ we let 
$$ x_{\sts\stt}  = d_\sts^\ast x_\lambda d_\stt
\qquad  y_{\sts\stt}  = \c_\sts^\ast y_\lambda \c_\stt.  
 $$

 \begin{thm}[\cite{EG:2012, MR1327362}]\label{thm Murphy basis of S n}
 The   algebra  $R\mathfrak{S}_r$ has cellular bases 
  $$ {\mathcal{X}}=
 \{  x_{\sts\stt}  \mid   \sts,\stt  \in {\rm Std}_r(\lambda ) \text{ for } \lambda \in\widehat{\mathfrak{S}}_r\}
\qquad
 {\mathcal{Y}}=
 \{  y_{\sts\stt}  \mid   \sts,\stt  \in {\rm Std}_r(\lambda ) \text{ for } \lambda \in \widehat{\mathfrak{S}}_r\}		 	$$
   with 
   the involution $\ast$  and the posets $(\widehat{\mathfrak{S}}_r , \unrhd)$ 
   and $(\widehat{\mathfrak{S}}_r , \unrhd_{\rm  col})$ respectively. These bases are the Murphy and dual-Murphy bases defined in \cite{MR1327362}.   
   \end{thm}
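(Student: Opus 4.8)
The plan is to realize $\mathcal X$ and $\mathcal Y$ as instances of the abstract Murphy basis of \myref{Theorem}{theorem abstract Murphy basis}, and then to match them with the bases of~\cite{MR1327362}. First I would invoke \myref{Example}{theoneexample}: the tower $(R\mathfrak S_r)_{r\ge 0}$, equipped with the involution $w\mapsto w\inv$, the cell generators $x_\lambda$ of~\eqref{symmetrizer and antisymmetrizer}, and the down-- and up--branching factors $d_{\mu\to\lambda}$, $u_{\mu\to\lambda}$ of~\eqref{branching factors for S n}, is a sequence of diagram algebras satisfying axioms \eqref{diagram 1}--\eqref{diagram 6}; this is established in~\cite[Section~3]{EG:2012}. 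The verification amounts to: the branching graph is Young's lattice $\widehat{\mathfrak S}$; $\Q\mathfrak S_r$ is split semisimple; $x_\lambda$ is $*$--invariant and generates the cell module labelled by $\lambda$; and the compatibility relation~\eqref{diagram compatibility} reads $x_\lambda\, d_{\mu\to\lambda}=(u_{\mu\to\lambda})^*\, x_\mu$, which is a short calculation with the coset representatives $s_{a,i}$. Since the transition matrix from $\mathcal X$ to the diagram basis $\{w\mid w\in\mathfrak S_r\}$ is unitriangular over $\ZZ$, the structure specializes to an arbitrary commutative ring $R$. Granting all this, \myref{Theorem}{theorem abstract Murphy basis} yields immediately that $\mathcal X=\{\,d_\mfs^* x_\lambda d_\mft\mid \lambda\in\widehat{\mathfrak S}_r,\ \mfs,\mft\in\Std_r(\lambda)\,\}$ is a cellular basis of $R\mathfrak S_r$ with involution $*$ and poset $(\widehat{\mathfrak S}_r,\unrhd)$, equivalent to the original cellular basis.

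The second step is to identify $\mathcal X$ with the Murphy basis of~\cite{MR1327362}. For $\mft\in\Std_r(\lambda)$ the ordered product $d_\mft=d_{\mft(r-1)\to\mft(r)}\cdots d_{\mft(0)\to\mft(1)}$ is a product of the cyclic permutations $s_{a_k,k}=(k,k-1,\dots,a_k)$ of~\eqref{cyclic permutation}. An induction on $r$ shows that $d_\mft$ equals Murphy's distinguished coset representative $d(\mft)$, the unique permutation with $\mft^\lambda d(\mft)=\mft$ for $\mft^\lambda$ the row--reading tableau: at step $k$ the added node $[\mft(k)]\setminus[\mft(k-1)]$ sits at the end of some row $j$, and inserting the entry $k$ there while preserving the row--reading order of $1,\dots,k-1$ is precisely left multiplication by $s_{a_k,k}$ with $a_k=\sum_{p=1}^j\lambda_p$. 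As $d_\mfs^*=d(\mfs)\inv$, the element $d_\mfs^* x_\lambda d_\mft$ coincides with Murphy's $x_{\mfs\mft}$, so $\mathcal X$ is the Murphy basis.

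For the dual basis I would use the sign automorphism $\#\colon w\mapsto\sgn(w)w$, an algebra automorphism of $R\mathfrak S_r$ which commutes with $*$ and with the tower inclusions and satisfies $\#^2=\id$. From~\eqref{symmetrizer and antisymmetrizer} we get $x_{\lambda'}^{\#}=\sum_{w\in\mathfrak S_{\lambda'}}\sgn(w)w=y_\lambda$, and with $b_\mft=(d_{\mft'})^\#$ we compute, using that $\#$ is an algebra homomorphism commuting with $*$,
\[
y^\lambda_{\mfs\mft}=b_\mfs^*\,y_\lambda\,b_\mft=\bigl(d_{\mfs'}^*\,x_{\lambda'}\,d_{\mft'}\bigr)^{\#}=\bigl(x^{\lambda'}_{\mfs'\mft'}\bigr)^{\#}.
\]
Thus $\mathcal Y$ is the image of $\mathcal X$ under $\#$, re--labelled via the shape--conjugation bijection $\lambda\leftrightarrow\lambda'$ and the tableau--conjugation bijection $\mft\leftrightarrow\mft'$. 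An algebra automorphism commuting with $*$ transports a cellular basis to a cellular basis with the same index sets and the same poset; composing the labelling with conjugation replaces $(\widehat{\mathfrak S}_r,\unrhd)$ by $(\widehat{\mathfrak S}_r,\coldomeq)$, since $\coldomeq$ is by definition the image of $\unrhd$ under $\lambda\mapsto\lambda'$. Hence $\mathcal Y$ is a cellular basis of $R\mathfrak S_r$ with involution $*$ and poset $(\widehat{\mathfrak S}_r,\coldomeq)$. Finally $y_\lambda$ is Murphy's antisymmetrizer and the same induction identifies $b_\mft$ with Murphy's dual coset representative, so $\mathcal Y$ is the dual Murphy basis of~\cite{MR1327362}.

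I expect the only genuinely non-formal point to be the combinatorial identification of the path products $d_\mft$ and $b_\mft$ with Murphy's coset representatives; everything else follows directly from \myref{Theorem}{theorem abstract Murphy basis} and the elementary properties of the automorphism $\#$. That combinatorial check is carried out in~\cite[Section~3]{EG:2012}, so in the write-up it can be quoted rather than reproduced.
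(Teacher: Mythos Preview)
Your proposal is correct and follows essentially the same approach as the paper's own proof: cite \cite{EG:2012} for the identification of $\mathcal X$ with Murphy's cellular basis, and then transport the structure to $\mathcal Y$ via the automorphism $\#$ together with the conjugation bijection $\lambda\leftrightarrow\lambda'$, $\mft\leftrightarrow\mft'$. The paper's proof is terser (it simply cites \cite[Corollary~4.8]{EG:2012} rather than unpacking the verification of \eqref{diagram 1}--\eqref{diagram 6} and the identification of $d_\mft$ with Murphy's coset representative), but your more explicit write-up is logically identical; note only that the relevant material in \cite{EG:2012} is in Section~4 and Appendix~A rather than Section~3.
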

   
   \begin{proof}  It is shown in \cite[Corollary 4.8]{EG:2012} that $\mathcal X$ coincides with the Murphy cellular basis defined in \cite[Theorem 4.17]{MR1327362}.  Since $y_{\sts \stt} = (x_{\sts' \stt'})^\#$,  it follows that $\mathcal Y$ is also a cellular basis; the basis $\mathcal Y$ appears in 
   \cite{MR1327362} in a subsidiary role.
   \end{proof}

 It is shown in \cite[Section 4 and Appendix A]{EG:2012}, 
 following \cite{MR1327362,MR1461487,MR513828,MR812444,MR2531227},  that
the sequence  of symmetric group algebras $(\ZZ\mathfrak S_\k)_{\k \ge 0}$, endowed with the Murphy cellular bases,  satisfies axioms \eqref{diagram 1}--\eqref{diagram 6} of 
\myref{Section}{subsection diagram algebras}.
 In fact, the sequence is induction coherent, and the $u$--branching coefficients are those derived from cell filtrations of induced cell modules.   
The \cellgenerators \ are the elements $x_\lambda$.
 The branching diagram associated to the sequence is Young's lattice $\widehat{\mathfrak S}$.  The $d$-- and $u$--branching factors 
 satisfy the compatibility relations \eqref{abstract branching compatibility},  by
 \cite[Appendix A]{EG:2012}.  The corresponding results for the dual-Murphy basis follow by 
 conjugating and 
 applying the 
 automorphism $\#$.

\section{The Murphy and dual Murphy  bases of the Brauer algebra}\label{section: Murphy bases Brauer}

In this section we recall the definition of the Brauer algebra and the construction of its Murphy and dual Murphy bases.  
In subsequent sections, we will require the 
Murphy basis for examining Brauer algebras acting on symplectic tensor space,  
whereas we require the   
dual Murphy basis for examining Brauer algebras acting on orthogonal  tensor space.

{\sf An $\k$--strand  Brauer diagram} is a figure consisting of $\k$  points  on the top edge, and another $\k$ on the bottom edge of a rectangle $\mathcal R$   together with $\k$ curves in $\mathcal R$ connecting the $2\k$ points in pairs, with two such diagrams being identified if they induce the same matching of the $2\k$ points.  We call the distinguished points {\sf vertices} and the curves {\sf strands}.  A strand is  {\sf vertical} if it connects  a top vertex with a bottom vertex and {\sf horizontal} otherwise.   We label the top vertices by $\p 1, \dots, \p \k$  and the bottom vertices by $\pbar 1, \dots, \pbar \k$ from left to right.

Let $S$ be an integral domain with a distinguished element $\delta \in S$.   As an $S$-module, {\sf the $\k$--strand Brauer algebra}  $B_\k(S; \delta)$  is the free $S$--module with basis the set of $\k$--strand Brauer diagrams.   The product $a b$  of two Brauer diagrams is defined as follows:  stack $a$ over $b $ to obtain a figure $a\ast b$ consisting of a Brauer diagram $c$ together with some number $j$  of closed loops.   Then $ab$ is defined to be $\delta^j c$.  The product on  $B_\k(S; \delta)$ is the bilinear extension of the product of diagrams.

The Brauer algebra $B_\k(S; \delta)$ has an $S$--linear involution $*$ defined on diagrams by reflection in a horizontal line.  The $\k$--strand algebra embeds in the $\k+1$--strand algebra by the map defined on diagrams by adding an additional top vertex $\p {\k+1}$ and an additional bottom vertex $\pbar {\k+1}$ on the right, and connecting the new  pair of vertices by a vertical strand. 

The  $\k$--strand Brauer algebra is generated as a unital algebra by the following Brauer diagrams:
\begin{align} \label{figure brauer generators}
\scalefont{0.9} \begin{minipage}{13cm}\begin{tikzpicture}
\node at (-7.4,-0.5) {$\displaystyle {s_i}= \color{black}$};
\node at (-6.3,-0.4) {$\displaystyle \cdots \color{black}$};
\draw (-6.8,0) -- (-6.8,-0.8);
\draw (-5.8,0) -- (-5.8,-0.8);
\draw (-5,0) -- (-4.2,-0.8);
\draw (-4.2,0) -- (-5,-0.8);
\draw (-3.4,0) -- (-3.4,-0.8);
\draw (-2.4,0) -- (-2.4,-0.8);
\filldraw [black] (-6.8,0) circle (1.2pt);
\filldraw [black] (-6.8,-0.8) circle (1.2pt);
\filldraw [black] (-5.8,0) circle (1.2pt);
\filldraw [black] (-5.8,-0.8) circle (1.2pt);
\filldraw [black] (-5.0,0) circle (1.2pt);
\filldraw [black] (-5.0,-0.8) circle (1.2pt);
\filldraw [black] (-4.2,0) circle (1.2pt);
\filldraw [black] (-4.2,-0.8) circle (1.2pt);
\filldraw [black] (-3.4,0) circle (1.2pt);
\filldraw [black] (-3.4,-0.8) circle (1.2pt);
\filldraw [black] (-2.4,0) circle (1.2pt);
\filldraw [black] (-2.4,-0.8) circle (1.2pt);
\node at (-2.9,-0.4) {$\displaystyle \cdots \color{black}$};
\node at (-5.0,-1.1) {\scalefont{0.8}$\displaystyle i \color{black}$};
\node at (-4.1,-1.1) {\scalefont{0.8}$\displaystyle i+1 \color{black}$};
\node at (-1.4,-0.4) {$\displaystyle \text{and} \color{black}$};
\node at (0.0,-0.5) {$\displaystyle e_i= \color{black}$};
\node at (1.1,-0.4) {$\displaystyle \cdots \color{black}$};
\draw (0.6,0) -- (0.6,-0.8);
\draw (1.6,0) -- (1.6,-0.8);
\draw (2.4,0)  .. controls (2.6,-0.2) and (3.0,-0.2) .. (3.2,0);
\draw (2.4,-0.8)  .. controls (2.6,-0.6) and (3.0,-0.6) .. (3.2,-0.8);
\draw (4.0,0) -- (4.0,-0.8);
\draw (5.0,0) -- (5.0,-0.8);
\filldraw [black] (0.6,0) circle (1.2pt);
\filldraw [black] (0.6,-0.8) circle (1.2pt);
\filldraw [black] (1.6,0) circle (1.2pt);
\filldraw [black] (1.6,-0.8) circle (1.2pt);
\filldraw [black] (2.4,0) circle (1.2pt);
\filldraw [black] (2.4,-0.8) circle (1.2pt);
\filldraw [black] (3.2,0) circle (1.2pt);
\filldraw [black] (3.2,-0.8) circle (1.2pt);
\filldraw [black] (4,0) circle (1.2pt);
\filldraw [black] (4,-0.8) circle (1.2pt);
\filldraw [black] (5,0) circle (1.2pt);
\filldraw [black] (5,-0.8) circle (1.2pt);
\node at (4.5,-0.4) {$\displaystyle \cdots \color{black}$};
\node at (2.4,-1.1) {\scalefont{0.8}$\displaystyle i \color{black}$};
\node at (3.3,-1.1) {\scalefont{0.8}$\displaystyle i+1 \color{black}$};
\end{tikzpicture}\end{minipage}
 \end{align}
   We have $e_i^2 = \delta e_i$,   $e_i^* = e_i$ and $s_i^* = s_i$.    
   An $\k$--strand  Brauer diagram with only vertical strands can be identified with a permutation in $\mathfrak S_\k$, and the product of such diagrams agrees with composition of permutations.  The linear span of the permutation diagrams is thus a subalgebra of $B_\k(S; \delta)$ isomorphic to $S \mathfrak S_\k$.  This subalgebra is generated by the diagrams $s_i$ in \eqref{figure brauer generators}.

The linear span of   $\k$--strand  Brauer diagrams with at least one horizontal strand is an ideal $J_\k$ of $B_\k(S; \delta)$, and $J_\k$ is generated as an ideal by any of the elements $e_i$.  
 The quotient $B_\k(S; \delta)/J_\k$ is also isomorphic to the symmetric group algebra, as algebras with involution. 
 
 The {\sf rank} of a Brauer diagram is the number of its vertical strands;  the {\sf corank} is $1/2$ the number of horizontal strands.

 Denote the generic ground ring $\ZZ[\deltabold]$ for the Brauer algebras  by $R$, and let $R' = \ZZ[\deltabold, \deltabold\inv]$.   
   It was shown in ~\cite[Section 6.3]{EG:2012}  that the pair of towers of algebras
 $(B_\k(R'; \deltabold))_{\k\ge 0}$  and $(R' \mathfrak S_\k)_{\k\ge0}$  satisfy \eqref{J-1}--\eqref{J-8} of  
 \myref{Section}{subsection: Jones},
where $R' \mathfrak S_\k$ is endowed with the Murphy cellular basis.   But the same is true if  $R' \mathfrak S_r$  is endowed instead with the dual Murphy cellular basis.   Following through the work outlined in 
 \myref{Section}{subsection: Jones},
 based on the Murphy basis or the dual Murphy basis of the symmetric group algebras, one obtains two different cellular bases on the Brauer algebras    $B_\k(R; \deltabold)$ over the generic ground ring $R$, which we also call the Murphy and dual Murphy cellular bases.   The tower of Brauer algebras over $R$, with either cellular structure, is restriction--coherent, with branching diagram $\widehat B$ obtained by reflections from Young's lattice.  Explicitly, the branching rule is as follows:
 if $(\lambda, l) \in \widehat B_\k$ and $(\mu, m) \in \widehat B_{\k+1}$,  then $(\lambda, l) \to (\mu, m)$ if and only if  the Young diagram $\mu$ is obtained from the Young diagram $\lambda$ by either adding or removing one box.  
 The partial order  $\unrhd$ on $\widehat B_\k$ for the Murphy cell datum is:   $(\lambda, l) \unrhd (\mu, m)$  if $l > m$ or if $l = m$ and $\lambda \unrhd \mu$  The partial order $\coldomeq$  on 
 $\widehat B_\k$ for the  dual Murphy cell datum is analogous, but with dominance order replaced with column dominance order.

 The branching factors and \cellgenerators\ in the Brauer algebras, computed using \myref{Theorem}{theorem:  closed form determination of the branching factors} and \myref{Lemma}{lemma lifting cell generators},  involve liftings of elements of the symmetric group algebras to the Brauer algebras;  for any element $x \in R\mathfrak S_r$, we lift $x$ to the ``same" element in the span of permutation diagrams in $B_r(r; \deltabold)$.  Thus, 
 for $(\lambda, l) \in \widehat B_\k$,  define
 \begin{equation} \label{x and y generators Brauer}
 x_{(\lambda, l)} = x_\lambda e_{r-1}\power l \quad \text{and} \quad  y_{(\lambda, l)} = y_\lambda e_{r-1}\power l .
 \end{equation} 
 These are the \cellgenerators\  for the Murphy and dual Murphy cellular structures on $B_\k(R; \deltabold)$.
 The branching factors  $d_{(\lambda, l) \to (\mu, m)}$ and  $u_{(\lambda, l) \to (\mu, m)}$  for the Murphy basis are obtained using the formulas of 
 \myref{Theorem}{theorem:  closed form determination of the branching factors} from the $d$-- and $u$--branching factors of the symmetric group algebras; and similarly the $b_{(\lambda, l) \to (\mu, m)}$ and  $v_{(\lambda, l) \to (\mu, m)}$ branching factors for the dual Murphy basis are obtained from the $b$-- and $v$--branching factors of the symmetric group algebras.
These
 satisfy the compatibility relations
\begin{equation} \label{compatibility relations Brauer}
x_{(\mu, m)}  d_{(\lambda, l) \to (\mu, m)} =    u_{(\lambda, l) \to (\mu, m)}^* x_{(\lambda, l) }
\qquad
y_{(\mu, m)}  b_{(\lambda, l) \to (\mu, m)} =    v_{(\lambda, l) \to (\mu, m)}^* y_{(\lambda, l) }. 
\end{equation}
The compatibility relation is  established in \cite[Appendix A]{EG:2012} for the Murphy basis, and the argument holds just as well for the dual Murphy basis. 
For $(\lambda, l) \in \widehat B_\k$ and $\stt \in \Std_\k(\lambda, l)$  let $d_\stt$  be the ordered product of the $d$--branching factors along $\stt$, and  $b_\stt$ the ordered product of the $b$--branching factors along $\stt$.
For $(\lambda, l) \in \widehat B_\k$  and $\sts, \stt \in \Std_k(\lambda, l)$,  define 
$$x\power{\lambda, l}_{\sts \stt} =  d_{\sts}^* x_{(\lambda, l)} d_\stt \qquad y\power{\lambda, l}_{\sts \stt} =  b_{\sts}^* y_{(\lambda, l)} b_\stt.$$ 
We have that 
\begin{align} \label{Murphy basis Brauer}  
\{x\power{\lambda, l}_{\sts \stt}  \suchthat
(\lambda, l) \in \widehat B_\k \text{ and }  \sts, \stt \in \Std_k(\lambda, l) \}
\\
\label{dual Murphy basis Brauer} \{y\power{\lambda, l}_{\sts \stt}  \suchthat
(\lambda, l) \in \widehat B_\k \text{ and }  \sts, \stt \in \Std_k(\lambda, l) \}
\end{align}
are the Murphy and dual Murphy cellular bases of $B_\k(R; \deltabold)$.

\begin{thm}     \label{theorem:  Brauer algebras as diagram algebras}
 The sequence  of Brauer algebras $(B_\k(R; \delta))_{\k \ge 0}$ over the generic ground ring $R= \ZZ[\deltabold]$   with either the Murphy cellular bases  \eqref{Murphy basis Brauer} or the dual Murphy cellular bases \eqref{dual Murphy basis Brauer}, 
 satisfies \eqref{diagram 1}--\eqref{diagram 6} of  \myref{Section}{subsection diagram algebras}.
  The data entering into the definition of the Murphy bases and dual Murphy bases are explicitly determined using equation \eqref{x and y generators Brauer} and the formulas of 
 \myref{Theorem}{theorem:  closed form determination of the branching factors}.
  \end{thm}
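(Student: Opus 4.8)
The plan is to assemble the statement from the Jones basic construction machinery of \myref{Section}{subsection: Jones}, the properties of the symmetric group algebras recorded in \myref{Section}{section Murphy symm}, and the generic--ground--ring descent of \cite[Sections 5 and 6]{EG:2012}. First I would recall from \myref{Theorem}{thm Murphy basis of S n} and the discussion following it that the tower $(\ZZ\mathfrak S_\k)_{\k \ge 0}$, with the Murphy cellular bases, satisfies \eqref{diagram 1}--\eqref{diagram 6}, the cell generators being the symmetrizers $x_\lambda$, the branching diagram being Young's lattice, and the $d$-- and $u$--branching factors of \eqref{branching factors for S n} obeying the compatibility relations \eqref{abstract branching compatibility}; conjugating and applying the automorphism $\#$ yields the same for the dual Murphy bases, with cell generators $y_\lambda$ and branching factors $b$, $v$ of \eqref{dual branching coefficients for S n}. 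Next I would invoke \cite[Section 6.3]{EG:2012}: over $R' = \ZZ[\deltabold, \deltabold\inv]$, the pair of towers $(B_\k(R'; \deltabold))_{\k \ge 0}$ and $(R'\mathfrak S_\k)_{\k \ge 0}$ --- with the symmetric group algebras carrying either the Murphy or the dual Murphy basis --- satisfies \eqref{J-1}--\eqref{J-8} of \myref{Section}{subsection: Jones}. By \cite[Theorem 5.5]{EG:2012} it follows that $(B_\k(R'; \deltabold))_{\k \ge 0}$ is a coherent tower of cyclic cellular algebras, giving \eqref{diagram 1}, \eqref{diagram cyclic cellular}, \eqref{diagram semisimplicity}, and \eqref{diagram restriction coherent} over $R'$; the branching diagram is the reflection $\widehat B$ of Young's lattice, and by \myref{Theorem}{theorem:  closed form determination of the branching factors} and \myref{Lemma}{lemma lifting cell generators} the branching factors and cell generators may be taken to be exactly those displayed in \eqref{x and y generators Brauer} and the formulas quoted thereafter.

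It then remains to check the two remaining axioms \eqref{diagram symmetry} and \eqref{diagram compatibility} for this data. For \eqref{diagram symmetry} I would note that in $x_{(\lambda, l)} = x_\lambda\, e_{\k-1}\power{l}$ the factor $x_\lambda$ is a linear combination of permutation diagrams on the first $\k-2l$ strands while $e_{\k-1}\power{l} = e_{\k-2l+1} e_{\k-2l+3}\cdots e_{\k-1}$ involves only the last $2l$ strands, so the two commute; moreover $x_\lambda^* = x_\lambda$, each $e_i^* = e_i$, and the $l$ factors of $e_{\k-1}\power{l}$ have indices differing by $2$ and so commute pairwise, whence $(e_{\k-1}\power{l})^* = e_{\k-1}\power{l}$; therefore $x_{(\lambda, l)}^* = x_{(\lambda, l)}$, and the identical computation gives $y_{(\lambda, l)}^* = y_{(\lambda, l)}$. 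For \eqref{diagram compatibility} the relations \eqref{abstract branching compatibility} hold for the symmetric group data by \cite[Appendix A]{EG:2012}, and then the remark following \myref{Lemma}{lemma lifting cell generators} --- which uses only \myref{Theorem}{theorem:  closed form determination of the branching factors} and that lemma --- forces the lifted branching factors and cell generators in the Brauer algebra to satisfy \eqref{abstract branching compatibility} as well; this is precisely the displayed relation \eqref{compatibility relations Brauer}.

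Finally I would carry out the descent to the generic ground ring $R = \ZZ[\deltabold]$. The point is that the Jones basic construction axioms --- in particular $e_i^2 = \deltabold\, e_i$ and the injectivity of $x \mapsto x e_\k$ --- require $\deltabold$ to be invertible, so the above is a priori valid only over $R'$. However, the cell generators \eqref{x and y generators Brauer} and all branching factors produced by \myref{Theorem}{theorem:  closed form determination of the branching factors} are $\ZZ$--linear combinations of Brauer diagrams, hence the candidate bases \eqref{Murphy basis Brauer} and \eqref{dual Murphy basis Brauer} lie in $B_\k(R; \deltabold)$; to conclude that they are $R$--bases --- and therefore cellular bases over $R$ satisfying \eqref{diagram 1}--\eqref{diagram 6} --- one checks that the transition matrix between the diagram basis of $B_\k(R; \deltabold)$ and the Murphy, respectively dual Murphy, basis is invertible over $R$. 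I expect this last step to be the main obstacle: Steps one through three are essentially bookkeeping and quotation, whereas the descent to $R$, where $\deltabold$ is no longer invertible, genuinely uses the explicit combinatorics of the branching factors and is carried out by a somewhat \emph{ad hoc} triangularity argument, as in \cite[Sections 5 and 6]{EG:2012}.
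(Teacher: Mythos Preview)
Your proposal is correct and follows essentially the same approach as the paper's proof, which is very terse: it says conditions \eqref{diagram 1}, \eqref{diagram cyclic cellular}, \eqref{diagram semisimplicity}, \eqref{diagram restriction coherent} follow from the Jones basic construction framework of \cite{EG:2012} as outlined in \myref{Section}{subsection: Jones}, that \eqref{diagram symmetry} follows from \eqref{x and y generators Brauer}, and that \eqref{diagram 6} follows from \eqref{compatibility relations Brauer}. Your write-up simply unpacks these references, including the descent from $R'=\ZZ[\deltabold,\deltabold^{-1}]$ to $R=\ZZ[\deltabold]$ that the paper handles by pointing to the remark at the end of \myref{Section}{subsection: Jones} and to \cite[Sections 5 and 6]{EG:2012}.
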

  
  \begin{proof}  For both the Murphy and dual Murphy bases,  conditions \eqref{diagram 1},  \eqref{diagram cyclic cellular},  \eqref{diagram semisimplicity}, and   \eqref{diagram restriction coherent} follow from the general framework of \cite{EG:2012}, as outlined in  \myref{Section}{subsection: Jones}.  Condition \eqref{diagram symmetry} follows from \eqref{x and y generators Brauer} and condition   \eqref{diagram 6}  from \eqref{compatibility relations Brauer}.
   \end{proof}

We will require the following lemma in 
\myref{Sections}{section Brauer symplectic}
and
\myrefnospace{}{section Brauer orthogonal}.

\begin{lem} \label{Brauer block diagonal transition}
 If $D$ is an $r$--strand  Brauer diagram of corank $\ge m+1$,  then for all $\mu \vdash r- 2m$, 
$D$ is an element of the ideal $B_r(R; \deltabold)^{\rhd (\mu, m)}$  defined using the Murphy basis. Likewise, $D$ is an element of the ideal $B_r(R; \deltabold)^{\coldom (\mu, m)}$  defined using the dual Murphy basis.
\end{lem}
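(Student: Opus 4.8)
The plan is to exploit the fact that the two-sided ideal $B_r(R;\deltabold)^{\rhd(\mu,m)}$ is spanned by Murphy basis elements $x^{(\lambda,\el)}_{\sts\stt}$ with $(\lambda,\el)\rhd(\mu,m)$, which by the partial order on $\widehat B_r$ means exactly those with corank $\el>m$ (and, when $\el=m$, those with $\lambda\rhd\mu$, but those have corank exactly $m$ so are irrelevant here). So it suffices to show that every Brauer diagram $D$ of corank $\ge m+1$ lies in the span of the $x^{(\lambda,\el)}_{\sts\stt}$ with $\el\ge m+1$. First I would recall from the Jones basic construction setup of \myref{Section}{subsection: Jones} (specialised to the Brauer algebras in \cite[Section 6]{EG:2012}) that a Brauer diagram $D$ of corank $c$ factors as $D = u\, e_{r-1}^{(c)}\, v$ for permutation diagrams $u,v$ (this is the standard normal form: use horizontal strands at the right to build $e_{r-1}^{(c)}$ and conjugate by permutations to move the horizontal strands into the desired position). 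More precisely, $D$ lies in the span of $B_r\, e_{r-1}^{(c)}\, B_r$, and since $c\ge m+1$ we certainly have $D \in B_r\, e_{r-1}^{(m+1)}\, B_r$ because $e_{r-1}^{(c)} = e_{r-1}^{(m+1)}\,(\text{stuff})$ when $c\ge m+1$ — more cleanly, $B_r e_{r-1}^{(c)} B_r \subseteq B_r e_{r-1}^{(m+1)} B_r$ for $c \ge m+1$.

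The second step is to identify $B_r\, e_{r-1}^{(m+1)}\, B_r$ with the ideal generated by $x_{(\mu',m+1)} = x_{\mu'} e_{r-1}^{(m+1)}$ for appropriate $\mu'$: indeed the cell generators for corank $\ge m+1$ all have the form $x_\lambda e_{r-1}^{(\el)}$ with $\el\ge m+1$ (\myref{Lemma}{lemma lifting cell generators} and \eqref{x and y generators Brauer}), and $e_{r-1}^{(m+1)}$ itself lies in the ideal $B_r^{\rhd(\nu,m)}$ because $1 = \sum_{\lambda\vdash r-2(m+1)} (\text{idempotent built from } x_\lambda)$ modulo lower terms in the symmetric group algebra on the first $r-2(m+1)$ strands — the key point being that the trivial module decomposition lets one write $e_{r-1}^{(m+1)}$ as an $R$-combination of elements $p\, x_\lambda e_{r-1}^{(m+1)}\, q = p\, x_{(\lambda, m+1)}\, q$. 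Then \myref{Theorem}{theorem abstract Murphy basis}(2) expresses each $x_{(\lambda,m+1)}$, and hence each $p\, x_{(\lambda,m+1)}\, q$, as an $R$-linear combination of Murphy basis elements $x^{(\lambda',\el)}_{\sts\stt}$; by the cellular axioms (multiplying a cell generator left and right by algebra elements stays within $B_r^{\unrhd(\lambda,m+1)}$), all these have $\el\ge m+1$, so all lie in $B_r^{\rhd(\mu,m)}$. Combining, $D\in B_r^{\rhd(\mu,m)}$.

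The main obstacle I anticipate is the bookkeeping in the second step: showing that $e_{r-1}^{(m+1)}$ — or more precisely any diagram of corank exactly $m+1$ — already lies in the span of Murphy basis elements of corank $\ge m+1$, rather than merely in the span of such elements together with lower-corank corrections. This is really the statement that the transition matrix between the diagram basis and the Murphy basis of $B_r(R;\deltabold)$ is "block upper-triangular with respect to corank", which is exactly the {\em ad hoc} invertibility check alluded to in the remark following \myref{Theorem}{theorem:  closed form determination of the branching factors} and worked out in \cite[Section 6.3]{EG:2012}. I would therefore lean on that reference: the diagram basis elements of corank $\ge m+1$ and the Murphy basis elements of corank $\ge m+1$ span the same $R$-submodule of $B_r(R;\deltabold)$, namely the ideal $B_r^{\rhd(\mu,m)}$ (which does not depend on the choice of $\mu\vdash r-2m$, since for any such $\mu$ the ideal is precisely "everything of corank $>m$"). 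Granting this, the lemma is immediate, and the identical argument with $x$ replaced by $y$ and $\unrhd$ by $\coldomeq$ gives the dual Murphy statement, since the corank component of the order is the same in both cases.
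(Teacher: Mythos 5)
Your argument is correct and, once the preliminary factorisation through $e_{r-1}^{(c)}$ is stripped away, it rests on exactly the fact the paper's own proof invokes: the transition matrix between the diagram basis and the Murphy basis of $B_r(R;\deltabold)$ is block diagonal with respect to corank (from \cite[Section 6]{EG:2012}), so a diagram of corank $l>m$ is an $R$-combination of Murphy basis elements labelled by corank $l$, all of which lie in $B_r(R;\deltabold)^{\rhd(\mu,m)}$, and likewise for the dual basis. One small inaccuracy in your closing parenthetical: $B_r(R;\deltabold)^{\rhd(\mu,m)}$ is not ``precisely everything of corank $>m$'' and it does depend on $\mu$, since it also contains the corank-$m$ elements $x^{(\lambda,m)}_{\sts\stt}$ with $\lambda\rhd\mu$; but you only use the inclusion that does hold (span of corank $>m$ is contained in $B_r(R;\deltabold)^{\rhd(\mu,m)}$ for every $\mu\vdash r-2m$), so the proof stands.
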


\begin{proof}  It follows from the computation of the transition matrix between the Murphy basis and the diagram basis in \cite[Section 6.2.3]{EG:2012} that for all $(\lambda, l) \in \widehat B_r$ and all 
$\sts, \stt \in \Std_r(\lambda, l)$,   the Murphy basis element $x_{\sts \stt}\power {\lambda, l}$  is an integer linear combination of Brauer diagrams with corank $l$.  Thus the transition matrix is block diagonal, and the inverse transition matrix is also block diagonal.  Hence if the corank of $D$ is $l  > m$,   then
$D$ is a linear combination of Murphy  basis elements $x_{\sts \stt}\power {\lambda, l}$ with 
$\lambda \vdash r - 2l$  and $\sts, \stt \in \Std_r(\lambda, l)$.   It follows that 
$D \in B_r(R; \deltabold)^{\rhd (\mu, m)}$ for any $\mu \vdash r - 2m$.     The same argument holds for the dual Murphy basis.
\end{proof}

\section{Bilinear forms and the action of the Brauer algebra on tensor space} \label{section tensor space}

Let $V$ be a finite  dimensional vector space over a field $\Bbbk$ with a non--degenerate bilinear form denoted $[\ , \ ]$.  For the moment we make no assumption about the symmetry of the bilinear form.   The non--degenerate form induces an isomorphism $\eta : V \to V^*$, defined by $\eta(v)(w) = [w, v]$ and hence a linear isomorphism $A : V \otimes V \to \End(V)$ defined by $A(v \otimes w)(x) = \eta(v)(x) \,w = [x, v]\, w$.   We write $x \cdot (v \otimes w) = A(v \otimes w)(x) = [x, v]\, w$, and also $(v \otimes w) \cdot x =
v [w, x]$.     

For all $r \ge 1$, extend the bilinear form to $V^{\otimes r}$ by $[x_1 \otimes x_2 \cdots x_r, y_1 \otimes y_2 \cdots y_r] = \prod_i [x_i, y_i]$.  Then this is also an non--degenerate bilinear form so induces
isomorphisms $\eta_r : V^{\otimes r} \to (V^*)^{\otimes r}$  and $A_r:  V^{\otimes 2r} \to \End(V^{\otimes r})$.   We will generally just write $\eta$ and $A$ instead of $\eta_r$ and $A_r$.      In the following, let $G$ denote the group of linear transformations of $V$ preserving the bilinear form.

\begin{lem} \label{lemma  properties of omega}
 Let $\{v_i\}$  and $\{v_i^*\}$  be dual bases of $V$ with respect to the bilinear form, i.e. bases such that $[v_i, v_j^*] = \delta_{i, j}$, and let $\omega = \sum_i v_i^* \otimes v_i$.    Then:
\begin{enumerate}[leftmargin=*,label=(\arabic{*}), font=\normalfont, align=left, leftmargin=*]
\item For all $x \in V$, $x \cdot \omega = \omega \cdot x = x$.  In particular, $\omega = A\inv(\id_V)$ and $\omega$ is independent of the choice of the dual bases.
\item  For all $x, y \in V$,  $[x \otimes y, \omega] = [y, x]$. 
\item $\omega$ is $G$--invariant.
\end{enumerate}
\end{lem}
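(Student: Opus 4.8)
The plan is to verify the three assertions in order, since each relies on the previous one. First, for part (1), I would compute directly. Write $x = \sum_j [x, v_j^*] v_j$ — this is valid because $\{v_i\}$ and $\{v_i^*\}$ are dual bases, so $\{v_j\}$ is a basis and the coefficient of $v_j$ in $x$ is $[x, v_j^*]$ (here I use $[v_i, v_j^*] = \delta_{i,j}$). Then by the definition $x \cdot (v \otimes w) = [x, v]\, w$, extended linearly,
\[
x \cdot \omega = x \cdot \Bigl( \sum_i v_i^* \otimes v_i \Bigr) = \sum_i [x, v_i^*]\, v_i = x.
\]
For $\omega \cdot x = x$, use instead the expansion $x = \sum_i [v_i, x]\, v_i^*$ (valid since the coefficient of $v_i^*$ in $x$ is $[v_i, x]$, because $[v_i, v_j^*] = \delta_{i,j}$ again) together with $(v \otimes w) \cdot x = v\, [w, x]$, giving $\omega \cdot x = \sum_i v_i^* [v_i, x] = x$. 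Since $A(\omega)(x) = x \cdot \omega = x$ for all $x$, we get $A(\omega) = \id_V$, so $\omega = A^{-1}(\id_V)$; as $A$ is an isomorphism independent of any basis choice, $\omega$ is independent of the choice of dual bases.

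For part (2), I would again compute using the expansion of the bilinear form on $V \otimes V$: $[x \otimes y, \omega] = \sum_i [x \otimes y,\, v_i^* \otimes v_i] = \sum_i [x, v_i^*]\,[y, v_i]$. Now $\sum_i [x, v_i^*]\, v_i = x$ by the expansion used in part (1), so pairing both sides with $y$ gives $\sum_i [x, v_i^*]\,[y, v_i] = [y, x]$, wait — more carefully, $\sum_i [x, v_i^*]\,[y, v_i] = \bigl[y,\, \sum_i [x,v_i^*] v_i\bigr] = [y, x]$ by linearity of the form in the second variable. Hence $[x \otimes y, \omega] = [y, x]$.

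For part (3), let $g \in G$, so $[gx, gy] = [x, y]$ for all $x, y$. The natural action of $g$ on $V \otimes V$ sends $v \otimes w \mapsto gv \otimes gw$. I would show $g \cdot \omega = \omega$ either by using the characterization in part (1), or by a change-of-basis argument. The cleanest route: the form being $G$-invariant and nondegenerate means that if $\{v_i\}, \{v_i^*\}$ are dual bases, then so are $\{g v_i\}, \{g v_i^*\}$, since $[g v_i, g v_j^*] = [v_i, v_j^*] = \delta_{i,j}$. Therefore $g\cdot \omega = \sum_i g v_i^* \otimes g v_i = \sum_i (gv_i)^* \otimes (gv_i)$ is just $\omega$ computed with respect to the dual bases $\{gv_i\}, \{gv_i^*\}$, and by part (1) this equals $\omega$. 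Alternatively, and perhaps more transparently, $\omega = A^{-1}(\id_V)$ and $A$ is $G$-equivariant (where $G$ acts on $\End(V)$ by conjugation), so $g \cdot \omega = A^{-1}(g \,\id_V\, g^{-1}) = A^{-1}(\id_V) = \omega$; here $G$-equivariance of $A$ follows from $A(gv \otimes gw)(x) = [x, gv]\, gw = [g^{-1}x, v]\, gw = g\bigl([g^{-1}x, v] w\bigr) = g \bigl( A(v\otimes w)(g^{-1} x)\bigr)$, i.e. $A(g\cdot(v\otimes w)) = g \circ A(v \otimes w) \circ g^{-1}$.

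None of the steps presents a serious obstacle; this is a routine computation. The only point requiring mild care is bookkeeping about which dual-basis expansion ($x = \sum [x, v_j^*] v_j$ versus $x = \sum [v_j, x] v_j^*$) is appropriate at each stage, and keeping track of which variable of the (not necessarily symmetric) form is being used, since symmetry of $[\ ,\ ]$ has not yet been assumed.
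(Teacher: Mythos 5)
Your proof is correct and follows essentially the same route as the paper's: verifying (1) via the dual-basis expansions, deducing (2) from $\sum_i [x,v_i^*][y,v_i] = [y, x\cdot\omega]$, and proving (3) by observing that $\{gv_i\}$, $\{gv_i^*\}$ form another pair of dual bases. The alternative argument for (3) via $G$-equivariance of $A$ is a harmless extra, and your care about which variable of the (possibly non-symmetric) form is used at each step is exactly the right bookkeeping.
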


\begin{proof}  For any $j$, $v_j \cdot \omega = \sum_i [v_j, v_i^*] v_i = v_j$.  Similarly, $\omega \cdot v_j^* = v_j^*$.    Hence, for all $x$,  $x \cdot \omega = \omega \cdot x = x$.    We have
$[x \otimes y, \omega] = \sum_i [x, v_i^*] [y, v_i] =   [ y,  x \cdot \omega] = [y, x]$.   For the $G$--invariance of $\omega$, note that for $g \in G$, $g\cdot \omega = \sum_i  g v_i^* \otimes g v_i = \omega$, because
$\{g v_i\}$  and $\{g v_i^*\}$ is another pair of dual bases.
\end{proof}

For the remainder of this section, we assume that the bilinear form $[\ , \ ]$ is either symmetric or skew--symmetric.  Note that in both cases the bilinear form induced on $V\otimes V$ is symmetric. 
 Because the bilinear form on $V^{\otimes r}$ is non--degenerate,  $\End(V\tensor r)$ has a $\Bbbk$--linear algebra involution $*$  defined by $[T^*(x), y] = [x, T(y)]$, for $T \in \End(V\tensor r)$ and $ x, y \in V\tensor r$.   (The involution property depends on the bilinear form being either symmetric or skew--symmetric.)

Define $E, S  \in \End(V \otimes V)$ by $(x \otimes y) E = [x, y] \omega$, and  $(x \otimes y) S = y \otimes x$.    These will be used to define a right action of  Brauer algebras on tensor powers of $V$.

\begin{lem} \label{lemma properties of E and S} \mbox{}
Write $\epsilon = 1$ if the bilinear form $[\ , \ ]$ on $V$ is symmetric and $\epsilon = -1$ if the bilinear form   is skew symmetric.

\begin{enumerate}[leftmargin=*,label=(\arabic{*}), font=\normalfont, align=left, leftmargin=*]
\item  $E S = S E = \epsilon E$
\item  $E^2 = (\epsilon \dim V) E$.
\item  $E = E^*$ and $S = S^*$ in $\End(V \otimes V)$.
\item  $E$ and $S$ commute with the action of $G$ on $V \otimes V$. 
\end{enumerate}
\end{lem}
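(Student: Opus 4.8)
The four assertions are all verified by direct computation using the definitions of $E$ and $S$ together with \myref{Lemma}{lemma  properties of omega}. I would organize the proof by handling each part in turn, working with a dual pair of bases $\{v_i\}$, $\{v_i^*\}$ as in \myref{Lemma}{lemma  properties of omega}, so that $\omega = \sum_i v_i^* \otimes v_i$ and $[v_i, v_j^*] = \delta_{i,j}$.

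\emph{Part (1).} First I would compute $(x \otimes y) E S = ([x,y]\,\omega) S$. Writing $\omega = \sum_i v_i^* \otimes v_i$ and applying $S$ gives $[x,y] \sum_i v_i \otimes v_i^*$; but by symmetry/skew-symmetry of the form, $\sum_i v_i \otimes v_i^* = \epsilon\, \omega$ (since $\{v_i^*\}, \{v_i\}$ is again a dual pair up to the sign $\epsilon$, or directly because $[v_i, v_j] = \epsilon[v_j, v_i^*]$-type manipulations show $\sum v_i \otimes v_i^*$ acts as $\epsilon\,\omega$ via part (2) of \myref{Lemma}{lemma  properties of omega}). Hence $(x \otimes y) ES = \epsilon [x,y]\,\omega = \epsilon (x\otimes y) E$. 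For $SE$: $(x\otimes y) S E = (y \otimes x) E = [y,x]\,\omega = \epsilon [x,y]\,\omega = \epsilon (x \otimes y) E$, using $[y,x] = \epsilon [x,y]$. So $ES = SE = \epsilon E$.

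\emph{Part (2).} Compute $(x \otimes y) E^2 = ([x,y]\,\omega) E = [x,y]\, (\omega E)$. Now $\omega E = \sum_i (v_i^* \otimes v_i) E = \sum_i [v_i^*, v_i]\,\omega$. The scalar $\sum_i [v_i^*, v_i]$ equals $\epsilon \sum_i [v_i, v_i^*] = \epsilon \sum_i \delta_{i,i} = \epsilon \dim V$. Hence $E^2 = (\epsilon \dim V) E$.

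\emph{Part (3).} For $S$: $[(x_1 \otimes x_2) S, y_1 \otimes y_2] = [x_2 \otimes x_1, y_1 \otimes y_2] = [x_2, y_1][x_1, y_2]$, while $[x_1 \otimes x_2, (y_1 \otimes y_2) S] = [x_1 \otimes x_2, y_2 \otimes y_1] = [x_1, y_2][x_2, y_1]$; these agree, so $S = S^*$. For $E$: $[(x_1 \otimes x_2) E, y_1 \otimes y_2] = [x_1,x_2]\,[\omega, y_1 \otimes y_2]$, and by part (2) of \myref{Lemma}{lemma  properties of omega} (using symmetry of the form on $V \otimes V$, noted in the text) $[\omega, y_1 \otimes y_2] = [y_1 \otimes y_2, \omega] = [y_2, y_1]$. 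So the left side is $[x_1, x_2][y_2, y_1]$. Symmetrically the right side $[x_1 \otimes x_2, (y_1 \otimes y_2)E] = [y_1, y_2]\,[x_1 \otimes x_2, \omega] = [y_1, y_2][x_2, x_1]$. Using symmetry/skew-symmetry of the form on $V$, both equal $[x_1,x_2][y_1,y_2]$ up to the common sign $\epsilon^2 = 1$, hence are equal; so $E = E^*$.

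\emph{Part (4).} For $g \in G$ and $x, y \in V$: $((x \otimes y)g) E = (gx \otimes gy) E = [gx, gy]\,\omega = [x,y]\,\omega$ since $g$ preserves the form, and this equals $[x,y]\,(g\omega) = ((x \otimes y) E) g$ since $\omega$ is $G$-invariant by part (3) of \myref{Lemma}{lemma  properties of omega}. So $E$ commutes with $G$. For $S$: $((x \otimes y) g) S = (gx \otimes gy) S = gy \otimes gx = (y \otimes x) g = ((x \otimes y) S) g$. So $S$ commutes with $G$.

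\textbf{Main obstacle.} There is no deep obstacle here; the only point requiring care is the consistent bookkeeping of the sign $\epsilon$ and the reuse of \myref{Lemma}{lemma  properties of omega}(2), which itself encodes the identity $[x \otimes y, \omega] = [y,x]$ — one must be careful which slot of the tensor gets paired with which, and that the form on $V \otimes V$ is genuinely symmetric (so one may freely swap arguments of $[\ ,\ ]$ on $V\tensor 2$) even when the form on $V$ is skew. I would state explicitly at the start that $\sum_i v_i \otimes v_i^* = \epsilon\,\omega$ as a preliminary observation, since it is used in parts (1) and (2), and then the rest is routine substitution.
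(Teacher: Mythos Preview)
Your proof is correct and proceeds exactly as the paper indicates: direct computation from the definitions of $E$ and $S$, invoking \myref{Lemma}{lemma  properties of omega} where needed (in particular part (3) for the $G$--invariance of $E$). The paper's own proof simply records that ``these statements follow from straightforward computations,'' so you have faithfully expanded what the authors left implicit.
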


\begin{proof}  These statements follow from  straightforward computations.    
%
The proof of the last statement, on $G$--invariance, uses \myref{Lemma}{lemma  properties of omega}, part (3).  
\end{proof}

For $r \ge 1$  and for $1 \le i \le r-1$  define $E_i$ and $S_i$ in $\End(V\tensor r)$  to be $E$  and $S$ acting in the $i$--th and $i+1$--st tensor places. 

\begin{prop}[Brauer, \cite{MR1503378}]   Let $V$ be a finite dimensional  vector space  over $\Bbbk$ with a non--degenerate symmetric or skew--symmetric bilinear form $[ \ , \ ]$, and let $G$ be the group of linear transformations of $V$ preserving the bilinear form.   Then  for $r \ge 1$, $e_i \mapsto E_i$  and 
$s_i \mapsto  \epsilon S_i$  determines a homomorphism 
$$
\Phi_r : B_r(\Bbbk;  \epsilon \dim V) \longrightarrow  \End_G(V \tensor r).
$$
\end{prop}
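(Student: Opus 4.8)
The plan is to verify that the assignments $e_i \mapsto E_i$ and $s_i \mapsto \epsilon S_i$ respect all the defining relations of the Brauer algebra $B_r(\Bbbk; \epsilon \dim V)$, and that the images land in $\End_G(V^{\otimes r})$. Since $B_r(\Bbbk; \delta)$ is generated by the diagrams $s_1, \dots, s_{r-1}, e_1, \dots, e_{r-1}$ (as noted after \eqref{figure brauer generators}), it suffices to check a finite list of relations: the braid/symmetric-group relations among the $s_i$, the relations $e_i^2 = \delta e_i$, $e_i s_i = s_i e_i = e_i$, $e_i e_{i\pm 1} e_i = e_i$, and the commutation relations $s_i s_j = s_j s_i$, $e_i e_j = e_j e_i$, $s_i e_j = e_j s_i$ for $|i - j| \ge 2$, together with $s_i e_{i+1} e_i = s_{i+1} e_i$ and the analogous identities relating $s_i$ and $e_j$ for adjacent indices.

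First I would reduce everything to identities in $\End(V \otimes V)$ or $\End(V^{\otimes 3})$ involving $E$ and $S$, since relations involving only non-adjacent indices hold trivially because the operators act on disjoint tensor factors, and the remaining relations involve at most three consecutive strands. The key input is \myref{Lemma}{lemma properties of E and S}: from $ES = SE = \epsilon E$ we get, writing $\sigma_i = \epsilon S_i$, that $E_i \sigma_i = \epsilon E_i S_i = \epsilon \cdot \epsilon E_i = E_i$ and likewise $\sigma_i E_i = E_i$ (using $\epsilon^2 = 1$); from $E^2 = (\epsilon \dim V) E$ we get $E_i^2 = (\epsilon \dim V) E_i$, which matches the parameter $\delta = \epsilon \dim V$ of the target Brauer algebra. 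The relation $\sigma_i^2 = \mathrm{id}$ follows from $S_i^2 = \mathrm{id}$ and $\epsilon^2 = 1$; the braid relation $\sigma_i \sigma_{i+1} \sigma_i = \sigma_{i+1} \sigma_i \sigma_{i+1}$ follows from the corresponding relation for the coordinate transpositions $S_i$ (and the fact that $\epsilon^3 = \epsilon$ appears on both sides). The mixed three-strand relations $E_i E_{i+1} E_i = E_i$ and $E_i \sigma_{i+1} \sigma_i = \sigma_{i+1} \sigma_i E_{i+1}$ (and their mirror images) are verified by a direct computation on a simple tensor $x \otimes y \otimes z$, using $(x \otimes y)E = [x,y]\omega$ and part (1) of \myref{Lemma}{lemma properties of omega} ($x \cdot \omega = \omega \cdot x = x$).

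For the statement that $\Phi_r$ lands in $\End_G(V^{\otimes r})$, I would invoke part (4) of \myref{Lemma}{lemma properties of E and S}: $E$ and $S$ commute with the $G$-action on $V \otimes V$, hence $E_i$ and $S_i$ commute with the diagonal $G$-action on $V^{\otimes r}$, so every element in the image of $\Phi_r$ is $G$-equivariant. Finally, one checks that $\Phi_{r}$ is compatible with the embedding $B_r \hookrightarrow B_{r+1}$ (adding a vertical strand on the right corresponds to tensoring with $\mathrm{id}_V$), so the homomorphisms are consistent across $r$; this is immediate since $E_i, S_i$ for $i \le r-1$ act as the identity on the $(r+1)$-st factor.

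**The main obstacle** is purely bookkeeping: marshalling the complete and correct list of Brauer-algebra relations and checking the three-strand mixed relations without sign errors, since the twist $s_i \mapsto \epsilon S_i$ (rather than $s_i \mapsto S_i$) means factors of $\epsilon$ must be tracked carefully — in particular one must confirm that $\epsilon$ appears to a power of the same parity on both sides of each relation, which works out because each relation is homogeneous in a suitable sense (the braid relations are degree $3 \equiv 1$, the relations $e_i s_i = e_i$ convert one $s$ and one $e$ to one $e$ with a compensating $\epsilon^2 = 1$, etc.). Since \myref{Lemma}{lemma properties of E and S} has already packaged the essential algebraic facts, no genuinely hard step remains; the proposition follows by assembling these pieces, and indeed the cited reference \cite{MR1503378} (Brauer's original paper) already contains the substance of the argument.
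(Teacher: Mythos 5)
Your proposal is correct and is exactly the paper's (second) stated route: the paper's proof simply cites Brauer's original argument and then remarks that, alternatively, one can use a presentation of the Brauer algebra by the generators $s_i, e_i$ and verify the defining relations on the images, which is what you carry out, using \myref{Lemma}{lemma properties of E and S} for the two-strand relations and the sign bookkeeping, and part (4) of that lemma for $G$-equivariance. The only implicit ingredient you should make explicit is the citation for the presentation itself (the paper points to \cite[Proposition 2.7]{MR2235339}), since "it suffices to check a finite list of relations" relies on knowing that list presents $B_r(\Bbbk;\delta)$.
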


\begin{proof}  Brauer works over the complex numbers, but his argument in \cite[page 869]{MR1503378} is equally valid over any field.   Alternatively, one can use the presentation of the Brauer algebra, see \cite[Proposition 2.7]{MR2235339}  for example, and verify that the images of the generators $s_i, e_i$ satisfy the defining relations. 
\end{proof}

Note that the symmetric group (contained in the Brauer algebra)  acts  on $V \tensor r$ by place permutations if the bilinear form  $[\ , \ ]$ on $V$ is symmetric and by signed place permutations if the bilinear form is skew--symmetric.

We have $\Phi_{r+1} \circ \iota =  \iota \circ \Phi_\f$,   where we have used $\iota$ to denote both the embedding of $B_\f$ into $B_{r+1}$ and the embedding of  $\End(V^{\otimes r})$ into
$\End(V^{\otimes {(r+1)}})$,   namely $\iota: T \mapsto T \otimes \id_V$.  In particular, this implies $\ker(\Phi_\f) \subseteq  \ker(\Phi_{r+1})$.   Because of this, we will 
sometimes  just write $\Phi$ instead of $\Phi_\f$.

\begin{lem}  The homomorphism $\Phi_r$ respects the involutions on $B_r(\Bbbk;  \epsilon \dim V)$ and
$ \End(V \tensor r)$, i.e.  $\Phi_r(a^*) = \Phi_r(a)^*$.   Consequently,  the image $\im(\Phi_r)$ is an algebra with involution, and $\ker(\Phi_r)$ is a $*$--invariant ideal in $B_r(\Bbbk;  \epsilon \dim V)$. 
\end{lem}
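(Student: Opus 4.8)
The plan is to deduce the identity $\Phi_r(a^*) = \Phi_r(a)^*$ for all $a$ from the corresponding identity on the algebra generators $s_i, e_i$ of $B_r(\Bbbk; \epsilon \dim V)$. First I would note that $X := \{a \in B_r(\Bbbk; \epsilon \dim V) \mid \Phi_r(a^*) = \Phi_r(a)^*\}$ is a unital $\Bbbk$-subalgebra: it is a linear subspace containing $1$, and if $a, b \in X$ then, using that $*$ is an anti-automorphism of both $B_r(\Bbbk; \epsilon \dim V)$ and $\End(V\tensor r)$ and that $\Phi_r$ is an algebra homomorphism,
\[
\Phi_r((ab)^*) = \Phi_r(b^* a^*) = \Phi_r(b^*)\,\Phi_r(a^*) = \Phi_r(b)^*\,\Phi_r(a)^* = (\Phi_r(a)\Phi_r(b))^* = \Phi_r(ab)^*,
\]
so $ab \in X$. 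Hence it is enough to show $s_i, e_i \in X$ for $1 \le i \le r - 1$.

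Next I would check the identity on generators. In $B_r(\Bbbk; \epsilon \dim V)$ one has $s_i^* = s_i$ and $e_i^* = e_i$, while $\Phi_r(s_i) = \epsilon S_i$ and $\Phi_r(e_i) = E_i$. By part (3) of \myref{Lemma}{lemma properties of E and S}, $E = E^*$ and $S = S^*$ in $\End(V \otimes V)$. I would then observe that the involution $*$ on $\End(V\tensor r)$ is compatible with the tensor decomposition: since the bilinear form on $V\tensor r$ is the $r$-fold tensor product $[x_1 \otimes \cdots \otimes x_r, y_1 \otimes \cdots \otimes y_r] = \prod_i [x_i, y_i]$ of the form on $V$, the adjoint of an operator $\id\tensor{(i-1)} \otimes T \otimes \id\tensor{(r-i-1)}$ acting in tensor places $i$ and $i+1$ is $\id\tensor{(i-1)} \otimes T^* \otimes \id\tensor{(r-i-1)}$, with $T^*$ the adjoint on $V \otimes V$. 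Taking $T = E$ and $T = S$ gives $E_i^* = E_i$ and $S_i^* = S_i$ in $\End(V\tensor r)$; since $\epsilon = \pm 1$ is a scalar fixed by $*$, we get $\Phi_r(e_i^*) = E_i = E_i^* = \Phi_r(e_i)^*$ and $\Phi_r(s_i^*) = \epsilon S_i = (\epsilon S_i)^* = \Phi_r(s_i)^*$. Thus $s_i, e_i \in X$, so $X = B_r(\Bbbk; \epsilon \dim V)$ and $\Phi_r(a^*) = \Phi_r(a)^*$ for all $a$.

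The two consequences then follow formally. If $T = \Phi_r(a) \in \im(\Phi_r)$ then $T^* = \Phi_r(a)^* = \Phi_r(a^*) \in \im(\Phi_r)$, so $\im(\Phi_r)$ is closed under $*$ and hence is an algebra with involution. If $a \in \ker(\Phi_r)$ then $\Phi_r(a^*) = \Phi_r(a)^* = 0$, so $a^* \in \ker(\Phi_r)$; since $\ker(\Phi_r)$ is automatically a two-sided ideal (being the kernel of an algebra homomorphism), it is a $*$--invariant ideal of $B_r(\Bbbk; \epsilon \dim V)$.

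The one point that will require a moment's care is the compatibility of the involution on $\End(V\tensor r)$ with the tensor factorization — that the adjoint of an operator supported on two adjacent factors is computed place-by-place — but this is a direct consequence of the multiplicativity of the form over tensor products and can be recorded in a single line; everything else is formal.
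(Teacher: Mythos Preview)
Your proof is correct and follows exactly the approach the paper takes: the paper's proof is the single line ``Follows from \myref{Lemma}{lemma properties of E and S}, part (3),'' and you have simply unpacked that reference by checking the identity on the generators $s_i, e_i$ and noting that the set where the identity holds is a subalgebra.
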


\begin{proof}  Follows from \myref{Lemma}{lemma properties of E and S}, part (3). 
\end{proof}
 
\section{The Brauer algebra on symplectic tensor space} \label{section Brauer symplectic}

Let $V$ be a $2N$--dimensional vector space over a field $\Bbbk$ with a symplectic form $\langle \ , \ \rangle$, i.e., a non--degenerate, alternating (and thus skew--symmetric) bilinear form.    One can show that
$V$  has a {\sf Darboux basis}, i.e. a basis $\{v_i \}_{1 \le i \le 2N}$ such that the dual basis  $\{v_i^*\}$ with respect to the symplectic form is $v_i^* = v_{2N + 1 - i}$ if $1 \le i \le N$  and $v_i^* = -  v_{2N + 1 - i}$ if $N+1 \le i \le 2N$.   Hence,     one can assume without loss of generality that $V = \Bbbk^{2N}$ with the standard symplectic form
$$
\langle x,  y \rangle = \sum_{i = 1}^N (x_i y_{2N +1 -i} -  y_i x_{2N + 1 -i}).
$$
For $r \ge 1$, let  $\Phi_r : B_r(\Bbbk; -2N) \to \End(V\tensor r)$ be the homomorphism defined as in \myref{Section}{section tensor space} using the symplectic form.   When required for clarity, we write
$V_\Bbbk$ for $V = \Bbbk^{2N}$ and $\Phi_{r, \Bbbk}$ for $\Phi_r$.  The image $\im(\Phi_{r, \Bbbk})$ is known as the  {\sf (symplectic) Brauer centralizer algebra}.

\begin{thm}[\cite{MR2342000}]  \label{symplectic SW duality}
 Let $\Lambda : \Bbbk \Symp(V) \to \End(V\tensor r)$ denote the homomorphism corresponding to the diagonal action of the symplectic group $ \Symp(V)$ on $V\tensor r$.   
\begin{enumerate}[leftmargin=*,label=(\arabic{*}), font=\normalfont, align=left, leftmargin=*]
\item 
 If $\Bbbk$ is a quadratically closed infinite field, then  $\im(\Phi_r) = \End_{\Symp(V)}(V \tensor r)$ and
$\im(\Lambda) = \End_{B_r(\Bbbk; -2N)}(V \tensor r)$.  
\item  The dimension of  $\im(\Phi_r)$ is independent of the field and  of the characteristic, for infinite fields $\Bbbk$. 
\end{enumerate}
\end{thm}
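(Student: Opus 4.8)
The plan is to reduce the statement to a rank--constancy assertion about one integer matrix, and then to establish that assertion via the theory of good filtrations for the symplectic group scheme.  Fix $N$ and $r$ and use a Darboux basis of $V$ with entries in $\ZZ$; then $E$ and $S$, and hence all $E_i$, $S_i$, are represented on the induced basis of $V\tensor r$ by matrices with entries in $\{0,1,-1\}$, and more generally each Brauer diagram $D$ acts by an integer matrix $\Phi^{\ZZ}_r(D)$ (the loop parameter $-2N$ is an integer), which is literally the same for every ground field, only reduced modulo its characteristic.  Thus $\Phi_{r,\Bbbk}=\Phi^{\ZZ}_r\otimes_{\ZZ}\Bbbk$ for a fixed $\ZZ$--algebra homomorphism $\Phi^{\ZZ}_r\colon B_r(\ZZ;-2N)\to \Mat_{d}(\ZZ)$ with $d=(2N)^r$, whence $\dim_{\Bbbk}\im(\Phi_{r,\Bbbk})=\operatorname{rank}_{\Bbbk}(\mathcal M\otimes_{\ZZ}\Bbbk)$, where $\mathcal M$ is the integer matrix whose rows are the vectorizations of the finitely many $\Phi^{\ZZ}_r(D)$.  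So the theorem is equivalent to: the rank of $\mathcal M$ does not drop under reduction modulo any prime, i.e.\ all elementary divisors of $\mathcal M$ equal $1$.

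To prove this I would argue representation--theoretically.  The natural module $V$ is a tilting module for the split reductive group scheme $\Symp_{2N}$ over $\ZZ$ --- it is $\Delta(\varpi_1)=\nabla(\varpi_1)$ in every characteristic, including $2$ --- hence so is $V\tensor r$, being a tensor product of tiltings.  In particular $V\tensor r$ is $\ZZ$--free and carries both a Weyl and a dual Weyl filtration, with multiplicities $m_\lambda=[V\tensor r:\Delta(\lambda)]=[V\tensor r:\nabla(\lambda)]$ independent of the ground ring: they count the up--down $r$--tableaux of shape $\lambda$, that is, the permissible paths of shape $\lambda$ in the branching diagram $\widehat B$ of \myref{Section}{section Brauer symplectic}.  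From $\operatorname{Ext}^1_{\Symp_{2N}}(\Delta(\lambda),\nabla(\mu))=0$ and the base--change compatibility of $\Hom$ between a $\Delta$--filtered and a $\nabla$--filtered module, $\End_{\Symp_{2N}}(V\tensor r)$ is $\ZZ$--free of rank $\sum_\lambda m_\lambda^2$ and $\End_{\Symp_{2N}(\Bbbk)}(V\tensor r)=\End_{\Symp_{2N}}(V\tensor r)\otimes_{\ZZ}\Bbbk$ for every field $\Bbbk$.  Part~(1) of the theorem identifies $\im(\Phi_{r,\Bbbk})$ with $\End_{\Symp(V)}(V\tensor r)$ over quadratically closed infinite $\Bbbk$; passing to the algebraic closure (both sides commute with field extension) extends the dimension equality to every infinite $\Bbbk$, so $\dim_{\Bbbk}\im(\Phi_{r,\Bbbk})=\sum_\lambda m_\lambda^2$, independent of $\Bbbk$ and of its characteristic, and equal to the number $\sum_{\nu\in\widehat B_{r,\perm}}(\sharp\Std_{r,\perm}(\nu))^2$ of axiom~\eqref{quotient axiom 3}.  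Within the framework of this paper one could alternatively read the constancy off the $\ZZ$--cellular basis of $Q_r^S$ from \myref{Theorem}{good paths basis theorem}, but since verifying \eqref{quotient axiom 3} itself requires a dimension input, the argument above is the one I would use in order to avoid circularity.

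The main obstacle is exactly the rank--constancy, equivalently the base--change compatibility of $\End_{\Symp_{2N}}(V\tensor r)$.  Semicontinuity of rank is automatic and yields only $\dim_{\F_p}\im(\Phi_{r,\F_p})\le\dim_{\Q}\im(\Phi_{r,\Q})$, so the content is to rule out extra relations among the diagram operators in positive characteristic, and this is precisely what the tilting/good--filtration machinery delivers: establishing that $V$ (hence $V\tensor r$) is tilting for $\Symp_{2N,\ZZ}$, that $\operatorname{Ext}^1(\Delta,\nabla)$ vanishes integrally, and that the $\Delta$--multiplicities $m_\lambda$ have a characteristic--free combinatorial description, is the technical heart of the argument.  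I would treat part~(1) --- the identification of $\im(\Phi_r)$ with the full symplectic centralizer over an arbitrary infinite field, a Schur--Weyl/density statement --- as an input cited from \cite{MR2342000} rather than reprove it.
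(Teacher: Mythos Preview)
The paper does not prove this theorem at all: it is stated with attribution to \cite{MR2342000} and used as a black-box input, with only a historical remark following it. There is therefore no ``paper's own proof'' to compare your proposal against.

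That said, your sketch is sound and is in fact close in spirit to how results of this type are established in the cited literature. The reduction to rank-constancy of an integer matrix is correct, and the tilting-module argument for $\Symp_{2N}$ is the standard route: the natural module is tilting, tensor products of tiltings are tilting, $\operatorname{Ext}^1(\Delta,\nabla)=0$ gives base-change compatibility of $\End_{\Symp_{2N}}(V^{\otimes r})$, and the Weyl-filtration multiplicities are governed by characteristic-free formal characters. Your handling of the passage from quadratically closed to general infinite fields via algebraic closure is also fine, since $\dim_\Bbbk\im(\Phi_{r,\Bbbk})$ is the rank of a fixed integer matrix and is preserved under field extension. You are right to flag the potential circularity with \myref{Theorem}{good paths basis theorem}: in this paper the dimension formula is an \emph{input} to the verification of axiom \eqref{quotient axiom 3}, not an output, so one must obtain it independently---exactly as you do.
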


\begin{rem}  The special case when $\Bbbk$ is the field of complex numbers  is due to Brauer  \cite{MR1503378}. The statement  of part (1) in \cite{MR2342000} is more general, allowing general infinite fields at the cost of replacing the symplectic group with the symplectic similitude group. 
\end{rem}

Let $\Phi_{r, \ZZ}$ denote the restriction of $\Phi_{r, \C}$ to $B_r(\ZZ; -2N)$; the image  $\im(\Phi_{r, \ZZ})$ is the $\ZZ$-subalgebra of $\End(V_\C\tensor r)$  generated by $E_i$ and $S_i$ for $1 \le i \le r-1$.   Let $V_\ZZ$ be the $\ZZ$-span of  the standard basis  $\{e_i \suchthat 1 \le i \le 2N\}$.  Thus $V_\ZZ \tensor r \subset V_\C \tensor r$ and  $\End_\ZZ(V_\ZZ \tensor r) \subset \End(V_\C \tensor r)$.   Since $E_i $ and $S_i$ leave $V_\ZZ \tensor r$ invariant, we can also regard $\im(\Phi_{r, \ZZ})$ as a $\ZZ$-subalgebra of 
 $\End_\ZZ(V_\ZZ \tensor r) $.   
 
 For any $\Bbbk$,  $B_r(\Bbbk; -2N) \cong B_r(\ZZ; -2N) \otimes_\ZZ \Bbbk$ and $V_\Bbbk \cong V_\ZZ \otimes_\ZZ \Bbbk$.     For $a \in B_r(\ZZ; -2N)$ and $w \in V_\ZZ$,  we have
 $\Phi_{r, \Bbbk}(a \otimes 1_\Bbbk) (w \otimes 1_\Bbbk) = \Phi_{r, \ZZ}(a)(w)\otimes 1_\Bbbk $.  Therefore, we are in the situation of \myref{Lemma}{lemma specialization cd}, and there exists a map $\theta : \im(\Phi_{r, \ZZ}) \to \im(\Phi_{r, \Bbbk})$ making the diagram commute:
  \!\!\!\!\!\!
  \begin{equation}  \label{symplectic CD}
\begin{minipage}{5cm}
\begin{tikzpicture}
 \draw (0,0) node {$B_r(\Z; -2N)$};\draw (3,0) node {\ \ $\im(\Phi_{r, \ZZ})$};
\draw[->](1.1,0) to node [above] {\scalefont{0.7} $\Phi_{r, \ZZ}$} (2.4,0);
\draw[->](0,-0.3) to node [right] {\scalefont{0.8}$\otimes 1_\Bbbk$} (0,-1.2);
\draw[->](3,-0.3) to node [right] {\scalefont{0.8}$\theta$} (3,-1.2);
\draw[->](1.1,-1.5) to node [below] {\scalefont{0.7} $\Phi_{r, \Bbbk}$} (2.4,-1.5);
\draw (0,-1.5) node {$B_r(\Bbbk; -2N)$};\draw (3,-1.5) node {\ \ $\im(\Phi_{r, \Bbbk})$};
\end{tikzpicture}\end{minipage} .
\end{equation}

  \subsection{Murphy basis over the integers}
 \label{symplectic integral BCA}

\begin{defn}
Write $A^{\mathrm s}_\f(**) = \Phi(B_\f( **; -2N))$, where $**$ stands for $\C$, $\Q$, or $\Z$. 
 Thus $A^{\mathrm s}_\f(\Z)$ is the $\Z$--algebra generated by $E_i = \Phi(e_i)$   and $S_i = -\Phi(s_i)$.  
 (The superscript ``s'' in this notation stands for ``symplectic".)  
 \end{defn}

Let $R = \Z[\deltabold]$.  Endow $B_r(R; \deltabold)$  with the Murphy 
cellular structure described in 
\myref{Section}{section: Murphy bases Brauer}
with the Murphy type basis
$$
\leftbrace x_{\mfs \mft}\power{\lambda, l} \suchthat  (\lambda, l) \in \widehat B_\k \text{ and } \mfs, \mft \in \Std_\k(\lambda, l) \rightbrace.
$$
By 
\myref{Theorem}{theorem:  Brauer algebras as diagram algebras},
the tower $(B_\k(R; \deltabold))_{\k \ge 0}$ satisfies the assumptions
\eqref{diagram 1}--\eqref{diagram 6} of  \myref{Section}{subsection diagram algebras}.

 We want to show that the maps   $\Phi_{r, \ZZ} : B_\f(\Z; -2N) \to  A^{\mathrm s}_\f(\Z)$ satisfy the assumptions
 \eqref{quotient axiom 1}--\eqref{quotient axiom 3} of 
 \myref{Section}{section quotient framework}.
 It will follow that the integral Brauer centralizer algebras $ A^{\mathrm s}_\f(\Z)$ are cellular over the integers. 

First we need the appropriate notion of permissibility for points in $\widehat B_r$ and for paths on $\widehat B$.   

\begin{defn}  A $(-2N)$--permissible partition $\lambda$ is a partition such that $\lambda_1 \le N$.  
We say that  an element
$(\lambda, l) \in \widehat B_\f$ is $(-2N)$--permissible if $\lambda$ is $(-2N)$--permissible.  
   We let  $\widehat B^{\rm s}_{\f,  \perm}\subseteq \widehat B_{\f}$   denote  the subset of $(-2N)$--permissible points.  
   
   A path $\mft \in \Std_\f({\lambda, l})$ is $(-2N)$--permissible if $\mft (k)$ is $(-2N)$--permissible for all $0\leq k \leq \f$.
  We let  $\Std^{\mathrm s}_{\f,  \perm}(\lambda, l) \subseteq \Std _{\f}(\lambda, l)$   denote the subset 
  of   $(-2N)$--permissible paths. 
\end{defn}

 Note that this set of permissible points satisfies condition \eqref{quotient axiom 1}.

\medskip
For any ring $U$ and any $\delta \in U$,  and any natural numbers $r$, $s$,  there is an  injective $U$--algebra homomorphism $B_r(U; \delta) \otimes B_s(U; \delta) \to B_{r+s}(U; \delta)$ defined on the basis of Brauer diagrams by placing diagrams side by side.  We also write $x\otimes y$ for the image of $x\otimes y$ in $B_{r+s}(U; \delta)$.  

 \begin{defn} \label{defn of diagrammatic Pfaffian}
 Define $\mathfrak b_r \in B_r(\Z; -2N)$ to be the sum of all Brauer diagrams on $r$ strands and $\mathfrak b_r'$ to be the sum of all Brauer diagrams on $r$ strands  of corank $\ge 1$.  
 For $\lambda = (\lambda_1, \dots, \lambda_s)$  a partition of $r$, write
\begin{equation} 
\label{symplectic a}
\mathfrak b_\lambda = \mathfrak b_{\lambda_1} \otimes x_{(\lambda_2, \dots, \lambda_s)} \quad \text{and} \quad
\mathfrak b'_\lambda = \mathfrak b'_{\lambda_1} \otimes x_{(\lambda_2, \dots, \lambda_s)}. 
\end{equation}
For $(\lambda, l) \in \widehat B_\k$, write
\begin{equation}     \label{symplectic b}
\mathfrak b_{(\lambda, l)} = \mathfrak b_\lambda e_{\k-1}\power l   \quad \text{and} \quad
\mathfrak b'_{(\lambda, l)} = \mathfrak b'_\lambda e_{\k-1}\power l. 
\end{equation}
\end{defn}

\begin{rmk}Thus, for all $\k\geq0$ and for all  $(\lambda, l) \in \widehat B_\k$,
\begin{equation} \label{symplectic 1}
x_{(\lambda, l)} =  \mathfrak b_{(\lambda, l)} - \mathfrak b'_{(\lambda, l)}.
\end{equation} 
\end{rmk}
\begin{lem}  \label{factorization of b'}
There exists $\beta'_r \in B_r(\Q; -2N)$ such that $\mathfrak b'_r = x_{(r)} \beta'_r$.
\end{lem}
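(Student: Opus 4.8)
The plan is to exploit the left $\mathfrak S_r$--invariance of $\mathfrak b'_r$. First I would note that for any permutation $w \in \mathfrak S_r$, regarded as an $r$--strand Brauer diagram with only vertical strands, left multiplication $D \mapsto wD$ maps the set of $r$--strand Brauer diagrams of corank $c$ bijectively onto itself, for every $c \ge 0$. Indeed, stacking the permutation diagram $w$ on top of a Brauer diagram $D$ merely permutes the top vertices of $D$: it sends horizontal strands to horizontal strands and vertical strands to vertical strands, produces no closed loops, and is inverted by left multiplication by $w^{-1}$. Summing over all Brauer diagrams of corank $\ge 1$ therefore gives $w\,\mathfrak b'_r = \mathfrak b'_r$ for every $w \in \mathfrak S_r$.

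Next I would sum this identity over $w \in \mathfrak S_r$. Since $x_{(r)} = \sum_{w \in \mathfrak S_r} w$, this yields
\[
x_{(r)}\,\mathfrak b'_r \;=\; \sum_{w \in \mathfrak S_r} w\,\mathfrak b'_r \;=\; |\mathfrak S_r|\,\mathfrak b'_r \;=\; r!\,\mathfrak b'_r.
\]
Working over $\Q$, where $r!$ is invertible, I would then set $\beta'_r = \tfrac{1}{r!}\,\mathfrak b'_r \in B_r(\Q; -2N)$, so that $x_{(r)}\beta'_r = \mathfrak b'_r$, as required.

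I do not anticipate a genuine obstacle here; the only step needing a moment's care is the claim that left multiplication by a permutation diagram preserves corank and creates no closed loops, and this is immediate from the definition of the product of Brauer diagrams recalled above. (The same argument of course shows that $\mathfrak b'_r$ is also right $\mathfrak S_r$--invariant, and that $\mathfrak b'_\lambda$ factors through $x_\lambda$ by applying this identity in the first tensor factor, which is what will actually be needed when verifying axiom \eqref{quotient axiom 2}.)
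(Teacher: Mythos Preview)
Your proof is correct and rests on the same underlying observation as the paper's, namely that left multiplication by permutations preserves the set of Brauer diagrams of fixed corank. The paper's argument packages this slightly differently: rather than averaging $\mathfrak b'_r$ itself, it decomposes the set of corank~$\ge 1$ diagrams into $\mathfrak S_r$--orbits and writes
\[
\mathfrak b'_r = x_{(r)}\sum_x \frac{1}{|\Stab(x)|}\,x,
\]
the sum running over orbit representatives. This yields a more explicit $\beta'_r$ with smaller denominators (stabilizer orders rather than $r!$), but since only existence over $\Q$ is needed, your simpler averaging argument is equally sufficient for the application.
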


 \begin{proof}  $\mathfrak S_r$ acts on the left on the set of Brauer diagrams on $r$ strands with corank $\ge 1$.   
Choose a representative of each orbit.  Then
$$
\mathfrak b'_r =  x_{(r)} \  \sum_x  \frac{1}{|\Stab(x)|} \, x,
$$
where the sum is over orbit representatives $x$ and $|\Stab(x)|$ is the cardinality of the stabilizer of $x$ in $\mathfrak S_r$.  
\end{proof}

It follows that for all $\k\geq0$ and for all $(\lambda, l) \in \widehat B_\k$,
 \begin{equation}  \label{symplectic 2}
  \mathfrak b'_{(\lambda, l)} =  x_{(\lambda, l)} \beta'_{\lambda_1}.
 \end{equation}

Fix $r \ge 1$.  The multilinear functionals on $V^{2r}$ of the form
$(w_1, \dots, w_{2r}) \mapsto \prod  \langle w_i, w_j\rangle$, where each $w_i$ occurs exactly once, are evidently 
$\rm {Sp}(V)$--invariant.  Moreover, there are some obvious relations among such functionals.  If we take 
$r = N+1$, then for any choice of  $(w_1, \dots, w_{2r})$, the  $2r$--by--$2r$ skew-symmetric matrix
$(\langle w_i, w_j \rangle)$ is singular and therefore the Pfaffian of this matrix is zero, which provides such a relation.
  These elementary observations are preliminary to  the first and second fundamental theorems of invariant theory for the symplectic  groups, see   
  \cite[Section 6.1]{MR0000255}.
The following proposition depends on the second of these observations.

\begin{prop}  \label{symplectic fund theorem 1}
The element $\mathfrak b_{N+1}$ is in $\ker(\Phi)$.  Hence if $r \ge N+1$ and  $(\lambda, l) \in \widehat B_\k$ with 
$\lambda_1 = N + 1$, then $\mathfrak b_{(\lambda, l)} \in \ker(\Phi)$.  
\end{prop}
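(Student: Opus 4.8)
The plan is to reduce the statement to the classical fact that the Pfaffian of the $(2N+2)\times(2N+2)$ Gram matrix $\bigl(\langle w_i,w_j\rangle\bigr)_{1\le i,j\le 2N+2}$ vanishes identically for all choices of vectors $w_1,\dots,w_{2N+2}\in V$, since this is a skew-symmetric matrix of size exceeding $\dim V = 2N$ and hence singular of rank at most $2N$. First I would unwind what $\Phi(\mathfrak b_r)$ does: for $r=N+1$, the diagram algebra element $\mathfrak b_r$ is the sum of \emph{all} $r$-strand Brauer diagrams. Applying $\Phi$ and then pairing against a test vector, $\Phi(\mathfrak b_r)$ acts on $V^{\otimes r}$ in a way that, after dualizing via the nondegenerate form $[\ ,\ ]$ on $V^{\otimes r}$ (as set up in \myref{Section}{section tensor space}), corresponds to the multilinear functional on $V^{\otimes 2r}=V^{\otimes 2N+2}$ given by $(w_1,\dots,w_{2N+2})\mapsto \sum_{\text{perfect matchings }M}\pm\prod_{\{i,j\}\in M}\langle w_i,w_j\rangle$, where the matchings run over all pairings of the $2N+2$ points and the signs come from the skew-symmetry conventions built into $\Phi$ (recall $s_i\mapsto -S_i$, so that transpositions in a diagram contribute signs). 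The key observation is that this signed sum over all perfect matchings is, up to an overall normalization, exactly the Pfaffian $\mathrm{Pf}\bigl(\langle w_i,w_j\rangle\bigr)$.

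Concretely, the steps would be: (1) Fix dual bases $\{v_i\},\{v_i^*\}$ and recall from \myref{Lemma}{lemma properties of E and S} and \myref{Lemma}{lemma  properties of omega} that $E$ and $S$ are expressed through $\omega=\sum v_i^*\otimes v_i$ and the flip; express a general Brauer diagram $D$ on $r$ strands acting on $V^{\otimes r}$ via these, with horizontal top strands producing contractions $\langle\ ,\ \rangle$, horizontal bottom strands producing copies of $\omega$, and vertical strands producing (signed) permutations. (2) Dualize: for $T\in\End(V^{\otimes r})$ let $\widehat T\in (V^{\otimes 2r})^*$ be $(w_1\otimes\cdots\otimes w_{2r})\mapsto [w_1\otimes\cdots\otimes w_r,\ T(w_{r+1}\otimes\cdots\otimes w_{2r})]$ as in the construction of $A_r$; check $\widehat{\Phi(D)}$ is the monomial functional $\prod_{\{i,j\}}\langle w_i,w_j\rangle$ attached to the perfect matching of $\{1,\dots,2r\}$ that $D$ determines (top vertex $\p k$ identified with $w_k$, bottom vertex $\pbar k$ with $w_{r+k}$), with the sign dictated by the $s_i\mapsto -S_i$ convention. (3) Sum over all $D$: since every perfect matching of $\{1,\dots,2r\}$ arises from exactly one Brauer diagram, $\widehat{\Phi(\mathfrak b_r)}$ is $\sum_{M}\pm\prod_{\{i,j\}\in M}\langle w_i,w_j\rangle$, and identify this signed sum with $\mathrm{Pf}\bigl(\langle w_i,w_j\rangle\bigr)$ (possibly times a nonzero combinatorial constant, which over $\ZZ$ is harmless for the vanishing statement — I would need to check the constant is $\pm 1$, or argue directly that the signed sum itself vanishes). (4) With $r=N+1$, this Pfaffian is identically zero because the Gram matrix has size $2N+2>\dim V$; hence $\widehat{\Phi(\mathfrak b_{N+1})}=0$, so $\Phi(\mathfrak b_{N+1})=0$ since $A_r$ is an isomorphism. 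For the last sentence: if $\lambda_1=N+1$, then by \eqref{symplectic a}–\eqref{symplectic b} we have $\mathfrak b_{(\lambda,l)}=(\mathfrak b_{N+1}\otimes x_{(\lambda_2,\dots)})\,e_{\k-1}\power l$, and $\Phi$ is a $*$-homomorphism compatible with the side-by-side embeddings $B_{N+1}\otimes B_{r-N-1}\hookrightarrow B_r$ (since $\Phi$ acts tensor-factorwise), so $\mathfrak b_{N+1}\in\ker\Phi$ forces $\mathfrak b_{(\lambda,l)}\in\ker\Phi$ because $\ker\Phi$ is an ideal.

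The main obstacle I anticipate is step (3): getting the signs exactly right so that the sum over all perfect matchings produced by $\Phi$ is genuinely the Pfaffian and not merely a signed sum that could fail to vanish, and in particular reconciling the $s_i\mapsto -S_i$ (skew) convention with the standard Pfaffian sign rule (which assigns to a matching $\{\{i_1,j_1\},\dots\}$ with $i_k<j_k$ the sign of the permutation $(i_1,j_1,i_2,j_2,\dots)$). The cleanest route is probably to appeal to the classical SFT for $\mathrm{Sp}(V)$ as in \cite[Section 6.1]{MR0000255}: the space of $\mathrm{Sp}(V)$-invariant multilinear functionals on $V^{\otimes 2r}$ is spanned by the matching monomials, and the relations among them are generated by the Pfaffian relations; one then needs only to recognize $\widehat{\Phi(\mathfrak b_{N+1})}$ as (a nonzero multiple of) the universal Pfaffian relation. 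Alternatively, one can bypass sign bookkeeping entirely by evaluating $\widehat{\Phi(\mathfrak b_{N+1})}$ on the specific tuples $(w_1,\dots,w_{2N+2})$ drawn from the Darboux basis and checking directly that all such evaluations vanish, using that any $2N+2$ basis vectors include a repeat, plus the alternating property of $\langle\ ,\ \rangle$; this is more elementary and avoids the subtle global sign.
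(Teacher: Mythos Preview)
Your proposal is correct and follows essentially the same approach as the paper: dualize $\Phi(\mathfrak b_{N+1})$ via the isomorphisms $A$ and $\eta$ to a functional on $V^{\otimes 2(N+1)}$, identify this functional with (a sign times) the Pfaffian of the Gram matrix $(\langle w_i,w_j\rangle)$, and invoke singularity. The paper's Appendix~D.1 carries out precisely the sign bookkeeping you flag as the main obstacle in step~(3), obtaining $(\eta\circ A^{-1}\circ\Phi(D))(w_1\otimes\cdots\otimes w_{2r})=(-1)^r\operatorname{sgn}(\sigma_D)\prod_{(i,j)\in D}\langle w_i,w_j\rangle$ and then summing to get $(-1)^r\operatorname{Pf}((\langle w_i,w_j\rangle))$.

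One small caution: your proposed ``bypass'' at the end---evaluating on Darboux basis tuples and using that a repeat forces vanishing---is not really an independent route. To conclude that a repeated vector kills the signed sum, you need the sum to be alternating in the $w_i$, which is exactly the Pfaffian's characteristic antisymmetry; establishing that alternation is equivalent to the sign computation you were trying to avoid. So the honest path is the one the paper takes: pin down the signs and recognize the Pfaffian.
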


\begin{proof}  Set $r = N+1$. There exist linear isomorphisms $A : V\tensor{2r} \to \End(V\tensor{r})$  and 
$\eta: V\tensor{2r} \to (V\tensor{2r} )^*$.  
The proof consists of showing that
$(\eta\circ A\inv\circ \Phi(\mathfrak b_{N+1}))(w_1 \otimes \cdots \otimes w_{2r})$ is up to a sign the Pfaffian of the singular matrix $(\langle w_i, w_j \rangle)$.  Hence $\mathfrak b_{N+1} \in \ker(\Phi)$.  
 This is explained in  \cite[Section 3.3]{MR1670662}.    
  We have also provided an  exposition in  
   \iftoggle{arxiv}
 { \myref{Appendix}{appendix symplectic}}
 {Appendix D in the arXiv version of this paper}.
  Another proof can be found in  \cite[Proposition 4.6]{MR2430306}. 
\end{proof}
  
We can now verify axiom \eqref{quotient axiom 2}.    Let $\mft \in \Std_\f(\lambda, l)$ be a path which is not $(-2N)$-permissible.  
Let $k \le r$ be the first index  such that $\mft(k) = (\mu, m)$  satisfies  $\mu_1 = N+1$. 
    It follows from
  \myref{Proposition}{symplectic fund theorem 1}
 that 
\begin{equation}  \label{symplectic 3}
\mathfrak b_{(\mu, m)} \in \ker(\Phi_r).  
\end{equation}
By \eqref{symplectic 2} and \eqref{x and y generators Brauer}, we have that 
  $$  \mathfrak b'_{(\mu, m)} =  x_{(\mu, m)} \beta'_{\mu_1} = x_\mu \beta'_{\mu_1} e_{k-1}\power m$$   is a linear combination of Brauer diagrams of corank at least $m+1$, and therefore
$ \mathfrak b'_{(\mu, m)}  \in B_\k(\Z; -2N)^{\rhd (\mu, m)}$, using 
   \myref{Lemma}{Brauer block diagonal transition}. 
 Hence
\begin{equation}\label{symplectic 4}
 \mathfrak b'_{(\mu, m)}  \in  x_{(\mu, m)} B_\k(\Q; -2N) \cap  B_\k(\Z; -2N)^{\rhd (\mu, m)} 
\end{equation}
as required.  
Taken together, \eqref{symplectic 1}--\eqref{symplectic 4}  show that  axiom \eqref{quotient axiom 2} holds.

Finally, it is shown in \cite[Theorem 3.4, Corollary 3.5]{MR951511} that 
$$
\dim_{\C}(A_\f^{\mathrm s}(\C)) =  
\smashr{\sum_{(\lambda, l) \in \widehat B^{\rm s}_{\f, \perm}}   } \ 
(\sharp \Std^{\mathrm s}_{\f, \perm}(\lambda, l))^2.
$$
Thus axiom \eqref{quotient axiom 3} holds.   

 Since assumptions
 \eqref{quotient axiom 1}--\eqref{quotient axiom 3}   of 
\myref{Section}{section quotient framework}
 are satisfied, we can produce a modified Murphy basis  of  $B_\f(\Z; -2N) $,
 $$
\leftbrace \widetilde x_{\mfs \mft}\power{\lambda, l} \suchthat  (\lambda, l) \in \widehat B_\k \text{ and } \mfs, \mft \in \Std_k(\lambda, l) \rightbrace,
$$
following the procedure described before 
\myref{Theorem}{basis theorem}.
   The following theorem gives a cellular basis for Brauer's centralizer algebra acting on symplectic tensor space, valid over the integers.  It also gives two descriptions  of the kernel  of the map
 $\Phi_{\f , \ZZ}: B_\f(\ZZ; -2N) \to \End(V^{\otimes r})$, one by providing a basis of $\ker(\Phi_{\f, \ZZ})$ over the integers, and the other by describing the kernel as the ideal generated by a single element.  Each of these statements is a form of the second fundamental theorem of invariant theory for the symplectic groups. 
 
\begin{thm}  \label{integral cellular basis 1}
  The integral (symplectic) Brauer centralizer algebra 
 $A^{\mathrm s}_\f(\Z)$ is a cellular algebra over $\Z$ with basis 
$$\mathbb A^{\mathrm s}_\f(\ZZ) = \leftbrace \Phi_{r, \ZZ}(\widetilde x_{\mfs \mft}\power{\lambda, l})\mid   
(\lambda, l) \in \widehat B^{\rm s}_{\f, \perm} \text{ and }  \mfs, \mft \in \Std^{\mathrm s}_{\f,  \perm}(\lambda, l)
 \rightbrace 
$$
with 
the involution $*$  determined by $E_i^* = E_i$ and $S_i^* = S_i$ and 
 the partially ordered set \break $(\widehat B^{\rm s}_{\f,   \perm}, \unrhd)$.  
The ideal  $\ker(\Phi_{\f, \ZZ})\subseteq  B_\f(\ZZ; -2N)$ has  $\ZZ$-basis 
$$ \kappa_r = 
\leftbrace  \widetilde x_{\mfs \mft}\power{\lambda, l}
\mid   
(\lambda, l) \in \widehat B_{r} \text{ and }  \mfs \text{ or } \mft  \text{ not  $(-2N)$--permissible}
 \rightbrace.
$$  
Moreover,    for $r>N$, $\ker(\Phi_{\f, \ZZ})$  is the ideal generated by the single element 
$\mathfrak b_{N+1} \in   B_\f(\ZZ; -2N)$.   For $r \le N$,    $\ker(\Phi_{\f, \ZZ})  = 0$. 
\end{thm}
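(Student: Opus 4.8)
The plan is to deduce everything from the abstract machinery of \myref{Theorem}{good paths basis theorem} and \myref{Corollary}{good paths cor} once we have checked that the hypotheses \eqref{quotient axiom 1}--\eqref{quotient axiom 3} hold in the present situation; the bulk of this verification has in fact already been carried out in the preceding discussion (equations \eqref{symplectic 1}--\eqref{symplectic 4} establish \eqref{quotient axiom 2}, the remark after the permissibility definition records \eqref{quotient axiom 1}, and the dimension count from \cite{MR951511} gives \eqref{quotient axiom 3}). First I would cite \myref{Theorem}{basis theorem} to obtain the modified Murphy basis $\{\widetilde x_{\mfs\mft}\power{\lambda,l}\}$ of $B_\f(\Z;-2N)$, equivalent to the original cellular basis, with $\widetilde x_{\mfs\mft}\power{\lambda,l} = x_{\mfs\mft}\power{\lambda,l}$ when both $\mfs,\mft$ are permissible and $\widetilde x_{\mfs\mft}\power{\lambda,l}\in\ker(\Phi)$ otherwise. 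Then \myref{Theorem}{good paths basis theorem}\ref{good paths 2} applied to $Q_\f^S = A_\f^{\mathrm s}(\Z) = \Phi_{r,\ZZ}(B_\f(\Z;-2N))$ immediately yields the cellularity statement with cell datum the permissible poset $(\widehat B^{\rm s}_{\f,\perm},\unrhd)$, index sets $\Std^{\mathrm s}_{\f,\perm}(\lambda,l)$, and basis $\mathbb A^{\mathrm s}_\f(\ZZ)$; part \ref{good paths 3} of the same theorem gives the $\ZZ$-basis $\kappa_r$ of $\ker(\Phi_{\f,\ZZ})$. The involution on $A_\f^{\mathrm s}(\Z)$ is the one induced by $*$ on $B_\f(\Z;-2N)$ via \myref{Theorem}{good paths basis theorem}\ref{good paths 1}, and since $\Phi$ respects involutions (the lemma at the end of \myref{Section}{section tensor space}) and $e_i^*=e_i$, $s_i^*=s_i$, this is exactly $E_i^* = E_i$, $S_i^* = S_i$.

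Next I would address the ideal generation statement. By \myref{Theorem}{good paths basis theorem}\ref{good paths 4}, $\ker(\Phi_{\f,\ZZ})$ is the ideal generated by the elements $\mathfrak b_\mu$ where $\mu$ ranges over marginal points of $\widehat B_s$ for $0<s\le\f$; in the present setup a marginal point is a $(\mu,m)$ with $\mu$ not $(-2N)$-permissible but reachable by a path through permissible points, which forces $\mu_1 = N+1$ (one box has just been added to a partition with first row $\le N$) and, since a permissible predecessor exists, the corank $m$ is as small as possible. For such a marginal point the corresponding generator is $\mathfrak b_{(\mu,m)} = \mathfrak b_\mu e_{\k-1}\power m = (\mathfrak b_{N+1}\otimes x_{(\mu_2,\dots,\mu_s)})\,e_{\k-1}\power m$. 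This manifestly lies in the two-sided ideal generated by $\mathfrak b_{N+1}$ (embedded in $B_\f(\ZZ;-2N)$ via the side-by-side embedding on the first $N+1$ strands), since $x_{(\mu_2,\dots)}$ and the $e_i$'s are elements of $B_\f(\ZZ;-2N)$. Conversely $\mathfrak b_{N+1}\in\ker(\Phi_{\f,\ZZ})$ by \myref{Proposition}{symplectic fund theorem 1}, so the ideal it generates is contained in $\ker(\Phi_{\f,\ZZ})$. Hence when $r>N$ (so that marginal points exist) the kernel equals the ideal generated by $\mathfrak b_{N+1}$.

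Finally, for $r\le N$, every partition $\lambda$ appearing in $\widehat B_r$ has $\lambda_1\le\lfloor r/2\rfloor + \dots \le r\le N$ — more carefully, $\lambda\vdash r-2l$ so $\lambda_1\le r-2l\le r\le N$ — hence every point of $\widehat B_r$ is $(-2N)$-permissible and every path is permissible. Thus $\kappa_r=\varnothing$ and $\ker(\Phi_{\f,\ZZ})=0$; equivalently, there are no marginal points, so the generating set of \ref{good paths 4} is empty. (This also re-proves, integrally, that $\Phi_{\f,\ZZ}$ is injective for $r\le N$.)

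The main obstacle is not really in this theorem itself — given the framework, it is essentially bookkeeping — but it rests entirely on \myref{Proposition}{symplectic fund theorem 1}, i.e.\ on identifying $\eta\circ A\inv\circ\Phi(\mathfrak b_{N+1})$ with a Pfaffian of a singular skew matrix and hence concluding $\mathfrak b_{N+1}\in\ker(\Phi)$; and on the dimension formula of \cite{MR951511} underlying \eqref{quotient axiom 3}. One subtlety worth a careful sentence is the characterization of marginal points: I must make sure that for $r>N$ every element of $\ker(\Phi_{\f,\ZZ})$ is captured, i.e.\ that the marginal-point generators really do all reduce to $\mathfrak b_{N+1}$ and that conversely $\mathfrak b_{N+1}$ itself arises (for $s=N+1$, $\mu=(N+1)$, $m=0$), which is clear since $(N{+}1)$ is marginal: its predecessor $(N)$ at level $N$ is permissible and all of $\varnothing\to(1)\to\dots\to(N)$ is permissible. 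With that in hand the proof is complete.
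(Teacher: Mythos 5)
Your proposal is correct and follows essentially the same route as the paper: both deduce the cellular basis and the kernel basis directly from \myref{Theorem}{good paths basis theorem} once \eqref{quotient axiom 1}--\eqref{quotient axiom 3} have been verified, and both identify the marginal points as the $(\mu,m)$ with $\mu_1=N+1$ so that $\mathfrak b_{(\mu,m)}=(\mathfrak b_{N+1}\otimes x_{(\mu_2,\dots,\mu_s)})e_{k-1}\power m$ lies in the ideal generated by $\mathfrak b_{N+1}$. Your extra remarks (the explicit converse containment via \myref{Proposition}{symplectic fund theorem 1}, the observation that $\lambda\vdash r-2l$ forces $\lambda_1\le r\le N$ when $r\le N$) are fine elaborations of steps the paper leaves implicit.
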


\begin{proof}   The construction of the  cellular basis of  $A^{\mathrm s}_\f(\Z)$ and the basis of $\ker(\Phi_r)$   follows immediately from 
\myref{Theorem}{good paths basis theorem}
 since \eqref{quotient axiom 1}--\eqref{quotient axiom 3} have been verified.

Now, if $r\leq N$ then  $\ker(\Phi_{\f, \ZZ})=0$, since all paths on $\widehat B$ of length $\le N$ are $(-2N)$--permissible. 
  For $r > N$,  the kernel  is the ideal generated by all the $\mathfrak{b}_{(\mu, m)}$ such that $(\mu,m)$ is a   marginal point in $\widehat B_k$ for some $0< k \le r$,  using  
  \myref{Theorem}{good paths basis theorem}.
   But the marginal points  are all of the form $(\mu,m)$ for some $\mu$ with $\mu_1=N+1$. 
 Now, by \eqref{symplectic a}  and  \eqref{symplectic b}
 we have that 
 $$ \mathfrak{b}_{(\mu, m)} = \mathfrak{b}_\mu e_{\k-1}\power m =
 \mathfrak b_{N+1} \otimes x_{(\lambda_2, \dots, \lambda_s)}e_{k-1}\power m
$$
and so we are done.
 \end{proof}

\begin{rmk}\label{7point8}
Applying \myref{Corollary}{good paths cor},   for every $r \ge 0$ and for every permissible point $(\lambda, l) \in \widehat B_r$,   the cell module $\cell {A_r^{\mathrm s}} \C {(\lambda, l)}$, regarded as  a $B_r(\C; -2N)$--module,    is the simple head of the
cell module   $\cell {B_r(\C; -2N)} {} {(\lambda, l)}$.   Thus  the cell module 
$\cell {A_r^{\mathrm s}} \ZZ {(\lambda, l)}$ is an integral form of the simple $B_r(\C; -2N)$--module 
$L_{{B_r(\C; -2N)}} ((\lambda, l))$.    In this way, all the simple $B_r(\C; -2N)$--modules that factor through the representation on symplectic tensor space are provided with integral forms.
\end{rmk}

\subsection{Murphy basis  over an arbitrary field}
 \label{symplectic BCA over a field}

We return to the general situation described at the beginning of \myref{Section}{section Brauer symplectic}:
$V$ is a vector space of dimension $2N$ over an arbitrary field $\Bbbk$, with a symplectic form, and 
for $r \ge 1$,   $\Phi_{r, \Bbbk} : B_{r}(\Bbbk; -2N) \to \End(V\tensor r)$ is Brauer's homomorphism.
We assume without loss of generality that $V = \Bbbk^{2N}$, with the standard symplectic form.  We have the commutative diagram
\eqref{symplectic CD}.   

For the modified Murphy basis $\{\widetilde x_{\mfs \mft}\power{\lambda, l} \}$ of $B_r(\ZZ; -2N)$,  we also write $\widetilde x_{\mfs \mft}\power{\lambda, l}$ instead of $\widetilde x_{\mfs \mft}\power{\lambda, l}  \otimes 1_\Bbbk$ for the corresponding basis element of $B_r(\Bbbk; -2N)$.

  \begin{thm}  \label{field cellular basis 1}
  Let $V$ be a $2N$--dimensional vector space over a field $\Bbbk$.  Assume that $V$ has a symplectic form, and let
   $\Phi_{r, \Bbbk} : B_{r}(\Bbbk; -2N) \to \End(V\tensor r)$ be Brauer's homomorphism defined using the symplectic form.
   Write 
 $A^{\mathrm s}_\f(\Bbbk)$  for the Brauer centralizer algebra $\im(\Phi_{r, \Bbbk})$.  

  The Brauer centralizer algebra 
 $A^{\mathrm s}_\f(\Bbbk)$ is a cellular algebra over $\Bbbk$ with basis 
$$\mathbb A^{\mathrm s}_\f(\Bbbk) = \leftbrace \Phi_{r,\Bbbk}(\widetilde x_{\mfs \mft}\power{\lambda, l})\mid   
(\lambda, l) \in \widehat B^{\rm s}_{\f, \perm} \text{ and }  \mfs, \mft \in \Std^{\mathrm s}_{\f,  \perm}(\lambda, l)
 \rightbrace 
$$
with 
the involution $*$  
determined by $E_i^* = E_i$ and $S_i^* = S_i$ and 
the partially ordered set \break  $(\widehat B^{\rm s}_{\f,   \perm}, \unrhd)$.  
The ideal  $\ker(\Phi_{r, \Bbbk})\subseteq  B_\f(\Bbbk; -2N)$  
has basis  
$$
\kappa_{r} = 
\leftbrace  \widetilde x_{\mfs \mft}\power{\lambda, l} 
\mid   
(\lambda, l) \in \widehat B_{\f} \text{ and }  \mfs \text{ or } \mft  \text{ not  $(-2N)$--permissible}
 \rightbrace.
$$  
Moreover,  $\ker(\Phi_{r, \Bbbk})$  is the ideal generated by the single element 
$\mathfrak b_{N+1}   \in   B_\f(\Bbbk; -2N)$ for $r>N$ (and is zero for $r \leq N$).  

\end{thm}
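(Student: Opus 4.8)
The plan is to deduce this from the integral case, \myref{Theorem}{integral cellular basis 1}, by base change along $\ZZ\to\Bbbk$, using the commutative square \eqref{symplectic CD} to relate $\Phi_{r,\ZZ}$ and $\Phi_{r,\Bbbk}$. First I would record that $B_r(\Bbbk;-2N)=B_r(\ZZ;-2N)\otimes_\ZZ\Bbbk$ and $V_\Bbbk=V_\ZZ\otimes_\ZZ\Bbbk$, so the modified Murphy basis $\{\widetilde x_{\mfs\mft}\power{\lambda,l}\}$ of $B_r(\ZZ;-2N)$ specializes to a cellular basis of $B_r(\Bbbk;-2N)$ (cellular structures pass to specializations, as recalled after \myref{Definition}{c-d}). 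By \myref{Theorem}{integral cellular basis 1}, $A^{\mathrm s}_r(\ZZ)$ is $\ZZ$-free and $\ker(\Phi_{r,\ZZ})$ is a $\ZZ$-submodule of $B_r(\ZZ;-2N)$ with $\ZZ$-basis $\kappa_r$; since $B_r(\ZZ;-2N)/\ker(\Phi_{r,\ZZ})=A^{\mathrm s}_r(\ZZ)$ is flat over $\ZZ$, the inclusion $\ker(\Phi_{r,\ZZ})\hookrightarrow B_r(\ZZ;-2N)$ stays injective after $-\otimes_\ZZ\Bbbk$, and $\kappa_r\otimes 1_\Bbbk$ is a $\Bbbk$-linearly independent subset of $B_r(\Bbbk;-2N)$ contained in $\ker(\Phi_{r,\Bbbk})$.

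Next, via \eqref{symplectic CD} the surjection $\Phi_{r,\ZZ}\otimes 1_\Bbbk\colon B_r(\Bbbk;-2N)\to A^{\mathrm s}_r(\ZZ)\otimes_\ZZ\Bbbk$ is $\Phi_{r,\Bbbk}$ followed by the natural surjection $\psi\colon A^{\mathrm s}_r(\ZZ)\otimes_\ZZ\Bbbk\to A^{\mathrm s}_r(\Bbbk)$ induced by $\theta$. Hence $\ker(\Phi_{r,\Bbbk})=\ker(\Phi_{r,\ZZ})\otimes_\ZZ\Bbbk$ and $A^{\mathrm s}_r(\Bbbk)=A^{\mathrm s}_r(\ZZ)\otimes_\ZZ\Bbbk$ exactly when $\psi$ is an isomorphism, i.e. when $\dim_\Bbbk A^{\mathrm s}_r(\Bbbk)=\operatorname{rank}_\ZZ A^{\mathrm s}_r(\ZZ)$. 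This dimension count is the crux. Here I would note that $A^{\mathrm s}_r(\overline\Bbbk)=A^{\mathrm s}_r(\Bbbk)\otimes_\Bbbk\overline\Bbbk$, since $\im(\Phi_{r,\cdot})$ is the span of a fixed set of words in the operators $E_i,S_i$, which are defined over $\Bbbk$; hence $\dim_\Bbbk A^{\mathrm s}_r(\Bbbk)=\dim_{\overline\Bbbk}A^{\mathrm s}_r(\overline\Bbbk)$. Since $\overline\Bbbk$ is infinite, \myref{Theorem}{symplectic SW duality}(2) identifies this with $\dim_\C A^{\mathrm s}_r(\C)$, which equals $\sum_{(\lambda,l)\in\widehat B^{\mathrm s}_{r,\perm}}(\sharp\Std^{\mathrm s}_{r,\perm}(\lambda,l))^2=\operatorname{rank}_\ZZ A^{\mathrm s}_r(\ZZ)$ by \cite{MR951511}, exactly as in the proof of \myref{Theorem}{integral cellular basis 1}.

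With $\psi$ an isomorphism, $A^{\mathrm s}_r(\Bbbk)=A^{\mathrm s}_r(\ZZ)\otimes_\ZZ\Bbbk$ is cellular over $\Bbbk$ with the specialized cellular basis $\mathbb A^{\mathrm s}_r(\Bbbk)$ and the stated cell datum, and $\ker(\Phi_{r,\Bbbk})=\ker(\Phi_{r,\ZZ})\otimes_\ZZ\Bbbk$ has $\Bbbk$-basis $\kappa_r=\{\widetilde x_{\mfs\mft}\power{\lambda,l}\mid \mfs \text{ or } \mft \text{ not } (-2N)\text{--permissible}\}$. For the final assertion, $A^{\mathrm s}_r(\ZZ)$ being $\ZZ$-free means the ideal $\ker(\Phi_{r,\ZZ})$ base changes faithfully: for $r>N$, $\ker(\Phi_{r,\ZZ})$ is generated by $\mathfrak b_{N+1}$ by \myref{Theorem}{integral cellular basis 1}, so $\ker(\Phi_{r,\Bbbk})=\ker(\Phi_{r,\ZZ})\otimes_\ZZ\Bbbk$ is the ideal of $B_r(\Bbbk;-2N)$ generated by $\mathfrak b_{N+1}$; and for $r\le N$ it is zero because $\ker(\Phi_{r,\ZZ})=0$ there.

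The only non-formal step is the dimension identification in the second paragraph; everything else is a consequence of flatness over $\ZZ$ and of \myref{Theorem}{integral cellular basis 1}. The essential input is thus the algebraically-closed-field case of symplectic Schur--Weyl duality, \myref{Theorem}{symplectic SW duality}(2), which applies since $\overline\Bbbk$ is always infinite, together with the classical dimension formula of \cite{MR951511}; the only point requiring a little care is the reduction to $\overline\Bbbk$, which is harmless because $\im(\Phi_{r,\Bbbk})$ and $\im(\Phi_{r,\overline\Bbbk})$ are spanned by the same words in $E_i$ and $S_i$.
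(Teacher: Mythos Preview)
Your approach is essentially the same as the paper's: both reduce to a dimension count by passing to the algebraic closure $\overline\Bbbk$ and invoking \myref{Theorem}{symplectic SW duality}(2). The paper packages this more element-wise (first showing $\kappa_r\subset\ker(\Phi_{r,\Bbbk})$ via $\theta$, hence $\mathbb A^{\mathrm s}_r(\Bbbk)$ spans; then proving linear independence by mapping into $A^{\mathrm s}_r(\overline\Bbbk)$), whereas you phrase the same thing via flatness and the surjection $\psi$; these are equivalent. Your direct passage to $\overline\Bbbk$ slightly streamlines the paper's two-step treatment (infinite fields first, then arbitrary fields).

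One slip to fix: your factorization is stated backwards. You write that ``$\Phi_{r,\ZZ}\otimes 1_\Bbbk$ is $\Phi_{r,\Bbbk}$ followed by $\psi$'', but the codomains do not match. What you need (and what follows from \eqref{symplectic CD} after extending $\theta$ $\Bbbk$-linearly) is $\Phi_{r,\Bbbk} = \psi\circ(\Phi_{r,\ZZ}\otimes 1_\Bbbk)$, i.e.\ $\Phi_{r,\Bbbk}$ is $\Phi_{r,\ZZ}\otimes 1_\Bbbk$ followed by $\psi$. Your subsequent reasoning (that $\psi$ is an isomorphism iff the dimensions match) is correct once this is read the right way around.
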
  

\begin{proof} 
Refer to the commutative diagram \eqref{symplectic CD}.
 If $\sts$ or $\stt$ is not $(-2N)$--permissible, then $\Phi_{r, \Bbbk}(\widetilde x_{\sts \stt}\power{\lambda, l} ) = \theta\circ \Phi_{r, \ZZ}(\widetilde x_{\sts \stt}\power{\lambda, l}   ) = 0$.   Thus
$\kappa_{r} \subset \ker(\Phi_{r, \Bbbk})$.   It follows from this that 
$\mathbb A^{\mathrm s}_\f(\Bbbk)  
 $ spans $A_r^{\mathrm s}(\Bbbk) $.  
Once we have established  that $ \mathbb A^{\mathrm s}_\f(\Bbbk)$ is linearly independent, the argument of 
\myref{Theorem}{good paths basis theorem} 
shows that 
$\kappa_{r}$ is a basis of 
$\ker(\Phi_{r, \Bbbk})$, 
and that 
$ \mathbb A^{\mathrm s}_\f(\Bbbk)$ is a cellular basis  of the Brauer centralizer algebra $ A^{\mathrm s}_\f(\Bbbk)$.

Suppose first that $\Bbbk$ is an infinite  field.  By \myref{Theorem}{symplectic SW duality} part (2), 
 the dimension of $\ker(\Phi_{r,\Bbbk})$ 
and the dimension of $ A_r^{\mathrm s}(\Bbbk)$ are
 independent of the (infinite) field $\Bbbk$ and of the characteristic.   Therefore 
 $ \mathbb A_r^{\mathrm s}(\Bbbk)$ is a basis of  $ A_r^{\mathrm s}(\Bbbk)$.
 
 Now consider the case that $\Bbbk$ is any field;  let $\overline \Bbbk$ be the algebraic closure of $\Bbbk$.
Applying   
 \myref{Lemma}{lemma specialization cd}
 again, we have a commutative diagram
$$
\begin{minipage}{5cm}
 \begin{tikzpicture}
 \draw (0,0) node {$B_r(\Bbbk; -2N)$};\draw (3,0) node {$A_r^{\rm s}(\Bbbk)$};
\draw[->](1.1,0) to node [above] {\scalefont{0.7} $\Phi_{r, \Bbbk}$} (2.4,0);
\draw[->](0,-0.3) to node [right] {\scalefont{0.8}$\otimes 1_{\overline\Bbbk}$} (0,-1.2);
\draw[->](3,-0.3) to node [right] {\scalefont{0.8}$\eta$} (3,-1.2);
\draw[->](1.1,-1.5) to node [below] {\scalefont{0.7} $\Phi_{r, \overline\Bbbk}$} (2.4,-1.5);
\draw (0,-1.5) node {$B_r(\overline{\Bbbk}; -2N)$};\draw (3,-1.5) node {$ A_r^{\mathrm s}(\overline\Bbbk )$};
\end{tikzpicture}
\end{minipage} .
$$
 \noindent We conclude that $\mathbb A^{\mathrm s}_r(\Bbbk)$ is linearly independent over $\Bbbk$, since
$\eta(\mathbb A^{\mathrm s}_r(\Bbbk))  = \mathbb A^{\mathrm s}_r(\overline \Bbbk)$ is linearly independent over $\overline \Bbbk$.  
As noted above, it now follows that
 $\mathbb A^{\mathrm s}_r(\Bbbk)$ is a cellular basis of 
$A^{\mathrm s}_r(\Bbbk)$, and  $\kappa_{r} $ is a basis of
 $\ker(\Phi_{r, \Bbbk})$.  For the final statement, it suffices to show that elements of $\kappa_r$ are in the ideal generated by 
 $\mathfrak b_{N+1} $ and this 
 follows from
 \myref{Theorem}{integral cellular basis 1}.
\end{proof} 

\begin{rmk}  
\myref{Theorem}{field cellular basis 1}
extends 
the constancy of  dimension of
Brauer's centralizer algebras $A^{\mathrm s}_\f(\Bbbk)$ and of $\ker(\Phi_{r, \Bbbk})$, from  
\myref{Theorem}{symplectic SW duality} part (2),   to all   fields $\Bbbk$.   
\ignore{
It also follows from \myref{Theorem}{field cellular basis 1} that 
$A^{\mathrm s}_r(\Bbbk) = A^{\mathrm s}_r(\ZZ) \otimes_\ZZ \Bbbk$, i.e. the Brauer centralizer algebra  $A^{\mathrm s}_\f(\Bbbk)$ acting on $V_\Bbbk^{\otimes r}$ is a specialization of 
the integral Brauer centralizer algebra $A^{\mathrm s}_\f(\ZZ)$. 
}
\end{rmk} 

We also have the following corollary.

\begin{cor} \label{corollary field cellular basis 1}
Adopt the hypothesis of \myref{Theorem}{field cellular basis 1}.  The Brauer centralizer algebra  $A^{\mathrm s}_\f(\Bbbk)$ acting on $V^{\otimes r}$ is the specialization of 
the integral Brauer centralizer algebra $A^{\mathrm s}_\f(\ZZ)$,   i.e.
$A^{\mathrm s}_r(\Bbbk)  \cong  A^{\mathrm s}_r(\ZZ) \otimes_\ZZ \Bbbk$.
\end{cor}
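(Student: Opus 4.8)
The plan is to exhibit the map $\theta$ of the commutative diagram \eqref{symplectic CD} as the desired isomorphism after base change to $\Bbbk$. First I would recall from \myref{Lemma}{lemma specialization cd} that $\theta : A^{\mathrm s}_r(\ZZ) = \im(\Phi_{r, \ZZ}) \to \im(\Phi_{r, \Bbbk}) = A^{\mathrm s}_r(\Bbbk)$ is a $\ZZ$--algebra homomorphism, and that commutativity of \eqref{symplectic CD}, namely $\Phi_{r, \Bbbk}(a \otimes 1_\Bbbk) = \theta(\Phi_{r, \ZZ}(a))$ for $a \in B_r(\ZZ; -2N)$, shows $\theta$ is surjective onto $\im(\Phi_{r, \Bbbk})$. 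Since $A^{\mathrm s}_r(\Bbbk)$ is a $\Bbbk$--algebra, $\theta$ factors through the base change to give a surjective $\Bbbk$--algebra homomorphism $\bar\theta : A^{\mathrm s}_r(\ZZ) \otimes_\ZZ \Bbbk \to A^{\mathrm s}_r(\Bbbk)$.

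The second step is to check that $\bar\theta$ carries a $\Bbbk$--basis bijectively onto a $\Bbbk$--basis, whence it is injective as well. By \myref{Theorem}{integral cellular basis 1}, the set $\mathbb A^{\mathrm s}_r(\ZZ) = \{ \Phi_{r, \ZZ}(\widetilde x_{\mfs \mft}\power{\lambda, l}) \mid (\lambda, l) \in \widehat B^{\rm s}_{r, \perm},\ \mfs, \mft \in \Std^{\mathrm s}_{r, \perm}(\lambda, l) \}$ is a $\ZZ$--basis of $A^{\mathrm s}_r(\ZZ)$, so $\{ \Phi_{r, \ZZ}(\widetilde x_{\mfs \mft}\power{\lambda, l}) \otimes 1_\Bbbk \}$ is a $\Bbbk$--basis of $A^{\mathrm s}_r(\ZZ) \otimes_\ZZ \Bbbk$. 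Applying $\bar\theta$ and using commutativity of \eqref{symplectic CD} sends this basis element to $\Phi_{r, \Bbbk}(\widetilde x_{\mfs \mft}\power{\lambda, l} \otimes 1_\Bbbk)$, which is precisely the element of $\mathbb A^{\mathrm s}_r(\Bbbk)$ labelled by the same triple $((\lambda, l), \mfs, \mft)$. The crucial point is that the two index sets coincide: both \myref{Theorem}{integral cellular basis 1} and \myref{Theorem}{field cellular basis 1} use the same poset $\widehat B^{\rm s}_{r, \perm}$ of $(-2N)$--permissible points and, for each such $(\lambda, l)$, the same set $\Std^{\mathrm s}_{r, \perm}(\lambda, l)$ of permissible paths. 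Since by \myref{Theorem}{field cellular basis 1} the set $\mathbb A^{\mathrm s}_r(\Bbbk)$ is a $\Bbbk$--basis of $A^{\mathrm s}_r(\Bbbk)$, the homomorphism $\bar\theta$ takes a basis bijectively onto a basis and is therefore an isomorphism of $\Bbbk$--algebras.

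I do not anticipate a serious obstacle: the real work has already been carried out in \myref{Theorems}{integral cellular basis 1} and \myrefnospace{}{field cellular basis 1}, which produce the two bases with matching combinatorial labels. The only point requiring a moment's care is the compatibility of the two modified Murphy bases under $\otimes 1_\Bbbk$, i.e. that the element denoted $\widetilde x_{\mfs \mft}\power{\lambda, l}$ over $\Bbbk$ is the image under $B_r(\ZZ; -2N) \to B_r(\Bbbk; -2N)$ of the element of the same name over $\ZZ$; but this is built into the construction preceding \myref{Theorem}{field cellular basis 1}, where $\widetilde x_{\mfs \mft}\power{\lambda, l}$ is \emph{defined} over $\Bbbk$ to be $\widetilde x_{\mfs \mft}\power{\lambda, l} \otimes 1_\Bbbk$.
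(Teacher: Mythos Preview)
Your proof is correct and follows essentially the same approach as the paper: both arguments rest on \myref{Theorems}{integral cellular basis 1} and \myrefnospace{}{field cellular basis 1} supplying bases of $A^{\mathrm s}_r(\ZZ)$ and $A^{\mathrm s}_r(\Bbbk)$ indexed by the same set of permissible triples. The only difference is packaging: you exhibit the explicit isomorphism $\bar\theta$ induced by the map $\theta$ of diagram~\eqref{symplectic CD} and check it carries one basis to the other, while the paper instead verifies directly that the structure constants of $A^{\mathrm s}_r(\Bbbk)$ with respect to $\mathbb A^{\mathrm s}_r(\Bbbk)$ are the images in $\Bbbk$ of the structure constants of $A^{\mathrm s}_r(\ZZ)$ with respect to $\mathbb A^{\mathrm s}_r(\ZZ)$; your use of $\theta$ is arguably a little cleaner since the algebra-homomorphism property comes for free.
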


\begin{proof}  In general, if $A$ is and $R$--algebra which is free as an $R$--module with basis $\{b_i\}$ and structure constants $r_{i j}^k \in R$, then any specialization $A^S = A \otimes_R S$ is characterized by being free as an $S$--module with basis $\{b_i \otimes 1_S\}$  and structure constants $r_{i j}^k \otimes 1_S$. 
Now $A^{\mathrm s}_r(\ZZ)$ has $\ZZ$--basis $\{\Phi_{r,\ZZ}(\widetilde x_{\mfs \mft}\power{\lambda, l}) \mid  (\lambda, l), \mfs, \mft \text{ permissible}\}$;    and if
$$
\widetilde x\power{\lambda, l}_{\mfs \mft}  \widetilde x\power{\mu, m}_{\mfu \mfv}  =
\sum r(\nu, n, \alpha, \beta) \  \widetilde x\power{\nu, n}_{\alpha, \beta}
$$
in $B_r(\ZZ; -2N)$,  where the sum runs over all  $(\nu, n)$ and $\alpha, \beta$,  then
$$
\Phi_{r,\ZZ}(\widetilde x\power{\lambda, l}_{\mfs \mft} )  \Phi_{r,\ZZ}( \widetilde x\power{\mu, m}_{\mfu \mfv} ) =
\sum {'} \ r(\nu, n, \alpha, \beta) \ \Phi_{r,\ZZ}( \widetilde x\power{\nu, n}_{\alpha, \beta}),
$$
where now the sum is restricted to permissible $(\nu, n)$ and $\alpha, \beta$.    

$A^{\mathrm s}_r(\Bbbk)$ has $\Bbbk$--basis $\{\Phi_{r,\Bbbk}(\widetilde x_{\mfs \mft}\power{\lambda, l}) \mid  (\lambda, l), \mfs, \mft \text{ permissible}\}$.  Moreover, in $B_r(\Bbbk; -2N)$, we have
$$
\widetilde x\power{\lambda, l}_{\mfs \mft}  \widetilde x\power{\mu, m}_{\mfu \mfv}  =
\sum (r(\nu, n, \alpha, \beta) \otimes 1_\Bbbk ) \  \widetilde x\power{\nu, n}_{\alpha, \beta},
$$
and in $A^{\mathrm s}_r(\Bbbk)$,
$$
\Phi_{r,\Bbbk}(\widetilde x\power{\lambda, l}_{\mfs \mft} )  \Phi_{r,\Bbbk}( \widetilde x\power{\mu, m}_{\mfu \mfv} ) =
\sum {'} \ (r(\nu, n, \alpha, \beta) \otimes 1_\Bbbk ) \ \Phi_{r,\Bbbk}( \widetilde x\power{\nu, n}_{\alpha, \beta}),
$$
with the sum again restricted to permissible $(\nu, n)$ and $\alpha, \beta$.   This shows that $A^{\mathrm s}_r(\Bbbk)$ is a specialization of $A_r(\ZZ)$, as required.
\end{proof}

\subsection{Jucys--Murphy elements and seminormal representations} 
\label{symplectic Brauer seminormal}

 We verify that  the setting of \myref{Section}{subsection seminormal quotient} applies to the  Brauer algebras, their specializations   $B_{r}(\ZZ; -2N)$, and the quotients of these specializations acting on symplectic tensor space.
 
 We have $R = \ZZ[\bolddelta]$, the generic ground ring, and the quotient map $\pi : \ZZ[\bolddelta] \to \Z$ determined by $\bolddelta \mapsto  -2N$.  The kernel of this map is the prime ideal $\mathfrak p = (\bolddelta + 2 N)$.  The subring  of evaluable elements in $\FF = \Q(\deltabold)$ is $R_{\mathfrak p}$.    The subring of evaluable elements in $B_{r}(\FF; \bolddelta)$  is  $B_{r}(R_{\mathfrak p}; \bolddelta)$;  c.f.  \myref{Remark}{remark on evaluable elements}.   The sequence of Brauer algebras with the Murphy cellular basis satisfies  \eqref{diagram 1}--\eqref{diagram 6} according to  \myref{Theorem}{theorem:  Brauer algebras as diagram algebras}.  We have verified in \myref{Section}{symplectic integral BCA} that the quotient axioms \eqref{quotient axiom 1}--\eqref{quotient axiom 3} are satisfied by the maps  $\Phi_r$  from $B_r(\ZZ; -2N)$  to endomorphisms of symplectic tensor space.    Jucys--Murphy elements for the Brauer algebras were defined by Nazaorv \cite{MR1398116}.  It is shown in \cite{MR2774622} that these are an  additive  family of JM elements, with contents
\begin{align}\label{equation: contents for Brauer algebra}
\kappa((\lambda, l) \to (\mu, m)) = 
\begin{cases}
c(a), &\text{if   $\mu = \lambda \cup \{a\}$,}\\
1- \deltabold - c(a), &\text{if  $\mu = \lambda \setminus \{a\}$.}   
\end{cases}  
\end{align}
where the content $c(a)$ of a node $a$  of a Young diagram is the column index of $a$ minus the row index of $a$.   It is easy to check that the separation condition is satisfied.   It remains to check condition \eqref{assumption SN}.  

\begin{lem} \label{SN for symplectic Brauer}
 Let $\stt$ be an $-2N$--permissible path in $\Std_{r}$.  
Then $F_t$ is evaluable.
\end{lem}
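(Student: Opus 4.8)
The plan is to derive the lemma from \myref{Lemma}{suff cond for SN}, which reduces the evaluability of $F_\stt$ for a permissible $\stt$ to the combinatorial statement that every $(-2N)$--permissible path in $\Std_r$ is residue equivalent only to itself. By the reformulation recorded in \myref{Remark}{remark: permissible paths residue class}, it is enough to prove the following local assertion: if $\mfs,\mft\in\Std_r$ share the truncation $\mfs_{[0,r-1]}=\mft_{[0,r-1]}$, have common endpoint $\mfs(r-1)=\mft(r-1)=(\lambda,l)$, satisfy $\mfs(r)\neq\mft(r)$, and at least one of $\mfs,\mft$ is permissible, then $\pi(\kappa_\mfs(r))\neq\pi(\kappa_\mft(r))$. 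I would first observe that whichever of the two paths is permissible, its value at level $r-1$ is permissible, so $(\lambda,l)$ is permissible, i.e.\ $\lambda_1\le N$.

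Next I would translate the content formula \eqref{equation: contents for Brauer algebra} under $\pi$ (which sends $\deltabold\mapsto -2N$): an edge $(\lambda,l)\to(\mu,m)$ that adds a box $a$ has residue $c(a)$, while one that removes a box $a$ has residue $1+2N-c(a)$. Since $\mfs(r)\neq\mft(r)$, there are three cases. If both edges add boxes, they add at distinct addable corners $a\neq a'$ of $\lambda$, and distinct addable corners of a Young diagram have distinct contents (the content is strictly decreasing in the row of the corner); hence the residues differ. If both edges remove boxes at distinct removable corners, the same elementary fact gives $c(a)\neq c(a')$, whence $1+2N-c(a)\neq 1+2N-c(a')$. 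Neither of these cases uses permissibility.

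The remaining case — one edge adds a box $a$, the other removes a box $a'$ — is the only place where permissibility enters, and I expect it to be the crux, although it is still elementary. Here I would use $\lambda_1\le N$ to sandwich the two residues: any addable corner of $\lambda$ lies in column at most $\lambda_1+1$, so $c(a)\le\lambda_1\le N$; any removable corner lies in column at most $\lambda_1$ and row at least $1$, so $c(a')\le\lambda_1-1\le N-1$ and the removal residue satisfies $1+2N-c(a')\ge N+2$. Thus $c(a)\le N< N+2\le 1+2N-c(a')$, so the residues are distinct. This establishes the local assertion, hence the hypothesis of \myref{Lemma}{suff cond for SN}, hence condition \eqref{assumption SN}; in particular $F_\stt$ is evaluable for the permissible path $\stt$. (The separation condition of Mathas, also needed to invoke this machinery, was already noted above to hold for the Brauer contents.) The only genuinely new input is the content estimate in the mixed add/remove case, which follows immediately from $\lambda_1\le N$ together with the description of the contents of addable and removable corners.
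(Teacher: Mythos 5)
Your proof is correct and follows essentially the same route as the paper: reduce via \myref{Lemma}{suff cond for SN} and \myref{Remark}{suff cond for SN 1} to comparing residues of two edges out of the common (permissible) vertex $(\lambda,l)$, dispose of the add/add and remove/remove cases by distinctness of contents of addable (resp.\ removable) corners, and in the mixed case use $\lambda_1\le N$ to bound $c(\alpha)+c(\beta)\le 2N<1+2N$. The only cosmetic difference is that you spell out the two same-type cases which the paper's proof passes over with ``this can only happen if one edge adds and the other removes.''
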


\begin{proof}  We apply  \myref{Lemma}{suff cond for SN} and \myref{Remark}{suff cond for SN 1}.     Let $\mfs,  \mft$ be two paths of length $r \ge 1$  with $\mfs' = \mft'$ and with at least one of the paths $-2N$ permissible.    We have to show that
  $\kappa_\stt(r)-\kappa_\sts(r) \not\equiv 0  \mod (\bolddelta + 2 N)$.
  In order to reach a contradiction, assume  $\kappa_\stt(r)-\kappa_\sts(r) \equiv 0  \mod (\bolddelta + 2 N)$.
  This can only happen if one of the two edges $\stt(r-1) \to \stt(r)$  and $\stt(r-1) \to \sts(r)$ involves adding a node to $\stt(r-1)$ and the other involves removing a node. Assume wlog that
  $\stt(r-1) = (\lambda, l)$,  $\stt(r) = (\lambda \cup \{\alpha\}, l)$ for an addable node $\alpha$ of $\lambda$, and $\sts(r) = (\lambda \setminus \{\beta\}, l+1)$, for a removable node $\beta$ of $\lambda$.  Our condition is then $c(\alpha) + c(\beta) = 1 + 2N$.  But since $\lambda_1 \le N$, we have $c(a), c(\beta) \le N$ and $c(a) + c(\beta) \le 2N$. 
\end{proof}

\section{The  Brauer algebra on orthogonal tensor space}
\label{section Brauer orthogonal}

Let $V$ be an $N$--dimensional vector space over a field $\Bbbk$ with $\ch(\Bbbk) \ne 2$, with a non--degenerate, symmetric bilinear form $(\ , \ )$;  we will call such forms {\sf orthogonal forms} for brevity.   
For $r \ge 1$, let  $\Psi_r : B_r(\Bbbk; N) \to \End(V\tensor r)$ be Brauer's homomorphism defined  in \myref{Section}{section tensor space} using the orthogonal form.   The image $\im(\Psi_r)$ is known as the 
{\sf(orthogonal) Brauer centralizer algebra}.

Over general fields, the classification of orthogonal forms is complicated. However, if   the field is quadratically closed,  then one can easily show that $V$ has an orthonormal basis, or alternatively a basis
$\{v_i\}_{1 \le i \le N}$ whose dual basis $\{v_i^*\}$ with respect to the orthogonal form  is $v_i^* = v_{N+1 - i}$.  Therefore, we can  assume (when $\Bbbk$ is quadratically closed) that $V = \Bbbk^N$ with the standard orthogonal form $(x, y)  = \sum_i x_i y_{N+1 - i}$.

\begin{thm}[\cite{MR2500869}]  \label{orthogonal SW duality}
 Let $\Lambda : \Bbbk \Orth(V) \to \End(V\tensor r)$ denote the homomorphism corresponding to the diagonal action of the orthogonal group $ \Orth(V)$ on $V\tensor r$.   
\begin{enumerate}[leftmargin=*,label=(\arabic{*}), font=\normalfont, align=left, leftmargin=*]
\item 
 If $\Bbbk$ is a quadratically closed infinite field, with $\ch(\Bbbk) \ne 2$, then  $\im(\Psi_r) = \End_{\Orth(V)}(V \tensor r)$ and
$\im(\Lambda) = \End_{B_r(\Bbbk; N)}(V \tensor r)$.  
\item  The dimension of  $\im(\Psi_r)$ is independent of the field and  of the characteristic, for infinite quadratically closed fields $\Bbbk$ with $\ch(\Bbbk) \ne 2$.  
\end{enumerate}
\end{thm}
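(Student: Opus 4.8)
The plan is to follow the two–step strategy of \cite{MR2500869}. Throughout write $G=\Orth(V)$, and let $V_\ZZ$ be the $\ZZ$-span of the standard basis, so $V_\Bbbk = V_\ZZ\otimes_\ZZ\Bbbk$ carries the standard orthogonal form.

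\emph{Part (1).} I would split this into the two one–sided statements. First, $\im(\Psi_r)=\End_{G}(V\tensor r)$: using the isomorphism $A\colon V\tensor{2r}\to\End(V\tensor r)$ induced by the form (the discussion before \myref{Lemma}{lemma properties of E and S}), $\End_{G}(V\tensor r)$ is identified with the space $(V\tensor{2r})^{G}$ of orthogonal invariants, and the $\Psi_r$-images of the Brauer-diagram basis correspond precisely to the products of form-factors (the ``$\omega$-tensors'' of \myref{Lemma}{lemma properties of E and S}) that span the space of diagrammatic invariants. The identity $\im(\Psi_r)=(V\tensor{2r})^{G}$ is then the first fundamental theorem of invariant theory for $\Orth(V)$ over an arbitrary infinite field of characteristic $\neq 2$, which one imports from the characteristic-free invariant theory of De Concini and Procesi (the case $\Bbbk=\C$ being the theorem of Brauer \cite{MR1503378}). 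Second, $\im(\Lambda)=\End_{\im(\Psi_r)}(V\tensor r)=\End_{B_r(\Bbbk;N)}(V\tensor r)$: for this I would use that $V\tensor r$ is a tilting module for the orthogonal group scheme (Donkin), hence has compatible good and Weyl filtrations, and invoke the double-centralizer property for tilting modules to conclude that $\Bbbk G\to\End_{\End_{G}(V\tensor r)}(V\tensor r)$ is surjective; combined with the first statement this is exactly the assertion about $\im(\Lambda)$.

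\emph{Part (2).} I would argue by specialization from $\ZZ$. Brauer's map is defined integrally, $\Psi_{r,\ZZ}\colon B_r(\ZZ;N)\to\End_\ZZ(V_\ZZ\tensor r)$, and for every quadratically closed $\Bbbk$ with $\ch(\Bbbk)\neq 2$ one has $\Psi_{r,\Bbbk}(a\otimes 1_\Bbbk)(w\otimes 1_\Bbbk)=\Psi_{r,\ZZ}(a)(w)\otimes 1_\Bbbk$, so \myref{Lemma}{lemma specialization cd} yields a surjection $\im(\Psi_{r,\ZZ})\otimes_\ZZ\Bbbk\twoheadrightarrow\im(\Psi_{r,\Bbbk})$, and hence
\[
\dim_\Bbbk\im(\Psi_{r,\Bbbk})\ \le\ \operatorname{rank}_\ZZ\im(\Psi_{r,\ZZ})\ =\ \dim_\C\im(\Psi_{r,\C}),
\]
the last equality because $\im(\Psi_{r,\ZZ})$ is spanned by the same (diagram) generators after the flat base changes $\ZZ\hookrightarrow\Q\hookrightarrow\C$. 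For the reverse inequality I would apply part (1) over $\Bbbk$, giving $\dim_\Bbbk\im(\Psi_{r,\Bbbk})=\dim_\Bbbk(V_\Bbbk\tensor{2r})^{\Orth(V_\Bbbk)}$; since the invariant submodule $(V_\ZZ\tensor{2r})^{\Orth(V_\ZZ)}$ is saturated in the free $\ZZ$-module $V_\ZZ\tensor{2r}$ (if $pv$ is invariant then so is $v$, by torsion-freeness), it is a $\ZZ$-direct summand, so $(V_\ZZ\tensor{2r})^{\Orth(V_\ZZ)}\otimes_\ZZ\Bbbk$ embeds into $(V_\Bbbk\tensor{2r})^{\Orth(V_\Bbbk)}$, whence $\dim_\Bbbk(V_\Bbbk\tensor{2r})^{\Orth(V_\Bbbk)}\ge\operatorname{rank}_\ZZ(V_\ZZ\tensor{2r})^{\Orth(V_\ZZ)}=\dim_\C\im(\Psi_{r,\C})$ (using the FFT over $\C$). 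The two inequalities force $\dim_\Bbbk\im(\Psi_{r,\Bbbk})=\dim_\C\im(\Psi_{r,\C})$, which is independent of $\Bbbk$ and of the characteristic; by \cite{MR951511} it equals a sum of squares of numbers of permissible standard tableaux.

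\emph{Main obstacle.} The serious input is the first fundamental theorem for the orthogonal group in positive characteristic used in part (1); once it is granted, the reverse-centralizer half is the routine tilting-module argument and part (2) is a formal specialization argument entirely parallel to the symplectic case \cite{MR2342000}. One could alternatively deduce part (2) from an explicit characteristic-free spanning set for $\ker(\Psi_r)$ (``diagrammatic minors''), but that is close to what the body of this paper sets out to prove downstream, so the invariant-theoretic route is the cleaner choice here.
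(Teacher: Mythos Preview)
The paper does not prove this theorem; it is quoted from \cite{MR2500869}, with only the remark that the case $\Bbbk=\C$ is due to Brauer. So there is no in-paper proof to compare your outline against.

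That said, your argument for part~(2) has a real gap. You take invariants under $\Orth(V_\ZZ)$ and claim both that the resulting $\ZZ$-module has rank equal to $\dim_\C\im(\Psi_{r,\C})$ and that its reduction lands inside $(V_\Bbbk\tensor{2r})^{\Orth(V_\Bbbk)}$. If $\Orth(V_\ZZ)$ denotes the group of $\ZZ$-points, this is a finite group (for the standard split form it consists of certain signed permutations of the basis), so its invariant space is strictly larger than the $\Orth(V_\C)$-invariants --- which kills your rank equality --- and an element fixed by this finite group has no reason to be fixed by the much larger group $\Orth(V_\Bbbk)$ --- which kills your claimed embedding. If instead you intend invariants for the orthogonal group \emph{scheme} over $\ZZ$, then saturation is beside the point: what you actually need is that taking invariants commutes with base change, and that requires vanishing of higher cohomology, i.e.\ that $V\tensor{2r}$ has a good filtration. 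But this is precisely the tilting-module input you already invoked in part~(1). Once $V\tensor r$ is known to be tilting for the orthogonal group scheme, $\dim_\Bbbk\End_{\Orth(V_\Bbbk)}(V_\Bbbk\tensor r)$ is automatically independent of $\Bbbk$, and your separate specialization argument for part~(2) becomes both unnecessary and, as written, incorrect. The clean route --- and the one taken in \cite{MR2500869} --- is to let the tilting argument do the work for both parts.
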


The special case of this  theorem when $\Bbbk=\mathbb{C}$  is due to Brauer  \cite{MR1503378}.  
Let us continue to assume, for now, that $\Bbbk$ is quadratically closed and that $V = \Bbbk^N$ with the standard orthogonal form.  When we need to emphasize the field we write $V_\Bbbk$ for $V$ and
$\Psi_{r, \Bbbk}$ for $\Psi_r$.

Let $\Psi_{r, \ZZ}$ denote the restriction of $\Psi_{r, \C}$ to $B_r(\ZZ; N)$; the image  $\im(\Psi_{r, \ZZ})$ is the $\ZZ$-subalgebra of $\End(V_\C\tensor r)$  generated by $E_i$ 
and $S_i$ for $1 \le i \le r-1$.   Let $V_\ZZ$ be the $\ZZ$-span of  the standard basis  $\{e_i \suchthat 1 \le i \le 2N\}$.  
Thus $V_\ZZ \tensor r \subset V_\C \tensor r$ and  $
\End_\ZZ(V_\ZZ \tensor r) \subset \End(V_\C \tensor r)$.   Since $E_i $ and $S_i$ leave $V_\ZZ \tensor r$ invariant, we 
can also regard $\im(\Psi_{r, \ZZ})$ as a $\ZZ$-subalgebra of 
 $\End_\ZZ(V_\ZZ \tensor r) $.   
 
 For any  (quadratically closed) $\Bbbk$,  $B_r(\Bbbk; N) \cong B_r(\ZZ; N) \otimes_\ZZ \Bbbk$ and $V_\Bbbk 
\cong V_\ZZ \otimes_\ZZ \Bbbk$.     For $a \in B_r(\ZZ; N)$ 
and $w \in V_\ZZ$,  we have
 $\Psi_{r, \Bbbk}(a \otimes 1_\Bbbk) (w \otimes 1_\Bbbk) = \Psi_{r, \ZZ}(a)(w)\otimes 1_\Bbbk $.  Therefore, we are in the 
situation of \myref{Lemma}{lemma specialization cd}, 
and there exists a map $\theta : \im(\Psi_{r, \ZZ}) \to \im(\Psi_{r, \Bbbk})$ making the diagram commute:
  \!\!\!\!\!\!
  \begin{equation}  \label{orthogonal CD}
\begin{minipage}{5cm}
\begin{tikzpicture}
 \draw (0,0) node {$B_r(\Z; N)$};\draw (3,0) node {\ \ $\im(\Psi_{r, \ZZ})$};
\draw[->](1.1,0) to node [above] {\scalefont{0.7} $\Psi_{r, \ZZ}$} (2.4,0);
\draw[->](0,-0.3) to node [right] {\scalefont{0.8}$\otimes 1_\Bbbk$} (0,-1.2);
\draw[->](3,-0.3) to node [right] {\scalefont{0.8}$\theta$} (3,-1.2);
\draw[->](1.1,-1.5) to node [below] {\scalefont{0.7} $\Psi_{r, \Bbbk}$} (2.4,-1.5);
\draw (0,-1.5) node {$B_r(\Bbbk; N)$};\draw (3,-1.5) node {\ \ $\im(\Psi_{r, \Bbbk})$};
\end{tikzpicture}\end{minipage} .
\end{equation}

\subsection{Murphy basis over the integers}
 \label{orthogonal integral BCA}
 
  \begin{defn}
Write $A^{\mathrm o}_\f(**) = \Psi_r(B_\f( **; N))$, where $**$ stands for $\C$, $\Q$, or $\Z$. 
 Thus $A^{\mathrm o}_\f(\Z)$ is the $\Z$--algebra generated by $E_i = \Psi_r(e_i)$   and $S_i = \Psi_r(s_i)$.  
 (The superscript ``o'' in this notation stands for ``orthogonal".)
\end{defn}

Let $R = \Z[\deltabold]$.  Endow $B_\k(R; \deltabold)$  with the  dual  Murphy 
cellular structure described in 
\myref{Section}{section: Murphy bases Brauer},
   with cellular basis
$$
\leftbrace y_{\mfs \mft}\power{\lambda, l} \suchthat  (\lambda, l) \in \widehat B_\k \text{ and } \mfs, \mft \in \Std_\k(\lambda, l) \rightbrace.
$$
By  
   \myref{Theorem}{theorem:  Brauer algebras as diagram algebras},
 the tower $(B_\k(R; \deltabold))_{\k \ge 0}$ satisfies the assumptions
\eqref{diagram 1}--\eqref{diagram 6} of 
\myref{Section}{subsection diagram algebras}.

 We want to show that the maps   $\Psi_{r, \ZZ} : B_\f(\Z; N) \to  A^{\mathrm o}_\f(\Z)$ satisfy the assumptions
 \eqref{quotient axiom 1}--\eqref{quotient axiom 3} of 
\myref{Section}{section quotient framework}.
 It will follow that Brauer's centralizer algebras $ A^{\mathrm o}_\f(\Z)$ are cellular over the integers. 

First we define  the appropriate permissible points in $\widehat B_\k$ and permissible paths in $\widehat B$.  

\begin{defn}  An $N$--permissible partition $\lambda$ is a partition such that $\lambda'_1  + \lambda'_2 \le N$.  
We say that  an element
$(\lambda, l) \in \widehat B_\f$ is $N$--permissible if $\lambda$ is $N$--permissible.  
   We let  $\widehat B^{\rm o}_{\f,  \perm}\subseteq \widehat B_{\f}$   denote  the subset of $N$--permissible points.  
   
   A path $\mft \in \Std_\f({\lambda, l})$ is $N$--permissible if $\mft (k)$ is $N$--permissible for all $0\leq k \leq \f$.
  We let  $\Std^{\mathrm o}_{\f,  \perm}(\lambda, l) \subseteq \Std_{\f}(\lambda, l)$   denote the subset 
  of   $N$--permissible paths. 

\end{defn}

Note that this set of permissible points satisfies condition \eqref{quotient axiom 1}.

 We will require the notion of a {\sf walled Brauer diagram}.    Consider Brauer diagrams with $a + b$ strands. 
 Divide the top vertices into a left cluster of $a$ vertices and a right cluster of $b$ vertices, and similarly for the bottom vertices.   The $(a,b)$--walled Brauer diagrams are those in which no vertical strand connects a left vertex and a right vertex,  and every horizontal strand connects a left vertex and a right vertex. 
For any ground ring $U$ and loop parameter $\delta$,  the $U$--span of $(a,b)$--walled Brauer diagrams is a unital  involution--invariant subalgebra of  $B_{a + b}(U; \delta)$, called the {\sf walled Brauer algebra}, and denoted $B_{a, b}(U; \delta)$. (One imagines a wall dividing the left and right vertices; thus the terminology ``walled Brauer diagram" and ``walled Brauer algebra".)

   \begin{defn} \label{sign of walled Brauer algebra}
If $d$ is an $(a, b)$-walled Brauer diagram, then the diagram obtained by exchanging the top and bottom vertices to the right of the wall is a permutation diagram.  Define the sign of $d$, denoted $\sgn(d)$ to be the sign of the permutation diagram.
\end{defn}

\begin{eg}
 For $a,b \in \mathbb{N}$,   let $e_{a, b}$ be the $(a, b)$--walled Brauer diagram with horizontal edges
 $\{\p 1, \p {a+b}\}$ and $\{\pbar 1, \pbar {a+b}\}$ and vertical edges $\{\p j, \pbar j\}$ for $j \ne 1, a+b$ . Then the permutation corresponding to $e_{a, b}$ is  the transposition $(1, a+b)$, and hence
 ${\rm sign}(e_{a,b})=-1$.  \end{eg}

\begin{defn} \mbox{}  \label{diagrammatic Pfaffians 1}
Let $a, b \ge 0$.  We define elements $\mathfrak d_{a, b}$ and $\mathfrak d'_{a, b}$ in $B_{a, b} \subseteq B_{a+b}$. 
\begin{enumerate}[leftmargin=*,label=(\arabic{*}), font=\normalfont, align=left, leftmargin=*]
\item  Let  $\mathfrak d_{a, b} = \sum_{d}  \sgn(d) \ d,$
 where the sum is over all $(a, b)$-walled Brauer diagrams.  
\item  Let  $\mathfrak d'_{a,  b} = \sum_d  \sgn(d) \ d,$
 where now the sum is over all $(a, b)$-walled Brauer diagrams of corank $\ge 1$. 
\end{enumerate}
\end{defn}

\begin{defn}  For a composition $\lambda$ , the row antisymmetrizer of $\lambda$ is
$A_\lambda = \sum_{\pi \in \mathfrak S_\lambda}  \sgn(\pi) \  \pi$.   In case $\lambda$ is a partition, we have $A_\lambda =  y_{\lambda'}$,  where $\lambda'$ is the conjugate partition.
\end{defn}

\begin{lem}  \label{factorization of d'}
For $a, b \ge 0$, there exists an element $\beta'_{a, b} \in  B_{a, b}(\Z; N)\subseteq  B_{a+b}(\Z; N)$ such that 
 $\mathfrak d'_{a, b} =
 A_{(a, b)} \beta'_{a, b}$.
\end{lem}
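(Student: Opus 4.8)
The plan is to adapt, with signs inserted throughout, the orbit-summation argument that proved \myref{Lemma}{factorization of b'}, replacing the row symmetrizer $x_{(r)}$ by the row antisymmetrizer $A_{(a,b)}$. Let $\mathcal D'$ denote the set of $(a,b)$-walled Brauer diagrams of corank $\ge 1$, so $\mathfrak d'_{a,b}=\sum_{d\in\mathcal D'}\sgn(d)\,d$. The Young subgroup $\mathfrak S_{(a,b)}=\mathfrak S_{\{1,\dots,a\}}\times\mathfrak S_{\{a+1,\dots,a+b\}}$, realised inside $B_{a,b}(\Z;N)$ as walled permutation diagrams, acts on $\mathcal D'$ by left multiplication (permuting the top vertices within each cluster); this action preserves corank, so $\mathcal D'$ splits into $\mathfrak S_{(a,b)}$-orbits.

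The first step is a sign lemma: for $\pi=\pi_L\pi_R\in\mathfrak S_{(a,b)}$ and any walled Brauer diagram $d$, $\sgn(\pi\cdot d)=\sgn(\pi)\,\sgn(d)$. This is immediate from \myref{Definition}{sign of walled Brauer algebra}: passing to the permutation diagram obtained by interchanging the top and bottom vertices to the right of the wall, left multiplication by $\pi_L$ becomes left multiplication on the left block and left multiplication by $\pi_R$ becomes right multiplication by $\pi_R^{-1}$ on the right block, and the sign of a permutation is multiplicative. The second step is an analysis of the stabilizer $\Stab(d)\le\mathfrak S_{(a,b)}$ of $d\in\mathcal D'$: if $\pi=\pi_L\pi_R$ fixes $d$, then $\pi_L$ (resp.\ $\pi_R$) must fix pointwise every top vertex lying on a vertical strand, and on the endpoints of the $k$ top horizontal strands, which the horizontal matching $\phi$ puts in bijection between a $k$-subset of the left cluster and a $k$-subset of the right cluster, $\pi_L$ may act by an arbitrary $\sigma$ and then $\pi_R$ is forced to act by $\phi\sigma\phi^{-1}$. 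Hence $\pi$, as a permutation of $\{1,\dots,a+b\}$, is a disjoint product of $\sigma$ with a permutation conjugate to it, so $\sgn(\pi)=\sgn(\sigma)^2=1$; thus $\Stab(d)\subseteq\mathfrak A_{a+b}$ and $\sum_{\tau\in\Stab(d)}\sgn(\tau)=|\Stab(d)|$.

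Combining the two steps, for a chosen orbit representative $x$ one computes
\[
A_{(a,b)}\,x \;=\; \sum_{\pi\in\mathfrak S_{(a,b)}}\sgn(\pi)\,\pi x \;=\; |\Stab(x)|\;\sgn(x)\!\!\sum_{d\in\mathfrak S_{(a,b)}\cdot x}\!\!\sgn(d)\,d,
\]
using $\sgn(\pi)=\sgn(x)\sgn(\pi x)$ from the sign lemma. Summing over a full set of orbit representatives then yields $\mathfrak d'_{a,b}=A_{(a,b)}\,\beta'_{a,b}$, with $\beta'_{a,b}=\sum_{x}\tfrac{\sgn(x)}{|\Stab(x)|}\,x$.

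I expect the main obstacle to be integrality, i.e.\ arranging $\beta'_{a,b}\in B_{a,b}(\Z;N)$ rather than just in $B_{a,b}(\Q;N)$: as soon as an orbit has $k\ge 2$ top horizontal strands one has $|\Stab(x)|=k!$, and the naive representative sum carries that denominator. To remove it I would first antisymmetrize only over a single cluster (where the stabilizer of a single diagram in $\mathfrak S_{\{1,\dots,a\}}$ is trivial, so no denominators appear), writing $\mathfrak d'_{a,b}=A_{(a)}\,\gamma$ with $\gamma$ integral, and then try to propagate this to the second cluster; the delicate point is to choose the intermediate integral element compatibly so that a second antisymmetrization recovers exactly $A_{(a,b)}\beta'_{a,b}$ with integral $\beta'_{a,b}$. (For the downstream use in verifying axiom \eqref{quotient axiom 2}, only coefficients in the relevant field of fractions are actually required, which is a useful fallback should the integral refinement prove stubborn.)
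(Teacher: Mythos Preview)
Your orbit-decomposition approach is the same as the paper's, but you have been more careful than the paper itself.  The paper's proof simply asserts that $\mathfrak S_a\times\mathfrak S_b$ acts \emph{freely} on $(a,b)$-walled Brauer diagrams of corank $\ge 1$, and thence writes $\mathfrak d'_{a,b}=A_{(a,b)}\sum_x\sgn(x)\,x$ with an integral $\beta'_{a,b}$.  Your stabilizer analysis shows this freeness claim is false: whenever there are $k\ge 2$ top horizontal strands, the element $\pi_L\pi_R$ with $\pi_R=\phi\pi_L\phi^{-1}$ fixes the diagram and the stabilizer has order $k!$.  Concretely, with $a=b=2$ the diagram $d_1$ with top strands $\{1,3\},\{2,4\}$ and bottom strands $\{1,3\},\{2,4\}$ satisfies $(12)(34)\cdot d_1=d_1$.

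In fact the statement over $\Z$ fails in this example.  Left multiplication by $A_{(2,2)}$ preserves corank, and on the four corank-$2$ diagrams its image is $2\Z(d_1-d_3)\oplus 2\Z(d_2-d_4)$, whereas the corank-$2$ part of $\mathfrak d'_{2,2}$ has odd coefficients on $d_1$ and $d_3$; no integral $\beta'_{2,2}$ exists.  So the integrality obstacle you identified is real, and your proposed repair via a one-sided antisymmetrization cannot succeed either.

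Your argument with $\beta'_{a,b}=\sum_x\frac{\sgn(x)}{|\Stab(x)|}\,x$ correctly proves the lemma with $\Q$ in place of $\Z$, and as you observe in your final parenthetical, this is exactly what is needed downstream: axiom \eqref{quotient axiom 2}(c) requires $\mathfrak b'_\mu\in m_\mu A_k^{\BbbK}\cap(A_k^S)^{\rhd\mu}$ with $\BbbK=\Q$ the field of fractions, not $S=\Z$.  The second membership $\mathfrak d'_{(\mu,m)}\in B_k(\Z;N)^{\coldom(\mu,m)}$ follows from the corank count via \myref{Lemma}{Brauer block diagonal transition}, independently of the factorization.  So your $\Q$-version is both correct and sufficient; equation~\eqref{orthogonal 4} in the paper should have $\Q$ rather than $\Z$ in its first factor, and the main results are unaffected.
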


\begin{proof}   $\mathfrak S_a \times \mathfrak S_b$  acts freely (by multiplication on the left)  on the set $D'_{a, b}$ of $(a, b)$-walled Brauer diagrams with corank $\ge 1$, and 
$$
\sgn(w\, d) = \sgn(w) \sgn(d),
$$
for $w \in \mathfrak S_a \times \mathfrak S_b$ and $d \in D'_{a, b}$.   Choose a representative of each orbit of the action.  Then
$$
\mathfrak d'_{a, b} =   A_{(a, b)}  \sum_x  \sgn(x) x,
$$
where the sum is over the chosen orbit representatives.
\end{proof}

Given $\lambda$ a Young diagram with more than two columns, 
we vertically slice $\lambda$ into two parts
 after the second column. The left and right segments of the 
sliced partition are then defined as follows,
  $$\lambda^L= (\lambda'_1,\lambda_2')' \quad \lambda^R= 
  (\lambda'_{3}, \lambda_4',\ldots  )'.$$  
 
  \begin{defn} \mbox{}  \label{diagrammatic Pfaffians orthogonal case}
 \begin{enumerate}[leftmargin=*,label=(\arabic{*}), font=\normalfont, align=left, leftmargin=*]
\item  If $\lambda$ is a Young diagram with at most two columns, define $\mathfrak d_\lambda =
\mathfrak d_{\lambda'_1, \lambda'_2}$  and 
$\mathfrak d'_\lambda =
\mathfrak d'_{\lambda'_1, \lambda'_2}$.
 \item
For $\lambda$  a Young diagram, with more than 2 columns we write  
\begin{equation}  
\label{orthogonal a}
\mathfrak d_\lambda = \mathfrak d_{\la^L} \otimes 
y_{\lambda^R} \quad \text{and} \quad
\mathfrak d'_\lambda = \mathfrak 
d'_{\la^L}
 \otimes
y_{\lambda^R}.
\end{equation}
\item  For $(\lambda, l) \in \widehat B_r$, write
\begin{equation} \label{orthogonal b}
\mathfrak d_{(\lambda, l)} =  \mathfrak d_\lambda e_{r-1}\power l  
\quad \text{and} \quad
\mathfrak d'_{(\lambda, l)} =  \mathfrak d'_\lambda e_{r-1}\power l  .
\end{equation}
\end{enumerate}
\end{defn}

 \begin{rmk} It is immediate that  for all $r$ and for all  $(\lambda, l) \in \widehat B_r$,
\begin{equation} \label{orthogonal 1}
y_{(\lambda, l)} =  \mathfrak d_{(\lambda, l)} - \mathfrak d'_{(\lambda, l)}.
\end{equation} 
Moreover, it follows from 
\myref{Lemma}{factorization of d'}
that 
 \begin{equation}  \label{orthogonal 2}
  \mathfrak d'_{(\lambda, l)} =  y_{(\lambda, l)} \beta',
 \end{equation} 
 where $\beta' = \beta'_{\lambda'_1, \lambda'_2}$.
\end{rmk}

\ignore{
It follows that for a partition $\lambda$,  
\begin{equation}  \label{factorization of d' lambda}
\mathfrak d'_\lambda = y_\lambda \beta_{\widetilde \lambda_1, \widetilde \lambda_2}.
\end{equation}  
}

Fix $r \ge 1$.  The multilinear functionals on $V^{2r}$ of the form
$(w_1, \dots, w_{2r}) \mapsto \prod (w_i, w_j)$, where each $w_i$ occurs exactly once, are evidently 
${\rm O}(V)$--invariant.  Moreover, there are some evident relations among such functionals, stemming from the following observation.   If we take $r = N+1$ and fix disjoint sets $S, S' $ of size $N+1$ with
$S \cup S' = \{1, 2, \dots, 2N + 2\}$,  then
$(w_1, \dots, w_{2r}) \mapsto \det((w_i, w_j))_{i \in S, j \in S'}$  is zero, because the matrix
$((w_i, w_j))_{i \in S, j \in S'}$ is singular.   These elementary observations are preliminary to   the first and second fundamental theorems of invariant theory for the orthogonal groups.  
See the preamble to \cite[Theorem 2.17.A]{MR0000255}.
The following proposition depends on the second of these observations.

\begin{prop}  \label{orthogonal fund theorem 1}
If $a + b = N+1$, then $\mathfrak d_{a, b} \in \ker(\Psi)$.
Hence if $r \ge N+1$ and
$(\lambda, l) \in \widehat B_r$ with $\lambda'_1 + \lambda'_2 = N+1$, then
 $\mathfrak d_{(\lambda,l)} \in \ker(\Psi)$.  
\end{prop}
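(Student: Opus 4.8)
The plan is to adapt the proof of \myref{Proposition}{symplectic fund theorem 1} essentially verbatim, with the Pfaffian of a singular alternating matrix replaced by the determinant of a singular matrix of ``rectangular $\times$ Gram $\times$ rectangular'' shape. Set $r = a+b = N+1$ and recall from \myref{Section}{section tensor space} the form-induced linear isomorphisms $A = A_r \colon V^{\otimes 2r} \to \End(V^{\otimes r})$ and $\eta = \eta_r \colon V^{\otimes 2r} \to (V^{\otimes 2r})^*$. Since $\eta \circ A^{-1}$ is bijective, it suffices to show that the multilinear functional $\Theta := \eta \circ A^{-1}\circ \Psi_r(\mathfrak d_{a,b}) \in (V^{\otimes 2r})^*$ vanishes identically.

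First I would evaluate $\Theta$ on a simple tensor $w_1\otimes\cdots\otimes w_{2r}$. For a single Brauer diagram $d$ on $r$ strands, $\bigl(\eta\circ A^{-1}\circ\Psi_r(d)\bigr)(w_1\otimes\cdots\otimes w_{2r})$ is a product of form values $(w_p,w_q)$, one factor per strand $\{p,q\}$ of $d$, the $2r$ arguments being attached to the $2r$ boundary points of $d$ under the identification $\End(V^{\otimes r}) \cong V^{\otimes r}\otimes(V^*)^{\otimes r}\cong V^{\otimes 2r}$ coming from the form (using the identity $[x\otimes y,\omega] = [y,x]$ and the symmetry of the form). Now specialise $d$ to an $(a,b)$-walled Brauer diagram. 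Let $S$ be the set of top-left and bottom-right boundary points and $S'$ the set of top-right and bottom-left boundary points; then $|S| = |S'| = a+b = N+1$, and the wall condition (no vertical strand crosses the wall, every horizontal strand does) forces every strand of $d$ to join a point of $S$ to a point of $S'$. Hence $d$ determines a bijection $\sigma_d\colon S\to S'$, the assignment $d\mapsto\sigma_d$ is a bijection onto the set of all such bijections, and the sign $\sgn(d)$ of \myref{Definition}{sign of walled Brauer algebra} is the sign of $\sigma_d$. Summing over all $(a,b)$-walled Brauer diagrams and invoking the Leibniz formula gives
\[
\Theta(w_1\otimes\cdots\otimes w_{2r}) \;=\; \pm\det\bigl((w_i,w_j)\bigr)_{i\in S,\ j\in S'}.
\]

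The second step is immediate invariant theory: if $G$ is the Gram matrix of the orthogonal form on $V=\Bbbk^N$ and $W_S$, $W_{S'}$ are the $N\times(N+1)$ matrices whose columns are the $w_i$ for $i\in S$, resp.\ $i\in S'$, then $\bigl((w_i,w_j)\bigr)_{i\in S,\,j\in S'} = W_S^{\top}G\,W_{S'}$, which has rank $\le N < N+1$ and hence vanishing determinant. So $\Theta\equiv 0$, and therefore $\mathfrak d_{a,b}\in\ker(\Psi_r)=\ker(\Psi_{N+1})$. The genuinely technical step — and the one I expect to be the main obstacle — is the combinatorial bookkeeping of the previous paragraph (matching walled diagrams with bijections $S\to S'$, and reconciling $\sgn(d)$ with the sign of that bijection and with the signs produced by $\eta\circ A^{-1}$, including a careful account of which tensor slots become row versus column indices). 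This is, however, exactly the kind of identity packaged in the diagrammatic-minors material of \myref{Appendix}{appendix: diagrammatic minors}, and analogues appear in the literature (compare \cite[Section~3.3]{MR1670662} for the symplectic case and \cite{MR2500869}), so it can be quoted rather than redone from scratch.

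For the ``hence'' clause, let $r\ge N+1$ and $(\lambda,l)\in\widehat B_r$ with $\lambda'_1+\lambda'_2 = N+1$; then $|\lambda| = r-2l$ satisfies $N+1\le|\lambda|\le r$. By \myref{Definition}{diagrammatic Pfaffians orthogonal case}, $\mathfrak d_{(\lambda,l)} = \mathfrak d_\lambda\, e_{r-1}^{(l)}$, and $\mathfrak d_\lambda$ equals either $\mathfrak d_{\lambda'_1,\lambda'_2}$ (when $\lambda$ has at most two columns) or $\mathfrak d_{\lambda^L}\otimes y_{\lambda^R}$ with $\mathfrak d_{\lambda^L} = \mathfrak d_{\lambda'_1,\lambda'_2}$. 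In either case $\mathfrak d_{\lambda'_1,\lambda'_2}\in B_{N+1}$ lies in $\ker(\Psi_{N+1})$ by the first part; embedding it into $B_{|\lambda|}$ by adjoining identity strands has $\Psi$-image $\Psi_{N+1}(\mathfrak d_{\lambda'_1,\lambda'_2})\otimes\id = 0$, so $\mathfrak d_{\lambda^L}\otimes 1^{\otimes|\lambda^R|}\in\ker(\Psi_{|\lambda|})$; since $\ker(\Psi_{|\lambda|})$ is a two-sided ideal, right-multiplying by $1^{\otimes(N+1)}\otimes y_{\lambda^R}$ keeps us in $\ker(\Psi_{|\lambda|})$, whence $\mathfrak d_\lambda\in\ker(\Psi_{|\lambda|})\subseteq\ker(\Psi_r)$ using $\ker(\Psi_s)\subseteq\ker(\Psi_{s+1})$ from \myref{Section}{section tensor space}. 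Right-multiplying once more by $e_{r-1}^{(l)}$ gives $\mathfrak d_{(\lambda,l)}\in\ker(\Psi)$, completing the plan.
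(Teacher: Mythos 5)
Your argument is correct and follows the paper's own route: both reduce to showing that $\eta\circ A^{-1}\circ\Psi(\mathfrak d_{a,b})$ evaluates on $w_1\otimes\cdots\otimes w_{2r}$ to $\pm\det\bigl((w_i,w_j)\bigr)_{i\in S,\,j\in S'}$ for the same bipartition $S,S'$ of the boundary points (your $S$, $S'$ coincide with the paper's sequences $(x_i)$, $(y_j)$ in the appendix), and then observe that this $(N+1)\times(N+1)$ matrix is singular since $\dim V=N$; the deduction of the second clause from the ideal property and the factorization of $\mathfrak d_{(\lambda,l)}$ is likewise the intended one.
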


\begin{proof}    Set $r = N+1$. 
There exist linear isomorphisms $A : V\tensor{2r} \to \End(V\tensor{r})$  and 
$\eta: V\tensor{2r} \to (V\tensor{2r} )^*$.  
The proof consists of showing that
$\eta\circ A\inv \circ \Psi(\mathfrak d_{a, b})$ is a functional of the sort described above, 
$(\eta\circ A \inv \circ \Psi(\mathfrak d_{a, b}))(w_1 \otimes \cdots \otimes w_{2r}) = 
 \det((w_i, w_j))_{i \in S, j \in S'},$ 
 for suitable choice of $S, S'$.  Hence $\mathfrak d_{a, b} \in  \ker(\Psi)$.  
 This is explained in  \cite[Section 3.3]{MR1670662}  or 
  \cite[Lemma 3.3]{MR2979865}.  We have also provided an  exposition in 
   \iftoggle{arxiv}
 { \myref{Appendix}{appendix orthogonal}}
 {Appendix D in the arXiv version of this paper}.
\end{proof}

We can now verify axiom \eqref{quotient axiom 2}.    Let $\mft \in \Std_\f(\lambda, l)$ be a path which is not  $N$--permissible.  
Let $k\le r$ be the first index  such that $\mft(k) = (\mu, m)$  satisfies 
$\mu'_1 + \mu'_2 =   N+1$.
  It follows from
  \myref{Proposition}{orthogonal fund theorem 1}
that 
\begin{equation}  \label{orthogonal 3}
\mathfrak d_{(\mu, m)} \in \ker(\Psi).  
\end{equation}
By \eqref{orthogonal 2} and \eqref{x and y generators Brauer}, we have that 
$$  \mathfrak d'_{(\mu, m)} =  y_{(\mu, m)} \beta'= y_\mu \beta'  e_{k-1}\power m,$$ 
where $\beta' = \beta'_{\mu'_1, \mu'_2}$.   This exhibits $\mathfrak d_{(\mu, m)}$ as an element of
 $y_{(\mu, m)} B_\k(\Z; N)$.  Moreover, it shows that $\mathfrak d'_{(\mu, m)}$
  is a linear combination of Brauer diagrams of corank at least $m+1$, and therefore
$ \mathfrak d'_{(\mu, m)}  \in B_\k(\Z; N)^{\coldom (\mu, m)}$, using 
   \myref{Lemma}{Brauer block diagonal transition}. 
 Hence
\begin{equation}\label{orthogonal 4}
 \mathfrak d'_{(\mu, m)}  \in  y_{(\mu, m)} B_\k(\Z; N) \cap  B_\k(\Z; N)^{\coldom (\mu, m)}.
\end{equation}
Equations  \eqref{orthogonal 1}--\eqref{orthogonal 4}  show that  axiom \eqref{quotient axiom 2} holds.
It is shown in \cite[Theorem 3.4, Corollary 3.5]{MR951511} that 
$$
\dim_{\C}(A_r^{\mathrm o}(\C)) = 
\smashr{ \sum_{(\lambda, l) \in \widehat B_{r, \perm}^{\rm o}}   }\ 
 (\sharp \Std^{\mathrm o}_{r, \perm}(\lambda, l))^2.
$$
Thus axiom \eqref{quotient axiom 3} holds.

Since assumptions
 \eqref{quotient axiom 1}--\eqref{quotient axiom 3}   of 
\myref{Section}{section quotient framework} 
 are satisfied, we can produce a modified  dual Murphy basis  of  $B_\f(\Z; N) $,
 $$
\leftbrace \widetilde y_{\mfs \mft}\power{\lambda, l} \suchthat  (\lambda, l) \in \widehat B_r \text{ and } \mfs, \mft \in \Std_r(\lambda, l) \rightbrace,
$$
following the procedure described before 
\myref{Theorem}{basis theorem}.    The following theorem gives a cellular basis for Brauer's centralizer algebra acting on orthogonal  tensor space, valid over the integers.  It also gives two descriptions  of the kernel  of the map
 $\Psi_\f : B_\f(\ZZ; N) \to \End(V^{\otimes r})$, one by providing a basis of $\ker(\Psi_\f)$ over the integers, and the other by describing the kernel as the ideal generated by a small set of elements.  Each of these statements is a form of the second fundamental theorem of invariant theory for the orthogonal groups.

\begin{thm}    \label{theorem integral orthogonal BCA}
The integral (orthogonal) Brauer centralizer algebra    $A^{\rm o}_\f(\Z)$ is a cellular algebra over $\Z$ with basis 
$$\mathbb A^{\rm o}_\f = \leftbrace \Psi(\widetilde y_{\mfs \mft}\power{\lambda, l})\mid   
(\lambda, l) \in \widehat B^{\rm o}_{\f, \perm} \text{ and }  \mfs, \mft \in \Std^{\mathrm o}_{\f,  \perm}(\lambda, l)
 \rightbrace,
$$
with 
the involution $*$  determined by $E_i^* = E_i$ and $S_i^* = S_i$ 
and 
the partially ordered set $(\widehat B^{\rm o}_{\f,   \perm}, \coldomeq)$.  
 The ideal   $\ker(\Psi_\f) \subseteq B_r(\ZZ; N)$ has  $\ZZ$-basis 
 $$\kappa_r = \leftbrace  \widetilde y_{\mfs \mft}\power{\lambda, l}  \mid   
(\lambda, l) \in \widehat B_{\f, \perm} \text{ and   $\mfs$  or  $\mft$  is not $N$--permissible}
 \rightbrace.
$$
 Moreover,      for $r > N$,  $\ker(\Psi_\f)$ is the ideal generated by the set 
    $ \{\mathfrak{d}_{a, b} \mid  a+ b = N+1 \}.
 $
  For $r \le N$, $\ker(\Psi_\f) = 0$. 
\end{thm}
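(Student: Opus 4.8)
The plan is to follow the proof of \myref{Theorem}{integral cellular basis 1} from the symplectic case almost word for word, using that axioms \eqref{quotient axiom 1}--\eqref{quotient axiom 3} have already been verified above for the maps $\Psi_\f : B_\f(\Z; N) \to A^{\rm o}_\f(\Z)$ and the dual Murphy cellular structure on $(B_\k(R;\deltabold))_{\k\ge 0}$. Granting this, cellularity of $A^{\rm o}_\f(\Z)$ with the basis $\mathbb A^{\rm o}_\f$ and poset $(\widehat B^{\rm o}_{\f, \perm}, \coldomeq)$, together with the statement that $\kappa_r$ is a $\Z$-basis of $\ker(\Psi_\f)$, is immediate from parts \ref{good paths 2} and \ref{good paths 3} of \myref{Theorem}{good paths basis theorem}; the only change from the symplectic case is that the dual Murphy cell datum is ordered by column dominance $\coldomeq$ rather than by $\unrhd$, and the diagrammatic Pfaffians $\mathfrak d$ replace the $\mathfrak b$.

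For $r \le N$ I would argue that every path on $\widehat B$ of length $\le r$ is $N$-permissible: a path of length $k \le r \le N$ visits only points $(\mu, m)$ with $|\mu| \le k \le N$, and any partition $\mu$ with $|\mu| \le N$ satisfies $\mu'_1 + \mu'_2 \le |\mu| \le N$, hence is $N$-permissible. Therefore $\kappa_r$ is empty and $\ker(\Psi_\f) = 0$.

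For $r > N$, part \ref{good paths 4} of \myref{Theorem}{good paths basis theorem} says $\ker(\Psi_\f)$ equals the ideal $\mathfrak I_\f$ of $B_\f(\Z;N)$ generated by the elements $\mathfrak d_{(\mu, m)}$ as $(\mu, m)$ ranges over marginal points of $\widehat B_k$ for $0 < k \le \f$. The first key step is to identify these marginal points. If $(\mu, m)$ is marginal at level $k$, there is a path $\mft$ to $(\mu, m)$ with $\mft(j)$ permissible for all $j < k$; in particular $\mft(k-1) = (\lambda, l)$ is permissible and $(\lambda, l) \to (\mu, m)$ is an edge of $\widehat B$. A ``remove a box'' edge would give $\mu' \le \lambda'$ componentwise, hence $\mu'_1 + \mu'_2 \le \lambda'_1 + \lambda'_2 \le N$, contradicting non-permissibility of $\mu$; so $\mu = \lambda \cup \{a\}$ for an addable box $a$, and since $\lambda'_1 + \lambda'_2 \le N$ while $\mu'_1 + \mu'_2 \ge N+1$, the box $a$ must lie in the first or second column and $\lambda'_1 + \lambda'_2 = N$, forcing $\mu'_1 + \mu'_2 = N+1$ exactly. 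Then by \eqref{orthogonal a}, \eqref{orthogonal b} and \myref{Definition}{diagrammatic Pfaffians orthogonal case}, $\mathfrak d_{(\mu,m)} = \mathfrak d_\mu\, e_{k-1}\power m$, where $\mathfrak d_\mu$ is either $\mathfrak d_{\mu'_1, \mu'_2}$ (if $\mu$ has at most two columns) or $\mathfrak d_{\mu'_1,\mu'_2} \otimes y_{\mu^R}$ (otherwise), with $\mu'_1 + \mu'_2 = N+1$; since the side-by-side tensor embeddings are unital algebra maps, in either case $\mathfrak d_{(\mu,m)}$ lies in the two-sided ideal generated by $\{\mathfrak d_{a,b} \mid a+b = N+1\}$, so $\mathfrak I_\f$ is contained in that ideal. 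Conversely each $\mathfrak d_{a,b}$ with $a+b = N+1$ lies in $\ker(\Psi_\f) = \mathfrak I_\f$ by \myref{Proposition}{orthogonal fund theorem 1}, whence the two ideals coincide.

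I expect the substantive content to lie entirely in results already in place: \myref{Proposition}{orthogonal fund theorem 1} (the vanishing of the diagrammatic determinant, which is the genuine second-fundamental-theorem input) and the dimension formula from \cite{MR951511} underlying \eqref{quotient axiom 3}. Within this proof the only new step, and the main place to be careful, is the combinatorial identification of the marginal points of $\widehat B$ --- in particular arguing that the offending box must be added to one of the first two columns --- so that each $\mathfrak d_{(\mu,m)}$ matches a generator $\mathfrak d_{a,b}$ with $a+b = N+1$; everything else is routine tracking of the definitions.
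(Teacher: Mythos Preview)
Your proof is correct and follows essentially the same approach as the paper's own proof: invoke \myref{Theorem}{good paths basis theorem} for the cellular basis and kernel basis, observe all short paths are permissible, then identify the marginal points and factor each $\mathfrak d_{(\mu,m)}$ through some $\mathfrak d_{a,b}$ with $a+b=N+1$. You spell out more carefully than the paper does why every marginal $(\mu,m)$ satisfies $\mu'_1+\mu'_2=N+1$ exactly, and you make the reverse containment explicit; both are implicit in the paper but helpful to write down.
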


\begin{proof}    The construction of the cellular basis  of  $A^{\rm o}_\f(\Z)$ and of the basis of 
$\ker(\Psi_r)$  follows immediately from  
\myref{Theorem}{good paths basis theorem} 
 since \eqref{quotient axiom 1}--\eqref{quotient axiom 3} have been verified.

Now, if $r \leq N$ then  $\ker(\Psi_\f)=0$ since all paths on $\widehat B$ of length $\le N$ are $N$--permissible. 
For $r > N$,  the kernel is the ideal generated by all the $\mathfrak{d}_{(\mu, m)}$ such that $(\mu,m)$ is a   marginal point in $\widehat B_k$ for some $0 < k \le r$,  using  
\myref{Theorem}{good paths basis theorem}.
But the marginal points  are all of the form $(\mu,m)$ for some $\mu$ with $\mu'_1 + \mu'_2 = N+1$. 
Now by \eqref{orthogonal a} and \eqref{orthogonal b},
$$\mathfrak{d}_{(\mu, m)}=  \mathfrak{d}_{\mu} e_{k-1} \power m = 
(\mathfrak d_{\mu^L} \otimes y_{\mu^R} )  e_{k-1} \power m,
$$
and so the result follows.
 \end{proof}

\begin{rmk}
As in 
\myref{Remark}{7point8},
 our construction provides an integral form of the simple $B_r(\C;N)$-modules labeled by permissible partitions.   
\end{rmk}

\subsection{Murphy basis  over an arbitrary field}
 \label{orthogonal BCA over a field} 

We return to the general situation described at the beginning of \myref{Section}{section Brauer orthogonal}:
$V$ is a vector space of dimension $N$ over an arbitrary field $\Bbbk$,  with $\ch(\Bbbk) \ne 2$, 
with an orthogonal form, and 
for $r \ge 1$,   $\Psi_{r, \Bbbk} : B_{r}(\Bbbk; N) \to \End(V\tensor r)$ is Brauer's homomorphism.     The map $\Psi_{r, \Bbbk}$ actually depends upon the particular orthogonal form, and, in contrast to the symplectic case,  we may not assume in this generality that we are dealing with the standard orthogonal form on $\Bbbk^N$.  

For the modified Murphy basis $\{\widetilde y_{\mfs \mft}\power{\lambda, l} \}$ of $B_r(\ZZ; N)$,  we also write $\widetilde y_{\mfs \mft}\power{\lambda, l}$ instead of $\widetilde y_{\mfs \mft}\power{\lambda, l}  \otimes 1_\Bbbk$ for the corresponding basis element of $B_r(\Bbbk; N) \cong B_r(\ZZ; N) \otimes \Bbbk$.

  \begin{thm}  \label{field cellular basis 2}  Let $V$ be a vector space of dimension $N$ over a field $\Bbbk$ with $\ch(\Bbbk) \ne 2$.   Assume $V$ has an orthogonal form $(\ , \ )$, and let $\Psi_{r, \Bbbk} : B_{r}(\Bbbk; N) \to \End(V\tensor r)$ be Brauer's homomorphism defined using the orthogonal form. Write 
 $A^{\mathrm o}_\f(\Bbbk)$  for the Brauer centralizer algebra $\im(\Psi_{r, \Bbbk})$. 
  
  The algebra 
 $A^{\rm o}_\f(\Bbbk)$ is a cellular algebra over $\Bbbk$ with basis 
$$\mathbb A^{\rm o}_\f(\Bbbk) = \leftbrace \Psi_{r, \Bbbk}(\widetilde y_{\mfs \mft}\power{\lambda, l}  )\mid   
(\lambda, l) \in \widehat B^{\rm o}_{\f, \perm} \text{ and }  \mfs, \mft \in \Std^{\mathrm o}_{\f,  \perm}(\lambda, l)
 \rightbrace.
$$
with 
the involution $*$  determined by $E_i^* = E_i$ and $S_i^* = S_i$ 
and 
the partially ordered set $(\widehat B^{\rm o}_{\f,   \perm},  \coldomeq)$.  
The ideal  $\ker(\Psi_{r, \Bbbk})\subseteq  B_\f(\Bbbk;  N)$ has  basis  
$$ \kappa_{r} = 
\leftbrace  \widetilde y_{\mfs \mft}\power{\lambda, l}  
\mid   
(\lambda, l) \in \widehat B_{r} \text{ and }  \mfs \text{ or } \mft  \text{ not  $N$--permissible}
 \rightbrace.
$$  
Moreover,    for   $r > N$,  $\ker(\Psi_{r, \Bbbk})$ is the ideal generated by the set 
 $ \{\mathfrak{d}_{a, b} \mid  a+ b = N+1 \}.
 $
  For $r \le N$, $\ker(\Psi_{r, \Bbbk}) = 0$. 
\end{thm}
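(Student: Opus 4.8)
The plan is to deduce this from the integral statement \myref{Theorem}{theorem integral orthogonal BCA} by base change to $\Bbbk$, following the pattern of the proof of \myref{Theorem}{field cellular basis 1}, but with extra care because the orthogonal form on $V$ need not be the standard one. Since $B_\f(\Bbbk; N)\cong B_\f(\ZZ; N)\otimes_\ZZ\Bbbk$, the modified dual Murphy basis $\{\widetilde y^{(\lambda,l)}_{\mfs\mft}\}$ of $B_\f(\ZZ; N)$ constructed in \myref{Section}{orthogonal integral BCA} specializes to a cellular basis of $B_\f(\Bbbk; N)$. The first step is to check that \myref{Proposition}{orthogonal fund theorem 1} holds for an \emph{arbitrary} nondegenerate symmetric form: under $\eta\circ A\inv$, the operator $\Psi_{\f,\Bbbk}(\mathfrak d_{a,b})$ with $a+b = N+1$ corresponds to the multilinear functional $(w_1,\dots,w_{2\f})\mapsto\pm\det((w_i,w_j))_{i\in S,\, j\in S'}$, and the $(N+1)\times(N+1)$ matrix here is singular for the purely dimensional reason that the $N+1$ vectors $(w_j)_{j\in S'}$ are linearly dependent in the $N$-dimensional space $V$; no property of the form beyond nondegeneracy is used. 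Hence $\Psi_{\f,\Bbbk}$ annihilates each $\mathfrak d_{a,b}$ with $a+b = N+1$, and so the ideal they generate in $B_\f(\Bbbk; N)$; by \myref{Theorem}{theorem integral orthogonal BCA} this ideal contains every element of $\kappa_\f$, so $\Psi_{\f,\Bbbk}(\kappa_\f) = 0$ and $\mathbb A^{\mathrm o}_\f(\Bbbk)$ spans $A^{\mathrm o}_\f(\Bbbk)$.

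The second step is a dimension count establishing that $\mathbb A^{\mathrm o}_\f(\Bbbk)$ is in fact a basis. By flatness, $\dim_\Bbbk A^{\mathrm o}_\f(\Bbbk) = \dim_{\overline\Bbbk}\im(\Psi_{\f,\Bbbk}\otimes 1_{\overline\Bbbk})$, and $\Psi_{\f,\Bbbk}\otimes 1_{\overline\Bbbk}$ is Brauer's homomorphism for the form $(\ ,\ )\otimes\overline\Bbbk$ on $V\otimes\overline\Bbbk$. As $\overline\Bbbk$ is quadratically closed (and has characteristic $\ne 2$), that form is isometric to the standard form on $\overline\Bbbk^N$; conjugating by $g^{\otimes\f}$ for an isometry $g$ identifies $\im(\Psi_{\f,\Bbbk}\otimes 1_{\overline\Bbbk})$, as an algebra with involution, with $\im(\Psi^{\mathrm{std}}_{\f,\overline\Bbbk})$. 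Since $\overline\Bbbk$ is infinite, quadratically closed and of characteristic $\ne 2$, \myref{Theorem}{orthogonal SW duality}, part~(2), together with \cite[Theorem 3.4, Corollary 3.5]{MR951511}, gives $\dim_{\overline\Bbbk}\im(\Psi^{\mathrm{std}}_{\f,\overline\Bbbk}) = \dim_\C A^{\mathrm o}_\f(\C) = \sum_{(\lambda,l)\in\widehat B^{\rm o}_{\f,\perm}}(\sharp\Std^{\mathrm o}_{\f,\perm}(\lambda,l))^2 = \sharp\mathbb A^{\mathrm o}_\f(\Bbbk)$. With the spanning statement this forces $\mathbb A^{\mathrm o}_\f(\Bbbk)$ to be a basis; then $\dim_\Bbbk\ker(\Psi_{\f,\Bbbk}) = \dim_\Bbbk B_\f(\Bbbk; N) - \sharp\mathbb A^{\mathrm o}_\f(\Bbbk) = \sharp\kappa_\f$, and since the elements of $\kappa_\f$ are part of the basis $\{\widetilde y^{(\lambda,l)}_{\mfs\mft}\}$ of $B_\f(\Bbbk; N)$ and lie in $\ker(\Psi_{\f,\Bbbk})$, they form a basis of it.

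The remaining assertions then follow formally. The involution $*$ with $E_i^* = E_i$ and $S_i^* = S_i$ is well defined on $A^{\mathrm o}_\f(\Bbbk)$ because $\Psi_{\f,\Bbbk}$ respects involutions, so $\ker(\Psi_{\f,\Bbbk})$ is $*$-invariant; and cellularity of $\mathbb A^{\mathrm o}_\f(\Bbbk)$ with poset $(\widehat B^{\rm o}_{\f,\perm},\coldomeq)$ follows exactly as in the proof of \myref{Theorem}{good paths basis theorem}, part~\ref{good paths 2}, by applying $\Psi_{\f,\Bbbk}$ to the cellular relations for the modified dual Murphy basis of $B_\f(\Bbbk; N)$, discarding the terms indexed by impermissible paths, and using that $\Psi_{\f,\Bbbk}$ carries $B_\f(\Bbbk; N)^{\coldom(\lambda,l)}$ onto $A^{\mathrm o}_\f(\Bbbk)^{\coldom(\lambda,l)}$. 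For the generator statement with $\f > N$: the ideal $\langle\,\mathfrak d_{a,b}\mid a+b = N+1\,\rangle$ lies in $\ker(\Psi_{\f,\Bbbk})$ by the first step, while every element of the basis $\kappa_\f$ of $\ker(\Psi_{\f,\Bbbk})$ lies in it by \myref{Theorem}{theorem integral orthogonal BCA}, so the two coincide; for $\f\le N$, every path on $\widehat B$ of length $\le N$ is $N$-permissible because $\lambda'_1+\lambda'_2\le|\lambda|$ at every vertex of such a path, so $\kappa_\f = \varnothing$ and $\ker(\Psi_{\f,\Bbbk}) = 0$.

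I expect the second step to be the main obstacle. Unlike \myref{Theorem}{symplectic SW duality}, part~(2), which gives constancy of dimension over all infinite fields, \myref{Theorem}{orthogonal SW duality}, part~(2), only does so over infinite \emph{quadratically closed} fields of characteristic $\ne 2$; this forces the passage to $\overline\Bbbk$, where one must additionally contend with the fact that $(\ ,\ )\otimes\overline\Bbbk$ is a priori not the standard form. The isometry-and-conjugation device reconciling the two forms over $\overline\Bbbk$, together with the verification that \myref{Proposition}{orthogonal fund theorem 1} is genuinely independent of the choice of orthogonal form, are the points requiring care; the rest is a transcription of the symplectic argument.
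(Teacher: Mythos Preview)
Your proposal is correct and follows essentially the same approach as the paper.  The only organizational difference is that the paper packages the passage to $\overline\Bbbk$ via the commutative square of \myref{Lemma}{lemma specialization cd} and the injectivity of the induced map $\eta$ (\myref{Remark}{remark injectivity of specialization}): from $\eta$ injective it reads off both $\kappa_r\subseteq\ker(\Psi_{r,\Bbbk})$ (since $\eta\circ\Psi_{r,\Bbbk}=\Psi_{r,\overline\Bbbk}\circ(\,\cdot\,\otimes 1_{\overline\Bbbk})$ vanishes on $\kappa_r$) and the linear independence of $\mathbb A^{\rm o}_r(\Bbbk)$ (since $\eta(\mathbb A^{\rm o}_r(\Bbbk))=\mathbb A^{\rm o}_r(\overline\Bbbk)$), whereas you establish the first directly by noting \myref{Proposition}{orthogonal fund theorem 1} is form-independent and the second by a flatness-plus-isometry dimension count; these are equivalent routes to the same conclusion.
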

\begin{proof}
Assume first that $\Bbbk$ is infinite and quadratically closed.   In this case, we may assume that $V = \Bbbk^N$ with the standard orthogonal form, and moreover, we have the commutative diagram \eqref{orthogonal CD}.   Now we can argue exactly as in the proof of \myref{Theorem}{field cellular basis 1}, using the constancy of dimension of the orthogonal Brauer centralizer algebra from \myref{Theorem}{orthogonal SW duality}, to obtain the desired conclusions. 

Now consider the general case. Let $\overline \Bbbk$ be the algebraic closure of $\Bbbk$.  Extend the orthogonal form
to $V_{\overline \Bbbk} = V \otimes_\Bbbk \overline \Bbbk$ by $(v \otimes 1_{\overline \Bbbk},  w \otimes 1_{\overline \Bbbk}) = 
(v, w) \in \Bbbk \subset \overline \Bbbk$.   Let 
$\Psi_{r, \overline \Bbbk} : B_{r}(\overline \Bbbk; N) \to \End(V_{\overline \Bbbk}\tensor r)$ be the corresponding Brauer homomorphism.
It is easy to check that we are again in the situation of 
 \myref{Lemma}{lemma specialization cd}, and we 
have a commutative diagram:
\begin{equation}  \label{orthogonal CD 2}
\begin{minipage}{5cm}
 \begin{tikzpicture}
 \draw (0,0) node {$B_r(\Bbbk; N)$};\draw (3,0) node {$A_r^{\rm o}(\Bbbk)$};
\draw[->](1.1,0) to node [above] {\scalefont{0.7} $\Psi_{r, \Bbbk}$} (2.4,0);
\draw[->](0,-0.3) to node [right] {\scalefont{0.8}$\otimes 1_{\overline\Bbbk}$} (0,-1.2);
\draw[->](3,-0.3) to node [right] {\scalefont{0.8}$\eta$} (3,-1.2);
\draw[->](1.1,-1.5) to node [below] {\scalefont{0.7} $\Psi_{r, \overline\Bbbk}$} (2.4,-1.5);
\draw (0,-1.5) node {$B_r(\overline{\Bbbk}; N)$};\draw (3,-1.5) node {$ A_r^{\mathrm o}(\overline\Bbbk )$};
\end{tikzpicture}
\end{minipage} .
\end{equation}
Moreover, from the first paragraph of the proof, we know that $\mathbb A^{\rm o}_\f(\overline\Bbbk)$ is 
a $\overline \Bbbk$--basis of $A^{\rm o}_\f(\overline\Bbbk)$.
The map $\eta$ in \eqref{orthogonal CD 2} is injective,  by \myref{Remark}{remark injectivity of specialization}. 
If $y \in \kappa_r \subset B_r(\Bbbk; N)$,  then $0 = \Psi_{r, \overline\Bbbk}(y \otimes 1_{\overline \Bbbk}) =
\eta(\Psi_{r, \Bbbk}(y))$.  Since $\eta$ is injective, it follows that $\kappa_r \subseteq \ker(\Psi_{r, \Bbbk})$, and from this it follows that $\mathbb A^{\rm o}_\f(\Bbbk)$ spans $ A^{\rm o}_\f(\Bbbk)$.   But $\mathbb A^{\rm o}_\f(\Bbbk)$ is linearly independent over $\Bbbk$, because $\eta(\mathbb A^{\rm o}_\f(\Bbbk)) = \mathbb A^{\rm o}_\f(\overline\Bbbk)$ is linearly independent over $\overline \Bbbk$.   Now we conclude as in the proof of \myref{Theorem}{good paths basis theorem}  that
$\kappa_r$ is a basis of $\ker(\Psi_{r, \Bbbk})$ and that $\mathbb A^{\rm o}_\f(\Bbbk)$ is a cellular basis of 
 $A^{\rm o}_\f(\Bbbk)$.  To finish, it suffices to observe that $\kappa_r$ is contained in the ideal  generated by the elements 
 $\mathfrak{d}_{a, b}$, and this follows from  \myref{Theorem}{theorem integral orthogonal BCA}. 
\end{proof}  

 \begin{rmk}
Over  fields of characteristic not equal to 2, it is shown in \cite{MR2979865,MR2946824} that 
$\ker(\Psi_{r, \Bbbk})$ 
 is actually generated by the single element
   $ \mathfrak{d}_{ \lceil N/2 \rceil,   \lfloor N/2 \rfloor} $.
\end{rmk}

\begin{rmk}  
As in the symplectic case, 
\myref{Theorem}{field cellular basis 2}
extends the constancy of dimension  of the Brauer centralizer algebras 
$A^{\mathrm o}_\f(\Bbbk)$
and of $\ker(\Psi_{r, \Bbbk})$ from \myref{Theorem}{orthogonal SW duality} to all fields  $\Bbbk$ of characteristic different from $2$, 
and, moreover, to all orthogonal forms  on a finite dimensional $\Bbbk$--vector space.
\ignore{Moreover, as in the symplectic case, the Brauer centralizer algebra
$A^{\mathrm o}_\f(\Bbbk)$  acting on $V \tensor r$ is a specialization of the integral Brauer centralizer algebra, 
$A^{\mathrm o}_r(\Bbbk) \cong A^{\mathrm o}_r(\ZZ) \otimes_\ZZ \Bbbk$.}
\end{rmk}

We observe that 
the isomorphism type of  the Brauer centralizer algebra $A^{\mathrm o}_r(\Bbbk)$ is  independent of the choice of the orthogonal bilinear form;   for example, when $\Bbbk = \RR$, the field of real numbers, 
the isomorphism type of $A^{\mathrm o}_r(\RR)$  does not depend on the signature of the form:

\begin{cor}  Adopt the hypotheses of \myref{Theorem}{field cellular basis 2}.   The Brauer centralizer algebra  $A^{\mathrm o}_\f(\Bbbk)$ acting on $V^{\otimes r}$ is the specialization of 
the integral Brauer centralizer algebra $A^{\mathrm o}_\f(\ZZ)$,   i.e.
$A^{\mathrm o}_r(\Bbbk)  \cong  A^{\mathrm o}_r(\ZZ) \otimes_\ZZ \Bbbk$.  Consequently, the isomorphism type of $A^{\mathrm o}_r(\Bbbk)$ is independent of the choice of the orthogonal form on $V$.
\end{cor}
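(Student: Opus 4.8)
The plan is to mimic exactly the proof of \myref{Corollary}{corollary field cellular basis 1}, which handles the symplectic case; the only issue is to check that all the ingredients used there remain available in the orthogonal setting established by \myref{Theorem}{field cellular basis 2}. First I would recall the general principle: if $A$ is an $R$-algebra which is free as an $R$-module with basis $\{b_i\}$ and structure constants $r_{ij}^k \in R$, then for any integral domain homomorphism $R \to S$ the specialization $A \otimes_R S$ is characterized (up to isomorphism of $S$-algebras) as the free $S$-module on $\{b_i \otimes 1_S\}$ with structure constants $r_{ij}^k \otimes 1_S$. So it suffices to exhibit a $\ZZ$-basis of $A^{\mathrm o}_r(\ZZ)$ whose structure constants, reduced modulo the map $\ZZ \to \Bbbk$, give the structure constants of a $\Bbbk$-basis of $A^{\mathrm o}_r(\Bbbk)$.

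The natural candidate bases are the cellular bases produced by \myref{Theorem}{theorem integral orthogonal BCA} and \myref{Theorem}{field cellular basis 2}, namely $\mathbb A^{\mathrm o}_r(\ZZ) = \{\Psi_{r, \ZZ}(\widetilde y_{\mfs \mft}\power{\lambda, l})\}$ indexed by permissible $(\lambda, l)$ and permissible paths $\mfs, \mft$, and $\mathbb A^{\mathrm o}_r(\Bbbk) = \{\Psi_{r, \Bbbk}(\widetilde y_{\mfs \mft}\power{\lambda, l})\}$ with the same index set. The key compatibility is that the modified dual Murphy basis $\{\widetilde y_{\mfs \mft}\power{\lambda, l}\}$ is defined over $\ZZ[\bolddelta]$ (indeed over $\ZZ$ after specializing $\bolddelta \mapsto N$), so that in $B_r(\Bbbk; N) \cong B_r(\ZZ; N) \otimes_\ZZ \Bbbk$ one has $\widetilde y_{\mfs \mft}\power{\lambda, l} \otimes 1_\Bbbk$ equal to the corresponding element, and the product expansion in $B_r(\ZZ; N)$,
$$
\widetilde y\power{\lambda, l}_{\mfs \mft}\, \widetilde y\power{\mu, m}_{\mfu \mfv} = \sum r(\nu, n, \alpha, \beta)\, \widetilde y\power{\nu, n}_{\alpha, \beta},
$$
reduces modulo $\ZZ \to \Bbbk$ to the corresponding expansion in $B_r(\Bbbk; N)$. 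Applying $\Psi_{r, \ZZ}$ (respectively $\Psi_{r, \Bbbk}$) kills precisely the non-permissible basis elements, by the kernel descriptions in the two theorems, so the surviving sum is restricted to permissible $(\nu, n), \alpha, \beta$ in both cases, and the structure constants on the centralizer algebras match under reduction. This gives $A^{\mathrm o}_r(\Bbbk) \cong A^{\mathrm o}_r(\ZZ) \otimes_\ZZ \Bbbk$ verbatim as in \myref{Corollary}{corollary field cellular basis 1}. For the final sentence, once the isomorphism type of $A^{\mathrm o}_r(\Bbbk)$ is exhibited as a specialization of a single ring $A^{\mathrm o}_r(\ZZ)$ depending only on $N$ and $r$ — with no reference to the particular orthogonal form on $V$ — independence of the choice of form (e.g. the signature when $\Bbbk = \RR$) is immediate.

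The main subtlety, and the one place where the orthogonal case genuinely differs from the symplectic case, is that over a general field $\Bbbk$ we cannot assume $V = \Bbbk^N$ with the standard form, so $\Psi_{r, \Bbbk}$ depends a priori on the chosen form; the diagram \eqref{symplectic CD} has no direct orthogonal analogue over $\Bbbk$. However, this obstacle has already been dealt with inside the proof of \myref{Theorem}{field cellular basis 2}: passing to the algebraic closure $\overline\Bbbk$ via \eqref{orthogonal CD 2} and the injectivity of $\eta$ from \myref{Remark}{remark injectivity of specialization}, one shows $\mathbb A^{\mathrm o}_r(\Bbbk)$ is a $\Bbbk$-basis of $A^{\mathrm o}_r(\Bbbk)$ with $\kappa_r$ a basis of the kernel, regardless of the form. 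So the structure-constant argument above goes through with the given $\Psi_{r, \Bbbk}$, and no further work on the classification of forms is needed — I would simply cite \myref{Theorem}{field cellular basis 2} for the fact that $\mathbb A^{\mathrm o}_r(\Bbbk)$ is a basis and that the non-permissible $\widetilde y$'s lie in $\ker(\Psi_{r,\Bbbk})$, then run the specialization bookkeeping.
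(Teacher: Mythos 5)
Your proposal is correct and follows exactly the route the paper takes: the paper's proof of this corollary simply says it is the same as that of \myref{Corollary}{corollary field cellular basis 1}, i.e.\ the structure-constant specialization argument you spell out, with the final independence statement following because $A^{\mathrm o}_r(\ZZ)\otimes_\ZZ\Bbbk$ makes no reference to the form. Your additional remark that the form-dependence of $\Psi_{r,\Bbbk}$ has already been absorbed into \myref{Theorem}{field cellular basis 2} is exactly the right observation and is implicit in the paper's appeal to that theorem.
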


\begin{proof}  The proof is exactly the same as that of \myref{Corollary}{corollary field cellular basis 1}.  The final statement holds since $A^{\mathrm o}_r(\ZZ) \otimes_\ZZ \Bbbk$ doesn't depend on the choice of the bilinear form.
\end{proof}

\subsection{Jucys--Murphy elements and seminormal representations} 
\label{orthogonal Brauer seminormal}
The discussion in \myref{Section}{symplectic Brauer seminormal} before \myref{Lemma}{SN for symplectic Brauer} carries over to the orthogonal case with small changes.    However,  the condition of 
\myref{Lemma}{suff cond for SN} fails for even integer values of the parameter, so a different argument is needed to verify condition \eqref{assumption SN}.   This is done in   \cite{EG:2017}. 

\begin{eg}  Let $N = 2M$ be a positive even integer.   Let $\lambda = (M, M-1)'$,  $\mu^+ = (M+1, M-1)'$ and $\mu^- = (M-1, M-1)'$.  Then all three Young diagrams are $2M$--permissible, and 
$\kappa((\lambda, 0) \to (\mu^+, 0)) \equiv  \kappa((\lambda, 0) \to (\mu^-, 1)) \mod (\bolddelta - 2M)$.
\end{eg}


\nottoggle{arxiv}{}{

\appendix
\section{The symmetric group algebras and the Hecke algebras on tensor space}\label{symgroup}
 In this appendix, we review results of H\"arterich~\cite{MR1680384} regarding the action of the symmetric group algebra and the Hecke algebra on ordinary tensor space.
These results are a model for  our results  in  this paper
regarding the action of the Brauer algebras and walled Brauer algebras on tensor space.

\subsection{The action of the symmetric group on tensor space}
 Let $V$ be an $N$--dimensional complex vector space.  For $\f \ge 1$, the symmetric group 
$\mathfrak S_\f$
 acts on $V^{\otimes \f}$ by place permutations,   
 $$
( w_1  \otimes \dots \otimes 
 w_i \otimes w_{i+1} \otimes \dots \otimes w_\f
 )  \cdot s_i = 
 (w_1  \otimes \dots \otimes 
 w_{i+1} \otimes w_{i } \otimes \dots \otimes w_\f)
  $$
  Let $\sigma$ denote the resulting representation 
 $\sigma : \C \mathfrak{S}_\f \to  \End_\C(V^{\otimes \f})$.  We consider the restriction of $\sigma$ to $\ZZ \mathfrak S_\f \subset  \C \mathfrak S_\f$.

The following result is due to H\"arterich~\cite{MR1680384}.   
Actually,  H\"arterich proved this result more generally for 
 the action of the Hecke algebras $H_\f(\boldq)$ on tensor space.  We will provide a simple proof here   for the case of the symmetric group algebras, and then  indicate how this can  be quantized.  
 \def\n{N}
For a Young diagram $\lambda$,  let $\ell(\lambda) = \lambda'_1$ denote the length of $\lambda$, i.e. the number of rows.  Let $\widehat{\mathfrak S}_{\f, \n}$ denote the set of $\lambda \in \widehat{\mathfrak{S}}_\f$ with $\ell(\lambda) \le \n$.  

We need the following elementary remark about cellular algebras in general.

\begin{rmk} \label{quotient by order ideal}
 If $A$ is a cellular algebra with cell datum given as in 
 \myref{Definition}{c-d},
 and if $\Gamma$ is an order ideal in $\widehat A$,  then $A/A^\Gamma$ is a cellular algebra with respect to the natural involution $*$  inherited from $A$,  the partially ordered set $(\widehat A \setminus \Gamma,  \unrhd)$, and the cellular basis
$$
\{ c_{\sts \stt}^\lambda  + A^\Gamma \suchthat  \lambda \in \widehat A \setminus \Gamma \text{ and }  \sts, \stt \in \Std(\lambda)\}.
$$
\end{rmk}

\begin{thm}[\cite{MR1680384}]  \label{Harterich for S n}
The algebra $\sigma(\ZZ\mathfrak{S}_\f)$ is has cellular basis
$$
\{ \sigma (y^\lambda_{\sts\stt} ) \mid \lambda \in \widehat{ \mathfrak{S} }_{\f}  \text{ with } \ell(\lambda) \le \n \text{ and }  \sts, \stt \in \Std_{\f}(\lambda)\}, 
 $$
 with involution induced from the involution $\ast$  on $\ZZ\mathfrak{S}_\f$
and the partially ordered set $( \widehat{ \mathfrak{S} }_{\f, \n},  \coldomeq)$.  
The kernel of $\sigma$ in $\ZZ \mathfrak S_\f$  has $\ZZ$-basis
 $
\{ y^\lambda_{\sts\stt}   \mid  
\ell(\lambda) > \n \text{ and }   \sts, \stt \in \Std_{\f}(\lambda)
\}.  
 $
 In particular, $\sigma$ is faithful if $\f \le \n$.   For $\f >\n$, 
  $\ker(\sigma)$  is the ideal generated by the single element 
$y_{(1^{\n+1})}\in \ZZ\mathfrak{S}_\f$.  
\end{thm}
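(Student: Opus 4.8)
The plan is to realize $\sigma(\ZZ\mathfrak S_r)$ as a quotient tower in the sense of Definition \ref{defn quotient tower} and then simply quote Theorem \ref{good paths basis theorem}. Concretely, take the sequence of symmetric group algebras $(\ZZ\mathfrak S_k)_{k\ge 0}$ over $R = \ZZ$ (no specialization of the ground ring is needed, so $S = R = \ZZ$ and $\BbbK = \Q$), endowed with the \emph{dual} Murphy cellular basis $\{y_{\sts\stt}^\lambda\}$ from Theorem \ref{thm Murphy basis of S n}; this sequence satisfies axioms \eqref{diagram 1}--\eqref{diagram 6} by the discussion at the end of Section \ref{section Murphy symm} (the dual Murphy data is obtained from the Murphy data by conjugating and applying $\#$, so the branching diagram is Young's lattice with the column dominance order $\coldomeq$, and the cell generators are the $y_\lambda$). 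The maps $\phi_k = \sigma : \Q\mathfrak S_k \to \sigma(\Q\mathfrak S_k)$ are surjective and compatible with the inclusions, since place permutation actions stabilize under adding a tensor factor.

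Next I would check the three quotient axioms. For \eqref{quotient axiom 1}: declare $\lambda\in\widehat{\mathfrak S}_k$ permissible iff $\ell(\lambda)\le N$; then $\varnothing$ is permissible, and a permissible $\lambda$ always has a permissible partition obtained by adding a box (add to the first row) and, if nonempty, one obtained by removing a box, so the permissibility-of-neighbours condition holds. For \eqref{quotient axiom 2}: if $\mft$ is a non-permissible path and $\mu = \mft(k)$ is the first impermissible vertex, then $\ell(\mu) = N+1$ (since $\mu$ is obtained from a permissible partition by adding one box). I need to exhibit $y_\mu = \mathfrak b_\mu - \mathfrak b'_\mu$ with $\mathfrak b_\mu\in\ker(\sigma)$ and $\mathfrak b'_\mu\in y_\mu\Q\mathfrak S_k \cap (\ZZ\mathfrak S_k)^{\coldom\mu}$. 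Here the natural choice, mirroring the orthogonal Brauer case of Section \ref{orthogonal integral BCA}, is to slice off the first column: write $\mathfrak b_\mu = A_{(\mu'_1)}\otimes y_{\mu\setminus(\text{first column})}$ (the row antisymmetrizer on the $N+1$ nodes of the first column, tensored with the dual Murphy generator of the rest) and $\mathfrak b'_\mu$ the complementary ``lower order'' piece, using a factorization lemma analogous to Lemma \ref{factorization of d'}; the point is that an antisymmetrizer on $N+1$ letters acting on $V^{\otimes(N+1)}$ with $\dim V = N$ is zero, giving $\mathfrak b_\mu\in\ker(\sigma)$. For \eqref{quotient axiom 3}: $\dim_\Q \sigma(\Q\mathfrak S_r) = \sum_{\ell(\lambda)\le N} (\dim S^\lambda)^2 = \sum_{\lambda\text{ permissible}} (\sharp\Std_r(\lambda))^2$, which is classical Schur--Weyl duality over $\Q$ (equivalently $\C$).

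With the axioms verified, Theorem \ref{good paths basis theorem} immediately yields: $\sigma(\ZZ\mathfrak S_r)$ is cellular with the stated basis indexed by pairs of permissible standard tableaux and poset $(\widehat{\mathfrak S}_{r,N},\coldomeq)$; $\ker(\sigma)$ has $\ZZ$-basis the $y_{\sts\stt}^\lambda$ with at least one of $\sts,\stt$ impermissible; and $\ker(\sigma)$ is the ideal generated by the $\mathfrak b_\mu$ over marginal points $\mu$. Since every marginal point has $\ell(\mu) = N+1$ and the corresponding $\mathfrak b_\mu = A_\mu\otimes(\text{stuff})$ lies in the ideal generated by $A_{(1^{N+1})} = y_{(1^{N+1})}$ (and conversely $y_{(1^{N+1})}$ itself is $\mathfrak b_{(1^{N+1})}$ for the marginal point $(1^{N+1})$ at level $N+1$), this single element generates $\ker(\sigma)$ when $r > N$; for $r\le N$ every path is permissible so $\ker(\sigma) = 0$.

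The main obstacle I anticipate is pinning down the elements $\mathfrak b_\mu, \mathfrak b'_\mu$ precisely and proving the required factorization $\mathfrak b'_\mu = y_\mu\,\beta'$ with $\beta'\in\Q\mathfrak S_k$, together with the containment $\mathfrak b'_\mu\in(\ZZ\mathfrak S_k)^{\coldom\mu}$: this needs the analogue of Lemma \ref{Brauer block diagonal transition} for the (dual) Murphy basis of the symmetric group, i.e. that a product $A_\mu\beta'$, viewed in the dual Murphy basis, only involves terms labelled by $\nu\coldomeq\mu$ (equivalently $\nu'\unrhd\mu'$, i.e. $\nu$ has a column at least as long as the first column of $\mu$). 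That statement is essentially the defining straightening property of the (dual) Murphy/Specht construction, but it should be spelled out or given a reference to \cite{MR1327362}. The rest is bookkeeping and a citation to Schur--Weyl duality for the dimension count.
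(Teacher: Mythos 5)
Your proposal is essentially correct, but it runs the full quotient-tower machinery of Section~\ref{section quotient framework}, whereas the paper deliberately avoids this: its proof of Theorem~\ref{Harterich for S n} opens by saying one \emph{could} invoke that framework but that the situation is simpler, and then argues directly. The paper's shortcut rests on two observations you do not make. First, if $\mu'_1 = N+1$ then $y_{(1^{N+1})}$ is literally a tensor factor of $y_\mu$, so $y_\mu$ itself lies in $\ker(\sigma)$; in the language of \eqref{quotient axiom 2} one takes $\mathfrak b_\mu = y_\mu$ and $\mathfrak b'_\mu = 0$. Your anticipated ``main obstacle'' (producing $\mathfrak b'_\mu$, a factorization $\mathfrak b'_\mu = y_\mu\beta'$, and a containment in $(\ZZ\mathfrak S_k)^{\coldom\mu}$ via an analogue of Lemma~\ref{Brauer block diagonal transition}) is therefore vacuous: the symmetric group has no analogue of the positive-corank diagrams that produce $\mathfrak d'_{a,b}$ in the Brauer case, and the element you propose, $A_{(\mu'_1)}\otimes y_{\mu\setminus(\text{first column})}$, is already equal to $y_\mu$. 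Second, since paths in Young's lattice only add boxes, every path to a shape with $\ell(\lambda)\le N$ is automatically permissible; hence $\Gamma = \{\lambda : \ell(\lambda) > N\}$ is an order ideal in $(\widehat{\mathfrak S}_r,\coldomeq)$, the kernel is exactly the standard cellular ideal $(\ZZ\mathfrak S_r)^\Gamma$ spanned by \emph{unmodified} basis elements $y^\lambda_{\sts\stt}$ with $\ell(\lambda)>N$, and cellularity of the quotient follows at once from Remark~\ref{quotient by order ideal} with no need for the modified basis $\widetilde y^\lambda_{\sts\stt}$ or the triangularity Lemmas~\ref{path basis dominance lemma}--\ref{good paths lemma 1}.

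If you insist on your route, two loose ends must still be tied to recover the theorem \emph{as stated}: (i) Theorem~\ref{good paths basis theorem} describes the kernel basis as $\{\widetilde y^\lambda_{\sts\stt} : \sts \text{ or } \stt \text{ impermissible}\}$, so you need both that $\widetilde y^\lambda_{\sts\stt} = y^\lambda_{\sts\stt}$ (which follows from $\mathfrak b'_\mu=0$, whence $\beta_\mu = 1$ and $a_\stt = d_\stt$ in the construction preceding Theorem~\ref{basis theorem}) and that ``$\sts$ or $\stt$ impermissible'' is equivalent to ``$\ell(\lambda)>N$''; (ii) the dimension count for \eqref{quotient axiom 3} should be over $\overline{\Q}$ or $\C$ as in the paper. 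With these additions your argument goes through; what the paper's direct argument buys is the elimination of all of this bookkeeping, at the cost of not illustrating the general machinery.
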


\begin{proof}  We could refer  to the framework of  
\myref{Section}{section quotient framework},
 but the situation here is simpler and it is more transparent to proceed directly.

 First we will show that if $\ell(\lambda) >\n$, then  all the dual Murphy basis elements
$y^\lambda_{\sts\stt}$ are in the kernel of $\sigma$.   Let  $\lambda$ be a Young diagram with $\ell(\lambda) > \n$ and let $\stt \in \Std_\f(\lambda)$.     Then there exists a $k \le \f$  such that 
$\mu = \stt(k)$ satisfies $\mu'_1 = \n+1$.    

Note that $y_{(1^{\n+1})}$ is the antisymmetrizer
$\sum_{w \in \mathfrak S_{N+1}} \sgn(w) w$.  Fix a basis $\{v_i\}$ of $V$.   
Then for any basis element
$v_{[i]} = v_{i_1} \otimes v_{i_2} \otimes \cdots \otimes v_{i_{N+1}}$ of $V^{\otimes N+1}$,  $v_{[i]}$ has repeated tensor factors, so $v_{[i]} \sigma(y_{(1^{\n+1})}) = 0$.   Thus $y_{(1^{\n+1})} \in \ker(\sigma)$.
 Since $y_{(1^{\n+1})}$ is a factor of $y_\mu$,  we have $y_\mu \in \ker(\sigma)$.   Let $\stt_1 = \stt_{[0,k]}$  and $\stt_2 =   \stt_{[k, \f]}$.   Then $y_\lambda \b_{\stt} = y_\lambda \b_{\stt_2} \b_{\stt_1} =
\v_{\stt_2}^* y_\mu  \b_{\stt_1}$,  using \eqref{abstract branching compatibility for path}, so $y_\lambda \b_{\stt}  \in \ker(\sigma)$.    Hence for all $\sts,\stt\in\Std_\k(\lambda)$,   $y^\lambda_{\sts\stt} =  (\b_\sts)^*  y_\lambda \b_{\stt} \in \ker(\sigma)$, as required. 

Now it follows that 
$$
\mathbb A_\f  = \{ \sigma (y^\lambda_{\sts\stt} ) \mid \lambda \in \widehat{ \mathfrak{S} }_{\f}  \text{ with } \ell(\lambda) \le N \text{ and }  \sts, \stt \in \Std_{\f}(\lambda)\}, 
 $$
 spans $\sigma(\ZZ \mathfrak S_\f)$ over $\ZZ$, and hence spans $\sigma(\C \mathfrak S_\f)$ over $\C$.  But it is known from Schur-Weyl duality that the dimension of  $\sigma(\C \mathfrak S_\f)$ is the cardinality of $\mathbb A_\f$, and consequently $\mathbb A_\f$ is linearly independent, and thus a $\ZZ$-basis of   $\sigma(\ZZ \mathfrak S_\f)$.  
     
 Now it follows, as in the proof of 
 \myref{Theorem}{good paths basis theorem},
 that $\ker(\sigma)$  is spanned by 
 $$
 \kappa_\f = \{ y^\lambda_{\sts\stt}   \mid  
\ell(\lambda) > N \text{ and }   \sts, \stt \in \Std_{\f}(\lambda)
\}, 
 $$
 and hence $\kappa_\f$ is a basis of $\ker(\sigma)$.    Finally,  it was shown above that every element of $\kappa_\f$ is in the ideal generated by $y_{(1^{N+1})}$, and therefore $\ker(\sigma)$ is generated as an ideal by this one element.
 
 Note that $\Gamma = \{ \lambda \in \widehat{\mathfrak S}_\f \suchthat  \ell(\lambda) > N\}$ is an order ideal 
 in $(\widehat{\mathfrak S}_\f, \coldomeq)$, and  
 $\ker(\sigma) = \spn(\kappa_r)$
 is just the corresponding involution--invariant two sided ideal  $(\ZZ \mathfrak S_\f)^\Gamma$.
  Therefore, cellularity of the quotient algebra 
 $(\ZZ \mathfrak S_\f)/\ker(\sigma)$ 
 follows from 
 \myref{Remark}{quotient by order ideal}.
\end{proof}

\subsection{The Hecke algebras}
We now give a brief sketch of how the results from the last section may be 
quantized.  We will 
assume that the reader is familiar  with
the  first properties  of Hecke algebras of symmetric groups (see for example \cite{MR812444}).

Let $S$ be an integral domain and $q\in S$ a unit.  The Hecke algebra $H_\f(S; q)$ is the unital $S$--algebra with generators $T_1, \dots, T_{\f-1}$  satisfying the braid relations and the quadratic relation
$(T_i - q) (T_i +1) = 0$.  For any $S$, the specialization  $H_\f(S; 1)$ is isomorphic to $S \mathfrak S_\f$.    
The generic ground ring for the Hecke algebras is the Laurent polynomial ring $R = \ZZ[\qbold, \qbold\inv]$, where $\qbold$ is an indeterminant.    We will write $H_\f(\qbold)$ for $H_\f(R; \qbold)$.   

The algebra $H_\f(\qbold)$ has an $R$--basis $\{T_w \suchthat w \in \mathfrak S_\f\}$, defined as follows:  if $w = s_{i_1} s_{i_2} \cdots s_{i_l}$ is a reduced expression for $w$ in the usual generators of $\mathfrak S_\f$,  then $T_w = T_{i_1} T_{i_2} \cdots T_{i_l}$,  independent of the reduced expression.   
Define $T_w^* = T_{w\inv}$,  $T_w^\dagger = (-\qbold)^{\ell(w)} T_w\inv$,  and  $T_w^\#  = (-\qbold)^{\ell(w)} T_{w\inv}\inv$.    Thus $\# = * \circ \dagger = \dagger \circ *$.    The operations $*$ and $\dagger$ are algebra involutions and $\#$ is an algebra automorphism.   For the symmetric group algebras, $\#$ agrees with the automorphism previously defined by $w^\# = \sgn(w) w$.

 For   $\lambda\vdash \f$,  let $x_\lambda =  \sum_{w \in \mathfrak S_\lambda} T_w$.
 Let $\stt^\lambda$ be the row reading tableaux of shape $\lambda$.  For any $\lambda$--tableau $\stt$,  there is a unique $w(\stt) \in \mathfrak S_\f$ with $\stt = \stt^\lambda w(\stt)$. 
   For $\lambda$ a partition of $\f$ and $\sts,\stt\in\Std_r(\la)$, let $x^\lambda_{\sts \stt} =  (T_{w(\sts)})^*  x_\lambda T_{w(\stt)}$.  Let $y_\lambda = (x_{\lambda'})^\#$,   and let $y^\lambda_{\sts \stt} =   (x_{\sts', \stt'}^{\lambda'})^\#$.

  \begin{thm}[The Murphy basis and dual Murphy basis, \cite{MR1327362}]  \label{thm Murphy basis of H n}  \mbox{}
  \begin{enumerate}[leftmargin=*,label=(\arabic{*}), font=\normalfont, align=left, leftmargin=*]
  \item
The set \ 
$
\mathcal X = \{  x_{\mfs  \mft}^\lambda  \suchthat  \lambda \in \widehat{\mathfrak S}_\f \text{ and } \sts, \stt  \in \Std_\f(\lambda) \}
$
is a cellular basis of $H_\f(R; \qbold)$,   with respect to the involution $*$ and the partially ordered set
$(\widehat{\mathfrak S}_\f, \unrhd)$.  
\item  The set \ 
$
\mathcal Y =  \{ y_{\mfs  \mft}^\lambda  \suchthat  \lambda \in \widehat{\mathfrak S}_\f \text{ and } \sts, \stt  \in \Std_\f(\lambda)\}$
is a cellular basis of $H_\f(R; \qbold)$,   with respect to the involution $*$ and the partially ordered set
$(\widehat{\mathfrak S}_\f, \coldomeq)$.  
\end{enumerate}
\end{thm}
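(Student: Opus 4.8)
The plan is to deduce this from Murphy's theorem together with the automorphism $\#$. Part (1) is Murphy's cellular basis theorem for the Hecke algebra of type $A$, so I would simply cite \cite{MR1327362}; alternatively, since it is literally the $\qbold$--analogue of \myref{Theorem}{thm Murphy basis of S n}, one may invoke the general framework of \cite{EG:2012} with $\qbold$ in place of $1$. Thus all the real work is in deriving part (2) from part (1).

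The first step is to record that the automorphism $\#$ commutes with the involution $*$. On the basis $\{T_w \suchthat w \in \mathfrak S_\f\}$ one computes directly $T_w^{*\dagger} = (-\qbold)^{\ell(w\inv)} T_{w\inv}\inv = (-\qbold)^{\ell(w)} T_{w\inv}\inv = T_w^{\dagger *}$, using $\ell(w) = \ell(w\inv)$; hence $*$ and $\dagger$ commute, and since $\# = \dagger \circ * = * \circ \dagger$ it follows that $* \circ \# = \# \circ *$ (both equal $\dagger$). The second step is a general observation on cellular algebras: if $(A, *, \widehat A, \unrhd, \Std(\cdot), \mathscr A)$ is a cell datum and $\psi\colon A \to A$ is an algebra automorphism with $\psi \circ * = * \circ \psi$, then $(A, *, \widehat A, \unrhd, \Std(\cdot), \psi(\mathscr A))$ is again a cell datum, with $\psi(c^\lambda_{\sts\stt})$ in the role of $c^\lambda_{\sts\stt}$. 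Indeed $\psi$ is linear and bijective, so $\psi(A^{\rhd\lambda})$ is the span of $\{\psi(c^\mu_{\sts\stt}) \suchthat \mu \rhd \lambda\}$; applying $\psi$ to \eqref{r-act} and using surjectivity of $\psi$ to reparametrise the element $a$ gives the multiplication axiom for the new basis, and applying $\psi$ to the relation $(c^\lambda_{\sts\stt})^* \equiv c^\lambda_{\stt\sts} \bmod A^{\rhd\lambda}$ and using $\psi\circ * = *\circ\psi$ gives the involution axiom.

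The third step is the re-indexing. Applying $\# $ to the cellular basis $\mathcal X$ from part (1) yields a cellular basis $\{ (x^\mu_{\mfu\mfv})^\# \suchthat \mu \in \widehat{\mathfrak S}_\f,\ \mfu,\mfv \in \Std_\f(\mu)\}$ of $H_\f(R;\qbold)$ with respect to $*$ and the poset $(\widehat{\mathfrak S}_\f, \unrhd)$. Now substitute $\mu = \lambda'$, $\mfu = \sts'$, $\mfv = \stt'$; conjugation of partitions and of standard tableaux is a bijection $\widehat{\mathfrak S}_\f \to \widehat{\mathfrak S}_\f$, $\Std_\f(\lambda') \to \Std_\f(\lambda)$, and by definition $y^\lambda_{\sts\stt} = (x^{\lambda'}_{\sts'\stt'})^\#$, so the resulting basis is precisely $\mathcal Y$. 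Under this relabelling the order relation $\mu \unrhd \nu$ on the index set becomes, writing $\mu = \lambda'$ and $\nu = \kappa'$, the relation $\lambda' \unrhd \kappa'$, which is exactly the definition of $\lambda \coldomeq \kappa$ recalled in \myref{Section}{section Murphy symm}. Hence $\mathcal Y$ is a cellular basis of $H_\f(R;\qbold)$ with respect to the involution $*$ and the partially ordered set $(\widehat{\mathfrak S}_\f, \coldomeq)$, as claimed.

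I do not anticipate a serious obstacle: the only points that need genuine care are the short computation that $\#$ commutes with $*$ on the $T_w$--basis, and the verification that pulling back dominance order along conjugation of partitions gives column dominance order — and the latter holds essentially by the definition of $\coldomeq$.
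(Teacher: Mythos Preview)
Your proposal is correct. The paper does not supply its own proof of this theorem---it is stated as a cited result from \cite{MR1327362}---but your argument is precisely the one the paper gives for the analogous symmetric-group statement (\myref{Theorem}{thm Murphy basis of S n}): cite Murphy for part~(1), then obtain part~(2) from the relation $y^\lambda_{\sts\stt} = (x^{\lambda'}_{\sts'\stt'})^{\#}$ together with the fact that $\#$ commutes with $*$.
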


\begin{thm}[\cite{EG:2012}]    The sequence of Hecke algebras $(H_\k(R; \qbold)_{\k \ge 0})$  with either the Murphy cellular bases or the dual Murphy cellular bases satisfies axioms \eqref{diagram 1}--\eqref{diagram 6}.   The branching diagram in each case is Young's lattice.  
The branching factors can be chosen so that the cellular basis obtained from ordered products of branching factors as in 
\myref{Theorem}{theorem abstract Murphy basis} 
 agrees with the Murphy basis (respectively the dual Murphy basis). 
\end{thm}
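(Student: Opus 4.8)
The plan is to deduce this result directly from the general machinery already set up for the symmetric group algebras, together with the fact (established in \cite{EG:2012}) that the pair of towers $(H_\k(R;\qbold))_{\k\ge0}$ and the Hecke-algebra analogue of the reflection construction fit the framework of Section~\ref{subsection: Jones}. For the Hecke algebras, however, there is no Jones basic construction involved: we simply need to exhibit down- and up-branching factors and cell generators satisfying \eqref{diagram 1}--\eqref{diagram 6}, together with the compatibility relation \eqref{abstract branching compatibility}. First I would recall from \cite[Section~4 and Appendix~A]{EG:2012} that for the Murphy cellular basis $\mathcal X$ of $H_\k(R;\qbold)$, with cell generators $x_\lambda=\sum_{w\in\mathfrak S_\lambda}T_w$, one can write down explicit $q$-analogues of the branching factors \eqref{branching factors for S n} (replacing the cyclic permutation $s_{a,i}$ by the corresponding element $T_{s_{a,i}}$ of the Hecke algebra, and the sums by the appropriate $q$-weighted sums), and these satisfy \eqref{abstract branching compatibility} by the argument of \cite[Appendix~A]{EG:2012}. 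This gives axioms \eqref{diagram 1}--\eqref{diagram 6} for the Murphy basis, with branching diagram Young's lattice $\widehat{\mathfrak S}$, exactly as in the group-algebra case.

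Next I would handle the dual Murphy basis by conjugation. Recall $y^\lambda_{\mfs\mft}=(x^{\lambda'}_{\mfs'\mft'})^{\#}$ where $\#$ is the algebra automorphism with $\#=*\circ\dagger$. Since $\#$ is an automorphism of $H_\k(R;\qbold)$ commuting with the inclusions $H_\k\hookrightarrow H_{\k+1}$, it carries the whole Murphy-basis structure (cell datum, cell generators, branching factors) to a new structure; one checks that the involution $*$ is preserved by $\#$-conjugation because $\#$ commutes with $*$ (they are both built from $*$ and $\dagger$), and that the partial order transforms from dominance $\unrhd$ to column dominance $\coldomeq$ via $\lambda\mapsto\lambda'$. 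Thus the dual Murphy cell generators are $y_\lambda=(x_{\lambda'})^{\#}$, the $b$- and $v$-branching factors are the $\#$-conjugates of the (conjugate-indexed) $d$- and $u$-branching factors, and the compatibility relation \eqref{abstract branching compatibility} for these follows by applying $\#$ to the corresponding relation for the Murphy basis. Since axioms \eqref{diagram 1}--\eqref{diagram 6} are preserved under this kind of conjugation (they are statements about the existence of cell generators and branching factors with certain factorization properties, all of which transport along the automorphism), the dual Murphy basis also satisfies \eqref{diagram 1}--\eqref{diagram 6}, with branching diagram again Young's lattice.

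Finally, I would observe that the last sentence of the statement---that the branching factors can be chosen so that the resulting cellular basis of Theorem~\ref{theorem abstract Murphy basis} agrees with Murphy's basis (resp.\ the dual Murphy basis)---is precisely the content of \cite[Corollary~4.8]{EG:2012} in the Hecke-algebra setting: the ordered product $d_\mft$ of the explicit branching factors along a standard tableau $\mft$ equals (up to the conventions fixing $\stt^\lambda$) the element $T_{w(\mft)}$, so that $d_\mfs^*x_\lambda d_\mft=x^\lambda_{\mfs\mft}$. The dual statement follows by applying $\#$.

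The main obstacle, I expect, is verifying the compatibility relation \eqref{abstract branching compatibility}, $x_\mu\,d_{\lambda\to\mu}=u_{\lambda\to\mu}^*\,x_\lambda$, for the $q$-deformed branching factors: this is the one genuinely computational point, and it requires the careful choice of the $q$-weights in the up-branching factor $u_{\lambda\to\mu}$ so that the identity holds exactly (not merely up to lower terms). For the symmetric group this is carried out in \cite[Appendix~A]{EG:2012}; for the Hecke algebra the same argument goes through, using the standard rewriting rules for $x_\lambda T_{s_{a,i}}$ in terms of coset representatives of $\mathfrak S_\lambda$ in $\mathfrak S_{\lambda\cup\{a\}}$, but one must track the powers of $\qbold$ carefully. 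Once this is in hand, everything else is formal: restriction coherence and split semisimplicity of $H_\k(\FF;\qbold)$ over $\FF=\Q(\qbold)$ are classical, and the transport of all structure along $\#$ for the dual basis is automatic.
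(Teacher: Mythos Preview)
Your proposal is correct and matches the paper's approach: the paper itself gives no proof here, simply stating the result with the citation \cite{EG:2012}, and your sketch accurately summarizes the content of that reference (explicit $q$-analogues of the branching factors verified in \cite[Section~4 and Appendix~A]{EG:2012}, transport to the dual basis via the automorphism~$\#$, and \cite[Corollary~4.8]{EG:2012} for the identification with Murphy's basis). Your opening remark about the Jones basic construction framework is a red herring for the Hecke algebras themselves---as you immediately note---but otherwise the outline is exactly what is needed and mirrors how the paper treats the symmetric group case in Section~\ref{section Murphy symm}.
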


We will write $\b_\stt$ for the ordered product of the down--branching factors for the dual Murphy basis along a standard tableaux $\stt$,  as for the symmetric group algebras.   Thus the dual Murphy basis becomes $y_{\sts \stt}^\lambda = (\b_{\sts})^*  y_\lambda \b_\stt$.

Now for any composition $\lambda$ of $\f$, we extend the notation above so that $x_\lambda =  \sum_{w \in \mathfrak S_\lambda} T_w$. 
The permutation module $M_\lambda$ of $H_\f(\qbold)$ is 
$M_\lambda =  x_\lambda H_\f(\qbold)$.   Murphy showed that $M_\lambda$ has an $R$--basis
$\{x_\lambda T_{w(\stt)} \suchthat  \stt \text{ is a row standard $\lambda$--tableau}\}$.  

For any ring $S$, let $V_S =  S^N$, with the standard basis $e_1, \dots, e_N$ of unit vectors.
   There is a right action of 
 $H_\f(\qbold)$ on $V_R^{\otimes \f}$ with the following properties:  as an $H_\f(\qbold)$--module, 
 $V_R^{\otimes \f}$ is isomorphic to the direct sum of $M_\lambda$ over all compositions $\lambda$ of $\f$ with no more than $N$ parts, and  the specialization $(\ZZ \otimes_R  V_R^{\otimes \f},  \ZZ \otimes_R H_\f(\qbold))$,  with $\qbold$ acting as $1$,   is the place permutation action of $\ZZ \mathfrak S_\f$ on
 $V_\ZZ^{\otimes \f}$.    This is explained in \cite[Section 3]{MR1680384}  (although there the action is twisted by $\#$). 
  Write $\sigma : H_\f(\qbold)) \to \End_R(V_R^{\otimes \f})$ for the  representation corresponding to the action of the Hecke algebra $ H_\f(\qbold))$ on tensor space $V_R^{\otimes \f}$.  
 
 Now we have H\"arterich's theorem:
 
\begin{thm}[\cite{MR1680384}]    \myref{Theorem}{Harterich for S n}
remains valid with $\ZZ \mathfrak S_\f$ replaced by the Hecke algebra $H_\f(\qbold)$ over $R = \ZZ[\qbold, \qbold\inv]$ and  $\sigma$  by the representation of $H_\f(\qbold)$ on $V_R^{\otimes \f}$.
 \end{thm}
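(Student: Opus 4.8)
The plan is to recognise the tower of tensor-space images $(\sigma(H_\f(\qbold)))_{\f \ge 0}$ as a quotient tower, in the sense of \myref{Definition}{defn quotient tower}, of the tower of Hecke algebras $(H_\f(\qbold))_{\f \ge 0}$ carrying the dual Murphy cellular structure of \myref{Theorem}{thm Murphy basis of H n}, and then to read off the conclusions from \myref{Theorem}{good paths basis theorem}. No specialisation of the ground ring is needed: one takes $S = R = \ZZ[\qbold, \qbold\inv]$, $\pi = \id_R$, $\BbbK = \FF = \Q(\qbold)$, with branching diagram Young's lattice, cell generators the $y_\lambda$, and declares $\lambda \in \widehat{\mathfrak S}_\f$ permissible exactly when $\ell(\lambda) \le \n$, so that $\widehat{\mathfrak S}_{\f, \perm} = \widehat{\mathfrak S}_{\f, \n}$. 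Everything then reduces to verifying \eqref{quotient axiom 1}--\eqref{quotient axiom 3}. Of these, \eqref{quotient axiom 1} is an immediate check on Young's lattice: from a partition with at most $\n$ rows one can always add a box in the first row and remove a suitable removable box, the results again having at most $\n$ rows. And \eqref{quotient axiom 3} is quantum Schur--Weyl duality: over the algebraically closed field $\overline\FF$ the algebra $H_\f(\overline\FF; \qbold)$ is split semisimple and $\sigma_{\overline\FF}(H_\f(\overline\FF; \qbold)) = \End_{U_\qbold(\mathfrak{gl}_\n)}(V_{\overline\FF}\tensor\f)$ has dimension $\sum_{\ell(\lambda) \le \n} (\sharp\Std_\f(\lambda))^2$ (Jimbo, or Dipper--James via the $q$-Schur algebra; alternatively this follows from semisimplicity together with the decomposition of $V_{\overline\FF}\tensor\f$ into permutation modules $M_\mu$ with at most $\n$ parts and the fact that the simple module $S^\lambda$ occurs in some such $M_\mu$ if and only if $\ell(\lambda) \le \n$).

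The heart of the matter is \eqref{quotient axiom 2}, where the situation is actually simpler than for the Brauer algebras: if $\mft$ is an impermissible path whose first impermissible vertex is $\mu = \mft(k)$, then $\ell(\mu) = \n + 1$, and one can take $\mathfrak b_\mu = y_\mu$ and $\mathfrak b'_\mu = 0$. Conditions (a) and (c) of \eqref{quotient axiom 2} then hold trivially, and only (b), the assertion $y_\mu \in \ker(\sigma)$, requires an argument. Since $\mu'_1 = \n + 1$, the Young subgroup $\mathfrak S_{\mu'}$ has $\mathfrak S_{\{1, \dots, \n+1\}}$ as a direct factor, so $x_{\mu'} = x_{(\n+1)} \otimes x_{(\mu'_2, \mu'_3, \dots)}$ and, applying the automorphism $\#$, $y_\mu = y_{(1^{\n+1})} \otimes (x_{(\mu'_2, \mu'_3, \dots)})^\#$; as $y_{(1^{\n+1})}$ acts on the first $\n+1$ tensor slots only, it suffices to prove $y_{(1^{\n+1})} \in \ker(\sigma)$ inside $H_{\n+1}(\qbold)$.

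For this one uses that $V_R\tensor{(\n+1)}$ decomposes as a direct sum of permutation modules $M_\mu = x_\mu H_{\n+1}(\qbold)$ with $\mu$ a composition of $\n+1$ having at most $\n$ parts, so that each $\mathfrak S_\mu \ne \{1\}$; since $\mathfrak S_{(\n+1)} = \mathfrak S_{\n+1}$ there is a single double coset, whence $M_\mu \, y_{(1^{\n+1})} = R\, x_\mu y_{(1^{\n+1})}$, and factoring $x_\mu = \big(\sum_w T_w\big)(1 + T_j)$ for a simple reflection $s_j \in \mathfrak S_\mu$ (the sum over minimal-length coset representatives) reduces this to the vanishing $(1 + T_j)\, y_{(1^{\n+1})} = 0$. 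That last identity is the $q$-analogue of ``the antisymmetriser annihilates a tensor with a repeated factor'' and follows by applying $\#$ to the relation $T_j x_{(\n+1)} = \qbold\, x_{(\n+1)}$, which yields $T_j\, y_{(1^{\n+1})} = -\, y_{(1^{\n+1})}$. This establishes \eqref{quotient axiom 2}; it is the only genuinely technical point, since the reduction to $y_{(1^{\n+1})}$ is formal but the vanishing $y_{(1^{\n+1})} \in \ker(\sigma)$ rests on the $H_\f(\qbold)$-module structure of tensor space and the permutation-module bookkeeping above — precisely the substantive content of H\"arterich's argument.

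Once \eqref{quotient axiom 1}--\eqref{quotient axiom 3} are in place, \myref{Theorem}{good paths basis theorem} applies, and because the $\mathfrak b'_\mu$ have been chosen to vanish the modified basis of the construction preceding \myref{Theorem}{basis theorem} coincides with the dual Murphy basis itself. Thus \myref{Theorem}{good paths basis theorem} gives directly: $\sigma(H_\f(\qbold))$ is a cellular $R$-algebra with the involution induced from $*$, the poset $(\widehat{\mathfrak S}_{\f, \n}, \coldomeq)$, index sets $\Std_\f(\lambda)$, and cellular basis $\{\sigma(y^\lambda_{\sts \stt}) : \ell(\lambda) \le \n\}$; the set $\{y^\lambda_{\sts \stt} : \ell(\lambda) > \n\}$ is an $R$-basis of $\ker(\sigma)$; and $\ker(\sigma)$ is the ideal generated by the $\mathfrak b_\mu = y_\mu$ with $\mu$ marginal, that is, with $\ell(\mu) = \n + 1$. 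Since every such $y_\mu$ has $y_{(1^{\n+1})}$ as a left tensor factor, and $(1^{\n+1})$ is itself a marginal point, $\ker(\sigma)$ is in fact generated by the single element $y_{(1^{\n+1})}$; and for $\f \le \n$ there are no impermissible paths, so $\ker(\sigma) = 0$ and $\sigma$ is faithful. Finally, \myref{Corollary}{good paths cor} provides, exactly as in \myref{Remark}{7point8}, integral forms of the simple $H_\f(\FF; \qbold)$-modules indexed by permissible partitions.
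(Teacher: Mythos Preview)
Your proof is correct, but it takes a different route from the paper's.  The paper, following its own proof of \myref{Theorem}{Harterich for S n}, proceeds directly rather than through the quotient-tower machinery of \myref{Section}{section quotient framework}.  It first shows $y_{(1^{\n+1})} \in \ker(\sigma)$ by quoting Murphy's result \cite[Lemma 4.12]{MR1327362} that $M_\mu y_\nu = 0$ unless $\mu \unlhd \nu$ (your hands-on computation with $(1+T_j)\,y_{(1^{\n+1})}=0$ amounts to the relevant special case of that lemma), and then uses the compatibility relations \eqref{abstract branching compatibility for path} to deduce $y^\lambda_{\sts\stt} \in \ker(\sigma)$ whenever $\ell(\lambda) > \n$.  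For linear independence of $\mathbb A_\f$ over $R = \ZZ[\qbold,\qbold^{-1}]$, the paper specialises $\qbold \mapsto 1$ and appeals to the already-established symmetric-group case, thereby avoiding quantum Schur--Weyl duality altogether.  Cellularity of the quotient then drops out of the elementary \myref{Remark}{quotient by order ideal}, because $\{\lambda : \ell(\lambda) > \n\}$ is an order ideal in $(\widehat{\mathfrak S}_\f, \coldomeq)$ --- a structural simplification not available in the Brauer and walled-Brauer settings, where the impermissible locus is not an order ideal and the full machinery of \myref{Theorem}{good paths basis theorem} is genuinely needed.

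Your approach has the virtue of exhibiting this result as a clean instance of the paper's general framework (and the paper itself remarks, at the start of its proof of \myref{Theorem}{Harterich for S n}, that one \emph{could} so proceed); the paper's approach is lighter on external input, needing only classical Schur--Weyl duality via specialisation rather than its quantum analogue over $\overline{\Q(\qbold)}$.
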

 
 \begin{proof}  The proof is essentially the same  as that of  the special case
  \myref{Theorem}{Harterich for S n}
 once we verify that $y_{(1^{N+1})} \in \ker(\sigma)$.   We have to show that  $y_{(1^{N+1})}$ is in the annihilator of $M_\mu$ for all compositions $\mu$ of $\f$  with no more than $N$ parts.   By  \cite[Lemma 4.12]{MR1327362},   for a composition $\mu$ of $\f$ and a partition
 $\nu$ of $\f$,  $M_\mu y_\nu = 0$ unless  $\mu \unlhd \nu$.    Take $\nu =  (N+1, 1^{\f- N -1})'$.  
 Then when $\mu$ has no more than $N$ parts,  $\mu \not\unlhd \nu$ and therefore
  $M_\mu y_\nu = 0$.   But $y_\nu = y_{(1^{n+1})}$.  
  
  One other point may deserve attention, namely that 
  $$
\mathbb A_\f  = \{ \sigma (y^\lambda_{\sts\stt} ) \mid \lambda \in \widehat{ \mathfrak{S} }_{\f}  \text{ with } \ell(\lambda) \le N \text{ and }  \sts, \stt \in \Std_{\f}(\lambda)\}, 
 $$
 is linearly independent over $R$.  But for this, it suffices that the specialization when $R \to \ZZ$ and $\qbold \mapsto 1$ is linearly independent over $\ZZ$, and that was shown in the proof of 
  \myref{Theorem}{Harterich for S n}.
 \end{proof}

\section{The walled Brauer algebras}  \label{section: walled Brauer}
\def\br{B}

 The walled Brauer algebras arise in connection with the invariant theory of the general linear group  $\GL(V)$ acting on mixed tensors space $V^{\otimes (r, s)} := V^{\otimes r} \otimes {V^*}^{\otimes s} $.     
The walled  Brauer algebras were studied by Turaev ~\cite{MR1024455},  Koike ~\cite{MR991410},   Benkart et  al.\  ~\cite{MR1280591},  and  Nikitin ~\cite{MR2251346}.   
Cellularity of walled Brauer algebras was proved by Green and Martin ~\cite{MR2293327} and by Cox et.\ al.\  ~\cite{MR2417984};  the latter authors show that
 walled Brauer algebras can be arranged into coherent cellular  towers.

\subsection{Definition of the walled Brauer algebras}
Let $S$ be a commutative ring with identity, with a distinguished element $\delta$.
The walled (or rational, or oriented)  Brauer algebra $\br_{r, s}(S; \delta)$
 is a unital subalgebra of the Brauer algebra $\br_{r+s}(S; \delta)$ spanned by certain $(r+s)$--strand Brauer diagrams.  
Divide the top vertices into a left cluster of $r$ vertices and a right cluster of $s$ vertices, and similarly for the bottom vertices.   The $(r,s)$--walled Brauer diagrams are those in which no vertical strand connects a left vertex and a right vertex,  and every horizontal strand connects a left vertex and a right vertex. 
 One can  check that the span of $(r,s)$--walled Brauer diagrams is a unital  involution--invariant subalgebra of $\br_{r+s}(S; \delta)$.\footnote{One imagines a wall dividing the left and right vertices; thus the terminology ``walled Brauer diagram" and ``walled Brauer algebra".}
 
   We have inclusions $\iota: \br_{r, s} \hookrightarrow \br_{r, s+1}$  by adding a  strand on the right, and 
$\iota' :  \br_{r, s} \hookrightarrow \br_{r+ 1, s}$ by adding a   strand on the left.    

 Label the top vertices of $(r, s)$--walled Brauer diagrams from left to right by
  $$\p{-r},  \dots,  \p {-2}, \p{-1}, \  \p1 , \p 2, \dots, \p s$$  and the bottom vertices by 
 $$\pbar{-r},  \dots,  \pbar {-2}, \pbar{-1},  \ \pbar1 , \pbar 2, \dots, \pbar s.$$  
 For $1\leq a \leq r$ and $1\leq b \leq s$ we let  $e_{a, b}$ denote the walled Brauer diagram with horizontal strands connecting $\p {-a}$ to $\p b$  and $\pbar {-a}$ to  $\pbar b$ and vertical strands connecting $\p j$ to $\pbar j$ for $j  \ne  -a, b$. 
  For $1 \le i < r$,  let $s'_i$ denote the transposition $s'_i = (-i, -(i+1))$,  regarded as an element of $B_{r, s}$  and for $1\leq i < s$   let $s_i = (i, i+1)$. 
 The walled Brauer algebra is generated by the elements $s'_i$,  $s_j$  and any single element $e_{a, b}$.

 \begin{lem}\label{walled LR isomorphism}
 $$
 B_{r, s}(S; \delta) \cong B_{s, r}(S; \delta).
 $$
 \end{lem}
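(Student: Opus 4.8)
The claim is that the walled Brauer algebras $B_{r,s}(S;\delta)$ and $B_{s,r}(S;\delta)$ are isomorphic. The natural approach is to exhibit an explicit isomorphism induced by reflecting walled Brauer diagrams across the wall. Concretely, a $(r,s)$--walled Brauer diagram has its top (and bottom) vertices partitioned into a left cluster of size $r$ and a right cluster of size $s$; the operation I would use is the geometric reflection of the rectangle $\mathcal R$ through a vertical line, which sends the left cluster to a right cluster and vice versa, turning an $(r,s)$--walled diagram into an $(s,r)$--walled diagram. Under the labelling conventions fixed just above the lemma, this reflection is the relabelling $\p{-a}\leftrightarrow \p{a}$, $\pbar{-a}\leftrightarrow\pbar{a}$ on indices, sending $s'_i\mapsto s_i$, $s_i \mapsto s'_i$, and $e_{a,b}\mapsto e_{b,a}$. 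Let $\rho$ denote the $S$--linear map $B_{r,s}(S;\delta)\to B_{s,r}(S;\delta)$ defined on the diagram basis by this reflection.

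The key steps are then routine verifications. First I would check that $\rho$ is a bijection on diagram bases: reflection is clearly an involution at the level of pictures (reflecting twice is the identity), and it manifestly carries the set of $(r,s)$--walled diagrams bijectively onto the set of $(s,r)$--walled diagrams, since the conditions ``no vertical strand crosses the wall'' and ``every horizontal strand crosses the wall'' are symmetric under swapping the two sides. Hence $\rho$ is an $S$--module isomorphism. Second, I would verify that $\rho$ is an algebra homomorphism. Recall that the product $ab$ of Brauer diagrams is computed by stacking $a$ over $b$ to get a diagram $c$ together with $j$ closed loops, with $ab = \delta^j c$. Reflection through a vertical line is compatible with vertical stacking: reflecting $a$ and $b$ separately and then stacking gives the same diagram (and the same number of closed loops) as stacking $a$ over $b$ and then reflecting. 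Therefore $\rho(a)\rho(b) = \delta^j \rho(c) = \rho(\delta^j c) = \rho(ab)$, and this extends bilinearly to all of $B_{r,s}(S;\delta)$. Finally, $\rho$ sends the identity diagram (all vertical strands) to the identity diagram, so $\rho$ is unital.

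The verification that reflection is multiplicative is the only point requiring any care, and even there it is essentially a picture argument: the composition of Brauer diagrams depends only on the combinatorial matching of the $2(r+s)$ boundary points and on the count of interior loops, both of which are preserved by an ambient isometry of the rectangle that fixes the top and bottom edges setwise. I do not anticipate a genuine obstacle; the main thing to be careful about is keeping the index conventions straight so that the images of the named generators $s'_i, s_j, e_{a,b}$ are identified correctly, but this is bookkeeping rather than mathematics. An alternative, slightly more formal route would be to use the presentation of the walled Brauer algebra by generators and relations (as in Benkart et al.\ or Nikitin) and check that the assignment $s'_i\mapsto s_i$, $s_i\mapsto s'_i$, $e_{a,b}\mapsto e_{b,a}$ respects all defining relations; since the relation set for $B_{r,s}$ is symmetric in the roles of the two sides, this is immediate, but the diagrammatic argument above is more transparent and self-contained.
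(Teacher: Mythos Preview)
Your proposal is correct and takes exactly the same approach as the paper: the paper's proof is the single line ``Reflect walled Brauer diagrams left to right.'' You have supplied the routine verifications (bijection on diagram bases, compatibility with stacking and loop counts, unitality) that the paper leaves implicit, but the underlying idea is identical.
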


\begin{proof}  Reflect walled Brauer diagrams left to right.
\end{proof}

 \subsection{The quotient algebras  $ \wbq_{r, s}$}
 We let 
 $$
 \mathfrak S_{r, s} = \mathfrak S\{-1, -2, \dots, -r\}  \times  \mathfrak S\{1, 2, \dots, s\},
 $$
 and 
 $$\wbq_{r, s} = \wbq_{r, s}(S) =  S\mathfrak S_{r, s} \cong  S\mathfrak S_r \otimes_S  S \mathfrak S_s.
 $$
  with left and right inclusion maps $\iota' :  \wbq_{r, s} \hookrightarrow \wbq_{r+ 1, s}$ and $\iota:  \wbq_{r, s} \hookrightarrow \wbq_{r, s + 1}$ as for the walled Brauer algebras.   
    The walled Brauer algebra $B_{r, s}(S; \delta)$ contains a subalgebra isomorphic to $\wbq_{r, s}$, spanned by the walled Brauer diagrams with rank  $r+s$,  i.e. with no horizontal strands.  
Moreover,  the ideal  in $B_{r, s}$ generated by any element $e_{a, b}$ is equal to the ideal  $J_{r,s}$ spanned by walled Brauer diagrams with rank strictly less than $r + s$,  and the quotient
 $B_{r, s}/J_{r,s} $ is isomorphic to $\wbq_{r, s}$.  
 \newcommand{\jj}{{\sf j}}
 Let $\jj$ or $x \mapsto x'$  denote the isomorphism  $\jj: \mathfrak S\{1, 2, \dots, r\} \to  \mathfrak S\{-1, -2, \dots, -r\}$ determined by $s_i \mapsto s'_i$.  
We endow $\wbq_{r, s}$ with the tensor product cell datum arising from the dual Murphy cellular bases of $S \mathfrak S_r$ and  $S \mathfrak S_s$:
 
 \begin{prop}
  The   algebra  $ \wbq_{r, s}$ has cellular basis 
  $$  
  \{ \jj(y^{\lambda(1)}_{\mfs \mft} )\otimes   y^{\lambda(2)}_{\mfu  \mfv}
   \mid   \sts,\stt  \in {\rm Std}_r(\lambda (1)),
   \stu,\stv  \in {\rm Std}_s(\lambda (2))
    \text{ for } \lambda=((\lambda(1), \lambda(2)) \in \widehat{\mathfrak{S}}_r\times\widehat{\mathfrak{S}}_s\}		.	$$
   with respect to the involution $\ast$  and the poset   $(\widehat{\mathfrak{S}}_r \times \widehat{\mathfrak{S}}_s , \coldomeq)$.  Here the product ordering is given by $(\lambda(1), \lambda(2)) \coldomeq  (\mu(1), \mu(2))$  if $\lambda(1)  \coldomeq \mu(1)$ and  $\lambda(2) \coldomeq\mu(2)$.  
 \end{prop}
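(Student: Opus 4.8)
The plan is to show that this proposition is a completely routine consequence of two facts we already have: first, that the dual Murphy basis $\mathcal Y$ is a cellular basis of $S\mathfrak S_r$ with respect to $*$ and $(\widehat{\mathfrak S}_r,\coldomeq)$ (\myref{Theorem}{thm Murphy basis of S n}, specialized to a field or general ring via the base-change property of cellular algebras); and second, a general ``tensor product of cellular algebras is cellular'' principle. So the first step is to isolate that general principle as the engine of the proof. The statement is: if $(A,*_A,\widehat A,\unrhd_A,\Std_A,\mathscr A)$ and $(B,*_B,\widehat B,\unrhd_B,\Std_B,\mathscr B)$ are cell data for $S$-algebras $A$ and $B$, then $A\otimes_S B$ is cellular with involution $*_A\otimes *_B$, poset $(\widehat A\times\widehat B, \unrhd_A\times\unrhd_B)$ (the product partial order), index sets $\Std_A(\lambda)\times\Std_B(\mu)$, and basis $\{c^\lambda_{\mfs\mft}\otimes c^\mu_{\mfu\mfv}\}$. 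This is standard (it appears e.g.\ in Geck--Pfeiffer or can be extracted from \cite{MR1376244}), so I would either cite it or give the one-paragraph verification: linear independence and spanning are clear since tensor products of bases are bases; the involution axiom (c-d-2) holds factorwise; and the multiplication axiom (c-d-1) follows because $(A\otimes B)^{\rhd(\lambda,\mu)}$ decomposes as $A^{\rhd\lambda}\otimes B + A\otimes B^{\rhd\mu}$, and multiplying $c^\lambda_{\mfs\mft}\otimes c^\mu_{\mfu\mfv}$ by $a\otimes b$ and applying (c-d-1) in each factor lands in $\sum r(a;\mft,\mfv_1)r(b;\mfv,\mfv_2)\,c^\lambda_{\mfs\mfv_1}\otimes c^\mu_{\mfu\mfv_2}$ modulo that ideal. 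One checks the product order is genuinely a partial order and that $\widehat A\times\widehat B$ is finite, both immediate.

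The second step is to identify the pieces. We take $A=S\mathfrak S_r$ with the dual Murphy cell datum $(*,\widehat{\mathfrak S}_r,\coldomeq,\Std_r(\cdot),\mathcal Y)$ and $B=S\mathfrak S_s$ with the analogous datum. Applying the tensor principle gives $\wbq_{r,s}=S\mathfrak S_r\otimes_S S\mathfrak S_s$ is cellular with poset $(\widehat{\mathfrak S}_r\times\widehat{\mathfrak S}_s,\coldomeq\times\coldomeq)$, which is exactly the product order $(\lambda(1),\lambda(2))\coldomeq(\mu(1),\mu(2))$ iff $\lambda(1)\coldomeq\mu(1)$ and $\lambda(2)\coldomeq\mu(2)$ named in the statement, and with basis $\{y^{\lambda(1)}_{\mfs\mft}\otimes y^{\lambda(2)}_{\mfu\mfv}\}$. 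The only remaining point is cosmetic: transporting the cellular structure along the isomorphism $\jj\otimes\mathrm{id}:S\mathfrak S_r\otimes S\mathfrak S_s\xrightarrow{\sim}S\mathfrak S\{-1,\dots,-r\}\otimes S\mathfrak S\{1,\dots,s\}$, which is an algebra isomorphism intertwining $*$ (since $\jj$ sends $s_i\mapsto s'_i$ and both $s_i,s'_i$ are involutions, $\jj$ commutes with inversion). A cellular structure is transported across any $*$-compatible isomorphism simply by relabelling the basis, so we get the basis $\{\jj(y^{\lambda(1)}_{\mfs\mft})\otimes y^{\lambda(2)}_{\mfu\mfv}\}$ with the same poset and the same involution $*$ on $\wbq_{r,s}$.

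Honestly there is no hard part here — this is a bookkeeping proposition whose content is entirely ``cellular structures tensor together and transport across isomorphisms.'' If pressed to name the one place requiring a moment's care, it is verifying that the ideal $(A\otimes B)^{\rhd(\lambda,\mu)}$ for the product order equals $A^{\rhd\lambda}\otimes B+A\otimes B^{\rhd\mu}$ — because $(\lambda,\mu)\lhd(\nu,\eta)$ in the product order means $\nu\unrhd\lambda$ and $\eta\unrhd\mu$ with at least one strict, which is precisely the condition cutting out that sum — and then checking that the multiplication axiom survives modulo this ideal; once that is in hand everything else is automatic. I would therefore write the proof as: (i) state the tensor lemma with a short verification or citation; (ii) apply it to the two symmetric group algebras with dual Murphy data, quoting \myref{Theorem}{thm Murphy basis of S n}; (iii) transport along $\jj\otimes\mathrm{id}$, noting $\jj$ is a $*$-isomorphism, to conclude.
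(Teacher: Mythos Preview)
Your approach is correct and matches the paper's treatment: the paper gives no proof at all, simply introducing the proposition with the phrase ``the tensor product cell datum arising from the dual Murphy cellular bases,'' so your plan of (i) invoking the general tensor-product-of-cellular-algebras lemma, (ii) applying it to the two dual Murphy data, and (iii) transporting along the $*$-isomorphism $\jj\otimes\mathrm{id}$ is exactly what is intended.

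One small correction: the claimed equality $(A\otimes B)^{\rhd(\lambda,\mu)} = A^{\rhd\lambda}\otimes B + A\otimes B^{\rhd\mu}$ is false in general. The right-hand side contains elements $c^\nu_{\ast\ast}\otimes c^\eta_{\ast\ast}$ with $\nu\rhd\lambda$ but $\eta$ incomparable to $\mu$, and such $(\nu,\eta)$ is not $\rhd(\lambda,\mu)$ in the product order. What you actually need, and what is true, is only the containment relevant to the multiplication axiom: when you expand $(c^\lambda_{\mfs\mft}a)\otimes(c^\mu_{\mfu\mfv}b)$ using cellularity in each factor, the error terms have the form $x\otimes c^\mu_{\mfu\mfv_2}$, $c^\lambda_{\mfs\mfv_1}\otimes y$, or $x\otimes y$ with $x\in A^{\rhd\lambda}$, $y\in B^{\rhd\mu}$; in each case the second index is $\unrhd\mu$ (respectively the first is $\unrhd\lambda$), so these terms genuinely lie in $(A\otimes B)^{\rhd(\lambda,\mu)}$. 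So the argument goes through, but not via the equality you stated.
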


\begin{rmk}
One can also lift the usual Murphy basis to a cellular basis of this algebra in an obvious fashion.  
\end{rmk}

\subsection{Cellularity of the walled Brauer algebras} \label{subsection: cellularity walled Brauer}
 Let $\bolddelta$ be an indeterminant over $\Z$.   Let $R = \Z[\bolddelta]$ be the generic ground ring for the Brauer algebras and $R' = \Z[\bolddelta^{\pm 1}]$.   Let $\F = \Q(\deltabold)$ denote the field of fractions of $R'$. 
We are going to show that  the algebras $\br_{r, s}(R'; \deltabold)$  are cyclic cellular algebras, with a cellular structure lifted from the dual Murphy cellular structure on the algebras $H_{r, s}(R')$.   (We use this cellular structure rather than  the ``ordinary"  Murphy cellular structure, because it is adapted to the action of the walled Brauer algebras on mixed tensor space.)   We show moreover that all the inclusions
$$
\iota: B_{r, s-1}(R'; \deltabold) \hookrightarrow B_{r, s}(R'; \deltabold) \quad \text{and} \quad
 \iota': B_{r-1, s}(R'; \deltabold)  \hookrightarrow B_{r, s}(R'; \deltabold)
$$
are coherent, that is 
a cell module of the larger algebra, when restricted to the smaller algebra, has an order preserving cell filtration; and likewise, a cell module of the smaller algebra, when induced to the larger algebra, has an order preserving cell filtration.   
 Since the algebras 
$\br_{r, s}(\F; \deltabold)$ are split semisimple, there is a well defined branching diagram governing the appearance of cell modules in these cell filtrations.
  Moreover, since the algebras are cyclic cellular,  there exist branching factors associated to the edges in the branching diagrams.

To obtain the results outlined above, one has to extract various single sequences from the double sequence of walled Brauer algebras, and apply the technique of the Jones basic construction, as in 
 \myref{Section}{subsection: Jones}.    Concurrently, one verifies axioms \eqref{diagram 1} to \eqref{diagram 6}.    
 
  Fix some $t \ge 0$.  
For $0 \le k \le t$,  take $\wbs_k = \br_{k, 0}(S; \delta) \cong  S \mathfrak S_k$, with inclusions $\iota': \wbs_k \hookrightarrow \wbs_{k+1}$.    For $j \ge 0$, take 
 $$\wbs_{t + 2j} = \br_{t+ j, j}(S; \delta) \quad \text {and} \quad  \wbs_{t + 2j + 1} =  \br_{t + j + 1, j}(S; \delta)
 .$$  with inclusions:
 $$
 \wbs_{t+2j}  \stackrel{\iota'}{\hookrightarrow} \wbs_{t+2j +1}  \stackrel{\iota}{\hookrightarrow}  \wbs_{t+2j + 2}.
 $$
  For $j \ge 1$, define
 $$
 e_{t+ 2j -1}  = e_{t + j ,  j} \in \wbs_{t+ 2j}  \quad \text{and}  \quad e_{t+ 2j} = e_{t+ j + 1, j} \in \wbs_{t+ 2j + 1}.
 $$
These elements satisfy the Temperley--Lieb relations $e_i e_{i\pm 1} e_i = e_i$, and $e_i e_j = e_j e_i$ if  $|i-j| \ge 2$ and so    \eqref{J-2} is satisfied.

 As with the walled Brauer algebras, we extract  a single sequence from the double sequence $H_{r, s}$.   Fix $t \ge 0$ as above, and let $\mathcal H_k = H_{k, 0}$ for
 $0 \le k \le t$.    For $j \ge 0$, set $\mathcal H_{t + 2 j} = H_{t+ j, j}$ and $\mathcal H_{t + 2j + 1} = H_{t+ j + 1, j}$.  
For $k \ge 2$,  with $J_{t+k} $ the ideal in $\wbs_{t + k}$  generated by 
$e_{t+k -1}$, we have $\wbs_{t+k}/J_{t+k} \cong \mathcal H_{t+k}$ and so  \eqref{J-3} is satisfied.

 The pair of towers of algebras  $(\wbs_{t + k}(R'; \deltabold))_{k \ge 0}$ and $(\mathcal H_{t + k}(R'))_{k \ge 0}$, where the algebra $\mathcal H_{t + k}$ are endowed with the dual Murphy cellular structure, 
satisfies  axioms \eqref{J-2}--\eqref{J-8} of 
 \myref{Section}{subsection: Jones},
 but \eqref{J-1} has to be replaced by:

\medskip
\noindent (J1$'$) \ \ $\wbs_{t} = \mathcal H_{t}$  and $\wbs_{t+1} = \mathcal H_{t+1}$.
\medskip

\noindent  This modified axiom (J1$'$)  is evident and the rest of axioms \eqref{J-2}--\eqref{J-8} are verified  just as in  \break ~\cite[Section 5.7]{MR2794027}, with the following additional observations:    Axiom \eqref{J-6} states that  \break  
$e_{i} \wbs_{i+ 1}  e_{i}  \wbs_{i+ 1} =  e_{i}  \wbs_{i+ 1}$ for $i \ge t+ 1$.    But we have 
$e_{i} \wbs_{i+ 1}  e_{i}  \wbs_{i+ 1} \supseteq e_{i}^2 \wbs_{i+ 1} = \deltabold e_{i} \wbs_{i+ 1}$,    Since $\deltabold$ is invertible in $R'$,  axiom \eqref{J-6} is verified.   Finally, to complete the verification of \eqref{J-8}, we have to check that 
 that each 
$\mathcal H_{i}$ for $i\geq t+1$ is a cyclic cellular algebra,  but this is evident.

\begin{lem}  \label{cellular tower for walled Brauer} For any choice of $t \ge 0$, 
the tower $(\wbs_{t + k}(R'; \deltabold))_{k \ge 0}$  is a coherent tower of cyclic cellular algebras, with branching diagram $\widehat A$ obtained by reflections from the branching diagram $\widehat{\mathcal H}$ of the tower $(\mathcal H_{t + k}(R'))_{k \ge 0}$. 
\end{lem}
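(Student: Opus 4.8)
The plan is to obtain Lemma~\ref{cellular tower for walled Brauer} as a direct application of the Jones basic construction machinery recalled in \myref{Section}{subsection: Jones}, namely \cite[Theorem 5.5]{EG:2012}. All of the hypotheses have already been put in place in the preceding discussion: the pair of towers $(\wbs_{t+k}(R';\deltabold))_{k\ge 0}$ and $(\mathcal H_{t+k}(R'))_{k\ge 0}$ satisfies \eqref{J-2}--\eqref{J-8} together with the modified initial condition (J1$'$). The only point that requires care is that \cite[Theorem 5.5]{EG:2012} is stated for towers indexed starting from $0$ with $A_0=H_0=\sfR$ and $A_1=H_1$, whereas here the two towers agree only from index $t$ onwards. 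So the first step is to observe that the machinery applies verbatim to the shifted tower: set $A_k = \wbs_{t+k}$ and $H_k=\mathcal H_{t+k}$ for $k\ge 0$, note that $A_0=H_0=\mathcal H_t$ and $A_1=H_1=\mathcal H_{t+1}$ (which is (J1$'$)), and that the element $e_{k-1}$ needed in \eqref{J-2}, \eqref{J-3}, \eqref{J-6} is $e_{t+k-1}$ as defined above. The proof of \cite[Theorem 5.5]{EG:2012} uses only axioms \eqref{J-1}--\eqref{J-8} and never the specific identification $A_0=\sfR$ beyond its role in \eqref{J-1}, so nothing else changes.

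With that identification made, the conclusion of \cite[Theorem 5.5]{EG:2012} gives immediately that $(\wbs_{t+k}(R';\deltabold))_{k\ge 0}$ is a coherent tower of cyclic cellular algebras over $R'$, and that its branching diagram is the one obtained ``by reflections'' from the branching diagram of $(\mathcal H_{t+k}(R'))_{k\ge 0}$, in the precise sense recalled in \myref{Section}{subsection: Jones}: the partially ordered set at level $k$ is $\widehat A_k = \{(\lambda,l) : 0\le l\le \lfloor k/2\rfloor,\ \lambda\in\widehat{\mathcal H}_{k-2l}\}$ with order $(\lambda,l)\trianglerighteq(\mu,m)$ iff $l>m$, or $l=m$ and $\lambda\trianglerighteq\mu$; and the branching rule is $(\la,l)\to(\mu,m)$ iff $l=m$ and $\la\to\mu$ in $\widehat{\mathcal H}$, or $m=l+1$ and $\mu\to\la$ in $\widehat{\mathcal H}$. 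This is exactly the statement of the lemma.

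The only genuine verification left is the bookkeeping for the axioms that were sketched rather than fully checked in the paragraph preceding the lemma: namely \eqref{J-6} for $i\ge t+1$ and the cyclic cellularity part of \eqref{J-8}. For \eqref{J-6}, the inclusion $e_i\wbs_{i+1}e_i\wbs_{i+1}\supseteq e_i^2\wbs_{i+1}=\deltabold\, e_i\wbs_{i+1}$ together with invertibility of $\deltabold$ in $R'=\Z[\deltabold^{\pm1}]$ gives $e_i\wbs_{i+1}e_i\wbs_{i+1}\supseteq e_i\wbs_{i+1}$, and the reverse inclusion is trivial, so equality holds. For \eqref{J-8}, each $\mathcal H_i$ for $i\ge t+1$ is $H_{a,b}(R')\cong R'\mathfrak S_a\otimes_{R'} R'\mathfrak S_b$ for suitable $a,b$, endowed with the tensor product of the dual Murphy cellular structures, which is visibly cyclic cellular (the cell module for $(\lambda(1),\lambda(2))$ is cyclic, being an outer tensor product of cyclic modules); coherence of $(\mathcal H_{t+k})_{k\ge 0}$ follows from coherence of the symmetric group towers with the dual Murphy basis, established via \myref{Theorem}{thm Murphy basis of S n} and the references cited there. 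I do not anticipate any real obstacle here: the substance of the argument is entirely contained in \cite[Theorem 5.5]{EG:2012}, and the work of this lemma is simply to record that the shifted indexing is harmless and that the handful of axiom checks go through as indicated. The most delicate bookkeeping point is making sure the reflection branching rule is stated with the correct convention relating the two inclusion maps $\iota$ and $\iota'$ to the single-sequence inclusions $\wbs_{t+2j}\hookrightarrow\wbs_{t+2j+1}\hookrightarrow\wbs_{t+2j+2}$, but this is already implicit in how the $e_{t+k-1}$ were defined.
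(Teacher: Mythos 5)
Your proposal is correct and follows exactly the paper's route: the paper's proof of this lemma is literally a one-line appeal to the preceding axiom verifications together with the Jones basic construction framework of \myref{Section}{subsection: Jones} (i.e.\ \cite[Theorem 5.5]{EG:2012}), which is precisely what you spell out, including the observation that the shift to starting index $t$ with the modified axiom (J1$'$) is harmless and the checks of \eqref{J-6} and cyclic cellularity in \eqref{J-8}.
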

\begin{proof}  Follows from the discussion above together with   \myref{Section}{subsection: Jones}.
\end{proof}

\begin{notation}    For $l \le \min(r, s)$,  write
$$
e_{r, s}\power l =  
   e_{r-l + 1, s-l + 1}  \cdots e_{r-1, s-1}   e_{r, s}.
$$
For  $\lambda = (\lambda(1), \lambda(2)) \in \widehat{\mathfrak S}_{r-l} \times \widehat{\mathfrak S}_{s-l}$, let 
\begin{equation} \label{walled Brauer antisymmetrizer}
y_{\lambda} =  \jj(y_{\lambda(1)}) \otimes y_{\lambda(2)} \quad \text{and} \quad
y_{(\lambda, l)} =  \jj(y_{\lambda(1)}) \otimes y_{\lambda(2)} e_{r, s}\power l.
\end{equation}
\end{notation}

The result regarding cellularity, coherence, and branching factors for the walled Brauer algebras is the following: 

\begin{prop} \label{B r s cellularity}
\mbox{}
\begin{enumerate}[leftmargin=*,label=(\arabic{*}), ref=\arabic{*},  font=\normalfont, align=left, leftmargin=*]
\item \label{brs cellularity 1}
 For $r, s \ge 0$,  $\br_{r, s}(R'; \bolddelta)$  is a cyclic cellular algebra.
\item \label{brs po set}
 The partially ordered set  $\widehat \br_{r, s}$  in the cell datum for $\br_{r, s}(R'; \bolddelta)$ is 
\begin{equation} \label{poset for B r s}
\widehat \br_{r, s} = \leftbrace(\lambda, l) \mid   
l \le \min(r, s),   \lambda = (\lambda(1), \lambda(2)) \in \widehat{\mathfrak S}_{r-l} \times \widehat{\mathfrak S}_{s - l}
   \rightbrace,
\end{equation}
with the partial order $(\lambda, l) \coldomeq(\mu, m)$  if $l > m$ or if $l = m$  and $\lambda \coldomeq \mu$ in
$\widehat{\mathfrak S}_{r-l} \times \widehat{\mathfrak S}_{s - l}$.   
\item   \label{brs generator}
 The element  $y_{(\lambda, l)}$ defined in \eqref{walled Brauer antisymmetrizer} lies in 
$B_{r, s}^{\coldomeq(\lambda, l)}$, and a 
 model for  the cell module $\Delta({\lambda, l})$ of $B_{r, s}$ is 
$$
\Delta({\lambda, l}) =  (y_{(\lambda, l)}  B_{r, s}  + B_{r, s}^{\coldom (\lambda, l)} )/  B_{r, s}^{\coldom (\lambda, l)}.
$$
\item  \label{brs coherence}
The inclusions
$$
\iota : B_{r, s-1}(R'; \deltabold) \subseteq B_{r, s}(R'; \deltabold) \quad \text{and} \quad  \iota':  B_{r-1, s}(R'; \deltabold) \subseteq B_{r, s}(R'; \deltabold)
$$
are   coherent.  
\item  \label{brs branching rules}
The branching rules associated with the inclusions $\iota$ and $\iota'$  are as follows.   
\begin{enumerate}[leftmargin=*,label=(\alph{*}), font=\normalfont, align=left, leftmargin=*]
\item
For the inclusion $\iota$,  if  $(\mu, m) \in \widehat B_{r, s-1}$ and $(\lambda, l) \in \widehat B_{r, s}$,  then 
$(\mu, m) \to (\lambda, l)$ if and only if $m = l$,  $\mu(1) = \lambda(1)$, and  $\mu(2) \subset \lambda(2)$;   or $m = l-1$,  $\mu(1) \supset \lambda(1)$, and $\mu(2) = \lambda(2)$.  
\item For the inclusion $\iota'$,  if  $(\mu, m) \in \widehat B_{r-s, s}$ and $(\lambda, l) \in \widehat B_{r, s}$,  then 
$(\mu, m) \to (\lambda, l)$ if and only if $m = l$,  $\mu(1) \subset \lambda(1)$, and  $\mu(2) =\lambda(2)$;   or $m = l-1$,  $\mu(1) = \lambda(1)$, and $\mu(2) \supset \lambda(2)$.  
\end{enumerate}   
\item \label{brs branching factors}
The branching factors associated with the inclusions $\iota$ and $\iota'$  can be chosen as follows.  We denote the branching factors arising from  restricting  a cell module by 
$\bb  {(\mu, l)} {(\lambda, l)} {r,s} $  and those arising from inducing a cell module by 
$\vv  {(\mu, l)} {(\lambda, l)} {r,s} $.
\begin{enumerate}[leftmargin=*,label=(\alph{*}), font=\normalfont, align=left, leftmargin=*]
\item For the inclusion $\iota$,
\begin{itemize}
\item  $\bb  {(\mu, l)} {(\lambda, l)} {r,s} =   \bb {\mu(2)} {\lambda(2)} {s-l}  e_{r, s-1}\power l $.
\item   $\vv  {(\mu, l)} {(\lambda, l)} {r,s} =   \vv {\mu(2)} {\lambda(2)} {s-l}  e_{r, s}\power l $.
\item   $\bb  {(\mu, l-1)} {(\lambda, l)} {r,s} = \jj(\vv {\lambda(1)} {\mu(1)} {r-l +1} )    e_{r, s-1} \power {l-1}$.
\item  $\vv  {(\mu, l-1)} {(\lambda, l)} {r,s} = \jj(\bb {\lambda(1)} {\mu(1)} {r-l +1} )    e_{r, s} \power {l}$.
\end{itemize}
\item  For the inclusion $\iota'$,
\begin{itemize}
\item  $\bb  {(\mu, l)} {(\lambda, l)} {r,s} =   \jj(\bb {\mu(1)} {\lambda(1)} {s-l}) \ e_{r-1, s}\power l $.
\item   $\vv  {(\mu, l)} {(\lambda, l)} {r,s} =   \jj(\vv {\mu(1)} {\lambda(1)} {s-l})\   e_{r, s}\power l $.
\item   $\bb  {(\mu, l-1)} {(\lambda, l)} {r,s} = \vv {\lambda(2)} {\mu(2)} {r-l +1}     e_{r-1, s} \power {l-1}$.
\item  $\vv  {(\mu, l-1)} {(\lambda, l)} {r,s} = \bb {\lambda(2)} {\mu(2)} {r-l +1}     e_{r, s} \power {l}$.
\end{itemize}
\end{enumerate}
\end{enumerate}
\end{prop}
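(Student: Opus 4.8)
The plan is to deduce every clause of the proposition from the Jones basic construction machinery of \myref{Section}{subsection: Jones}, applied to the single sequences $(\wbs_{t+k})_{k\ge 0}$ extracted above, together with the (elementary) representation theory of tensor products of symmetric group algebras. Fix $r,s$. After possibly applying the left--right isomorphism \myref{Lemma}{walled LR isomorphism}, we may assume $r\ge s$; then $\br_{r,s}(R';\deltabold)=\wbs_{t+2s}$ in the tower with $t=r-s$, so \myref{Lemma}{cellular tower for walled Brauer} gives at once that this tower is a coherent tower of cyclic cellular algebras, proving part~\ref{brs cellularity 1}. For part~\ref{brs po set} I would simply read off the cell-datum poset of the reflection tower from \myref{Section}{subsection: Jones}: it is $\{(\lambda,l)\mid 0\le l\le s,\ \lambda\in\widehat{\mathcal H}_{t+2s-2l}\}$ with $(\lambda,l)\unrhd(\mu,m)$ if $l>m$, or $l=m$ and $\lambda\unrhd\mu$ in $\widehat{\mathcal H}$; since $\mathcal H_{t+2s-2l}=\mathcal H_{(r-s)+2(s-l)}=H_{r-l,\,s-l}\cong S\mathfrak S_{r-l}\otimes S\mathfrak S_{s-l}$ carries the tensor product of the dual Murphy cell data, this is exactly \eqref{poset for B r s} with the column dominance order. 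Part~\ref{brs generator}: by \myref{Lemma}{lemma lifting cell generators} a cell generator of $(\lambda,l)$ may be taken as $m_\lambda\,e_{r,s}\power l$, where $m_\lambda$ is a (lifted) cell generator for $H_{r-l,s-l}$; choosing $m_\lambda=\jj(y_{\lambda(1)})\otimes y_{\lambda(2)}$, the $*$-invariant tensor product of the dual Murphy cell generators, and using $e_{r,s}\power l$ as in \eqref{notation e sub i super ell}, this is precisely $y_{(\lambda,l)}$ of \eqref{walled Brauer antisymmetrizer}, and the stated model for $\Delta(\lambda,l)$ is then \myref{Lemma}{properties of c lambda}\ref{cyclic gen c}.

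For part~\ref{brs coherence} the point is that \emph{every} inclusion $\iota\colon B_{r,s-1}\hookrightarrow B_{r,s}$ and $\iota'\colon B_{r-1,s}\hookrightarrow B_{r,s}$ occurs as a consecutive inclusion $\wbs_{t+k}\hookrightarrow\wbs_{t+k+1}$ for a suitable $t\ge 0$. Indeed, in the tower with $t=r-s$ (defined for $r\ge s$) the $\iota$--steps are exactly $B_{t+j+1,j}\hookrightarrow B_{t+j+1,j+1}$, while in the tower with $t=r-1-s$ (defined for $r\ge s+1$) the $\iota'$--steps are exactly $B_{t+j,j}\hookrightarrow B_{t+j+1,j}$; the remaining cases are handled by these together with \myref{Lemma}{walled LR isomorphism}, which interchanges $\iota$ and $\iota'$. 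Coherence of each such consecutive inclusion is part of the conclusion of \myref{Lemma}{cellular tower for walled Brauer}, so part~\ref{brs coherence} follows.

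Parts~\ref{brs branching rules} and~\ref{brs branching factors} are then bookkeeping on top of the reflection construction. First one records the branching graph $\widehat{\mathcal H}$ of $(\mathcal H_{t+k})$, which is immediate from the tensor product structure: at an $\iota'$--step $S\mathfrak S_a\otimes S\mathfrak S_b\to S\mathfrak S_{a+1}\otimes S\mathfrak S_b$ one adds a box to the first factor, and at an $\iota$--step to the second; the corresponding branching factors are the dual Murphy factors $\bb{\cdot}{\cdot}{\cdot}$, $\vv{\cdot}{\cdot}{\cdot}$ of \eqref{dual branching coefficients for S n} (suitably $\jj$--transported) acting in the relevant factor. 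Feeding this into the reflection rule of \myref{Section}{subsection: Jones} ($(\lambda,l)\to(\mu,m)$ iff $l=m$ and $\lambda\to\mu$ in $\widehat{\mathcal H}$, or $m=l+1$ and $\mu\to\lambda$ in $\widehat{\mathcal H}$) and matching it against the alternation of $\iota$-- and $\iota'$--steps yields the branching rules of~\ref{brs branching rules}(a),(b); feeding the same data into \myref{Theorem}{theorem:  closed form determination of the branching factors} — whose four cases (1)--(4) correspond respectively to a forward restriction edge, a forward induction edge, a reflected down--edge and a reflected up--edge — yields the four displayed formulas in~\ref{brs branching factors}(a), and then \myref{Lemma}{walled LR isomorphism} transfers them to~\ref{brs branching factors}(b). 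Everything happens over $R'=\Z[\deltabold^{\pm1}]$, as required, since that is the ground ring for which the Jones basic construction axioms were verified.

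The only genuine work, and the main source of potential error, is keeping the dictionary between the single-sequence labelling $\wbs_{t+k}$ and the double-sequence labelling $B_{r,s}$ straight — in particular matching the parity of the reflection level $l$ against the $\iota/\iota'$ alternation, and checking that each of $\iota$ and $\iota'$ does land on a forward (rather than reflected) edge for the right choice of $t$. I would therefore fix that correspondence once, explicitly, before extracting the branching rules and the closed-form branching factors; with the dictionary in hand, parts~\ref{brs branching rules} and~\ref{brs branching factors} are a direct substitution into \myref{Theorem}{theorem:  closed form determination of the branching factors} and \myref{Lemma}{lemma lifting cell generators}.
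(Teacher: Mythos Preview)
Your proposal is correct and follows essentially the same route as the paper: reduce to the single sequences $(\wbs_{t+k})_{k\ge 0}$, apply \myref{Lemma}{cellular tower for walled Brauer} together with \myref{Theorem}{theorem:  closed form determination of the branching factors} and \myref{Lemma}{lemma lifting cell generators}, and use \myref{Lemma}{walled LR isomorphism} to cover the remaining cases. The only organisational difference is that the paper derives \ref{brs branching factors}(b) for $r>s$ directly from the tower with $t=r-(s+1)$ (reserving the left--right isomorphism for $r\le s$), whereas you transport all of \ref{brs branching factors}(b) from \ref{brs branching factors}(a) via the isomorphism; both are fine.
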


\begin{proof} 
 If $r = 0$ or $s = 0$, all the statements are obvious.   For example, if $ r = 0$,  then
$B_{0, s}(R'; \delta) \cong R' \mathfrak S_s$,   and $\widehat B_{0, s} = \{\emptyset\} \times \widehat{\mathfrak S}_s \cong \widehat{\mathfrak S}_s$.    The inclusion $\iota$ is $R' \mathfrak S_{s-1} \subseteq R' \mathfrak S_s$,  and statements \eqref{brs branching rules} and \eqref{brs branching factors} give the known branching diagram and branching factors  for this inclusion. (For point \eqref{brs branching factors} we remark that  there are no steps in the branching diagram of the form $   (\mu, l-1) \to (\lambda,l)$ and we have that  $e^{(0)}_{r,0}=1$, $e^{(0)}_{0,s}=1$    and so the branching factors are all equal to those from the tower of symmetric groups.)  Thus, we can assume $r, s > 0$ for the remainder of the proof.

 For points \eqref{brs cellularity 1}--\eqref{brs generator},  assume first that $r \ge s$.   Set $t = r - s$, and construct the tower $(\wbs_k)_{k \ge t}$ as above.  Then $\wbs_{r + s}  = \br_{r, s}(R'; \deltabold)$, and points  \eqref{brs cellularity 1}--\eqref{brs generator} follow from 
 \myref{Lemma}{cellular tower for walled Brauer}.
   If $s > r$,  use the isomorphism $\br_{r, s} \cong \br_{s, r}$.  

For points  \eqref{brs coherence}--\eqref{brs branching factors},   first assume that $r \ge s >0$, and set $t = r - s$. Construct the tower $(\wbs_k)_{k \ge t}$ as above.   The inclusion $\iota$ is $\wbs_{r+s -1} \subseteq \wbs_{r + s}$, and so the assertions in  \eqref{brs coherence} and \eqref{brs branching rules} regarding $\iota$,  follow immediately  from 
 \myref{Lemma}{cellular tower for walled Brauer}.
 For the branching factors for $\iota$  in \eqref{brs branching factors}(a),  use in addition 
\myref{Theorem}{theorem:  closed form determination of the branching factors}
 as well as the definition of the idempotents $e_{t + k}$.  

It remains to consider the inclusion $\iota'$.  
 If $r = s$, we also obtain the assertions in \eqref{brs coherence}, \eqref{brs branching rules} and \eqref{brs branching factors}(b)  regarding the inclusion $\iota'$ by using the isomorphism $\br_{r-1, s} \cong \br_{s, r-1}$ of 
 \myref{Lemma}{walled LR isomorphism}.
 Now,  if $r > s \ge 0$,  construct the tower  $(\wbs_k)_{k \ge t}$ with 
$t = r-(s+1)$ instead of $t = r-s$.   Now the inclusion $\iota'$ is  $\wbs_{r+s -1} \subseteq \wbs_{r + s}$, so the assertions in \eqref{brs coherence}, \eqref{brs branching rules}, and \eqref{brs branching factors}(b)  regarding this inclusion follows from 
\myref{Lemma}{cellular tower for walled Brauer}
and
\myref{Theorem}{theorem:  closed form determination of the branching factors}.

At this point we have the assertions  in \eqref{brs coherence}--\eqref{brs branching factors}  regarding both inclusions $\iota$ and $\iota'$ in case $r \ge s$.   To handle the case $s >r$,  use the isomorphism
$B_{r, s} \cong B_{s, r}$ of 
\myref{Lemma}{walled LR isomorphism}.  
\end{proof}

\begin{lem}   \label{b and v branching factor compatibility walled case}
Let $(\lambda, l) \in \widehat B_{r, s}$,  and let $(\mu, m)$ be an element of 
$\widehat B_{r-1, s}$ or $\widehat B_{r, s-1}$ such that $(\mu, m) \to (\lambda, l)$.    
The $b$-- and $v$--branching factors satisfy the following compatibility relation:
\begin{equation}\label{branching factor intertwining walled case}
y_{(\lambda, l)} \bb {(\mu, m)}{(\lambda, l)}{r,s} = (\vv {(\mu, m)}{(\lambda, l)}{r, s})^* y_{(\mu, m)},
\end{equation}
\end{lem}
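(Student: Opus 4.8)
The statement to be proved is the compatibility relation \eqref{branching factor intertwining walled case} for the walled Brauer algebras, asserting that for $(\mu,m)\to(\lambda,l)$ one has $y_{(\lambda,l)}\, \bb{(\mu,m)}{(\lambda,l)}{r,s} = (\vv{(\mu,m)}{(\lambda,l)}{r,s})^*\, y_{(\mu,m)}$. My plan is to reduce this to the already-established compatibility relations for the two input towers --- the symmetric group algebras (relation \eqref{compatibility relations Brauer}, which in the symmetric-group case is just the compatibility \eqref{abstract branching compatibility} for the $b$-- and $v$--branching factors of the dual Murphy basis, established in \cite[Appendix A]{EG:2012}) and for the Brauer-type Jones basic construction towers via \myref{Theorem}{theorem:  closed form determination of the branching factors}. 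Concretely, I would case-split exactly as in the formulas of \myref{Proposition}{B r s cellularity}\eqref{brs branching factors}: there are four cases for the inclusion $\iota$ and four for $\iota'$, according to whether $m=l$ (a ``horizontal'' step moving a box within one of the two partitions) or $m=l-1$ (a ``vertical'' step creating or destroying a horizontal strand), and according to which inclusion we are using.

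First I would treat the inclusion $\iota:B_{r,s-1}\hookrightarrow B_{r,s}$ in the case $m=l$, $\mu(1)=\lambda(1)$, $\mu(2)\subset\lambda(2)$. Here $y_{(\lambda,l)} = \jj(y_{\lambda(1)})\otimes y_{\lambda(2)} e_{r,s}\power l$, $\bb{(\mu,l)}{(\lambda,l)}{r,s} = \bb{\mu(2)}{\lambda(2)}{s-l} e_{r,s-1}\power l$, and $\vv{(\mu,l)}{(\lambda,l)}{r,s} = \vv{\mu(2)}{\lambda(2)}{s-l} e_{r,s}\power l$. The key technical point is that $e_{r,s}\power l$ commutes (up to the explicit identities among the $e_{a,b}$) with the symmetric-group elements supported away from the affected strands, and that $(e_{r,s}\power l)^* = e_{r,s}\power l$; combined with the symmetric-group compatibility $y_{\lambda(2)} \bb{\mu(2)}{\lambda(2)}{s-l} = (\vv{\mu(2)}{\lambda(2)}{s-l})^* y_{\mu(2)}$, this gives the claim after absorbing the $e$-factors. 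The case $m=l-1$ (where the branching factors involve $\jj(\vv{\lambda(1)}{\mu(1)}{r-l+1})$ and $\jj(\bb{\lambda(1)}{\mu(1)}{r-l+1})$, with roles of $b$ and $v$ interchanged) is handled by the same bookkeeping, now using that $e_{r,s}\power l = e_{r-l+1,s-l+1}\cdots e_{r,s}$ ``folds'' the bottom-left strand onto the top-right one so that $y_{(\mu,m)}$ expressed on $r-l+1$ left strands gets transported across the $e$-factors; here the reversal of $b\leftrightarrow v$ is exactly the reflection phenomenon already built into \myref{Theorem}{theorem:  closed form determination of the branching factors}(3)--(4). Finally, the four cases for $\iota'$ follow either by the identical argument with left and right roles swapped, or simply by invoking the isomorphism $B_{r,s}\cong B_{s,r}$ of \myref{Lemma}{walled LR isomorphism}, which intertwines $\iota$ and $\iota'$, carries $y_{(\lambda,l)}$ to the corresponding generator, and is compatible with $*$.

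\textbf{Main obstacle.} The genuine difficulty is not conceptual but the careful verification that the explicit elements $e_{r,s}\power l$ interact correctly with the lifted symmetric-group branching factors --- i.e. that when one writes $\bb{(\mu,m)}{(\lambda,l)}{r,s}$ and $\vv{(\mu,m)}{(\lambda,l)}{r,s}$ as a product of a symmetric-group factor with an $e$-power, the $e$-power can be slid past $y_{(\lambda,l)}$ or $y_{(\mu,m)}$ using only the Temperley--Lieb-type relations $e_ie_{i\pm1}e_i=e_i$, $e_i^2=\deltabold e_i$, and commutation of distant $e$'s, together with the fact that $e_{a,b}$ commutes with permutations fixing $-a$ and $b$. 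This is precisely the content of the argument already carried out in \cite[Appendix A]{EG:2012} for the ordinary Brauer algebras and in the proof of \myref{Theorem}{theorem:  closed form determination of the branching factors}, so the cleanest writeup is to observe that the walled Brauer tower $(\wbs_{t+k})_{k\ge0}$ of \myref{Lemma}{cellular tower for walled Brauer} is a Jones basic construction tower over the dual-Murphy-equipped symmetric group tower, and that the branching factors in \myref{Proposition}{B r s cellularity}\eqref{brs branching factors} are exactly those produced by \myref{Theorem}{theorem:  closed form determination of the branching factors}; the compatibility relation \eqref{branching factor intertwining walled case} is then a special case of the general statement \eqref{abstract branching compatibility} applied to this tower, whose validity for these explicit liftings was noted in the Remark following \myref{Lemma}{lemma lifting cell generators}. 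I would therefore present the proof as: (i) identify the branching factors as those of \myref{Theorem}{theorem:  closed form determination of the branching factors} applied to the dual Murphy structure on symmetric groups; (ii) cite that the symmetric-group dual Murphy branching factors satisfy \eqref{abstract branching compatibility}; (iii) conclude \eqref{branching factor intertwining walled case} from the general compatibility-propagation through the Jones basic construction, exactly as in the Remark after \myref{Lemma}{lemma lifting cell generators}.
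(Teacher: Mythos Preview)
Your proposal is correct and takes essentially the same approach as the paper: the paper's proof reads simply ``Straightforward computation using the compatibility relations for the branching factors \eqref{dual branching coefficients for S n}, the definition \eqref{walled Brauer antisymmetrizer}, and \myref{Proposition}{B r s cellularity} part \eqref{brs branching factors},'' which is exactly your case-by-case verification. Your alternative packaging via the Jones basic construction (invoking the Remark after \myref{Lemma}{lemma lifting cell generators}) is a legitimate and slightly more conceptual way to say the same thing, though note it still requires the case split implicit in the proof of \myref{Proposition}{B r s cellularity}, since different edges are realized inside different single-parameter towers $(\wbs_{t+k})_{k\ge 0}$.
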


\begin{proof}  Straighforward computation using the compatibility relations for the branching factors
\eqref{dual branching coefficients for S n}, the definition
 \eqref{walled Brauer antisymmetrizer}, and 
 \myref{Proposition}{B r s cellularity}
 part \eqref{brs branching factors}.
\end{proof}

We can now produce many different analogues of the (dual) Murphy cellular basis for the walled Brauer algebras over $R'$.   Fix $r, s \ge 0$.    Consider any path  $\varepsilon = ((r_i, s_i))_{0 \le i \le r+s}$  in $\Z_{\ge 0} \times \Z_{\ge 0}$ from $(0,0)$ to $(r, s)$.    Thus  $(r_i , s_i) - (r_{i-1}, s_{i-1}) = (1, 0)$  or $(0, 1)$.    The sequence of algebras $(B_{\varepsilon_i}(R'; \deltabold))_{0 \le i \le r+s}$   is a  coherent tower of cyclic cellular algebras, with $B_{\varepsilon_i}(\F; \delta)$ split semisimple.   The branching diagram $\widehat B_\varepsilon$  and suitable branching factors for the tower of algebras
$(B_{\varepsilon_i}(R'; \deltabold))_{0 \le i \le r+s}$ 
are determined by   \myref{Proposition}{B r s cellularity}.

Fix $(r, s)\in \Z_{\ge 0} \times \Z_{\ge 0}$ and $\varepsilon$  a path from $(0, 0)$ to $(r, s)$ in  $\Z_{\ge 0} \times \Z_{\ge 0}$.  
For $1 \le k \le r+s$ and for $(\lambda, l) \in \widehat B_{\varepsilon_k}$, define $\Std_{\varepsilon}(\lambda, l)$ to be the set of paths on $\widehat B_\varepsilon$  from the root $\emptyset$ to $(\lambda, l)$.   Define $\Std_{\varepsilon, k}$ to be the set of all paths on  $\widehat B_\varepsilon$ from $\emptyset$ to some point of $\widehat B_{\varepsilon_k}$.   Similarly,  for $0 \le j < k \le r+s$,  let $\Std_{\varepsilon; j, k}$ denote the set of all paths on $\widehat B_\varepsilon$ from some point  $(\mu, m) \in \widehat B_{\varepsilon_j}$ to some point $(\lambda, l) \in \widehat B_{\varepsilon_k}$.

For a path   $
\mft  \in \Std_{\varepsilon; j, k}$   with initial vertex $\stt(j) = (\mu, m)$ and final vertex $\stt(k) = (\lambda, l)$,
 define $b_\mft$ and $v_\mft$ as ordered products of $b$-- and $v$--branching factors,  by
\begin{equation}\label{b t for truncated path walled case}
b_\mft =  \bb {\mft\powerr {k-1}} {\mft\powerr k} k    \bb {\mft\powerr {k-2}} {\mft\powerr {k-1}} {k-1} \cdots
\bb {\mft\powerr j} {\mft \powerr {j+1}} {j+1},
\end{equation}
and
\begin{equation}  \label{v t for truncated path walled case}
v_\mft = \vv {\mft\powerr j} {\mft \powerr {j+1}} {j+1} \cdots  \vv {\mft\powerr {k-2}} {\mft\powerr {k-1}} {k-1} 
\vv {\mft\powerr {k-1}} {\mft\powerr k} k ,
\end{equation} 
It follows from 
  \myref{Lemma}{b and v branching factor compatibility walled case}
  and induction on 
  $k-j$  that 
\begin{equation}\label{b and v compatibility for paths walled case}
y_{(\lambda, l)} b_\mft        =    v_\mft^*     y_{(\mu, m)}.  
\end{equation}
In particular, we have just defined $b_\mft$ and $v_\stt$ for $\mft \in \Std_{\varepsilon, k}(\lambda, l)$.

\begin{cor} \label{walled cellular basis 1}
 Let $\varepsilon$  be a path from $(0, 0)$ to $(r, s)$ in  $\Z_{\ge 0} \times \Z_{\ge 0}$.
 The tower  
$(B_{\varepsilon_i}(R'; \deltabold))_{0 \le i \le r+s}$ satisfies conditions  \eqref{diagram 1} to \eqref{diagram 6}.  
  The set
  \begin{equation}  \label{eqn walled Brauer Murphy basis 1}
\mathscr B^{ \varepsilon}_{r, s} = 
\leftbrace  y_{\mfs \mft}\power{\lambda, l} = 
 b_\mfs^*  y_{(\lambda, l)}  b_\mft \suchthat    
(\lambda, l) \in \widehat B_{r, s} \text{ and }  
\mfs, \mft \in \Std_{\varepsilon}({\lambda, l})
   \rightbrace
\end{equation}
is a cellular basis of  $B_{r, s}(R'; \deltabold)$.  The partially ordered set in the cell datum is
$(\widehat B_{r,s},  \coldomeq)$.  
\end{cor}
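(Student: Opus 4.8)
The plan is to recognize that Corollary~\ref{walled cellular basis 1} is essentially a repackaging of the general machinery already assembled in the preceding Proposition~\ref{B r s cellularity} together with the abstract framework of Section~\ref{subsection diagram algebras}. First I would address the claim that the tower $(B_{\varepsilon_i}(R'; \deltabold))_{0 \le i \le r+s}$ satisfies \eqref{diagram 1}--\eqref{diagram 6}. Axiom \eqref{diagram 1} is immediate since $B_{0,0}(R';\deltabold) = R'$. Axioms \eqref{diagram cyclic cellular}, \eqref{diagram semisimplicity}, and \eqref{diagram restriction coherent} follow from Proposition~\ref{B r s cellularity} parts \eqref{brs cellularity 1}, \eqref{brs coherence}, together with the fact (recalled just before the Corollary) that the algebras $B_{\varepsilon_i}(\F;\deltabold)$ are split semisimple. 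Axiom \eqref{diagram symmetry} holds because the cell generators $y_{(\lambda, l)}$ of \eqref{walled Brauer antisymmetrizer} are manifestly $*$-invariant: $y_\lambda^* = y_\lambda$ for the (dual) Murphy cell generators of the symmetric groups, and $e_{r,s}\power l$ is $*$-invariant (it is a product of the self-adjoint $e_{a,b}$'s along commuting indices, arranged symmetrically). Axiom \eqref{diagram compatibility} is exactly the compatibility relation \eqref{branching factor intertwining walled case} of Lemma~\ref{b and v branching factor compatibility walled case}, which provides the $u$-branching factors (here called $v$).

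Next I would invoke Theorem~\ref{theorem abstract Murphy basis} for this tower: since \eqref{diagram 1}--\eqref{diagram restriction coherent} hold, the tower has a well-defined multiplicity-free branching diagram $\widehat B_\varepsilon$, and the set $\{b_\mfs^* \M_{(\lambda,l)} b_\mft \mid (\lambda,l) \in \widehat B_{\varepsilon_{r+s}},\ \mfs,\mft \in \Std_\varepsilon(\lambda,l)\}$ is a cellular basis of $B_{\varepsilon_{r+s}}(R';\deltabold) = B_{r,s}(R';\deltabold)$, equivalent to the original one from Proposition~\ref{B r s cellularity}. Here one must identify the cell generator $\M_{(\lambda,l)}$ with $y_{(\lambda,l)}$; this follows from Lemma~\ref{lemma lifting cell generators} and the description in Proposition~\ref{B r s cellularity}\eqref{brs generator} of the cell module as $(y_{(\lambda,l)} B_{r,s} + B_{r,s}^{\coldom(\lambda,l)})/B_{r,s}^{\coldom(\lambda,l)}$, so $y_{(\lambda,l)}$ is indeed a lift of $\alpha_{(\lambda,l)}\inv(\delta(\lambda,l)^* \otimes \delta(\lambda,l))$ for the natural generator $\delta(\lambda,l)$. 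The $b$-- and $v$--branching factors of \eqref{b t for truncated path walled case}--\eqref{v t for truncated path walled case} are, by construction, the ordered products of the $d$-- and $u$--branching factors of the abstract framework along paths on $\widehat B_\varepsilon$, so the basis $\mathscr B^\varepsilon_{r,s}$ of \eqref{eqn walled Brauer Murphy basis 1} is literally the Murphy cellular basis produced by Theorem~\ref{theorem abstract Murphy basis}. Finally, the partially ordered set in the cell datum is $\widehat B_{\varepsilon_{r+s}} = \widehat B_{r,s}$ with order $\coldomeq$, by Proposition~\ref{B r s cellularity}\eqref{brs po set}.

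The only genuine subtlety, and the step I expect to require the most care, is verifying that the cell generators $y_{(\lambda,l)}$ satisfy \eqref{diagram symmetry} (that $e_{r,s}\power l$ is $*$-invariant), and more importantly that Lemma~\ref{b and v branching factor compatibility walled case} genuinely supplies $u$-branching factors in the sense of axiom \eqref{diagram compatibility}: the abstract framework requires $\M_\mu d_{\lambda \to \mu} = (u_{\lambda\to\mu})^* \M_\lambda$, and \eqref{branching factor intertwining walled case} reads $y_{(\lambda,l)} b_{(\mu,m) \to (\lambda,l)} = (v_{(\mu,m)\to(\lambda,l)})^* y_{(\mu,m)}$, which matches precisely with $\M_\lambda \leftrightarrow y_{(\lambda,l)}$, $d \leftrightarrow b$, $u \leftrightarrow v$, noting the larger algebra sits at the target $(\lambda,l)$. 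One must also check that the $b$-branching factors of Proposition~\ref{B r s cellularity}\eqref{brs branching factors} are precisely those associated to the cell filtrations of restricted cell modules --- but this is asserted in that Proposition. Everything else is bookkeeping: matching the two path notations and confirming that $\widehat B_\varepsilon$ as generated abstractly agrees with the branching diagram described explicitly in Proposition~\ref{B r s cellularity}\eqref{brs branching rules}. I would therefore write the proof as a short sequence of citations: axioms from Proposition~\ref{B r s cellularity} and Lemma~\ref{b and v branching factor compatibility walled case}, then the conclusion from Theorem~\ref{theorem abstract Murphy basis}, with a sentence identifying $\M_{(\lambda,l)}$ with $y_{(\lambda,l)}$ and the abstract branching factors with the $b$-- and $v$--factors.
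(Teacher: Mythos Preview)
Your proposal is correct and takes essentially the same approach as the paper, which simply cites \myref{Proposition}{B r s cellularity} together with \myref{Theorems}{theorem abstract Murphy basis} and \myrefnospace{}{theorem:  closed form determination of the branching factors}. Your write-up is a faithful expansion of that one-line proof, spelling out which parts of the proposition supply which axioms and making explicit the identification of $y_{(\lambda,l)}$ with the abstract cell generator and of the $b$-- and $v$--factors with the abstract $d$-- and $u$--factors; the paper absorbs the compatibility relation \eqref{diagram compatibility} into its citation of \myref{Proposition}{B r s cellularity} (via the discussion surrounding \myref{Lemma}{b and v branching factor compatibility walled case}), whereas you cite that lemma directly.
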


\begin{proof} Follows from  
\myref{Proposition}{B r s cellularity} and
\myref{Theorems}{theorem abstract Murphy basis} and
\myrefnospace{}{theorem:  closed form determination of the branching factors}.
\end{proof}

\begin{rmk}  For emphasis, we remind the reader that all the data entering into the definition of the cellular bases \eqref{eqn walled Brauer Murphy basis 1} are explicitly determined.
\end{rmk}

For the remainder of the paper, we fix our choice of $\varepsilon$ as follows:
\begin{equation}  \label{standard lattice path}
\varepsilon:  (0, 0) \to (1, 0) \to \cdots \to (r, 0) \to (r, 1) \to \cdots \to (r, s). 
\end{equation}With this convention  in place, we set $\mathscr B_{r,s}:=\mathscr B^\varepsilon_{r,s}$ for the remainder of the paper.  
This is consistent with the choice made in   ~\cite{MR3473643} and ~\cite{Werth:2015}.   An example of the resulting branching graph is depicted in \myref{Figure}{figureBRAUER22222} below, for $r = s = 2$.   

\begin{figure}[ht!]
 $$  \scalefont{0.8}
\begin{tikzpicture}[scale=0.7]
 \draw (0,0) -- (0,-2);  
\draw(0,-2) --(2,-4); 
 \draw(0,-2) --(-2,-4); 
\draw(-2,-4)--(0,-6);
 \draw(2,-4)--(0,-6);
  
  \draw(2,-4)--(2+2,-6);
  \draw(-2,-4)--(-4,-6);

\draw(4,-6)--(1,-8);\draw(-4,-6)--(1,-8);
\draw(0,-6)--(-1,-8);
\draw(0,-6)--(1,-8);

\draw(4,-6)--(4,-8);\draw(4,-6)--(7,-8);
\draw(-4,-6)--(-4,-8);\draw(-4,-6)--(-7,-8);

\draw (0,-2) node[anchor=center]{\begin{centering}\Ylinecolour{white}\Ynodecolour{white}\Yfillcolour{white}\Yvcentermath1\Yboxdim{10pt}\gyoung(;)\end{centering}
       }		;
      \draw (0,-2)node[anchor=center]{$ (\Yvcentermath1\Ylinecolour{black}\Ynodecolour{white}\Yfillcolour{white}\Yboxdim{7pt}\gyoung(;) \; , \;\varnothing)$ 
       }		;

\draw (0,-6) node[anchor=center]{\begin{centering}\Ylinecolour{white}\Ynodecolour{white}\Yfillcolour{white}\Yvcentermath1\Yboxdim{10pt}\gyoung(;)\end{centering}
       }		;
      \draw (0,-6)node[anchor=center]{$ (\Yvcentermath1\Ylinecolour{black}\Ynodecolour{white}\Yfillcolour{white}\Yboxdim{7pt}\gyoung(;) \; , \;\varnothing)$ 
       }		;


\draw (2,-4) node[anchor=center]{\begin{centering}$\Yvcentermath1\Ylinecolour{white}\Ynodecolour{white}\Yfillcolour{white}\Yboxdim{11.5pt}\gyoung(;;,;;) $\end{centering}
       }		;
      \draw (2,-4) node[anchor=center]{\begin{centering}$\left(\Yvcentermath1\Yvcentermath1\Ylinecolour{black}\Ynodecolour{white}\Yfillcolour{white}\Yboxdim{7pt}\gyoung(;,;) \; , \;\varnothing\right)$\end{centering}
       }		;

\draw (-2,-4) node[anchor=center]{\begin{centering}$\Yvcentermath1\Ylinecolour{white}\Ynodecolour{white}\Yfillcolour{white}\Yboxdim{11.5pt}\gyoung(;;,;;) $\end{centering}
       }		;
      \draw (-2,-4) node[anchor=center]{\begin{centering}$\left(\Yvcentermath1\Yvcentermath1\Ylinecolour{black}\Ynodecolour{white}\Yfillcolour{white}\Yboxdim{7pt}\gyoung(;;) \; , \;\varnothing\right)$\end{centering}
       }		;

\draw (4,-6) node[anchor=center]{\begin{centering}$\Yvcentermath1\Ylinecolour{white}\Ynodecolour{white}\Yfillcolour{white}\Yboxdim{11.5pt}\gyoung(;;,;;) $\end{centering}
       }		;
      \draw (4,-6) node[anchor=center]{\begin{centering}$\left(\Yvcentermath1\Yvcentermath1\Ylinecolour{black}\Ynodecolour{white}\Yfillcolour{white}\Yboxdim{7pt}\gyoung(;,;) \; , \;\gyoung(;)\right)$\end{centering}
       }		;

\draw (-4,-6) node[anchor=center]{\begin{centering}$\Yvcentermath1\Ylinecolour{white}\Ynodecolour{white}\Yfillcolour{white}\Yboxdim{11.5pt}\gyoung(;;,;;) $\end{centering}
       }		;
      \draw (-4,-6) node[anchor=center]{\begin{centering}$\left(\Yvcentermath1\Yvcentermath1\Ylinecolour{black}\Ynodecolour{white}\Yfillcolour{white}\Yboxdim{7pt}\gyoung(;;) \; , \;\gyoung(;)\right)$\end{centering}
       }		;


\draw (1,-8) node[anchor=center]{\begin{centering}$\Yvcentermath1\Ylinecolour{white}\Ynodecolour{white}\Yfillcolour{white}\Yboxdim{11.5pt}\gyoung(;;,;;) $\end{centering}
       }		;
      \draw (1,-8) node[anchor=center]{\begin{centering}$\left(\Yvcentermath1\Yvcentermath1\Ylinecolour{black}\Ynodecolour{white}\Yfillcolour{white}\Yboxdim{7pt}\gyoung(;) \; , \; \gyoung(;)\right)$\end{centering}
       }		;

\draw (-1,-8) node[anchor=center]{\begin{centering}$\Yvcentermath1\Ylinecolour{white}\Ynodecolour{white}\Yfillcolour{white}\Yboxdim{9pt}\gyoung(;;,;;) $\end{centering}
       }		;
      \draw (-1,-8) node[anchor=center]{\begin{centering}$\left(\Yvcentermath1\Yvcentermath1\Ylinecolour{black}\Ynodecolour{white}\Yfillcolour{white}\Yboxdim{7pt}\varnothing,\varnothing\right)$\end{centering}
       }		;

\draw (4,-8) node[anchor=center]{\begin{centering}$\Yvcentermath1\Ylinecolour{white}\Ynodecolour{white}\Yfillcolour{white}\Yboxdim{11.5pt}\gyoung(;;,;;) $\end{centering}
       }		;
      \draw (4,-8) node[anchor=center]{\begin{centering}$\left(\Yvcentermath1\Yvcentermath1\Ylinecolour{black}\Ynodecolour{white}\Yfillcolour{white}\Yboxdim{7pt}\gyoung(;,;) \; , \;\gyoung(;;)\right)$\end{centering}
       }		;

\draw (7,-8) node[anchor=center]{\begin{centering}$\Yvcentermath1\Ylinecolour{white}\Ynodecolour{white}\Yfillcolour{white}\Yboxdim{11.5pt}\gyoung(;;,;;) $\end{centering}
       }		;
      \draw (7,-8) node[anchor=center]{\begin{centering}$\left(\Yvcentermath1\Yvcentermath1\Ylinecolour{black}\Ynodecolour{white}\Yfillcolour{white}\Yboxdim{7pt}\gyoung(;,;) \; , \;\gyoung(;,;)\right)$\end{centering}
       }		;

\draw (-4,-8) node[anchor=center]{\begin{centering}$\Yvcentermath1\Ylinecolour{white}\Ynodecolour{white}\Yfillcolour{white}\Yboxdim{11.5pt}\gyoung(;;,;;) $\end{centering}
       }		;
      \draw (-4,-8) node[anchor=center]{\begin{centering}$\left(\Yvcentermath1\Yvcentermath1\Ylinecolour{black}\Ynodecolour{white}\Yfillcolour{white}\Yboxdim{7pt}\gyoung(;;) \; , \;\gyoung(;;)\right)$\end{centering}
       }		;

\draw (-7,-8) node[anchor=center]{\begin{centering}$\Yvcentermath1\Ylinecolour{white}\Ynodecolour{white}\Yfillcolour{white}\Yboxdim{11.5pt}\gyoung(;;,;;) $\end{centering}
       }		;
      \draw (-7,-8) node[anchor=center]{\begin{centering}$\left(\Yvcentermath1\Yvcentermath1\Ylinecolour{black}\Ynodecolour{white}\Yfillcolour{white}\Yboxdim{7pt}\gyoung(;;) \; , \;\gyoung(;,;)\right)$\end{centering}
       }		;

\draw (0,0) node[anchor=center] {\begin{centering}\Ylinecolour{white}\Ynodecolour{white}\Yfillcolour{white}\Yboxdim{8pt}\gyoung(;)\end{centering}};
\draw (0,0) node[anchor=center] {$(\varnothing| \varnothing)$};
  \end{tikzpicture}
$$
 \caption{The branching graph, $\widehat{B}_\varepsilon$ when $r = s = 2$.}
 \label{figureBRAUER22222}
 \end{figure}
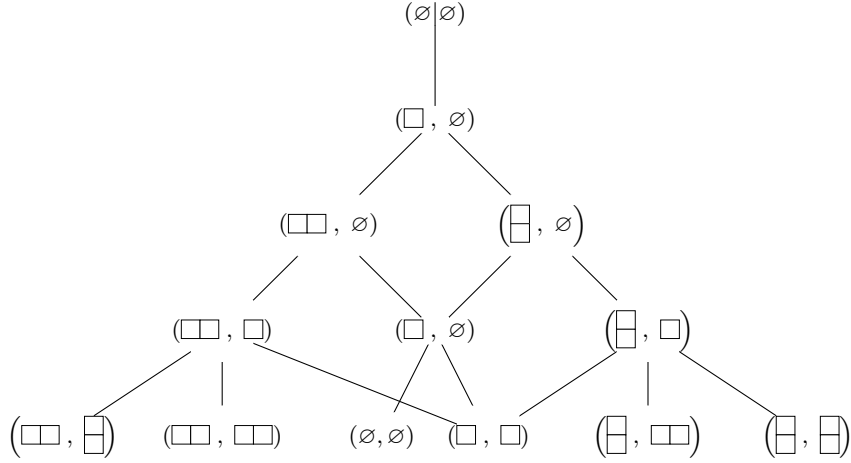

We  now show  
 that the cellular basis  $\mathscr B_{r, s}$
 in 
\myref{Corollary}{walled cellular basis 1}
  is actually a cellular basis over the generic ground ring $R = \Z[\deltabold]$.   The argument is a variant of those used in ~\cite[Section 6]{EG:2012}, and requires that we examine the transition matrix between the basis  $\mathscr B_{r, s}$ and the diagram basis of the walled Brauer algebra.
  
 \begin{defn}  Let $0 \le k \le s$ and $m \le  \min(r, k)$.     An $(r, s)$--walled  Brauer diagram $D$  is {\em of type} $(k, m)$ if $D$ has strands connecting the (nested) pairs of bottom vertices
\begin{equation}  
(\pbar{-r + m -1}, \pbar{k-m +1}),  \dots,  (\pbar{-r}, \pbar k)  \qquad  \text{($m$ strands)}. 
\end{equation}
and $D$ has no horizontal strands with both vertices in the interval $\{\pbar {-r + m}, \dots, \pbar {k-m}\}$.  
(A walled Brauer diagram of type $(0,0)$ is just an arbitrary  walled Brauer diagram.)
\end{defn}

\begin{lem} \label{corank and multiplication by branching factor walled case}
Let $D$ be an $(r,s)$--walled Brauer diagram  of type $(k, m)$ and corank $l$.
\begin{enumerate}[leftmargin=*,label=(\arabic{*}), font=\normalfont, align=left, leftmargin=*]
\item  Let $1 \le a \le k- m$  and let $D' =  D s_{a, k-m} e_{r, k-1}\power m$.  Then $D'$ is a walled  Brauer diagram of type $(k-1, m)$ and corankj $l$.
\item Let $1\le a \le r - m+1$  and let $D' =  D\,  \jj(s_{r - m + 1, a}) \, e_{r, k-1}\power{m-1}$.  Then $D'$ is a walled Brauer diagram of type $(k-1, m-1)$ and corank $l$.
\end{enumerate}
\end{lem}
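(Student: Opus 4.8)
The plan is to verify both assertions by direct diagrammatic inspection, tracking three invariants of a walled Brauer diagram under right multiplication: the corank (half the number of horizontal strands), the ``nested pairs of bottom vertices'' data that defines the type, and the absence of horizontal strands in the distinguished interval of bottom vertices. The key observation is that right multiplication by a permutation diagram (supported on bottom vertices) merely relabels the bottom endpoints of strands without changing the number of horizontal versus vertical strands, hence preserves corank; and right multiplication by an $e$-type diagram caps off a pair of bottom vertices, which is precisely the operation that creates the nesting demanded by the definition of ``type''. So the strategy is: first analyze the effect of the permutation factor, then the effect of the product $e_{r,k-1}\power m$ (respectively $e_{r,k-1}\power{m-1}$), and finally check that no \emph{new} horizontal strands appear inside the forbidden interval.

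For part (1): I would write $D' = D\, s_{a,k-m}\, e_{r,k-1}\power m$ and analyze the two factors in turn. Recall from \eqref{notation e sub i super ell} (adapted to the walled setting, cf.\ \myref{Notation}{} preceding the lemma) that $e_{r,k-1}\power m = e_{r-m+1,k-m+1}\cdots e_{r-1,k-1}\,e_{r,k-1}$ (or the analogous ordered product), which as a diagram connects the bottom vertex pairs $(\pbar{-r+m-1},\pbar{k-m+1}),\dots,(\pbar{-r},\pbar k)$ by horizontal strands and is the identity on the remaining bottom vertices. First, $D$ has type $(k,m)$ and corank $l$. Multiplying on the right by the cyclic permutation $s_{a,k-m} = (k-m, k-m-1,\dots,a)$ (acting on bottom vertices) just permutes the bottom endpoints in positions $a,\dots,k-m$; in particular it moves whatever was attached to $\pbar{k-m}$ down to $\pbar a$, and the result is still a walled Brauer diagram of corank $l$ (permutation multiplication cannot change the number of horizontal strands, and produces no closed loops since a permutation diagram has no horizontal strands). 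Then multiplying by $e_{r,k-1}\power m$: this stacking caps the bottom pairs listed above. Because $D s_{a,k-m}$ already had, from the type-$(k,m)$ structure of $D$, the appropriate strands terminating at $\pbar{-r+m-1},\dots,\pbar{-r}$ (the $m$ existing nested strands are untouched by $s_{a,k-m}$, whose support is among the positive bottom vertices $\{\pbar a,\dots,\pbar{k-m}\}$), one checks that stacking $e_{r,k-1}\power m$ reproduces exactly the $m$ nested bottom pairs required for type $(k-1,m)$, shifting the ``active'' positive bottom vertex from $\pbar{k-m}$ to $\pbar{k-m}$ now being capped off — so the new diagram has its topmost positive bottom pair at $(\pbar{-r+m-1},\pbar{k-m+1})$... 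I must be careful here to recount indices, but the point is combinatorial bookkeeping. The corank is unchanged: right multiplication by $e_{a',b'}$ adds exactly one horizontal strand on the bottom and removes one only if the vertex it touches already bore a horizontal strand, and the type-$(k,m)$ hypothesis plus the absence of horizontal strands in $\{\pbar{-r+m},\dots,\pbar{k-m}\}$ guarantees the relevant vertices carry vertical strands, so each $e$-factor capping in the interval neither creates nor destroys a loop and the corank stays $l$. (I should also double-check there are no closed loops formed, using exactly this no-interior-horizontal-strands hypothesis.) Finally, the new diagram has no horizontal strands with both endpoints in $\{\pbar{-r+m},\dots,\pbar{k-1-m}\}$ — wait, for type $(k-1,m)$ the forbidden interval is $\{\pbar{-r+m},\dots,\pbar{k-1-m}\}$ — this follows because that interval is a subinterval of the old forbidden interval and the operations performed did not introduce horizontal strands there.

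For part (2): the argument is parallel but now the permutation factor is $\jj(s_{r-m+1,a})$, supported on the \emph{negative} bottom vertices, and the idempotent product is $e_{r,k-1}\power{m-1}$, one shorter. Right multiplication by $\jj(s_{r-m+1,a})$ cyclically permutes bottom vertices $\pbar{-a},\dots,\pbar{-(r-m+1)}$, hence preserves corank and moves the strand at $\pbar{-(r-m+1)}$ to $\pbar{-a}$; then capping with $e_{r,k-1}\power{m-1}$ produces the $m-1$ nested bottom pairs $(\pbar{-r+m-2},\pbar{k-m+2}),\dots,(\pbar{-r},\pbar k)$ required for type $(k-1,m-1)$, and one checks the absence of interior horizontal strands and the invariance of corank exactly as before. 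The hard part — and the main obstacle — is the index arithmetic: one must pin down precisely which bottom vertices are capped by the ordered product of $e$'s, verify that the cyclic permutation lands the right existing strand in the right slot so that the capping creates a \emph{nested} (not crossing or repeated) family, and confirm no closed loop is ever formed. I expect this to require drawing the generic picture carefully for each of the two cases and doing a short induction on $m$ (peeling off one $e$-factor at a time), but no genuinely deep idea is needed beyond the structural hypotheses already built into the definition of ``type''.
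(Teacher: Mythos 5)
Your plan is correct and coincides with the paper's approach: the paper's entire proof of this lemma is ``Evident from examining pictures,'' i.e.\ exactly the direct diagrammatic bookkeeping you describe (permutation factors preserve corank, the $e$-factors create the required nested bottom pairs, and the no-interior-horizontal-strands hypothesis rules out closed loops). Your version just makes explicit the index-tracking that the paper leaves to the reader.
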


\begin{proof} Evident from examining pictures.
\end{proof}

\begin{lem}    \label{corank sequence of  y lambda b t}
 Let $(\lambda, l) \in \widehat B_{r, s}$ and  $\mft \in \Std_\varepsilon(\lambda, l)$.  
 Write $\mft(j) = (\lambda\power j, l_j)$. 
\begin{enumerate}[leftmargin=*,label=(\arabic{*}), font=\normalfont, align=left, leftmargin=*]
\item
For $0 \le  k \le s$,  
$y_{(\lambda, l)}    b_{\mft_{[r+ k, r+s]}} $
is a  signed sum  of walled Brauer diagrams  of type $(k, l_{r+k})$ and corank $l$.
\item  
$y_{(\lambda, l)} b_\mft 
$
is a signed sum of walled Brauer diagrams of corank $l$. 
\end{enumerate}
\end{lem}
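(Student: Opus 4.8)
\textbf{Proof proposal for \myref{Lemma}{corank sequence of  y lambda b t}.}

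The plan is to prove part (1) by induction on $s-k$, descending from $k = s$ down to $k = 0$, and then to deduce part (2) as the special case $k = 0$. For the base case $k = s$, the path truncation $\mft_{[r+s, r+s]}$ is trivial, so $b_{\mft_{[r+s, r+s]}} = 1$ and the claim reduces to showing that $y_{(\lambda, l)}$ itself is a signed sum of $(r,s)$--walled Brauer diagrams of type $(s, l_{r+s}) = (s, l)$ and corank $l$. This follows directly from the definition \eqref{walled Brauer antisymmetrizer}, $y_{(\lambda, l)} = \jj(y_{\lambda(1)}) \otimes y_{\lambda(2)} e_{r, s}\power l$: the factor $e_{r, s}\power l$ contributes exactly the $l$ nested horizontal strands on the right-hand bottom vertices $(\pbar{-r+l-1}, \pbar{s-l+1}), \dots, (\pbar{-r}, \pbar s)$ that define type $(s, l)$, while $\jj(y_{\lambda(1)})$ and $y_{\lambda(2)}$ are signed sums of permutation diagrams (hence contribute no horizontal strands), so the product has corank exactly $l$ and no horizontal strands among the central bottom vertices.

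For the inductive step, suppose the claim holds for some $k$ with $1 \le k \le s$; I want to deduce it for $k-1$. Write $\mft_{[r+k-1, r+s]} = \mft_{[r+k, r+s]} \circ (\text{the single edge } \mft(r+k-1) \to \mft(r+k))$, so that $b_{\mft_{[r+k-1, r+s]}} = b_{\mft_{[r+k, r+s]}} \cdot b_{\mft(r+k-1) \to \mft(r+k)}$. The edge $\mft(r+k-1) \to \mft(r+k)$ is an edge in $\widehat B_\varepsilon$ going from $\widehat B_{r, k-1}$ to $\widehat B_{r, k}$, i.e.\ an $\iota$-type edge, so by \myref{Proposition}{B r s cellularity}\eqref{brs branching factors}(a) the branching factor $b_{\mft(r+k-1) \to \mft(r+k)}$ is either $\bb{\mu(2)}{\lambda(2)}{k-l_{r+k}} e_{r, k-1}\power{l_{r+k}}$ (the $m = l$ case, where a box is added to the second component and $l_{r+k-1} = l_{r+k}$) or $\jj(\vv{\lambda(1)}{\mu(1)}{r - l_{r+k} + 1}) e_{r, k-1}\power{l_{r+k} - 1}$ (the $m = l-1$ case, where $l_{r+k-1} = l_{r+k} - 1$). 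By the inductive hypothesis, $y_{(\lambda, l)} b_{\mft_{[r+k, r+s]}}$ is a signed sum of walled Brauer diagrams of type $(k, l_{r+k})$ and corank $l$. Now I multiply each such diagram $D$ on the right by the branching factor. Recalling that the branching factors for the symmetric-group tower from \eqref{dual branching coefficients for S n} are (signed) sums of the cyclic permutations $s_{b, i}$, $s_{i,b}$ and $s_{b, b-r}$, the element $\bb{\mu(2)}{\lambda(2)}{k-l_{r+k}}$ is a signed sum of terms $s_{a, k-l_{r+k}}$ (supported on the right-hand vertices not involved in the nested horizontal strands), and $\jj(\vv{\lambda(1)}{\mu(1)}{\dots})$ is a signed sum of terms $\jj(s_{r-l_{r+k}+1, a})$. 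Applying \myref{Lemma}{corank and multiplication by branching factor walled case}(1) in the first case and (2) in the second case, each diagram $D$ of type $(k, l_{r+k})$ and corank $l$ is sent to a diagram of type $(k-1, l_{r+k-1})$ and corank $l$. Summing with signs preserves the "signed sum of walled Brauer diagrams of type $(k-1, l_{r+k-1})$ and corank $l$" property, which completes the induction.

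Part (2) is then immediate: $b_\mft = b_{\mft_{[0, r+s]}}$, and one first applies part (1) with $k = 0$ to conclude that $y_{(\lambda, l)} b_{\mft_{[r, r+s]}}$ is a signed sum of walled Brauer diagrams of corank $l$ (type $(0, l_r)$ imposes no horizontal-strand constraints beyond corank); then one multiplies by the remaining branching factors $b_{\mft_{[0, r]}}$ coming from $\iota'$-type edges, for which \myref{Proposition}{B r s cellularity}\eqref{brs branching factors}(b) shows the factors are again signed sums of (images under $\jj$ of) cyclic permutations times powers of $e$-idempotents of the appropriate corank; a short argument parallel to \myref{Lemma}{corank and multiplication by branching factor walled case} (or a direct appeal to the block-triangularity of the transition matrix, cf.\ \myref{Lemma}{Brauer block diagonal transition}) shows corank is preserved. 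The main obstacle I anticipate is purely bookkeeping: keeping straight which vertices are "left" versus "right", the exact index ranges in the nested horizontal strands of type $(k, m)$, and matching the indices in \eqref{dual branching coefficients for S n} with the hypotheses of \myref{Lemma}{corank and multiplication by branching factor walled case}(1)--(2); once the index conventions are pinned down, each step is a routine inspection of diagrams with no genuine difficulty.
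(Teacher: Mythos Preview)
Your proof of part (1) is correct and matches the paper's approach: induction on $s-k$ using \myref{Lemma}{corank and multiplication by branching factor walled case} and the explicit branching factors from \myref{Proposition}{B r s cellularity}\eqref{brs branching factors}(a). Your base case is spelled out more fully than the paper's, which is fine.

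For part (2), your argument is correct in spirit but overcomplicated. The paper makes a cleaner observation: for $0 \le j \le r$ the algebras $B_{\varepsilon_j} = B_{j,0}$ are just (images under $\jj$ of) symmetric group algebras, so each branching factor along $\mft_{[0,r]}$ is a \emph{single} signed permutation $\pm \jj(s_{b,j})$ (the $e$--powers are all $e^{(0)} = 1$ since $l_j = 0$ for $j \le r$). Hence $b_{\mft_{[0,r]}} = \pm \sigma$ for one permutation $\sigma \in \mathfrak S\{-1,\dots,-r\}$, and right multiplication by a permutation obviously preserves corank. Your references to ``a short argument parallel to \myref{Lemma}{corank and multiplication by branching factor walled case}'' and to ``block-triangularity'' are unnecessary; the latter is also mildly circular, since the block-diagonal transition (\myref{Corollary}{transition matrix walled case}) is precisely what this lemma is used to establish.
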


\begin{proof}  Statement (1) follows   from 
\myref{Lemma}{corank and multiplication by  branching factor walled case},
 \myref{Proposition}{B r s cellularity} 
part \eqref {brs branching factors}
 and induction on $s - k$.  In particular, we have that $y_{(\lambda, l)} b_{\mft_{[r, r+s] }}$  is  a signed sum of walled Brauer diagrams. 
  Now   
 $$
 y_{(\lambda, l)} b_\mft  = y_{(\lambda, l)} b_{\mft_{[r, r+s] }}  b_{\mft_{[0, r]}},
$$
and $ b_{\mft_{[0, r]}}$  is $\pm \sigma$ for some   $\sigma \in \mathfrak S\{-1, \dots, -r\}$,   so part (2) follows from part (1).
\end{proof}

 \begin{cor}  \label{transition matrix walled case}
 For $(\lambda, l) \in \widehat B_{r, s}$ and $\mfs, \mft \in \Std_\varepsilon({\lambda, l})$,
$y^{(\lambda, l)}_{\mfs \mft} = b_\mfs^* y_{(\lambda, l)}  b_\mft$ 
is a  signed sum of walled Brauer diagrams of corank $l$.
\end{cor}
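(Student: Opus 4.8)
The plan is to deduce this directly from \myref{Lemma}{corank sequence of  y lambda b t}, using the involution $*$ to upgrade its one–sided statement into the two–sided statement we need. Write $y^{(\lambda,l)}_{\mfs\mft}=b_\mfs^*\,y_{(\lambda,l)}\,b_\mft$ and put $w:=b_\mfs^*\,y_{(\lambda,l)}$, so that $y^{(\lambda,l)}_{\mfs\mft}=w\,b_\mft$. The first step is to show that $w$ is a signed sum of walled Brauer diagrams of type $(s,l)$ and corank $l$. Since $y_{(\lambda,l)}$ is a \cellgenerator, it is $*$–invariant by axiom \eqref{diagram symmetry}, so $w=(y_{(\lambda,l)}\,b_\mfs)^*$; applying part (2) of \myref{Lemma}{corank sequence of  y lambda b t} with $\mfs$ in the role of $\mft$, and then $*$ — which carries walled Brauer diagrams of corank $l$ to walled Brauer diagrams of corank $l$ — shows $w$ is a signed sum of corank–$l$ walled Brauer diagrams. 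Moreover, from $y_{(\lambda,l)}=(\jj(y_{\lambda(1)})\otimes y_{\lambda(2)})\,e_{r,s}\power l$ every diagram occurring in $y_{(\lambda,l)}$ has bottom row equal to that of $e_{r,s}\power l$, namely the nested configuration; and since left multiplication can only add bottom horizontal strands to a diagram while $w$ has corank exactly $l$, the bottom row of each diagram in $w$ must still be exactly this nested configuration — which is precisely what it means to be of type $(s,l)$.

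Next I would feed $w$ into the induction used to prove part (1) of \myref{Lemma}{corank sequence of  y lambda b t}, applied to $y^{(\lambda,l)}_{\mfs\mft}=w\,b_\mft$ in place of $y_{(\lambda,l)}\,b_\mft$. Split $b_\mft=b_{\mft_{[r,r+s]}}\,b_{\mft_{[0,r]}}$: since $\varepsilon_r=(r,0)$, the factor $b_{\mft_{[0,r]}}$ is (the $\jj$–image of) a signed permutation, whose right action merely permutes vertical strands and hence preserves corank; and $b_{\mft_{[r,r+s]}}$ is an ordered product of the $b$–branching factors for the inclusions $B_{r,k-1}\hookrightarrow B_{r,k}$, each of which by part \eqref{brs branching factors}(a) of \myref{Proposition}{B r s cellularity} is a symmetric–group branching factor (of corank $0$) times a power $e_{r,k-1}\power{\bullet}$. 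Multiplying $w$ by these factors one step at a time and invoking \myref{Lemma}{corank and multiplication by branching factor walled case} keeps us, at every stage, inside the span of walled Brauer diagrams of the appropriate type and corank $l$; hence $y^{(\lambda,l)}_{\mfs\mft}=w\,b_\mft$ is a signed sum of corank–$l$ walled Brauer diagrams.

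The one ingredient genuinely beyond \myref{Lemma}{corank sequence of  y lambda b t} — and the step I expect to need the most care — is the observation that its inductive proof of part (1) uses only that the element multiplied on the right is a signed sum of walled Brauer diagrams of type $(s,l)$ and corank $l$, not the specific form of $y_{(\lambda,l)}$; this is clear by inspection, but it is exactly what legitimises the substitution $y_{(\lambda,l)}\rightsquigarrow w$. As in the lemma, everything ultimately rests on the $e$–factors of the branching factors transporting the nested horizontal strands rather than creating or destroying them, which is the content of \myref{Lemma}{corank and multiplication by branching factor walled case}; the only new point here is that this analysis is insensitive to whatever $b_\mfs^*$ has done to the \emph{top} rows of the diagrams in $w$.
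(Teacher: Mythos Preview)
Your proof is correct and is essentially the natural way to derive the corollary from \myref{Lemma}{corank sequence of  y lambda b t}; the paper gives no proof at all, so there is nothing to compare beyond noting that your argument is the evident elaboration. The crucial observations are exactly the two you identify: first, that $w=b_\mfs^* y_{(\lambda,l)}=(y_{(\lambda,l)}b_\mfs)^*$ is a signed sum of corank-$l$ diagrams of type $(s,l)$ --- the corank from part~(2) and~$*$, and the type because left multiplication preserves bottom horizontal strands while the corank bound forces no new ones; second, that the inductive step of part~(1) invokes only \myref{Lemma}{corank and multiplication by branching factor walled case}, which takes as input an arbitrary diagram of the stated type and corank, so the induction is insensitive to replacing $y_{(\lambda,l)}$ by $w$.
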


\begin{thm}    \label{walled cellular basis 2}
Let $R = \Z[\deltabold]$ be the generic ground ring for the walled Brauer algebras. 
Fix  $r, s \ge 0$ and let  
  $\varepsilon$  be the standard lattice path from 
 from $(0, 0)$ to $(r, s)$ in  $\Z_{\ge 0} \times \Z_{\ge 0}$ given in 
 \eqref{standard lattice path}.   
 The tower  
$(B_{\varepsilon_i}(R; \deltabold))_{0 \le i \le r+s}$ satisfies conditions  \eqref{diagram 1} to \eqref{diagram 6}.  
In particular, 
  the set 
 $$
\mathscr B_{r, s} = 
\leftbrace  y_{\mfs \mft}\power{\lambda, l} = 
 b_\mfs^*  y_{(\lambda, l)}  b_\mft \suchthat    
(\lambda, l) \in \widehat B_{r, s} \text{ and }  
\mfs, \mft \in \Std_\varepsilon({\lambda, l})
   \rightbrace
$$
is a cellular basis of  $B_{r, s}(R; \deltabold)$.
 \end{thm}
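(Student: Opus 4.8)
The plan is to reduce the statement over the generic ground ring $R = \Z[\bolddelta]$ to the already-established statement over $R' = \Z[\bolddelta^{\pm 1}]$ from \myref{Corollary}{walled cellular basis 1}, using the block-triangular structure of the transition matrix between $\mathscr B_{r,s}$ and the diagram basis. First I would observe that by \myref{Lemma}{cellular tower for walled Brauer} and \myref{Proposition}{B r s cellularity}, the branching factors $b_{(\mu,m) \to (\lambda,l)}$ and the cell generators $y_{(\lambda,l)}$ are built from symmetrizers in the symmetric group algebras together with the idempotents $e_{r,s}\power l$; crucially, these all lie in the walled Brauer algebra over the generic ground ring $R$, \emph{not} merely over $R'$. (This parallels the remark after \myref{Lemma}{lemma lifting cell generators} and the treatment in \cite[Sections 5 and 6]{EG:2012}.) Hence the set $\mathscr B_{r,s} = \{ y^{\lambda,l}_{\mfs\mft} = b_\mfs^* y_{(\lambda,l)} b_\mft \}$ consists of genuine elements of $B_{r,s}(R; \bolddelta)$, and it has the right cardinality $\dim_{R'} B_{r,s}(R'; \bolddelta)$.

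Next I would invoke \myref{Corollary}{transition matrix walled case}: each element $y^{\lambda,l}_{\mfs\mft}$ is a signed sum of $(r,s)$--walled Brauer diagrams all of corank exactly $l$. Thus the transition matrix $T$ between $\mathscr B_{r,s}$ and the diagram basis of $B_{r,s}(R;\bolddelta)$ is block diagonal with respect to corank: on the block of corank $l$, the rows are indexed by pairs $(\mfs,\mft)$ of paths of shape $(\lambda,l)$ with $l$ fixed, and the entries are integers (independent of $\bolddelta$). Since $\mathscr B_{r,s}$ is already known to be an $R'$-basis of $B_{r,s}(R';\bolddelta)$, the matrix $T$ is invertible over $R' = \Z[\bolddelta^{\pm 1}]$; being block diagonal with integer entries and square, each block is an integer matrix invertible over $R'$, hence its determinant is a unit in $\Z[\bolddelta^{\pm 1}]$, i.e. $\pm \bolddelta^k$ for some $k$ --- but since the entries are integers, the determinant is an integer, forcing it to be $\pm 1$. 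Therefore each block, and hence $T$ itself, is invertible over $\Z$, a fortiori over $R$. This shows $\mathscr B_{r,s}$ is an $R$-basis of $B_{r,s}(R;\bolddelta)$.

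Finally I would check the cellular algebra axioms over $R$. The involution axiom $(y^{\lambda,l}_{\mfs\mft})^* = y^{\lambda,l}_{\mft\mfs}$ follows immediately from $y_{(\lambda,l)}^* = y_{(\lambda,l)}$ (which holds since $y_\lambda$, $\jj(y_{\lambda(1)})$, and the $e_{r,s}\power l$ are all $*$-invariant). For the multiplication axiom $y^{\lambda,l}_{\mfs\mft} a \equiv \sum_\mfv r_\mfv y^{\lambda,l}_{\mfs\mfv} \bmod B_{r,s}(R;\bolddelta)^{\coldom(\lambda,l)}$, I would argue that this is the general consequence of the framework of \myref{Theorem}{theorem abstract Murphy basis} applied to the tower $(B_{\varepsilon_i}(R;\bolddelta))_i$: once we know the tower satisfies \eqref{diagram 1}--\eqref{diagram restriction coherent} over $R$, the cellular basis produced by ordered products of branching factors is automatically cellular. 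Conditions \eqref{diagram 1}, \eqref{diagram cyclic cellular}, \eqref{diagram restriction coherent} follow from \myref{Proposition}{B r s cellularity} --- the only new point is that cyclicity, coherence, and the cell-filtration structure pass from $R'$ to $R$, which follows from the fact (just established) that the relevant module bases consist of elements defined over $R$ and that the transition matrices are invertible over $\Z$. Condition \eqref{diagram semisimplicity} is about $B_{\varepsilon_i}(\F;\bolddelta)$, already known. Condition \eqref{diagram symmetry} is the $*$-invariance noted above, and \eqref{diagram compatibility} is \myref{Lemma}{b and v branching factor compatibility walled case}, an identity that already holds over $R$.

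\textbf{Main obstacle.} The delicate point is the descent of the \emph{coherence} (cell-filtration) data from $R'$ to $R$: while \myref{Corollary}{transition matrix walled case} gives the block-triangularity needed to get the determinant-is-$\pm 1$ argument for the \emph{algebra} basis, one must separately verify that the cell modules $\Delta(\lambda,l)$ and their restriction filtrations are defined over $R$ and split correctly --- essentially re-running the $\ZZ$-form argument of \cite[Section 6]{EG:2012} for the walled case. I expect this to be the step requiring genuine (if routine) case-by-case checking rather than a one-line appeal.
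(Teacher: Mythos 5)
Your proposal is correct and follows essentially the same route as the paper's proof: reduce to the $R'=\Z[\bolddelta^{\pm1}]$ case of \myref{Corollary}{walled cellular basis 1}, then use \myref{Corollary}{transition matrix walled case} to see that the elements of $\mathscr B_{r,s}$ lie in $B_{r,s}(R;\bolddelta)$ with an integer-valued transition matrix to the diagram basis, forcing the determinant (a unit of $R'$ that is an integer) to be $\pm 1$. The paper's proof is terser --- it defers the verification of \eqref{diagram 1}--\eqref{diagram 6} over $R$ and the descent of the coherence data to the general discussion modelled on \cite[Sections 5 and 6]{EG:2012} --- so the determinant argument and the axiom checks you spell out are exactly the details it leaves implicit.
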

 
 \begin{proof}    We know that $\mathscr B_{r, s}$ is a cellular basis of the walled Brauer algebra over $R' = R_0[\deltabold\inv]$, from  \myref{Corollary}{walled cellular basis 1}.  
 From 
  \iftoggle{arxiv}
{ \myref{Corollary}{transition matrix walled case}}
 {Corollary B.5 in the arXiv version of this paper},
  the elements of $\mathscr B_{r, s}$
  lie in $B_{r,s}(R; \deltabold)$,   and the transition matrix between $\mathscr B_{r, s}$ and the diagram basis is integer valued.   It follows that $\mathscr B_{r, s}$ is a cellular basis of $B_{r, s}(R; \deltabold)$.
 \end{proof}

  \begin{lem}  \label{walled Brauer block diagonal transition}
 If $D$ is an $(r, s)$--walled Brauer diagram of corank $\ge m+1$,  then for all $\mu \in  \widehat {\mathfrak S}_{r-m}  \times \widehat {\mathfrak S}_{s-m}$, 
$D$ is an element of the ideal $B_{r, s} (R; \deltabold)^{\coldom (\mu, m)}$.
 \end{lem}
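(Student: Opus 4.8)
The plan is to repeat, in the walled setting, the argument used for \myref{Lemma}{Brauer block diagonal transition} in the ordinary Brauer case: namely, to exploit that the cellular basis $\mathscr B_{r,s}$ is ``block diagonal with respect to corank'' relative to the diagram basis. The key input is \myref{Corollary}{transition matrix walled case}, according to which, for each $(\lambda,l)\in\widehat B_{r,s}$ and $\mfs,\mft\in\Std_\varepsilon(\lambda,l)$, the basis element $y^{(\lambda,l)}_{\mfs\mft}=b_\mfs^* y_{(\lambda,l)} b_\mft$ is a signed sum of $(r,s)$--walled Brauer diagrams of corank \emph{exactly} $l$. Equivalently, if we group both the cellular basis $\mathscr B_{r,s}$ and the diagram basis of $B_{r,s}(R;\deltabold)$ according to corank, then the transition matrix between them is block diagonal with respect to this grading.

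Next I would invoke \myref{Theorem}{walled cellular basis 2}, which asserts that $\mathscr B_{r,s}$ is a cellular $R$--basis of $B_{r,s}(R;\deltabold)$ over the generic ground ring $R=\ZZ[\deltabold]$; hence the transition matrix above is invertible over $R$. An invertible block-diagonal matrix (with blocks indexed by the finitely many possible coranks) necessarily has each diagonal block square and invertible, and therefore its inverse is again block diagonal with the same block structure. Consequently every $(r,s)$--walled Brauer diagram $D$ of corank $l$ is an $R$--linear combination of cellular basis elements $y^{(\lambda,l)}_{\mfu\mfv}$ with $\lambda\in\widehat{\mathfrak S}_{r-l}\times\widehat{\mathfrak S}_{s-l}$ and $\mfu,\mfv\in\Std_\varepsilon(\lambda,l)$ --- that is, with the same corank $l$.

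To finish, compare with the partial order on $\widehat B_{r,s}$ recorded in \myref{Proposition}{B r s cellularity}~\eqref{brs po set}: one has $(\lambda,l)\coldom(\mu,m)$ whenever $l>m$, irrespective of $\lambda$ and $\mu$. Thus, if $D$ has corank $l\ge m+1$, then each label $(\lambda,l)$ appearing in the expansion of $D$ satisfies $(\lambda,l)\coldom(\mu,m)$ for every $\mu\in\widehat{\mathfrak S}_{r-m}\times\widehat{\mathfrak S}_{s-m}$. Since $B_{r,s}(R;\deltabold)^{\coldom(\mu,m)}=\Span_R\{y^{(\nu,n)}_{\mfu\mfv}\mid (\nu,n)\coldom(\mu,m)\}$, it follows that $D\in B_{r,s}(R;\deltabold)^{\coldom(\mu,m)}$, as claimed.

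The only point requiring any thought beyond bookkeeping is the elementary linear-algebra observation that an invertible block-diagonal matrix has an invertible block-diagonal inverse with square diagonal blocks; this is what forces the combinatorial counts (number of corank-$l$ walled diagrams versus $\sum_{\lambda}(\#\Std_\varepsilon(\lambda,l))^2$) to agree, so that no separate combinatorial argument is needed. Everything else is a direct citation of \myref{Corollary}{transition matrix walled case}, \myref{Theorem}{walled cellular basis 2}, and \myref{Proposition}{B r s cellularity}.
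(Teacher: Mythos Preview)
Your proposal is correct and is essentially identical to the paper's own proof, which simply points to \myref{Lemma}{Brauer block diagonal transition} and \myref{Corollary}{transition matrix walled case} and invokes the same block-diagonality argument. Your added remark that the invertibility of a block-diagonal transition matrix forces each block to be square and invertible is a helpful elaboration, but does not depart from the paper's reasoning.
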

 
\begin{proof} The proof is essentially the same as that of 
\myref{Lemma}{Brauer block diagonal transition}  
using the block diagonality of the transition matrix from the Murphy basis to the diagram basis, which is part of the statement of   
 \iftoggle{arxiv}
{ \myref{Corollary}{transition matrix walled case}}
 {Corollary B.5 in the arXiv version of this paper}.
\end{proof}
 
\subsection{Jucys--Murphy elements for the walled Brauer algebras}  \label{subsection JM elements WBA}
 
Let $R = \ZZ[\bolddelta]$ be the generic ground ring for the walled Brauer algebras,  and $\FF = \Q(\bolddelta)$ its field of fractions.
Write 
$B_{a, b}$ for $B_{a, b}(R; \bolddelta)$.



We define Jucys--Murphy elements in the double sequence of walled Brauer algebras, following 
\cite{MR2955190}, equations (2.12) and (2.13);     The JM elements are associated to the inclusions $\iota : B_{a, b}  \hookrightarrow B_{a, b+1}$ and  $\iota' : B_{a, b}  \hookrightarrow B_{a+1, b}$.
Define
\begin{equation}
\begin{aligned}
L_{a,b}^{a+1, b} &=  \sum_{1 \le k \le a}  \jj((k, a+1))  -  \sum_{1 \le k \le b} e_{a+1, k} \in B_{a+1, b},  \text{ and }  \\
L_{a,b}^{a, b+1} &=  \sum_{1 \le k \le b}  (k, b+1)    -  \sum_{1 \le k \le a}  e_{k, b+1} \in B_{a, b+1}.
\end{aligned}
\end{equation}
     One can easily check that  the elements  $L_{a,b}^{a+1, b} $ and $L_{a,b}^{a, b+1}$  are $*$--invariant and commute pointwise with $B_{a, b}$.  
If $\eta$ is a lattice path from $(0,0)$  to $(r, s)$ in $\ZZ_{\ge 0} \times \ZZ_{\ge 0}$, define $L_{\eta, i} =
 L_{\eta_{i-1}}^{\eta_i}$ for $1 \le i \le r+s$.
 \begin{lem} \label{wba JM 1}
  The sum $L_{\eta, 1} + \cdots + L_{\eta, r+s}$ is central in $B_{r, s}$ and independent of the choice of the lattice path $\eta$.  In fact, 
\begin{equation} \label{eqn wba JM 1}
 L_{\eta, 1} + \cdots + L_{\eta, r+s} = \sum_{1 \le j < k \le r} \jj((j, k))  + \sum_{1 \le j < k \le s} (j, k) -
 \sum_{\begin{subarray}c 
 1 \le j \le r \\
 1 \le k \le s
 \end{subarray}
 }  e_{j, k}.
\end{equation}
 \end{lem}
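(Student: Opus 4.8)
\textbf{Proof proposal for Lemma \ref{wba JM 1}.}

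The plan is to verify the identity \eqref{eqn wba JM 1} directly, and then to deduce both the centrality of the sum and its independence of the lattice path $\eta$ from that single closed formula. The key observation is that the right-hand side of \eqref{eqn wba JM 1}, call it $Z_{r,s}$, is manifestly a symmetric expression: it is the sum of \emph{all} transpositions $\jj((j,k))$ with $1 \le j < k \le r$ among the negative vertices, plus all transpositions $(j,k)$ with $1 \le j<k\le s$ among the positive vertices, minus the sum of all $e_{j,k}$. Since $\mathfrak S_{r,s} = \mathfrak S\{-1,\dots,-r\} \times \mathfrak S\{1,\dots,s\}$ acts on the generators $\jj((j,k))$, $(j,k)$, $e_{j,k}$ by conjugation permuting the labels within the two blocks, $Z_{r,s}$ is fixed under conjugation by $\mathfrak S_{r,s}$; and one checks that conjugation by the remaining generator $e_{a,b}$ also fixes $Z_{r,s}$, since $e_{a,b}$ merely exchanges a negative label with a positive label and $Z_{r,s}$ is symmetric in the appropriate way. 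Hence $Z_{r,s}$ is central in $B_{r,s}$ once we establish it equals the sum of JM elements.

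First I would prove \eqref{eqn wba JM 1} by induction on $r+s$, using the recursive definition $L_{\eta,i} = L_{\eta_{i-1}}^{\eta_i}$. It suffices to treat one step of a lattice path, say the inclusion $\iota' : B_{a,b} \hookrightarrow B_{a+1,b}$ (the case $\iota$ being symmetric via Lemma \ref{walled LR isomorphism}). By the inductive hypothesis applied inside $B_{a,b}$, the partial sum up to level $a+b$ equals $Z_{a,b}$, and then we must check $Z_{a,b} + L_{a,b}^{a+1,b} = Z_{a+1,b}$ inside $B_{a+1,b}$. But by the definition of $L_{a,b}^{a+1,b}$, adding it to $Z_{a,b}$ contributes exactly the transpositions $\jj((k,a+1))$ for $1 \le k \le a$ (these are precisely the new pairs $(k, a+1)$ with $k < a+1$ among the negative vertices) and subtracts the new terms $e_{a+1,k}$ for $1 \le k \le b$; this is exactly the difference $Z_{a+1,b} - Z_{a,b}$. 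So the step goes through, and since any lattice path from $(0,0)$ to $(r,s)$ is a sequence of such steps, the sum $\sum_{i} L_{\eta,i}$ telescopes to $Z_{r,s}$ regardless of the order in which the steps are taken. This simultaneously yields formula \eqref{eqn wba JM 1} and the independence of $\eta$.

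For the centrality claim, once \eqref{eqn wba JM 1} is known, I would argue as in the previous paragraph: $Z_{r,s}$ commutes with every generator of $B_{r,s}$. The generators $s'_i$ and $s_j$ act by conjugation as permutations of the negative (resp.\ positive) labels, and $\sum_{j<k}\jj((j,k))$, $\sum_{j<k}(j,k)$, $\sum_{j,k}e_{j,k}$ are each invariant under such permutations (the class sum of transpositions in a symmetric group is central, and the double sum $\sum_{j,k} e_{j,k}$ is symmetric in both families of indices). Conjugation by $e_{1,1}$ (which, together with the $s'_i, s_j$, generates $B_{r,s}$) must also be checked to fix $Z_{r,s}$; this is a short diagram computation, using the walled Brauer relations relating $e_{a,b}$, $\jj((j,k))$, $(j,k)$, and the other $e_{j,k}$. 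I expect the main obstacle to be precisely this last verification: bookkeeping the conjugation action of $e_{1,1}$ on the three summands of $Z_{r,s}$ and confirming the various cross-terms cancel. This is routine but somewhat tedious; alternatively, one can sidestep it entirely by noting that, having expressed $\sum_i L_{\eta,i}$ as path-independent, one can also realize $\sum_i L_{\eta,i}$ as the image under the natural maps of the corresponding central element in a larger algebra, or invoke \cite{MR2955190} where the centrality of exactly this element is established; but the self-contained diagrammatic check is preferable here.
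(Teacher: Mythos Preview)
Your approach is correct and is essentially what the paper intends; the paper's own proof simply says the equality and centrality are ``straightforward to verify'' and cites \cite{MR2955190}, so you have supplied the details that the paper omits. The telescoping argument for \eqref{eqn wba JM 1} is exactly right, and path--independence follows at once.

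One point of language deserves correction. In your first paragraph you speak of ``conjugation by the remaining generator $e_{a,b}$'' and claim that ``$e_{a,b}$ merely exchanges a negative label with a positive label''. This is not correct: $e_{a,b}$ is not invertible (recall $e_{a,b}^2 = \delta\, e_{a,b}$), so conjugation by it is not defined, and it certainly does not act on diagrams as a permutation of labels. What you actually need, and what you correctly say in your final paragraph, is the identity $e_{1,1}\, Z_{r,s} = Z_{r,s}\, e_{1,1}$, verified as a direct computation with walled Brauer diagrams. This check is indeed routine: use the relations $e_{1,1}\, \jj((1,k)) = e_{1,1}\, e_{k,1}$ and $e_{1,1}\,(1,k) = e_{1,1}\, e_{1,k}$ (and their mirror images), together with the fact that $e_{1,1}$ commutes with all $\jj((j,k))$, $(j,k)$, and $e_{j,k}$ having no index equal to $1$. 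The cross--terms then match up. So your outline is sound; just replace the misleading ``conjugation'' description with this commutation check.
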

 
 \begin{proof}  As observed in \cite{MR2955190}, Section 2,  the equality \eqref{eqn wba JM 1} is straightforward to verify, as is the statement that the element on the right hand side is central.
 \end{proof}
 
For the remainder of this section, we fix  the lattice path $\varepsilon$ as in \eqref{standard lattice path}, and the corresponding sequence of walled Brauer algebras
$(B_{\varepsilon_i})_{1 \le i \le r+s}$, with branching graph $\widehat B_\varepsilon$.   Correspondingly, we define $L_i = L_{\varepsilon, i}$ for $1 \le i \le r+s$.     It follows immediately from the discussion above that
 $L_1, \dots, L_{r+s}$ is an additive family of Jucys--Murphy elements in the sense of \myref{Section}{subsection JM elements quotients}.   Consequently, as discussed in \myref{Section}{subsection JM elements quotients},   the JM elements  $L_1, \dots, L_{r+s}$ act diagonally on the seminormal basis of cell modules of $B_{r, s}^\FF$  and triangularly on the Murphy basis.  We proceed to determine the {\sf contents} of edges $(\lambda, l) \to (\mu, m)$ in $\widehat B_\varepsilon$, which are the eigenvalues of the JM elements $L_i$.

%
%
%


Recall that if $a$ is a node in a Young diagram, the content $c(a)$  of $a$ is the column index of $a$ minus the row index of $a$.   For a Young diagram $\lambda$, define $\alpha(\lambda)$ to be the sum of the contents of all of its nodes.

\begin{lem}  For $1 \le k \le r+s$,  and for $(\lambda, l)  = ((\lambda(1), \lambda(2)), l)\in \widehat B_{\varepsilon_k}$  the central element
$L_1 + \cdots +  L_k$ acts on the cell module $\Delta((\lambda, l))$  of $B_{\varepsilon_k}$  by 
$$
\beta((\lambda, l)) := \alpha(\lambda(1)) +  \alpha(\lambda(2)) - l \bolddelta.
$$
\end{lem}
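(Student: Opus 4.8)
The plan is to compute the scalar by which the central element acts. Since $L_1,\dots,L_{r+s}$ is an additive family of Jucys--Murphy elements (\myref{Lemma}{wba JM 1} and \eqref{cond JM1}--\eqref{cond JM2}), $z_k:=L_1+\cdots+L_k$ is central in $B_{\varepsilon_k}$ and, by \eqref{cond JM2}, acts on $\Delta((\lambda,l))$ by a scalar $d((\lambda,l))\in R$, and by \myref{Lemma}{wba JM 1} we have $z_k=z^{(a,b)}$ for $(a,b)=\varepsilon_k$, where $z^{(a,b)}$ is the path-independent central element of $B_{a,b}$ appearing on the right-hand side of \eqref{eqn wba JM 1}. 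It therefore suffices to prove the uniform statement, by induction on $l$: for every $a,b\ge0$ and every $(\lambda,l)\in\widehat B_{a,b}$, the element $z^{(a,b)}$ acts on the cell module $\Delta_{B_{a,b}}((\lambda,l))$ by $\beta((\lambda,l))=\alpha(\lambda(1))+\alpha(\lambda(2))-l\bolddelta$. (This $\Delta_{B_{a,b}}((\lambda,l))$ is intrinsic, the various lattice-path Murphy cell data being equivalent.) As $d((\lambda,l))\in R$, I may compute over $\FF=\Q(\bolddelta)$, where everything is split semisimple, and since $B_{a,b}\cong B_{b,a}$ (\myref{Lemma}{walled LR isomorphism}) with $\beta$ symmetric in $\lambda(1),\lambda(2)$, I may assume $a\ge b$.

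\textbf{Base case $l=0$.} When $l=0$, \myref{Lemma}{walled Brauer block diagonal transition} shows every walled Brauer diagram of positive corank lies in $B_{a,b}^{\coldom(\lambda,0)}$, so the ideal $J_{a,b}$ they span annihilates $\Delta_{B_{a,b}}((\lambda,0))$; hence this cell module is a module over $B_{a,b}/J_{a,b}\cong\FF\mathfrak S_a\otimes\FF\mathfrak S_b$, namely the dual-Murphy cell module labelled $(\lambda(1),\lambda(2))$, i.e.\ the outer tensor product $S^{\lambda(1)}\boxtimes S^{\lambda(2)}$ over $\FF$. In $z^{(a,b)}$ the $e_{i,j}$-terms lie in $J_{a,b}$ and act by $0$, while the two transposition sums are the images of the sums of all transpositions of $\mathfrak S_a$ and of $\mathfrak S_b$, which by Frobenius's content-sum formula act by $\alpha(\lambda(1))$ and $\alpha(\lambda(2))$. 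Thus $d((\lambda,0))=\alpha(\lambda(1))+\alpha(\lambda(2))=\beta((\lambda,0))$; in particular this settles the whole first phase $k\le r$, where $l\equiv0$.

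\textbf{Inductive step $l\ge1$.} Assume the statement for smaller $l$ (all $a,b$). Given $(\lambda,l)\in\widehat B_{a,b}$ with $a\ge b\ge l\ge1$, I would realize $B_{a,b}$ as the top algebra $\wbs_{a+b}$ of the zigzag tower $(\wbs_j)$ with $t=a-b$, built by Jones basic constructions from the auxiliary tower of the construction preceding \myref{Lemma}{cellular tower for walled Brauer}; along this tower the branching graph is obtained by reflections, so $(\lambda,l)$ reflects to $(\lambda,l-1)\in\widehat B_{a-1,b-1}=\widehat{\wbs}_{a+b-2}$ with the same $\lambda=(\lambda(1),\lambda(2))$, and the pertinent idempotent is $e_{a,b}$. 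From \myref{Lemma}{wba JM 1} applied to the zigzag lattice path $\zeta$, $z^{(a,b)}=z^{(a-1,b-1)}+\big(L_{\zeta,a+b-1}+L_{\zeta,a+b}\big)$, where $z^{(a-1,b-1)}$ is central in $B_{a-1,b-1}$ and, by the inductive hypothesis, acts on $\Delta_{B_{a-1,b-1}}((\lambda,l-1))$ by $\beta((\lambda,l-1))$. Via the Jones basic construction (cf.\ \myref{Lemma}{lemma lifting cell generators} and \cite{EG:2012}), $\Delta_{\wbs_{a+b}}((\lambda,l))$ is obtained by inducing $\Delta_{\wbs_{a+b-2}}((\lambda,l-1))$ through $e_{a,b}$, and on it $z^{(a-1,b-1)}$ still acts by $\beta((\lambda,l-1))$ while the correction $L_{\zeta,a+b-1}+L_{\zeta,a+b}$ acts by the scalar $-\bolddelta$. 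Granting the last point, $d((\lambda,l))=\beta((\lambda,l-1))-\bolddelta=\beta((\lambda,l))$, closing the induction.

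\textbf{Main obstacle.} The hard part will be the two assertions of the inductive step: the precise identification of $\Delta_{\wbs_{a+b}}((\lambda,l))$ as induced from $\Delta_{\wbs_{a+b-2}}((\lambda,l-1))$ through $e_{a,b}$, and --- the decisive computation --- that $L_{\zeta,a+b-1}+L_{\zeta,a+b}$ then acts by $-\bolddelta$. The latter should come down to the relation $e_{a,b}^2=\bolddelta\,e_{a,b}$ together with the observation that the non-permutation parts of those two JM elements collapse onto the single new cap while their transposition parts act trivially there or land in $B_{a,b}^{\coldom(\lambda,l)}$; carrying this out requires careful bookkeeping of corank under multiplication by cap and permutation diagrams --- for which \myref{Lemma}{corank and multiplication by branching factor walled case} and \myref{Lemma}{walled Brauer block diagonal transition} are the tools --- and of the branching factors of \myref{Theorem}{theorem:  closed form determination of the branching factors}, using the compatibility relation \eqref{b and v compatibility for paths walled case} to move between $y_{(\lambda,l)}$ and the path representatives that span the cell module. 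As a shortcut, one could instead quote the explicit content (eigenvalue) formulas for the walled Brauer Jucys--Murphy elements from \cite{MR2955190} and conclude immediately by summing contents along any path via \eqref{cond JM2}.
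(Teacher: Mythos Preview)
The paper's proof consists entirely of a citation: it refers to \cite[Lemma~2.3]{MR2955190} or \cite[Lemma~4.1]{MR2417984}. This is exactly the ``shortcut'' you mention in your final sentence, so you have already identified the paper's approach.

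Your main route --- induction on $l$, with the $l=0$ case reduced to the Frobenius content-sum formula for symmetric groups and the inductive step handled via the Jones basic construction --- is a genuinely different and self-contained argument. The base case is clean and correct: for $l=0$ the ideal $J_{a,b}$ annihilates the cell module, the $e_{i,j}$ summands of $z^{(a,b)}$ drop out, and what remains is precisely the two transposition sums acting on $S^{\lambda(1)}\boxtimes S^{\lambda(2)}$ by $\alpha(\lambda(1))+\alpha(\lambda(2))$.

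For the inductive step you are honest that the decisive computation --- that $L_{\zeta,a+b-1}+L_{\zeta,a+b}$ acts by $-\bolddelta$ on the cell module obtained by globalizing $\Delta((\lambda,l-1))$ through $e_{a,b}$ --- is not carried out. Your diagnosis of what is needed is accurate: the identification of $\Delta((\lambda,l))$ with the globalized module is part of the Jones basic construction machinery of \cite{MR2794027,EG:2012} (this is how the cell modules with $l\ge1$ are built in the first place), and the scalar computation does ultimately reduce to relations of the type $e_{a,b}(L_{\zeta,a+b-1}+L_{\zeta,a+b})e_{a,b}$ collapsing to $-\bolddelta\,e_{a,b}$ plus lower-corank terms that die in the cell quotient. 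This is doable but involves exactly the bookkeeping you describe, and is essentially the content of the proof in \cite{MR2417984}. So your plan is sound, but as a complete proof it currently rests on either that computation or on the citation --- which brings you back to the paper's one-line proof.
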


\begin{proof}    \cite[Lemma 2.3]{MR2955190} or  \cite[Lemma 4.1]{MR2417984}.
\end{proof}

    
    For an edge
$(\lambda, l) \to (\mu, m)$  in $\widehat B_\varepsilon$, 
define  the {\sf content} of the edge to be
\begin{align}\label{equation: contents for Brauer algebra}
\kappa((\lambda, l) \to (\mu, m)) =  \beta((\mu, m)) - \beta((\lambda, l)) = 
\begin{cases}
 c(a)  &\text{if   $\mu = \lambda \cup \{a\}$,}\\
 - c(a) -  \bolddelta &\text{if  $\mu = \lambda \setminus \{a\}$.}   
\end{cases}  
\end{align}
For $(\lambda,l)\in\widehat{B}_{r, s}$ and $\stt  \in \Std_\varepsilon((\lambda, l))$,  and for $1 \le i \le r+s$,  define 
$
\kappa_\stt(i) = \kappa(\stt(i-1) \to \stt( i)).
$

 For $(\lambda, l) \in \widehat B_{r, s}$,  let $\{y_\mft  \suchthat  \mft \in \Std_\varepsilon((\lambda, l))\}$ denote the Murphy basis of the cell module $\Delta((\lambda, l))$ corresponding to the cellular basis of \myref{Theorem}{walled cellular basis 2}.  Let  $\{f_\mft  \suchthat  \mft \in \Std_\varepsilon((\lambda, l))\}$  denote the seminormal basis obtained from the Murphy basis as in  \myref{Section}{section some consequences from BEG}.


\begin{prop}  For $(\lambda, l) \in \widehat B_{r, s}$,  for  $\stt \in \Std_\varepsilon((\lambda, l))$, and for
 $i=1,\ldots,k,$, 
$$
f_{\stt} L_i = \kappa_\stt(i)f_\stt ,
$$
and 
there exist scalars $r_{\sts}\in R,$ for ${\sts}\in\Std_\varepsilon({(\lambda,l)}) $ such that  
\begin{align*} 
y_{\stt}L_i=\kappa_{\stt}(i)y_{\stt}+\sum_{{\sts}\rhd{\stt}}r_{\sts}y_{\sts}. 
\end{align*}
\end{prop}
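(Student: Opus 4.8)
The statement is an instance of the general theory recalled in \myref{Section}{section some consequences from BEG}, applied to the tower $(B_{\varepsilon_i})_{1\le i\le r+s}$ equipped with the Murphy (here, dual Murphy) cellular basis of \myref{Theorem}{walled cellular basis 2}. The plan is to verify that $L_1,\dots,L_{r+s}$ is an additive family of Jucys--Murphy elements for this tower, in the sense of conditions \eqref{cond JM1} and \eqref{cond JM2} of \myref{Section}{subsection JM elements quotients}, and then to invoke conditions \eqref{cond JM4} and \eqref{cond JM5}, which are established in \cite{BEG} (Section 4) for any such tower satisfying \eqref{diagram 1}--\eqref{diagram 6}.

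First I would record that $(B_{\varepsilon_i})_{0\le i\le r+s}$ satisfies \eqref{diagram 1}--\eqref{diagram 6}, which is precisely the content of \myref{Theorem}{walled cellular basis 2}, so all the machinery of \myref{Section}{section some consequences from BEG} is available. Next I would check \eqref{cond JM1}: each $L_i = L_{\varepsilon,i} = L_{\varepsilon_{i-1}}^{\varepsilon_i}$ lies in $B_{\varepsilon_i}$, is $*$--invariant, and commutes pointwise with $B_{\varepsilon_{i-1}}$ --- all of this was observed immediately after the definition of $L_{a,b}^{a+1,b}$ and $L_{a,b}^{a,b+1}$. Then I would check \eqref{cond JM2}: by \myref{Lemma}{wba JM 1} the sum $L_1+\cdots+L_k$ equals the central element displayed in \eqref{eqn wba JM 1}, and by the lemma just preceding the proposition (``For $1\le k\le r+s$, \dots the central element $L_1+\cdots+L_k$ acts on $\Delta((\lambda,l))$ by $\beta((\lambda,l))$''), it acts on the cell module $\Delta((\lambda,l))$ of $B_{\varepsilon_k}$ as the scalar $\beta((\lambda,l)) = \alpha(\lambda(1))+\alpha(\lambda(2))-l\bolddelta \in R$. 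Hence the additive JM axioms hold with content function $d(\la,l)=\beta((\la,l))$, and the content of an edge $(\lambda,l)\to(\mu,m)$ is $\kappa((\lambda,l)\to(\mu,m)) = \beta((\mu,m))-\beta((\lambda,l))$, which is exactly the formula \eqref{equation: contents for Brauer algebra} used to define $\kappa_\stt(i)$.

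With this in place, the two displayed identities of the proposition are immediate: the eigenvalue equation $f_\stt L_i = \kappa_\stt(i) f_\stt$ is condition \eqref{cond JM4} of \myref{Section}{subsection JM elements quotients}, applied with $\la$ replaced by $(\la,l)$ and the seminormal basis $\{f_\stt\}$ of $\Delta((\la,l))$ over $\FF$ constructed as in \myref{Section}{section some consequences from BEG}; and the dominance--triangular action $y_\stt L_i = \kappa_\stt(i) y_\stt + \sum_{\sts\rhd\stt} r_\sts y_\sts$ with $r_\sts\in R$ is condition \eqref{cond JM5} for the same tower. The only point requiring a word of care is that \eqref{cond JM5} as stated in \myref{Section}{subsection JM elements quotients} gives coefficients in $R$ for the tower over $R=\ZZ[\bolddelta]$, which is exactly the ground ring fixed at the start of \myref{Section}{subsection JM elements WBA}; since $\kappa_\stt(i)\in R$ by the content formula and the Murphy basis of \myref{Theorem}{walled cellular basis 2} is defined over $R$, no localization is needed and the coefficients $r_\sts$ genuinely lie in $R$. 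The main (and only mild) obstacle is therefore bookkeeping: matching the indexing conventions of the abstract framework in \cite{BEG} --- where cell modules are labelled by a single symbol $\la\in\widehat A_r$ --- with the present pair notation $(\la,l)$ and the lattice--path indexing $\Std_\varepsilon((\la,l))$, and confirming that $\kappa_\stt(i)$ as defined just above the proposition coincides with the content sequence produced by the general theory; both checks are routine given \myref{Lemma}{wba JM 1} and the preceding lemma.
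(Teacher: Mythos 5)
Your proposal is correct and follows exactly the route the paper takes: the paper's proof is a citation of \cite{BEG}, Proposition 4.5 and Theorem 4.7, which is precisely the invocation of conditions \eqref{cond JM4} and \eqref{cond JM5} after verifying that $L_1,\dots,L_{r+s}$ is an additive family of Jucys--Murphy elements via \myref{Lemma}{wba JM 1} and the preceding lemma on the action of $L_1+\cdots+L_k$. Your additional bookkeeping remarks about the ground ring and indexing are accurate but are already implicit in the paper's surrounding discussion.
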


\begin{proof}   \cite{BEG}  Proposition 4.5 and Theorem 4.7.
\end{proof}

\begin{rmk} \label{WBA JM separation condition}
It is straighforward to check that the sequence of JM elements $L_1, \dots, L_{r+s}$ satisfy the separation condition of Mathas.from ~\cite[Section 3]{MR2414949}; that is, the content sequence 
$(\kappa_\mft(i))_{1 \le i \le r+s}$ of a path determines the path.
\end{rmk}

\begin{rmk}  Using  \myref{Lemma}{wba JM 1}, this discussion shows more generally that for any lattice path $\eta$  from  $(0,0)$  to $(r, s)$ in $\ZZ_{\ge 0} \times \ZZ_{\ge 0}$,  the family of JM elements
$L_{\eta, 1}, \dots, L_{\eta, r+s}$   is an additive family of JM elements for the sequence of algebras
$(B_{\eta_i})_{0 \le i \le r+s}$.  
\end{rmk}

\section{The walled Brauer algebra on mixed tensor space}\label{section Brauer walled tensor}
\label{appendix wba tensor space}
\subsection{Action on mixed tensor space}  \label{subsection wba on mixed tensor space}
Let $V$ be an $N$--dimensional complex vector space.  For $r, s \ge 0$,  the general linear group $G = \GL(V)$  acts on the space of mixed tensors  $V^{\otimes (r, s)} := V^{\otimes r} \otimes {V^*}^{\otimes s} $.
One can define an action of the walled Brauer algebra $B_{r, s}(\C; N)$ on $V^{\otimes (r, s)}$, 
$$\Theta_{r, s} : B_{r, s}(\C; N) \to \End(V^{\otimes (r, s)}),$$
whose image is the centralizer algebra  for the action of $\GL(V)$  on $V^{\otimes (r, s)}$.  
The action of $B_{r, s}(\C; N)$ on $V^{\otimes (r, s)}$ is described as follows:
We write a simple tensor in $V^{\otimes (r, s)}$ as 
$$
u_{-r} \otimes \cdots \otimes u_{-1} \otimes u^*_1 \otimes \cdots \otimes u^*_s,
$$
where the $u_i$  are arbitrary elements of $V$ and the $u^*_j$ are arbitrary elements of $V^*$.  
  Let $\{v_i\}$ be any basis of $V$ and let $\{v_i^*\}$  be the dual basis of $V^*$.
Define $E \in \End(V \otimes V^*)$  by $E(u\otimes u^*) = u^*(u) \sum_i v_i \otimes v_i^*$.   Then $E$ is independent of the choice of the basis  $\{v_i\}$.   Define $E_{a, b} \in \End(V^{\otimes (r, s)})$  by $E$ acting in the
$-a$ and $b$ tensor positions.  Let $S'_i$ be the operator that interchanges the tensorands in the 
$-i$--th and $-(i+1)$--st positions, and let $S_j$ be the operator that  interchanges the tensorands in the 
$j$--th and $(j+1)$--st positions.  The homomorphism $\Theta_{r, s}$ is determined by 
$s_i'  \mapsto S'_i$,  $s_j \mapsto S_j$  and $e_{a, b} \mapsto E_{a, b}$.  

We have embeddings $\iota :  \End(V^{\otimes(r, s)}) \to  \End(V^{\otimes(r, s+1)}) $ by $T \mapsto T \otimes \id_{V^*}$  and $\iota' :  \End(V^{\otimes(r, s)}) \to  \End(V^{\otimes(r+1, s)}) $  by 
$T \mapsto   \id_{V}  \otimes   T $.   The homomorphisms $\Theta_{r, s}$ respect these embeddings,
$\Theta_{r, s+1} \circ \iota =  \iota \circ \Theta_{r, s}$  and similarly for $\iota'$.   Therefore, we will  just write $\Theta$ instead of $\Theta_{r,s}$.  

 Fix a basis $\{v_i\}$ of $V$ and the dual basis $\{v^*_i\}$ of $V^*$ as above.  For any multi-index
$[i] =( i_{-r}, \dots, i_{-1}, i_1, \dots, i_s)$, write $v_{[i]} = v_{i_{-r}} \otimes \cdots \otimes v_{i_{-1}} \otimes v^*_{i_1} \otimes \cdots \otimes v^*_{i_s}$.  The set of $v_{[i]}$ as $[i]$ varies over all multi-indices is a basis of $V\tensor{(r, s)}$.     Now we can verify that the matrix of $E_{a, b}$ with respect to this basis
is $E_{a, b}([i], [j]) = \delta(i_{-a}, i_b) \delta(j_{-a}, j_b) \prod_{k \ne -a, b} \delta(i_k, j_k)$.  Thus, the matrix of $E_{a, b}$ is symmetric.  Similarly, the matrices for the generators $S_l$ and $S_{-l}$ are symmetric. 
Define $*$ on  $\End(V^{\otimes (r, s)})$  to be matrix transposition with respect to this basis;  then $*$ is an algebra involution on  $\End(V^{\otimes (r, s)})$, and $\Theta$ is involution preserving.

\subsection{Verification of the quotient axioms} \label{subsection wba quotient axioms}
Let $R = \Z[\deltabold]$ be the generic ground ring for the walled Brauer algebras.  For fixed $(r, s)$, endow $B_{r, s}(R; \deltabold)$ with the cellular structure described in 
\myref{Theorem}{walled cellular basis 2}
 and cellular basis
$$
\mathscr B_{r, s} = 
\leftbrace  y_{\mfs \mft}\power{\lambda, l} = 
 b_\mfs^*  y_{(\lambda, l)}  b_\mft \suchthat    
(\lambda, l) \in \widehat B_{r, s} \text{ and }  
\mfs, \mft \in \Std_\varepsilon({\lambda, l})
   \rightbrace
$$
   This involves choosing the lattice path $\varepsilon$ as in \eqref{standard lattice path}, and placing $B_{r, s}(R; \deltabold)$ in the single sequence of algebras $ B_{\varepsilon_i}(R; \deltabold)$, with $B_{r, s}(R; \deltabold) = B_{\varepsilon_{r+s}}(R; \deltabold)$.   Evidently, this sequence can be extended to an infinite tower of algebras, and this infinite tower satisfies 
the assumptions
\eqref{diagram 1}--\eqref{diagram 6} of 
\myref{Section}{subsection diagram algebras}, as has been shown in 
\myref{Appendix}{subsection: cellularity walled Brauer}.

\begin{defn}Write $A_{r, s}\mixed(**) = \Theta(B_{r,s}( **; N))$, where $**$ stands for $\Z$, $\Q$ or $\C$.
 Thus $A_{r, s}\mixed(\Z)$ is the $\Z$--algebra generated by  the operators $E_{a, b} $   and $S'_i $, and $S_j$.  
 \end{defn}
 We want to show that the maps $\Theta: B_{\varepsilon_i}(\Z; N) \to A_{\varepsilon_i}\mixed(\Z)$  satisfy assumptions  \eqref{quotient axiom 1}--\eqref{quotient axiom 3} of 
\myref{Section}{section quotient framework}.  It will follow that  the concrete centralizer algebras $A_{r, s}\mixed(\Z)$  are cellular over the integers. 
 The appropriate collection of permissible paths is the following.
 
\begin{defn}   \label{defn permissibility for WBA}
 An $N$--permissible pair of partitions  $\lambda = (\lambda(1), \lambda(2))$ is a pair  such that $ {\lambda(1)}'_1  +  {\lambda(2)}'_1\le N$.    That is, the sum of the lengths of the first columns of $\lambda(1)$ and $\lambda(2)$  is no more than $N$.  
    Write $\widehat B\mixed_{r, s,  \perm}$   for the set of $N$--permissible points in $\widehat B_{r, s}$.
 
An element
$(\lambda, l) \in \widehat B_{r,s}$ is $N$--permissible if $\lambda$ is $N$--permissible.  
A path $\mft \in \Std_{\varepsilon}(\lambda, l)$ is $N$--permissible if $\mft\powerr k$ is $N$--permissible for all $0\leq k \leq r+s$.
 Write  $\Std\mixed_{\varepsilon,  \perm}({\lambda, l})$   for the set of   $N$--permissible paths  on $\widehat B_\varepsilon$ of shape $(\lambda, l)$.
\end{defn}

Recall the elements  of the walled Brauer algebras $\mathfrak d_{a, b}$  and $\mathfrak d'_{a, b}$ defined 
in 
\myref{Definition}{diagrammatic Pfaffians 1}.
Fix $(r, s)$ and 
let  $\lambda = (\lambda(1), \lambda(2)) \in \widehat{\mathfrak S}_{r-l} \times \widehat{\mathfrak S}_{s-l}$ for some $l$.  
\ignore{
Write $\widetilde {\lambda(1)}$, and  
 $\widetilde {\lambda(2)}$  for the transposed partitions.}
   Write $\lambda(1)_{>1}$ and $\lambda(2)_{>1}$ for the partitions with the first columns removed.  Write $a$ for the length of the first column of $\lambda(1)$ and $b$ for the length of the first column of $\lambda(2)$.
 Define
\begin{equation} \label{d lambda walled case}
\mathfrak d_\lambda =  \jj( y_{\lambda(1)_{>1}} ) \otimes \mathfrak d_{a, b }  \otimes  y_{\lambda(2)_{>1}}  
\quad\text{ and }\quad\mathfrak d'_\lambda =  \jj( y_{\lambda(1)_{>1}} ) \otimes \mathfrak d'_{a, b}  \otimes  y_{\lambda(2)_{>1}}.
\end{equation}
These are elements of $B_{r-l, s-l}$.    If $(\lambda, l) \in \widehat B_{r, s}$, define
\begin{equation} \label{d lambda l walled case}
\mathfrak d_{(\lambda, l)} =   \mathfrak d_{\lambda} e_{r, s}\power l, \quad \text{ and }\quad
\mathfrak d'_{(\lambda, l)} =   \mathfrak d'_{\lambda} e_{r, s}\power l.
\end{equation}
Thus for all $(r, s)$ and for all $(\lambda, l) \in  \widehat B_{r, s}$, 
\begin{equation} \label{walled 1}
y_{(\lambda, l)}   = \mathfrak d_{(\lambda, l)} - \mathfrak d'_{(\lambda, l)}.
\end{equation}
Because of 
\myref{Lemma}{factorization of d'}, 
we also have
\begin{equation}
 \mathfrak d'_{(\lambda, l)} =   y_{(\lambda, l)} \beta',
\end{equation}
where $\beta' = \beta'_{ {\lambda(1)}'_1,  {\lambda(2)}'_1}$.

For $m > 0$,  the multilinear functionals on $V^{m} \times (V^*)^m$ of the form
$$(w_1, \dots, w_m) \times (w^*_1, \dots, w^*_m) \mapsto  \prod_{i = 1}^m  w^*_i(w_{i\cdot \pi})$$  are evidently $\rm{GL}(V)$--invariant.  There are some obvious relations among these functionals, namely if $m = N+1$,  then $(w_1, \dots, w_m) \times (w^*_1, \dots, w^*_m) \mapsto \det((w^*_i(w_j)))$ is zero since the matrix $(w^*_i(w_j))$ is singular.  The following proposition reduces ultimately to this observation.

\begin{prop}  \label{mixed fund theorem 1}
Let $a, b$ satisfy $a + b = N+1$ and consider $\mathfrak d_{a, b} \in B_{a, b}$.  Then $\mathfrak d_{a, b} \in \ker(\Theta)$.   Hence if $(\lambda, l) \in \widehat B_{r, s}$  and  the sum of the lengths of the first column of $\lambda(1)$ and the first column of $\lambda(2)$ is $N+1$, then 
$\mathfrak d_{(\lambda, l)} \in \ker(\Theta)$.   
\end{prop}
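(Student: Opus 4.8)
The plan is to mimic exactly the strategy already used for the symplectic and orthogonal cases (\myref{Proposition}{symplectic fund theorem 1} and \myref{Proposition}{orthogonal fund theorem 1}), transporting the combinatorial identity ``diagrammatic minor $=$ determinant of a singular matrix'' through the linear isomorphisms relating mixed tensor space to endomorphism space. First I would set $m = a+b = N+1$ and recall that, analogously to \myref{Section}{section tensor space}, there are linear isomorphisms $A : V^{\otimes m}\otimes (V^*)^{\otimes m} \to \End(V^{\otimes (a,b)})$ and a pairing $\eta$ identifying $V^{\otimes m}\otimes(V^*)^{\otimes m}$ with the space of multilinear functionals on $V^m\times (V^*)^m$. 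The key computation is that $(\eta\circ A^{-1}\circ\Theta(\mathfrak d_{a,b}))$ evaluated on $(w_1,\dots,w_m)\times(w^*_1,\dots,w^*_m)$ equals, up to sign, $\det\big((w^*_i(w_j))\big)$: each $(a,b)$-walled Brauer diagram $d$ with $\sgn(d)$ corresponds to a term $\prod_i w^*_i(w_{i\cdot\pi_d})$ with the right sign, and summing over all such $d$ (equivalently over all bijections of the relevant index sets) reproduces the determinant expansion. Since $m = N+1 > \dim V$, the Gram-type matrix $(w^*_i(w_j))$ is singular for every choice of arguments, so the functional is identically zero; hence $\Theta(\mathfrak d_{a,b}) = 0$, i.e. $\mathfrak d_{a,b}\in\ker(\Theta)$.

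For the bookkeeping I would invoke the exposition already cited in the excerpt --- \cite[Section 3.3]{MR1670662} and \cite[Lemma 3.3]{MR2979865} in the orthogonal case, and note the walled-Brauer analogue is the direct linear (rather than symmetric/skew) version, which is if anything simpler since no symmetry of the form is involved; one may alternatively point to the appendix promised in the arXiv version. The second assertion then follows formally: suppose $(\lambda, l)\in\widehat B_{r,s}$ with the first column of $\lambda(1)$ of length $a$ and the first column of $\lambda(2)$ of length $b$, where $a+b = N+1$. By \eqref{d lambda walled case} and \eqref{d lambda l walled case},
$$
\mathfrak d_{(\lambda, l)} = \big(\jj(y_{\lambda(1)_{>1}})\otimes \mathfrak d_{a,b}\otimes y_{\lambda(2)_{>1}}\big)\, e_{r,s}^{(l)},
$$
so $\mathfrak d_{(\lambda,l)}$ lies in the two-sided ideal generated by $\mathfrak d_{a,b}$ (placed in the appropriate tensor slot via the side-by-side embedding of walled Brauer algebras). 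Since $\mathfrak d_{a,b}\in\ker(\Theta)$ and $\ker(\Theta)$ is a two-sided ideal, $\mathfrak d_{(\lambda,l)}\in\ker(\Theta)$ as well.

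The main obstacle is purely a matter of index management: making the correspondence between signed walled Brauer diagrams and signed terms of the determinant completely precise, in particular checking that $\sgn(d)$ (as in \myref{Definition}{sign of walled Brauer algebra}) matches the sign of the corresponding permutation in the Leibniz expansion, and that the ``$E$'' contractions coming from horizontal strands correctly produce the identity-matrix contributions $\sum_i v_i\otimes v_i^*$ that disappear under the pairing. This is genuinely routine but notation-heavy; I would handle it by reducing to the case of diagrams $d$ that factor as (permutation of left vertices) followed by (a fixed nested cap-cup pattern) followed by (permutation of right vertices), as in the proof of \myref{Lemma}{factorization of d'}, so that the sum over all $d$ becomes a product of a fixed contraction tensor with $\sum_{\pi\in\mathfrak S_m}\sgn(\pi)(\cdots)$, which is manifestly the determinant. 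Everything else is an appeal to results already established earlier in the paper.
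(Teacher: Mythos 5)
Your proposal is correct and follows essentially the same route as the paper: the paper's proof defers to its appendix on diagrammatic minors, which carries out exactly the computation you sketch — identifying $A^{-1}\circ\Theta(\mathfrak d_{a,b})$ with the functional $(\bm x,\bm y^*)\mapsto\det\bigl((y_i^*(x_j))\bigr)$ via the bijection between signed walled Brauer diagrams and permutations, and concluding from the singularity of an $(N+1)\times(N+1)$ matrix built from $N+1$ vectors in an $N$-dimensional space. Your handling of the second assertion via \eqref{d lambda walled case}, \eqref{d lambda l walled case} and the ideal property of $\ker(\Theta)$ also matches the paper's treatment of the analogous step in the orthogonal case.
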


\begin{proof} The proof is similar to that of 
  \myref{Proposition}{orthogonal fund theorem 1}.   
Details are provided in 
 {\myref{Appendix}{walled Brauer fund thm appendix}}.
\end{proof}

We can now verify axiom \eqref{quotient axiom 2}.   Let $\mft \in \Std_{\varepsilon}(\lambda, l)$ be a  path which is 
not $N$--permissible.    Since $\mft$ is not $N$--permissible, there exists a $0<k \le r+s$ such that
$\mft(k) = (\mu, m) = ((\mu(1), \mu(2)), m)$ and the sum of the lengths of the first column of $\mu(1)$ and the first column of $\mu(2)$ exactly equals $N+1$.   Hence, it follows from 
\myref{Proposition}{mixed fund theorem 1}
that 
\begin{equation} \label{walled 2}
\mathfrak d_{(\mu, m)} \in \ker(\Theta). 
\end{equation}
   In case $0<k \le r$,    $(\mu, m) = ((\mu(1), \emptyset), 0)$ and
$\mathfrak d'_{(\mu, m)}  = 0$.    On the other hand, if $  r < k \leq r+s$, we have (writing $a$ for the length of the first column of $\mu(1)$ and $b$ for the length of the first column of $\mu(2)$)
$$
\mathfrak d'_{(\mu, m)}  =  y_{(\mu, m)}   \beta'_{a, b} =  y_\mu  \beta'_{a, b}  e_{r + k}\power m.
$$
This is an element of $B_{r, s}(\Z; N)$ of corank greater than $m$, and hence an element of 
\break  $B_{r, s}(\Z; N)^{\coldom(\mu, m)}$, because of 
\myref{Lemma}{walled Brauer block diagonal transition}.
  On the other hand, we see that   \break $\mathfrak d'_{(\mu, m)} \in 
y_{(\mu, m)}  B_{r, s}(\Z; N)$.  Thus
\begin{equation} \label{walled 3}
\mathfrak d'_{(\mu, m)} \in  y_{(\mu, m)}  B_{r, s}(\Z; N) \cap B_{r, s}(\Z; N)^{\coldom(\mu, m)}.
\end{equation}
Equations  \eqref{walled 1}, \eqref{walled 2} and \eqref{walled 3}    show that  axiom  \eqref{quotient axiom 2} holds.  Axiom  \eqref{quotient axiom 3}  comes from:

\begin{lem}  \label{dimension of centralizer algebra on mixed tensors}
 The dimension of $A_{r, s}(\C) = \Theta(B_{r, s}( \C; N))$ is 
 $$
 \sum_{(\lambda, l) \in \widehat B_{r,s,  \perm}\mixed}   (\sharp \Std_{\varepsilon, \perm}\mixed(\lambda, l))^2.
$$
\end{lem}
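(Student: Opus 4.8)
The plan is to compute $\dim_\C A_{r,s}\mixed(\C) = \dim_\C\Theta(B_{r,s}(\C;N))$ directly from classical invariant theory, independently of the quotient-tower machinery (using the latter would be circular, as \eqref{quotient axiom 3} is exactly what is at stake). First I would invoke the first fundamental theorem of invariant theory for $\GL(V)$ acting on mixed tensor space, as recorded in \myref{Section}{subsection wba on mixed tensor space}: the contraction--permutation operators span the full centralizer, so $A_{r,s}\mixed(\C) = \End_{\GL(V)}(V\tensor{(r,s)})$; see \cite{MR0000255, MR1280591}. Since $V\tensor{(r,s)}$ is a completely reducible rational $\GL(V)$-module, the double centralizer theorem together with Artin--Wedderburn theory gives
\[
\dim_\C A_{r,s}\mixed(\C) \;=\; \dim_\C\End_{\GL(V)}(V\tensor{(r,s)}) \;=\; \sum_{\mu} m_\mu^{\,2},
\]
where $\mu$ runs over the isomorphism classes of simple rational $\GL(V)$-modules and $m_\mu = \dim_\C\Hom_{\GL(V)}(L_\mu, V\tensor{(r,s)})$ is the multiplicity of $L_\mu$ in $V\tensor{(r,s)}$.

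It then remains to identify the right-hand side term-by-term with the sum in the statement. The simple rational $\GL(V)$-modules are indexed by pairs of partitions $\mu = (\mu(1),\mu(2))$ with ${\mu(1)}'_1 + {\mu(2)}'_1 \le N$, i.e.\ by $N$-permissible pairs in the sense of \myref{Definition}{defn permissibility for WBA}. Such an $L_{(\mu(1),\mu(2))}$ occurs in $V\tensor{(r,s)}$ precisely when $|\mu(1)| \le r$, $|\mu(2)| \le s$ and $|\mu(1)| - |\mu(2)| = r - s$ (the latter because every weight of $V\tensor{(r,s)}$ has coordinate-sum $r-s$). Setting $l := r - |\mu(1)| = s - |\mu(2)| \ge 0$, the assignment $\mu \mapsto (\mu, l)$ is a bijection from the set of $\mu$'s occurring in $V\tensor{(r,s)}$ onto the set $\widehat B\mixed_{r,s,\perm}$ of permissible points of $\widehat B_{r,s}$, since the level of a point of $\widehat B_{r,s}$ is recovered from the sizes of its two partitions. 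Thus the lemma reduces to the multiplicity identity $m_{(\mu(1),\mu(2))} = \sharp\,\Std\mixed_{\varepsilon,\perm}(\mu,l)$.

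This identity I would prove by induction along the lattice path $\varepsilon$ of \eqref{standard lattice path}: tracking the $\GL(V)$-decomposition of the partial tensor products $V\tensor{(r_i,s_i)}$, the first $r$ steps tensor with $V$ and the last $s$ steps tensor with $V^*$. Since $V$ and $V^*$ are minuscule, the rational Pieri rule for $\GL_N$ says that $L_\mu \otimes V$ (resp.\ $L_\mu \otimes V^*$) is the sum of $L_\nu$ over all $\nu$ obtained from $\mu$ by adding one box to $\mu(1)$ or removing one box from $\mu(2)$ (resp.\ adding one box to $\mu(2)$ or removing one box from $\mu(1)$) subject to rational dominance for $\GL_N$; these are exactly the edges, at $N$-permissible vertices, of the branching graph $\widehat B_\varepsilon$ described in \myref{Proposition}{B r s cellularity}\eqref{brs branching rules}, with the increment of the level $l$ recording a "contraction''. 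Hence $m_{(\mu(1),\mu(2))}$ equals the number of permissible paths from $\varnothing$ to $(\mu,l)$ in $\widehat B_\varepsilon$, namely $\sharp\,\Std\mixed_{\varepsilon,\perm}(\mu,l)$, and summing squares yields the claim.

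The main obstacle is the bookkeeping in this last step: verifying that along a single up-step of $\varepsilon$ the $\GL_N$ rational branching rule matches precisely the edge set of $\widehat B_\varepsilon$ at $N$-permissible vertices --- in particular that the permissibility truncation on the walled-Brauer side corresponds exactly to the rational-dominance truncation on the $\GL_N$ side, with no spurious or missing terms arising from the interplay of the two partitions in a rational weight. This compatibility is essentially the content of the known decomposition of mixed tensor space, so in practice I would either reproduce the argument of \cite{MR991410, MR1280591} or, more efficiently, quote their multiplicity formula for $V\tensor{(r,s)}$ and check directly that it counts permissible paths in $\widehat B_\varepsilon$.
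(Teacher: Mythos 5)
Your argument is correct and is essentially the paper's: the proof there simply cites the known $\GL_N(\C)$-decomposition of mixed tensor space into irreducibles counted by up--down tableaux (\cite[Corollary 4.7]{MR899903}), which is exactly the multiplicity identity you derive via the double centralizer theorem and the rational Pieri rule. Your expanded sketch fills in the same steps the citation compresses, and your fallback of quoting the multiplicity formula and matching it to permissible paths is precisely what the paper does.
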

\begin{proof}    This follows by \cite[Corollary 4.7]{MR899903} where the author  calculates the
 direct sum decomposition of mixed tensor space  as a  ${{\rm GL}}_n(\C)$-module  in terms of the combinatorics of ``up-down tableaux'' \cite[Definition 4.4]{MR899903}.
\end{proof}

\begin{thm}  
The algebra  $A_{r, s}\mixed(\Z)$ is a cellular algebra over $\Z$ with basis 
$$\mathbb A_{r,s}\mixed = \leftbrace \Theta(\widetilde y_{\mfs \mft}\power{\lambda, l})\mid   
(\lambda, l) \in \widehat B\mixed_{r, s,  \perm} \text{ and }  \mfs, \mft \in \Std\mixed_{\varepsilon,  \perm} (\lambda, l)
 \rightbrace 
$$
with respect to 
the involution $*$  determined by $E_{a,b}^* = E_{a,b}$, $(S_i')^* = S_i'$, and $S_j^* = S_j$ 
and 
the partially ordered set $(\widehat B\mixed_{\varepsilon,   \perm},\coldomeq)$ 
 The ideal   $\ker(\Theta) \subseteq B_{r,s}(\ZZ; N)$ has  $\ZZ$--basis 
$$\kappa_r = \leftbrace  \widetilde y_{\mfs \mft}\power{\lambda, l}  \mid   
(\lambda, l) \in \widehat B_{r, s} \text{ and   $\mfs$  or  $\mft$  is not $N$--permissible}
 \rightbrace.
$$
Moreover,    for $r +s > N$,  $\ker(\Theta)$ is the ideal generated by the set 
    $$ \{\mathfrak{d}_{a, b} \mid a+b = N+1  \},
 $$
 and for  $r + s \le N$, $\ker(\Theta) = 0$.

 \end{thm}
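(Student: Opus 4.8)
The plan is to read the theorem off \myref{Theorem}{good paths basis theorem}, in exactly the way \myref{Theorem}{theorem integral orthogonal BCA} was obtained in the orthogonal case. First I would note that all the hypotheses are now assembled: the tower $(B_{\varepsilon_i}(\Z; N))_{0 \le i \le r+s}$ satisfies \eqref{diagram 1}--\eqref{diagram 6} by the work of \myref{Appendix}{subsection: cellularity walled Brauer}, and the homomorphisms $\Theta : B_{\varepsilon_i}(\Z; N) \to A_{\varepsilon_i}\mixed(\Z)$ satisfy the quotient axioms \eqref{quotient axiom 1}--\eqref{quotient axiom 3} --- \eqref{quotient axiom 1} by \myref{Definition}{defn permissibility for WBA}, \eqref{quotient axiom 2} by \eqref{walled 1}--\eqref{walled 3} (which rest on \myref{Proposition}{mixed fund theorem 1} and \myref{Lemma}{walled Brauer block diagonal transition}), and \eqref{quotient axiom 3} by \myref{Lemma}{dimension of centralizer algebra on mixed tensors}. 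Hence the modified dual Murphy basis $\{\widetilde y_{\mfs \mft}\power{\lambda, l}\}$ of $B_{r, s}(\Z; N)$ is defined by the procedure preceding \myref{Theorem}{basis theorem}, and \myref{Theorem}{good paths basis theorem} parts \ref{good paths 2} and \ref{good paths 3} then give at once that $\mathbb A_{r, s}\mixed$ is a cellular basis of $A_{r, s}\mixed(\Z)$ for the poset $(\widehat B\mixed_{\varepsilon, \perm}, \coldomeq)$ and the stated involution, while $\kappa_r$ is a $\Z$--basis of $\ker(\Theta)$.

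Next I would dispose of the range $r + s \le N$. By the branching rules of \myref{Proposition}{B r s cellularity}, along each edge of $\widehat B_\varepsilon$ exactly one box is added to or removed from one of the two component Young diagrams, so the quantity $\lambda(1)'_1 + \lambda(2)'_1$ attached to $(\lambda, l) = ((\lambda(1), \lambda(2)), l)$ changes by at most $1$ at each step of a path; since it is $0$ at the root, every point of a path of length $\le N$ satisfies $\lambda(1)'_1 + \lambda(2)'_1 \le N$, i.e.\ is $N$--permissible. Therefore $\kappa_r = \varnothing$ and $\ker(\Theta) = 0$ when $r + s \le N$.

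For $r + s > N$ I would apply \myref{Theorem}{good paths basis theorem} part \ref{good paths 4}: $\ker(\Theta)$ is the two--sided ideal generated by the elements $\mathfrak d_{(\mu, m)}$ as $(\mu, m)$ runs over the marginal points of $\widehat B_{\varepsilon_k}$ for $0 < k \le r+s$. By the step-size observation above, at a marginal point the quantity $\mu(1)'_1 + \mu(2)'_1$ first exceeds $N$ and so equals $N+1$; writing $a = \mu(1)'_1$ and $b = \mu(2)'_1$, equations \eqref{d lambda walled case} and \eqref{d lambda l walled case} give
$$
\mathfrak d_{(\mu, m)} = \big(\jj(y_{\mu(1)_{>1}}) \otimes \mathfrak d_{a, b} \otimes y_{\mu(2)_{>1}}\big)\, e_{r, s}\power m,
$$
so $\mathfrak d_{(\mu, m)}$ lies in the ideal generated by $\{\mathfrak d_{a, b} \mid a + b = N+1\}$. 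Conversely each $\mathfrak d_{a, b}$ with $a + b = N+1$ lies in $\ker(\Theta)$ by \myref{Proposition}{mixed fund theorem 1}. The two inclusions together identify $\ker(\Theta)$ with the ideal generated by $\{\mathfrak d_{a, b} \mid a + b = N+1\}$, which finishes the proof.

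The main obstacle is not this final bookkeeping, which is parallel to the symplectic and orthogonal cases, but the two inputs behind it: the verification in \myref{Appendix}{subsection: cellularity walled Brauer} that the single sequences extracted from the double tower of walled Brauer algebras satisfy \eqref{diagram 1}--\eqref{diagram 6} over the \emph{generic} ground ring $R = \Z[\deltabold]$ --- the delicate point being the block-triangularity of the transition matrix between $\mathscr B_{r, s}$ and the diagram basis recorded in \myref{Corollary}{transition matrix walled case} --- and the proof of \myref{Proposition}{mixed fund theorem 1}, where one must recognise $\eta \circ A\inv \circ \Theta(\mathfrak d_{a,b})$ as the determinantal $\GL(V)$--invariant $\det\big((w^*_i(w_j))\big)$ evaluated on a singular $(N{+}1)\times(N{+}1)$ matrix. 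I would make sure both of these are fully secured before presenting the argument above.
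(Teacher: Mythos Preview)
Your proposal is correct and follows essentially the same route as the paper: verify \eqref{quotient axiom 1}--\eqref{quotient axiom 3}, invoke \myref{Theorem}{good paths basis theorem} for the cellular basis and the $\Z$--basis of the kernel, observe that all paths of length $\le N$ are permissible, and for $r+s>N$ use part \ref{good paths 4} together with \eqref{d lambda walled case}--\eqref{d lambda l walled case} to reduce the marginal-point generators to the $\mathfrak d_{a,b}$. Your write-up is in fact slightly more explicit than the paper's in two places --- the step-size argument for why $\mu(1)'_1+\mu(2)'_1=N+1$ at a marginal point, and the converse inclusion via \myref{Proposition}{mixed fund theorem 1} --- but the substance is identical.
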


\begin{proof}The construction of the cellular basis  of  $A\mixed_{r,s}(\Z)$ and of the basis of 
$\ker(\Theta)$  follows immediately from   
\myref{Theorem}{good paths basis theorem},  
 since \eqref{quotient axiom 1}--\eqref{quotient axiom 3} have been verified.

Now, if $r + s \leq N$ then  $\ker(\Theta)=0$ since all paths on $\widehat B_\varepsilon$ of length $\le N$ are $N$--permissible. 
For $r + s > N$,  the kernel is is the ideal generated by all the $\mathfrak{d}_{(\mu, m)}$ such that $(\mu,m)$ is a   marginal point in $\widehat B_{\varepsilon_k}$ for some $0 < k \le r$,  using  
\myref{Theorem}{good paths basis theorem}.
But the marginal points are are all of the form $(\mu,m)$ for some $\mu$ with $\mu(1)'_1 + \mu(2)'_1 = N+1$.   Write $a = \mu(1)'_1 $ and $b = \mu(2)'_1$.   
By \eqref{d lambda walled case}  and \eqref{d lambda l walled case}, 
$$
\begin{aligned}
\mathfrak{d}_{(\mu, m)} &=  \mathfrak{d}_{\mu} e_{k-1} \power m  
 = \jj( y_{\lambda(1)_{>1}} ) \otimes \mathfrak d_{a, b }  \otimes  y_{\lambda(2)_{>1}} e_{k-1} \power m, 
\end{aligned}
$$
and so the result follows.
\end{proof}

\begin{rmk}
As in \myref{Remark}{7point8},
 our construction provides an integral form of the simple $B_{r,s}(\C;N)$-modules labelled by permissible points of $\widehat B_{r, s}$. 
\end{rmk}

\subsection{Fields of positive characteristic} Now, we let $\Bbbk$ denote a field of  arbitrary characteristic.    
 We let $B_{r, s}(\Bbbk; N)=  B_{r, s}(\ZZ; N)  \otimes_\ZZ\Bbbk$. This algebra is a cellular algebra 
  with  basis 
  $$\mathbb B_{r, s}(\Bbbk) = \leftbrace \widetilde y_{\mfs \mft}\power{\lambda, l} 
  \otimes_\ZZ 1_\Bbbk \mid   
(\lambda, l) \in \widehat B_{r, s} \text{ and }  \mfs, \mft \in \Std_{\varepsilon } (\lambda, l)
 \rightbrace 
 $$  
 We fix a basis $\{v_i\}$ of $V$ and the dual basis $\{v^*_i\}$ of $V^*$. 
 We let $V_\ZZ$ denote the $\ZZ$-submodule of $V$ with 
 with  basis $\{v_i\}$ , and
 we let    $V_\Bbbk= V_\ZZ \otimes_\ZZ 1_\Bbbk$.   
 We let $V^*_\ZZ$ denote the $\ZZ$-submodule of $V$ with 
 with  basis $\{v^*_i\}$ , and
 we let    $V^*_\Bbbk= V^*_\ZZ \otimes_\ZZ 1_\Bbbk$. 

 For all $r,s\geq 0$,  there  is a homomorphism $\Theta_\Bbbk : B_{r, s}(\Bbbk;  N) \to \End(V_{\Bbbk}^{\otimes r}\otimes (V_{\Bbbk}^\ast)^{\otimes s})$  
 such that $\Theta_{\Bbbk}(a \otimes 1_\Bbbk)(w \otimes 1_\Bbbk) = \Theta(a)(w) \otimes 1_\Bbbk$,  for 
 $a \in  B_{r,s}(\ZZ; N)$ and $w \in V_{\ZZ}^{\otimes r}\otimes (V_{\ZZ}^\ast)^{\otimes s}$.   Write $A\mixed_{r,s}(\Bbbk)$ for the image of 
 $\Theta_\Bbbk$.

  \begin{thm}  \label{field cellular basis 22}
The algebra  $A_{r, s}\mixed(\Bbbk)$ is a cellular algebra over $\Bbbk$ with basis 
$$\mathbb A_{r,s}\mixed = \leftbrace \Theta_\Bbbk(\widetilde y_{\mfs \mft}\power{\lambda, l})\mid   
(\lambda, l) \in \widehat B\mixed_{\varepsilon,  \perm} \text{ and }  \mfs, \mft \in \Std\mixed_{\varepsilon,  \perm} (\lambda, l)
 \rightbrace 
$$
with respect to 
the involution $*$  determined by $E_{a,b}^* = E_{a,b}$, $(S_i')^* = S_i'$, and $S_j^* = S_j$ 
and 
the partially ordered set $(\widehat B\mixed_{r, s,   \perm},\coldomeq)$ 
 The ideal   $\ker(\Theta_\Bbbk) \subseteq B_{r,s}(\Bbbk; N)$ has  $\Bbbk$--basis 
$$\kappa_r = \leftbrace  \widetilde y_{\mfs \mft}\power{\lambda, l}  \mid   
(\lambda, l) \in \widehat B_{r, s} \text{ and   $\mfs$  or  $\mft$  is not $N$--permissible}
 \rightbrace.
$$
Moreover,      for $r +s > N$,  $\ker(\Theta_\Bbbk)$ is the ideal generated by the set 
    $$ \{\mathfrak{d}_{a, b} \mid a+b = N+1  \},
 $$
and for $r \le N$, $\ker(\Theta_\Bbbk) = 0$. 

\end{thm}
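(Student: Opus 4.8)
The plan is to follow exactly the same template used in the proofs of Theorems~\ref{field cellular basis 1} and~\ref{field cellular basis 2}, transported to the walled Brauer setting. First I would record that all the structural input is already in place: Appendix~\ref{subsection: cellularity walled Brauer} shows the tower $(B_{\varepsilon_i}(R;\deltabold))$ satisfies \eqref{diagram 1}--\eqref{diagram 6}, and Section~\ref{subsection wba quotient axioms} (specifically Proposition~\ref{mixed fund theorem 1}, Lemma~\ref{walled Brauer block diagonal transition}, and Lemma~\ref{dimension of centralizer algebra on mixed tensors}) verifies the quotient axioms \eqref{quotient axiom 1}--\eqref{quotient axiom 3} for $\Theta : B_{\varepsilon_i}(\Z; N) \to A\mixed_{\varepsilon_i}(\Z)$. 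So over $\Z$ the conclusion is already Theorem~8.x (the integral version just above). The present theorem is the ``reduction to an arbitrary field'' companion.

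The key steps, in order, are: (1) Form the specialization $B_{r,s}(\Bbbk;N) = B_{r,s}(\Z;N)\otimes_\Z\Bbbk$ and note it is cellular with the specialized modified dual Murphy basis $\{\widetilde y^{\lambda,l}_{\mfs\mft}\otimes 1_\Bbbk\}$; the structure constants are the $\Z$-structure constants reduced mod the characteristic. (2) Apply Lemma~\ref{lemma specialization cd} to the $B_{r,s}(\Z;N)$-module $V_\Z^{\otimes r}\otimes(V_\Z^*)^{\otimes s}$ with $\tau : \Z\to\Bbbk$, obtaining the algebra map $\theta : A\mixed_{r,s}(\Z)\to A\mixed_{r,s}(\Bbbk)$ fitting into a commuting square with $\Theta$ and $\Theta_\Bbbk$. (3) Conclude $\Theta_\Bbbk(\widetilde y^{\lambda,l}_{\mfs\mft}\otimes 1_\Bbbk) = \theta\circ\Theta(\widetilde y^{\lambda,l}_{\mfs\mft})$, so when $\mfs$ or $\mft$ is not $N$-permissible this image is $0$; hence $\kappa_r\subseteq\ker(\Theta_\Bbbk)$ and $\mathbb A\mixed_{r,s}$ spans $A\mixed_{r,s}(\Bbbk)$. (4) For linear independence, pass to the algebraic closure $\overline\Bbbk$: apply Lemma~\ref{lemma specialization cd} again to get $\eta : A\mixed_{r,s}(\Bbbk)\to A\mixed_{r,s}(\overline\Bbbk)$, which is injective by Remark~\ref{remark injectivity of specialization} since $\Bbbk\subset\overline\Bbbk$ are fields; and over $\overline\Bbbk$ (an infinite field) $\mathbb A\mixed_{r,s}(\overline\Bbbk)$ is a basis, because the dimension count of Lemma~\ref{dimension of centralizer algebra on mixed tensors} combined with $\dim_\C A\mixed_{r,s}(\C) = \dim_{\overline\Bbbk} A\mixed_{r,s}(\overline\Bbbk)$ forces exactly $\sharp(\mathbb A\mixed_{r,s})$ independent spanning vectors. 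Since $\eta(\mathbb A\mixed_{r,s}(\Bbbk)) = \mathbb A\mixed_{r,s}(\overline\Bbbk)$ is linearly independent over $\overline\Bbbk$, so is $\mathbb A\mixed_{r,s}(\Bbbk)$ over $\Bbbk$. (5) Then the argument of Theorem~\ref{good paths basis theorem} parts~\ref{good paths 2},~\ref{good paths 3} runs verbatim to show $\mathbb A\mixed_{r,s}(\Bbbk)$ is a cellular basis of $A\mixed_{r,s}(\Bbbk)$ and $\kappa_r$ is a $\Bbbk$-basis of $\ker(\Theta_\Bbbk)$. (6) Finally, for $r+s\le N$ every path of that length is $N$-permissible so $\kappa_r=\emptyset$ and $\ker(\Theta_\Bbbk)=0$; for $r+s>N$ the marginal points are exactly those $(\mu,m)$ with $\mu(1)'_1+\mu(2)'_1 = N+1$, and by \eqref{d lambda walled case}--\eqref{d lambda l walled case} the corresponding $\mathfrak d_{(\mu,m)}$ lie in the ideal generated by $\{\mathfrak d_{a,b} : a+b=N+1\}$; combined with Theorem~\ref{good paths basis theorem}\ref{good paths 4} (applied over $\Bbbk$) this identifies $\ker(\Theta_\Bbbk)$ with that ideal. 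One also invokes the $\Z$-version (the theorem immediately above) to see $\kappa_r$ is contained in this ideal, exactly as in the final line of the proof of Theorem~\ref{field cellular basis 2}.

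The one genuine subtlety — the ``main obstacle'' — is that, unlike the infinite-field symplectic and orthogonal cases, there is no a priori constancy-of-dimension statement for $A\mixed_{r,s}(\Bbbk)$ over arbitrary $\Bbbk$ to lean on; the dimension formula of Lemma~\ref{dimension of centralizer algebra on mixed tensors} is stated only over $\C$. This is handled precisely as in the orthogonal case: one only needs the result over an infinite quadratically closed field, namely $\overline\Bbbk$, where the classical decomposition of mixed tensor space as a $\GL_N$-module (Lemma~\ref{dimension of centralizer algebra on mixed tensors}, valid over any infinite field by the usual Schur--Weyl/polynomial-GL argument, or by base change from $\C$ since the relevant multiplicities are characteristic-free integers coming from up-down tableaux) gives the dimension, and then injectivity of $\eta$ propagates linear independence down to $\Bbbk$. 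So no new representation-theoretic input is required beyond what is already cited; the proof is essentially a transcription of the proof of Theorem~\ref{field cellular basis 2}, replacing $\Psi$ by $\Theta$, orthogonal permissibility by Definition~\ref{defn permissibility for WBA}, $N$ by $N$ and $r$ by $r+s$, and the diagrammatic minors $\mathfrak d_{a,b}$ of the Brauer algebra by the walled diagrammatic minors $\mathfrak d_{a,b}\in B_{a,b}$.

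I would write it compactly: ``The proof is identical to that of Theorem~\ref{field cellular basis 2}'' with the explicit substitutions spelled out, then reproduce only the two commuting squares (for $\tau:\Z\to\Bbbk$ and for $\Bbbk\hookrightarrow\overline\Bbbk$) and the final ideal-generation paragraph, which is where the walled-specific combinatorics \eqref{d lambda walled case}--\eqref{d lambda l walled case} enters. Everything else — cellularity of the specialization, the spanning argument, the injectivity-via-algebraic-closure argument, and the dimension bookkeeping via Theorem~\ref{good paths basis theorem} — is pure boilerplate already executed twice in the paper, so it can be invoked rather than repeated.
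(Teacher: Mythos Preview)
Your approach is essentially the paper's: it says ``the proof is essentially the same as that of \myref{Theorem}{field cellular basis 1}'' and then supplies the one missing ingredient, constancy of dimension.

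The gap is in how you handle that ingredient. You assert $\dim_{\overline\Bbbk} A\mixed_{r,s}(\overline\Bbbk) = \dim_\C A\mixed_{r,s}(\C)$ because ``the classical decomposition of mixed tensor space as a $\GL_N$-module \ldots\ [is] valid over any infinite field'' or because ``the relevant multiplicities are characteristic-free integers coming from up-down tableaux.'' Neither claim is self-evident: in positive characteristic the $\GL_N$-module structure of $V^{\otimes(r,s)}$ is \emph{not} given by the characteristic-zero decomposition, and \myref{Lemma}{dimension of centralizer algebra on mixed tensors} (via \cite{MR899903}) is stated and proved only over $\C$. That the dimension of $\im(\Theta_\Bbbk)$, equivalently of $\ker(\Theta_\Bbbk)$, is nonetheless independent of the field and of the characteristic is precisely the nontrivial input that must be imported; the paper cites \cite{MR3181742} for this, and moreover that reference gives it for \emph{arbitrary} fields directly, so the detour through $\overline\Bbbk$ is not even needed. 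Once that citation replaces your heuristic, the rest of your argument goes through.
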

\begin{proof}
The proof is essentially the same as that of 
\myref{Theorem}{field cellular basis 1}.     One requires the statement that
for  an arbitrary  field  $\Bbbk$,  the dimension of $\ker(\Theta_\Bbbk)$ and the dimension of 
$A_{r,s}\mixed(\Bbbk)$ are independent of the field (and of the characteristic);  this comes from \cite{MR3181742}.
\end{proof}

\begin{rmk}  As in the orthogonal and symplectic cases, the centralizer algebra  $\Theta_{\Bbbk}(B_{r, s}(\Bbbk; N))$  acting on  $V_{\Bbbk}^{\otimes r}\otimes (V_{\Bbbk}^\ast)^{\otimes s}$ is a specialization of the integral centralizer algebra 
$\Theta(B_{r, s}(\ZZ; N))$  acting on  $V^{\otimes r}\otimes (V^\ast)^{\otimes s}$. 
\end{rmk}

\subsection{Seminormal bases and seminormal representations}
\label{subsection wba seminormal}

Next we wish to verify that the setting of \myref{Section}{subsection seminormal quotient} applies to the walled Brauer algebras, their specializations   $B_{r, s}(\ZZ; N) \subseteq  B_{r, s}(\Q; N)$, and the quotients of these specializations acting on mixed tensor space.  First we make more explicit part of the setting of \myref{Section}{subsection setting quotient towers}.  We have $R = \ZZ[\bolddelta]$, the generic ground ring, and the quotient map $\pi : \ZZ[\bolddelta] \to \Z$ determined by $\bolddelta \mapsto N$.  The kernel of this map is the prime ideal $\mathfrak p = (\bolddelta - N)$.  The subring  of evaluable elements in $\FF = \Q(\deltabold)$ is $R_{\mathfrak p}$.    The subring of evaluable elements in $B_{r, s}(\FF; \bolddelta)$  is  $B_{r, s}(R_{\mathfrak p}; \bolddelta)$;  c.f.  \myref{Remark}{remark on evaluable elements}.   
 We fix  the lattice path $\varepsilon$ as in \eqref{standard lattice path}, and the corresponding sequence of walled Brauer algebras
$(B_{\varepsilon_i})_{1 \le i \le r+s}$, with branching graph $\widehat B_\varepsilon$.  We have verified in \myref{Section}{subsection: cellularity walled Brauer}  that this is a sequence of diagram algebras satisfying  properties
\eqref{diagram 1}--\eqref{diagram 6}   of 
\myref{Section}{subsection diagram algebras}.     We have verified in \myref{Section}{subsection wba quotient axioms}  that the quotient axioms \eqref{quotient axiom 1}--\eqref{quotient axiom 3} are satisfied by the maps $\Theta$ from $B_{\varepsilon_i}(\ZZ; N)$  to endomorphisms of mixed tensor space.  We have defined JM elements  $L_1, \dots, L_{r+s}$  for the tower $(B_{\varepsilon_i})_{1 \le i \le r+s}$
in \myref{Section}{subsection JM elements WBA}, 
 and have verified that they are an additive  family  JM elements in the sense discussed in \myref{Section}{subsection JM elements quotients}. We  have checked that the JM elements satisfy the  the separation condition; see \myref{Remark}{WBA JM separation condition}.  We know that quotient algebras $\Theta(B_{\varepsilon_i}(\Q; N))$, which are the centralizer algebras of  the general linear group acting on mixed tensor space, are split semisimple.  So it remains only to check that condition 
\eqref{assumption SN} of \myref{Section}{subsection seminormal quotient} holds, with  
\myref{Definition}{defn permissibility for WBA} providing the appropriate permissibilty condition.  

\begin{lem}  Let $\stt$ be an $N$--permissible path in $\Std_{\varepsilon, k}$.  
Then $F_t$ is evaluable.
\end{lem}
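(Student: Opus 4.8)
The strategy is to apply the sufficient condition for \eqref{assumption SN} provided by \myref{Lemma}{suff cond for SN}, in the equivalent form stated in \myref{Remark}{suff cond for SN 1}. Concretely, I must show: for all $1 \le k \le r+s$ and all paths $\mfs, \mft \in \Std_{\varepsilon, k}$ with $\mfs_{[0,k-1]} = \mft_{[0,k-1]}$, if at least one of $\mfs, \mft$ is $N$--permissible, then $\pi(\kappa_\mft(k)) \ne \pi(\kappa_\mfs(k))$, where $\pi: \ZZ[\bolddelta] \to \ZZ$ sends $\bolddelta \mapsto N$. The point is that the two edges $\mft(k-1) \to \mft(k)$ and $\mft(k-1) \to \mfs(k)$ emanate from a common vertex $(\lambda, l) = \mft(k-1)$, and using the content formula \eqref{equation: contents for Brauer algebra} for the walled Brauer branching graph, the equality $\pi(\kappa_\mft(k)) = \pi(\kappa_\mfs(k))$ forces a numerical constraint that the permissibility hypothesis rules out.

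\textbf{Key steps.} First I would recall the branching rule on $\widehat B_\varepsilon$: from $(\lambda, l) = ((\lambda(1), \lambda(2)), l)$, at step $k$ (which is a ``$\iota$''-step since $k > r$, adding/removing in the second component — or the obvious analogue for an ``$\iota'$''-step), an edge either adds a node $a$ to a component, with content $\kappa = c(a)$, or removes a node $a$ from a component, with content $\kappa = -c(a) - \bolddelta$. If both edges add nodes, then $\pi(\kappa_\mft(k)) = \pi(\kappa_\mfs(k))$ would give $c(a) = c(a')$ for two distinct addable nodes of the same Young diagram, which is impossible; likewise if both remove nodes. Hence one edge adds a node $a$ and the other removes a node $a'$, from the relevant component, say $\lambda(2)$ (for a $\iota$-step) — the $\iota'$ case and the case where the active component is $\lambda(1)$ are symmetric. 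The equation $\pi(\kappa_\mft(k)) = \pi(\kappa_\mfs(k))$ then reads $c(a) = -c(a') - N$, i.e. $c(a) + c(a') = -N$, equivalently (negating) $-c(a) - c(a') = N$. Now I would bound: an addable node $a$ of $\lambda(2)$ satisfies $c(a) \ge -\ell(\lambda(2)) \ge -\lambda(2)'_1$, and a removable node $a'$ of $\lambda(2)$ satisfies $c(a') \ge -\ell(\lambda(2)) + 1 > -\lambda(2)'_1$; so $-c(a) - c(a') < \lambda(2)'_1 + \lambda(2)'_1 \le 2\lambda(2)'_1$. This alone is not quite enough, so I would use permissibility more carefully: the vertex $\mfs(k)$ (or $\mft(k)$), whichever is permissible, has a component whose first column length is constrained. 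If $\mft(k) = (\lambda(2) \cup \{a\})$ and $\mfs(k) = (\lambda(2) \setminus \{a'\})$, permissibility of one of these together with $\lambda(1)'_1 + \lambda(2)'_1$ bounds forces $\ell(\lambda(2)) \le N - \lambda(1)'_1 \le N$, and combined with $c(a) \le 0$ for the lowest addable node being in row $\le \ell(\lambda(2)) + 1$ one gets the sharper $c(a) + c(a') \ge -N+1 > -N$, the contradiction. I would carry out the arithmetic precisely, handling the addable/removable node in the first-column versus interior-column cases.

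\textbf{Main obstacle.} The genuine content of the argument is the inequality bounding $c(a) + c(a')$ away from $-N$ under the permissibility hypothesis on $\mfs(k)$ or $\mft(k)$ — this is the walled-Brauer analogue of the final inequality in the proof of \myref{Lemma}{SN for symplectic Brauer}, but here permissibility is phrased via $\lambda(1)'_1 + \lambda(2)'_1 \le N$ rather than $\lambda_1 \le N$, and one must be careful that the ``active'' component (the one whose node is being changed) is the one whose first-column length controls the relevant content; the worst case is when the added or removed node lies in the first column itself (so its content is $1 - (\text{row index})$, most negative), and one must check that even then permissibility of the resulting vertex keeps the row index at most $N - (\text{first-column length of the other component}) \le N$, yielding $c(a) \ge 1 - N$ and hence $c(a) + c(a') > -N$. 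Once this case analysis is set up, the verification of \eqref{assumption SN} follows from \myref{Lemma}{suff cond for SN} and \cite[Lemma~4.2]{MR2414949}.
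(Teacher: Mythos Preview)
Your overall strategy — reduce to the sufficient condition of \myref{Lemma}{suff cond for SN} via \myref{Remark}{suff cond for SN 1}, then rule out the mixed add/remove case by a content inequality — is exactly the paper's. But there is a genuine gap in your execution coming from a misreading of the walled Brauer branching rule.

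At a $\iota$-step (level $k>r$) the two possible edges out of $(\lambda,l) = ((\lambda(1),\lambda(2)),l)$ are: add a node $\beta$ to $\lambda(2)$ (keeping $l$), or remove a node $\alpha$ from $\lambda(1)$ (increasing $l$ by one); see \myref{Proposition}{B r s cellularity}~\eqref{brs branching rules}(a). The addition and the removal occur in \emph{different} components, not both in $\lambda(2)$ as you assume. This is precisely why your bound $-c(a)-c(a') < 2\lambda(2)'_1$ only involves one column length and, as you noticed, is not enough. With the correct branching rule, writing $a=\lambda(1)'_1$ and $b=\lambda(2)'_1$, one has $c(\alpha)\ge 1-a$ (removable node of $\lambda(1)$) and $c(\beta)\ge -b$ (addable node of $\lambda(2)$), so $c(\alpha)+c(\beta)\ge 1-(a+b)$. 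The equality $\pi(\kappa_\mft(k))=\pi(\kappa_\mfs(k))$ forces $c(\alpha)+c(\beta)=-N$, and now the permissibility condition $a+b\le N$ gives the contradiction $-N \ge 1-N$.

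A second, smaller point: you try to invoke permissibility of one of the \emph{endpoints} $\mfs(k)$ or $\mft(k)$, which leads you into the awkward case analysis at the end. The clean observation is that since $\mfs$ and $\mft$ agree on $[0,k-1]$ and at least one of them is $N$--permissible, the common vertex $\mft(k-1)=(\lambda,l)$ is itself $N$--permissible, giving $a+b\le N$ directly. Once you correct the branching rule and use permissibility at the common vertex, the argument is a one-line inequality and no further case analysis is needed.
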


\begin{proof}  
 We apply  \myref{Lemma}{suff cond for SN} and \myref{Remark}{suff cond for SN 1}.
 Let $\mft, \mfs \in \Std_{k+1}$ for some $k$ with $\mfs' = \mft'$ and  with
  at least one of $\mfs,  
 \mft$   $N$--permissible.      We have to show that
  $\kappa_\stt(k+1)-\kappa_\sts(k+1) \not\equiv 0  \mod (\bolddelta - N)$.
  In order to reach a contradiction, assume  $\kappa_\stt(k+1)-\kappa_\sts(k+1) \equiv 0  \mod (\bolddelta - N)$.
  This can only happen if one of the two edges $\stt(k) \to \stt(k+1)$  and $\stt(k) \to \sts(k+1)$ involves adding a node to $\stt(k)$ and the other involves removing a node.  It follows in particular that $k \ge r$, since paths up to level $r$ involve only the addition of nodes. 
Write $\stt(k) = ((\lambda(1), \lambda(2)), l)$ and write $a$ for the length of the first column of $\lambda(1)$ and $b$ for the length of the first column of $\lambda(2)$.    Since $\stt' = \sts'$ is $N$--permissible, $a + b \le N$.   Assume without loss of generality that 
$\sts(k+1) =  ((\lambda(1) \setminus \{\alpha\}, \lambda(2)), l+1)$, for some removable node $\alpha$  of $\lambda(1)$, and
$\stt(k+1) =  ((\lambda(1), \lambda(2) \cup \{\beta\}), l)$  for some addable node $\beta$  of $\lambda(2)$.
Our assumption  is then $c(\beta) + c(\alpha) = -N$.   But $c(\beta) \ge   -b $ and $c(\alpha) \ge 1-a$.  Hence $c(\alpha) + c(\beta) \ge 1 - a - b \ge 1 - N$, a contradiction.
\end{proof}

Now that we have verified condition \eqref{assumption SN}, it follows that all the conclusions regarding seminormal bases and seminormal representations from \myref{Section}{subsection seminormal quotient} are valid for the quotients of the walled Brauer algebras acting on mixed tensor space.

\section{Diagrammatic minors and Pfaffians} \label{appendix: diagrammatic minors}
This appendix provides proofs of \myref{Propositions}{symplectic fund theorem 1} and \myrefnospace{}{orthogonal fund theorem 1}.
Let $V$ be a finite dimensional vector space over a field $\Bbbk$ with a non-degenerate symplectic or orthogonal form $[\ ,\ ]$.   The form induces a non-degenerate bilinear form on $V\tensor r$ for each $r$, and thus 
isomorphisms $\eta : V^{\otimes r} \to (V^*)^{\otimes r}$  and $A:  V^{\otimes 2r} \to \End(V^{\otimes r})$, as described in \myref{Section}{section tensor space}.   It follows from the definitions that
$\eta\circ A\inv(T)(x \otimes y) = [y, T(x)]$ for $T \in \End(V\tensor r)$ and $x, y \in V \tensor r$.

\subsection{The symplectic case}  \label{appendix symplectic} Take $V$ to be $2N$ dimensional over $\Bbbk$ with a symplectic form $\langle \ , \ \rangle$.    
Let $\Phi : B_r(\Bbbk; -2N) \to \End(V^{\otimes r})$ be the Brauer homomorphism 
determined by $e_i \mapsto E_i$ and $s_i \mapsto -S_i$, as in 
\myref{Section}{section Brauer symplectic}.   
We will  determine  $\Phi$ and   $\eta\circ A\inv \circ \Phi$
explicitly on the basis of Brauer diagrams.    

In this context we adopt an alternative labeling of the vertices of an $r$--strand Brauer diagram:  the top vertices are labeled by $1, 2, \dots r$ from left to right and the bottom vertices by $r+1, \break r+2, \dots, 2r$ from left to right.  A Brauer diagram $D$ is determined by its set of edges, each given as an unordered pair of vertices:
$$
D = \{ \{i_1, j_1\}, \{i_2, j_2\}, \dots, \{i_r, j_r\} \}.
$$
 The symmetric group $\mathfrak S_{2r}$  acts transitively (on the right)  on Brauer diagrams  by acting on the set of vertices.  The stabilizer  $H$ of the identity diagram  $D_0$ is isomorphic to $(\ZZ/2\ZZ)^r \rtimes \mathfrak S_r$.  For each Brauer diagram $D$, 
 $D$ can be written uniquely as
 $$
D = \{ \{h_1, k_1\}, \{h_2, k_2\}, \dots, \{h_r, k_r\} \},
$$
where $h_i < k_i$ for all $i$ and $h_1 <h_2 < \cdots < h_r$.   
Define $\sigma_D$  by $i \, \sigma_D = h_i$ and $(i+r)\,  \sigma_D = k_i$ for $1 \le i \le r$, so   $D_0 \ \sigma_D = D$.

Any Brauer diagram $D$  can be drawn so that each pair of strands crosses at most once and no strand crosses itself; the {\sf  length} $\ell(D)$ is the number of crossings in such a representative of $D$.    Recall that the  corank of a Brauer diagram  is  $1/2$ the number of horizontal strands.

\begin{lem} \label{lemma sign of Brauer diagram}
 If $D$ is a Brauer diagram with corank $s$,  then $(-1)^{s  + \ell(D)} = \sgn(\sigma_D)$. 
\end{lem}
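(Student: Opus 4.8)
\textbf{Plan for proving Lemma \ref{lemma sign of Brauer diagram}.}

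The statement $(-1)^{s+\ell(D)} = \sgn(\sigma_D)$ for a Brauer diagram $D$ of corank $s$ is purely combinatorial, concerning the permutation $\sigma_D \in \mathfrak S_{2r}$ that carries the identity diagram $D_0$ to $D$. The plan is to argue by induction on the length $\ell(D)$, or equivalently, to reduce every diagram to a normal form whose sign is computed directly and to track how $\sgn(\sigma_D)$ and $(-1)^{s+\ell(D)}$ each change under elementary moves. First I would verify the base case: if $\ell(D) = 0$, then $D$ can be drawn with no crossings, so $D$ is a planar (non-crossing) matching. One checks directly that in this case $\sigma_D$ is a product of $s$ ``long'' cycles coming from the horizontal strands together with a shift on the vertical strands, and a careful bookkeeping shows $\sgn(\sigma_D) = (-1)^s$, matching $(-1)^{s+0}$. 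Actually it is cleaner to take as the base case the identity diagram $D_0$ itself ($s=0$, $\ell = 0$, $\sigma_{D_0} = \id$, both sides $= 1$) and then build up.

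The inductive step: given $D$ with $\ell(D) \ge 1$, choose a crossing between two strands and ``uncross'' it, producing a new diagram $D'$. There are two cases according to whether the two strands involved are of the same type (both vertical, both horizontal-top, etc.) or whether uncrossing changes the matching structure. In the generic case, uncrossing two strands corresponds to multiplying $\sigma_D$ on the right by a transposition of two adjacent vertices, so $\sgn(\sigma_{D'}) = -\sgn(\sigma_D)$, while $\ell(D') = \ell(D) - 1$ and the corank is unchanged, so $(-1)^{s + \ell(D')} = -(-1)^{s+\ell(D)}$; the claimed identity is preserved. The subtlety is that some ``uncrossing'' moves can change the corank: resolving a crossing between a top-horizontal strand and a bottom-horizontal strand, or between two horizontal strands sharing a region, can convert two horizontal strands into two vertical ones (corank drops by $1$) or vice versa. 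In those cases one must check that the parity of the accompanying change in $\sigma_D$ (which may be a transposition times the combinatorial rearrangement of the $h_i < k_i$ ordering convention) compensates exactly for the $\pm 1$ coming from $(-1)^s$. I expect this case analysis — controlling how the normalization $h_1 < h_2 < \cdots < h_r$, $h_i < k_i$ interacts with uncrossing moves that alter the matching type — to be the main obstacle, because $\sigma_D$ is defined via this ordering convention rather than directly geometrically, so an innocuous-looking geometric move can permute the labels $h_i$ among themselves.

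An alternative, possibly cleaner route avoiding the move-by-move analysis: write $\sigma_D$ as an explicit product. Using the decomposition of $D$ into its $s$ horizontal strands on top, $s$ on the bottom, and $r - 2s$ vertical strands, express $\sigma_D$ as a product of (i) a fixed permutation depending only on $s$ and $r$ (the ``standard'' corank-$s$ diagram's permutation), times (ii) a product of transpositions realizing the crossings. One then shows $\sgn$ of (i) is $(-1)^s$ by a direct cycle count — each pair (top horizontal strand, matching bottom horizontal strand) contributes a cycle of odd length under the labeling convention — and $\sgn$ of (ii) is $(-1)^{\ell(D)}$ since a minimal crossing diagram has exactly $\ell(D)$ transpositions in a reduced word. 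I would likely present the reduced-word argument as the main proof and relegate the corank cycle-count to a short explicit computation, since that is the part that is genuinely just a calculation once the labeling is set up. Either way, the key technical lemma I need is that $\ell(D)$ equals the Coxeter length of $\sigma_D$ relative to the double coset $H \sigma_D H$ — i.e. that a minimal-crossing drawing corresponds to a minimal-length coset representative — which is standard for Brauer diagrams and I would cite or prove via the usual exchange argument.
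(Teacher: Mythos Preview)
The paper states this lemma without proof, treating it as a routine combinatorial check, so there is no argument in the text to compare against.

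Your overall strategy is sound, and your second route---factoring through a standard corank-$s$ diagram---is essentially what the paper itself invokes immediately afterward: it records the factorization $D=\pi_1^{-1}(e_1e_3\cdots e_{2s-1})\pi_2$ and the identity $(-1)^{\ell(D)}=\sgn(\pi_1)\sgn(\pi_2)$ (both also without justification), and combining these with a direct computation of $\sgn(\sigma_{E_s})$ for the standard diagram $E_s=e_1e_3\cdots e_{2s-1}$ gives the lemma cleanly.

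There is, however, a genuine error in the ``key technical lemma'' you propose at the end: $\ell(D)$ is \emph{not} the Coxeter length of $\sigma_D$, nor the minimal length in its coset or double coset. For $r=2$ and $D=e_1$ one has $\ell(D)=0$ but $\sigma_D=(2,3)$, of Coxeter length $1$; since $(2,3)\notin H$ the identity is not in $H\sigma_D$, so the minimal left-coset length is also $1$; and the double coset $H(2,3)H$ contains both $\sigma_{e_1}$ and $\sigma_{s_1}$ (with $\ell(e_1)=0\ne 1=\ell(s_1)$), so no single coset-length invariant can recover $\ell$. The underlying reason is that $\sigma_D$ is normalized so that $\sigma_{D_0}=\id$ rather than so that the Pfaffian matching $\{\{1,2\},\{3,4\},\ldots\}$ corresponds to the identity, and the discrepancy is a fixed riffle shuffle whose length is unrelated to $\ell(D)$. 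Drop this lemma and either carry through the factorization route above, or run your move-by-move induction directly, which does not depend on it.
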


We now compute $\Phi(D)$ for any Brauer diagram $D$.
If $\pi$ is a permutation diagram, then
\begin{equation} \label{symplectic action 1}
(x_1 \otimes \cdots \otimes x_r)  \Phi(\pi)   = \sgn(\pi) (x_{1\cdot \pi \inv} \otimes  \cdots \otimes  x_{{r\cdot \pi \inv}}).
\end{equation}    Moreover, 
\begin{equation} \label{symplectic action 2}
\begin{aligned}
(x_1 \otimes & \cdots \otimes x_r)  \Phi(e_1 e_3 \cdots e_{2s -1})  =   \\
&
\langle x_1, x_2 \rangle \langle x_3, x_4 \rangle \cdots \langle x_{2s -1}, x_{2s} \rangle    \ 
(\omega \otimes \cdots \otimes \omega) \otimes (x_{2s +1} \otimes \cdots \otimes x_{r}).
\end{aligned}
\end{equation}
Let $D$ be a Brauer diagram of corank $s$.  Then $D$ can be 
factored as 
\begin{equation} \label{symplectic action 3}
D = \pi_1\inv (e_1 e_3 \cdots e_{2s -1}) \pi_2,
\end{equation}
$\pi_1, \pi_2 \in \mathfrak S_r$ satisfy
$(2j -1) \cdot \pi_i < 2j \cdot \pi_i$ for $1 \le j \le s$,  and  $\pi_1$ also satisfies
$(2s +1) \cdot \pi_1 < (2s +2) \cdot \pi_1< \cdots < r \cdot \pi_1$.   This factorization is not unique, but the conditions on $\pi_1, \pi_2$ imply
 \begin{equation} \label{symplectic action 4}
 (-1)^{\ell(D)} = \sgn(\pi_1) \sgn(\pi_2).
 \end{equation} 
The homomorphism property of $\Phi$ together with    \eqref{symplectic action 1} --   \eqref{symplectic action 4}  determines $\Phi(D)$.

To write the answer, we use the following notation:  let $t = r - 2s$.  Let 
$\{1, 2, \cdots, r \} = \{i_1, \dots, i_s\} \cup \{j_1, \dots, j_s\} \cup \{k_1, \dots, k_t\}$,  with
$i_l < j_l$,  for $1 \le l \le s$.  Let $x_1, \dots, x_t$ be any elements of $V$.   Then we 
write 
$$\bigotimes_l \omega(i_l, j_l)  \otimes  \bigotimes_m x_m(k_m)$$
for the tensor that has a copy of $\omega$ in each of the pairs of tensor positions $(i_1, j_1)$, \dots $(i_s, j_s)$,  and the vectors $x_1, \dots x_t$ in the remaining $t$ tensor positions.  For example, with 
$\{v_i\}$  and $\{v_i^*\}$   dual bases of $V$,   
$$
\omega(1, 4) \otimes \omega(2, 5) \otimes x_1(3) \otimes x_2(6) = 
\sum_{i, j}  v_i^* \otimes v_j^* \otimes x_1 \otimes v_i \otimes v_j \otimes x_2.
$$

\def\top{{\rm top}}
\def\bot{{\rm bot}}
\def\vert{{\rm vert}}
\def\rank{{\rm rank}}
\def\corank{{\rm corank}}
 Write $(i, j) \in D$ if $i <j$ and $\{i, j\}$ is a strand of $D$.
Write $(i, j) \in \top(D)$, $(i, j) \in \bot(D)$, or $(i, j) \in \vert(D)$  if $i < j$ and $\{i, j\}$ is respectively a horizontal strand at the top of $D$,  a horizontal strand at the bottom of $D$, or a vertical strand of $D$.  
Then we have
\begin{equation} \label{symplectic action 5}
\begin{aligned}
(x_1 \otimes  & \cdots \otimes x_r)  \Phi(D)  = \\ 
&  (-1)^{\ell(D)} \ 
\smashoperator[lr]{\prod_{(i, j) \in \top(D)}} \langle x_i, x_j \rangle
\smashr{\bigotimes_{(r+ i, r+ j) \in \bot(D)}} \  \omega(i, j) \ \  \otimes 
\smashr{\bigotimes_{(i,r+  j) \in \vert(D)} }\ x_{i}(j).
\end{aligned}
\end{equation}

Next we compute $\eta \circ A\inv \circ\Phi(D)$, for $D$ a Brauer diagram of corank $s$.
\begin{equation}
\begin{aligned}
&(\eta  \circ A\inv \circ \Phi(D))((x_1\otimes \cdots \otimes x_r)\otimes(x_{r+1} \otimes \cdots \otimes x_{2r})) =  \\
&\langle ( x_{r+1} \otimes \cdots \otimes x_{2r}), (x_1\otimes \cdots \otimes x_r)\Phi(D) \rangle = \\
& (-1)^{\ell(D)} 
\smashr{\prod_{(i, j) \in \top(D)}} \  \langle x_i, x_j \rangle
\smashr{\prod_{( i,  j) \in \bot(D)}}\ \langle x_j, x_i \rangle
\smashr{\prod_{(i,  j) \in \vert(D)}}\ \langle x_j, x_i \rangle = \\
& (-1)^{\ell(D)} (-1)^{s} (-1)^{{\rank}(D)}
\smashr{ \prod_{(i, j) \in D}} \  \langle x_i, x_j \rangle = \\
& (-1)^r  \sgn(\sigma_D) 
\smashr{ \prod_{(i, j) \in D}} \  \langle x_i, x_j \rangle,
\end{aligned}
\end{equation}
where in the last line, we have used that $2s + {\rank}(D) = r$ and \myref{Lemma}{lemma sign of Brauer diagram}.

\def\Pf{{\rm Pf}}
For the following result see    \cite[Section 3.6]{MR2265844}. 
\begin{lem}\label{Pfaffian}
If $(a_{i, j})$ is a skew-symmetric $2r$--by--$2r$ matrix, then the Pfaffian of $(a_{i, j})$ is
$$ \Pf((a_{i, j})) = \sum_D  \sgn(\sigma_D) \prod_{(i, j) \in D}  a_{i, j},$$
where the sum is over $r$-strand Brauer diagrams. 
\end{lem}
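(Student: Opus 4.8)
\textbf{Plan for the proof of Lemma~\ref{Pfaffian}.}

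The plan is to prove the Pfaffian identity by a direct sign-counting argument, exploiting exactly the combinatorial apparatus built up in this appendix: the correspondence $D \mapsto \sigma_D$ between $r$-strand Brauer diagrams and the coset space $H \backslash \mathfrak S_{2r}$, where $H \cong (\ZZ/2\ZZ)^r \rtimes \mathfrak S_r$ is the stabilizer of the identity diagram $D_0$. First I would recall the standard definition of the Pfaffian of a skew-symmetric $2r$-by-$2r$ matrix $(a_{i,j})$ as
$$
\Pf((a_{i,j})) = \frac{1}{2^r\, r!} \sum_{\pi \in \mathfrak S_{2r}} \sgn(\pi) \prod_{i=1}^r a_{\pi(2i-1),\, \pi(2i)},
$$
or equivalently the sum over perfect matchings of $\{1,\dots,2r\}$ with the appropriate sign. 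The key observation is that a perfect matching of $\{1,\dots,2r\}$ into unordered pairs is precisely the data of an $r$-strand Brauer diagram $D$ (forgetting the top/bottom structure, which is irrelevant here since we only care about the matching). So the content of the lemma is that the sign attached to the matching $D$ in the Pfaffian expansion equals $\sgn(\sigma_D)$, where $\sigma_D$ is the canonical permutation defined just before Lemma~\ref{lemma sign of Brauer diagram} by writing $D = \{\{h_1,k_1\},\dots,\{h_r,k_r\}\}$ with $h_i < k_i$ and $h_1 < \dots < h_r$, and setting $i\,\sigma_D = h_i$, $(i+r)\,\sigma_D = k_i$.

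The main steps I would carry out are: (1) Fix a matching $D$ and its canonical representative ordering $(h_1,k_1,\dots,h_r,k_r)$; the permutation sending $(1,2,\dots,2r)$ to this tuple is, after reindexing the even/odd positions, essentially $\sigma_D$ composed with the fixed shuffle that interleaves the first $r$ and last $r$ entries — I would pin down this fixed shuffle once and absorb its (constant) sign. (2) Show that the $2^r r!$ permutations $\pi \in \mathfrak S_{2r}$ that induce the matching $D$ (i.e.\ $\{\{\pi(1),\pi(2)\},\dots,\{\pi(2r-1),\pi(2r)\}\}=D$) form a single coset of the subgroup generated by the $r$ transpositions swapping $2i-1 \leftrightarrow 2i$ and the block permutations of the $r$ pairs; crucially, for each such $\pi$ one has $\sgn(\pi)\prod_i a_{\pi(2i-1),\pi(2i)}$ equal to the value at the canonical representative, because swapping a pair $2i-1\leftrightarrow 2i$ changes $\sgn(\pi)$ by $-1$ and simultaneously replaces $a_{h_i,k_i}$ by $a_{k_i,h_i}=-a_{h_i,k_i}$ (skew-symmetry), and permuting the blocks is an even permutation that leaves the product unchanged. (3) Conclude that the $1/(2^r r!)$ exactly cancels the coset size, leaving a sum over matchings $D$ with coefficient $\sgn(\pi_D)\prod_{(i,j)\in D} a_{i,j}$ for the canonical representative $\pi_D$, and then identify $\sgn(\pi_D) = \sgn(\sigma_D)$ by untangling the fixed interleaving shuffle from step (1).

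The step I expect to be the main obstacle is step (3): carefully matching the bookkeeping permutation $\pi_D$ that appears naturally in the Pfaffian expansion (which lists the matched pairs consecutively as $\pi(1)\pi(2)\,\pi(3)\pi(4)\cdots$) against the permutation $\sigma_D$ defined in the paper (which lists all the \emph{smaller} endpoints first, then all the \emph{larger} endpoints). These differ by the fixed ``perfect interleaving'' permutation of $\mathfrak S_{2r}$ that sends $(1,2,\dots,2r)$ to $(1,r+1,2,r+2,\dots,r,2r)$; its sign is a constant depending only on $r$, and I would need to check that this constant is $+1$ — or, if it is not, verify that it is already accounted for. In fact the cleanest route is to avoid computing this constant in isolation: evaluate both sides of the claimed identity on the single matching $D_0$ (the identity diagram, $(1,r+1),(2,r+2),\dots,(r,2r)$ in the paper's labelling — though the top/bottom split is immaterial), where $\sigma_{D_0}$ is readily seen to be a specific permutation whose sign one computes directly, and check consistency; then the general case follows from steps (1)–(2). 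Alternatively, one can simply cite \cite[Section 3.6]{MR2265844} for the combinatorial formula and restrict the work to verifying that the sign convention there agrees with $\sgn(\sigma_D)$ via Lemma~\ref{lemma sign of Brauer diagram}, which relates $\sgn(\sigma_D)$ to $(-1)^{s+\ell(D)}$ with $s=\corank(D)=r-(\text{rank})$ — but since the lemma is stated with a reference already, the expected intent is a short self-contained argument along the lines above rather than a long independent development.
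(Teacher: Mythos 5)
The paper offers no proof of this lemma at all --- it is quoted from \cite[Section 3.6]{MR2265844} --- so your self-contained argument is not ``the paper's route'' but it is the standard one, and steps (1)--(2) of your plan are correct: the $2^r r!$ permutations of $\mathfrak S_{2r}$ inducing a fixed matching $D$ form a coset of the hyperoctahedral stabilizer, each contributes the same summand (a swap within a pair flips both $\sgn(\pi)$ and the matrix entry, a block permutation is even and reorders a commutative product), and the normalization cancels the coset size.

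The genuine gap is exactly the point you flagged and deferred in step (3). The interleaving shuffle $\tau$ sending $(1,2,\dots,2r)$ to $(1,r+1,2,r+2,\dots,r,2r)$ satisfies $\pi_D=\sigma_D\circ\tau$, and its sign is \emph{not} $+1$: counting inversions (every pair of positions $2i<2j-1$ with $i<j$ inverts, and no others) gives $\sgn(\tau)=(-1)^{r(r-1)/2}$. Consequently, with the standard normalization $\Pf((a_{i,j}))=\tfrac{1}{2^r r!}\sum_{\pi}\sgn(\pi)\prod_i a_{\pi(2i-1),\pi(2i)}$, your argument proves
$$\sum_D\sgn(\sigma_D)\prod_{(i,j)\in D}a_{i,j}=(-1)^{\binom{r}{2}}\Pf((a_{i,j})),$$
not the identity as literally stated. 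Your proposed sanity check on $D_0$ detects this rather than confirming it: already for $r=2$ one has $\sgn(\sigma_{D_0})=+1$ for $D_0=\{\{1,3\},\{2,4\}\}$, whereas the coefficient of $a_{13}a_{24}$ in $\Pf=a_{12}a_{34}-a_{13}a_{24}+a_{14}a_{23}$ is $-1$; indeed all three matchings come out with the opposite sign. So to close the proof you must either carry the explicit constant $(-1)^{\binom{r}{2}}$ (which is harmless for the paper's purposes --- the only use of the lemma is the display $(\eta\circ A^{-1}\circ\Phi(\mathfrak b_r))(x_1\otimes\cdots\otimes x_{2r})=\pm\Pf((\langle x_i,x_j\rangle))$, where the global sign is irrelevant to concluding $\mathfrak b_{N+1}\in\ker(\Phi)$), or adopt the sign convention of the cited reference in which pairs are listed ``all smaller endpoints first''. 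As written, the plan asserts the constant might be $+1$ and leaves the alternative unverified, so it does not yet establish the stated equality.
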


  We defined $\mathfrak b_r$ to be the sum of all Brauer diagrams with $r$ strands 
 (\myref{Definition}{defn of diagrammatic Pfaffian}),
 so we have
 $$
 (\eta\circ  A\inv \circ \Phi(\mathfrak b_r))(x_1 \otimes x_2 \otimes \cdots \otimes x_{2r})  = \pm 
 \Pf((\langle x_i, x_j \rangle )).
 $$
 Because of this, Gavarini called $\mathfrak b_r$ a ``diagrammatic Pfaffian",  \cite[Definition 3.4]{MR1670662}.  
 If we take $r = N+1$, then for any choice of $x_1, \dots, x_{2N + 2}$, the  matrix $(\langle x_i, x_j \rangle)$ is singular, so the Pfaffian of this matrix is zero.  Thus $\eta \circ A\inv \circ \Phi(\mathfrak b_{N+1}) = 0$, or equivalently $\mathfrak b_{N+1} \in  \ker(\Phi)$.   This proves \myref{Proposition}{symplectic fund theorem 1}.

\subsection{The orthogonal case}  \label{appendix orthogonal}
Let $V$  be an $N$ dimensional vector space  over $\Bbbk$ with a non-degenerate symmetric bilinear  form $(\ , \ )$.   
Let $\Psi : B_r(\Bbbk; N) \to \End(V^{\otimes r})$ be the Brauer homomorphism 
determined by $e_i \mapsto E_i$ and $s_i \mapsto S_i$, as in 
\myref{Section}{section Brauer orthogonal}.

By a similar, but easier, computation as in the symplectic case, we can explicitly determine $\eta\circ A\inv \circ \Psi$ on the basis of Brauer diagrams.  The answer is 
$$(\eta\circ A\inv\circ \Psi(D))(w_1 \otimes \cdots \otimes w_{2r}) =  \prod_{(i, j) \in D}  ( w_i, w_j).$$

We now fix $a \ge b \ge 0$ with $a + b = r$, and restrict attention to $(a, b)$-walled Brauer diagrams $D$. 
There is a bijection between $(a, b)$-walled Brauer diagrams $D$ and permutations $\pi \in \mathfrak S_r$ by exchanging top and bottom vertices to the right of the wall.  
In \myref{Definition}{sign of walled Brauer algebra}, we defined the sign of an $(a, b)$-walled Brauer diagram $D$  to be the sign of the corresponding permutation.    We record the following lemma in order to make the connection with the signs of Brauer diagrams appearing in 
\myref{Appendix}{appendix symplectic}:

\begin{lem}  Let $a + b = r$, and let $D$ be an $(a, b)$-walled Brauer diagram of corank $s$.  Then
$$
\sgn(D) = (-1)^{s + \el(D)} =  \sgn(\sigma_D).
$$
\end{lem}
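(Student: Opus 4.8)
The second equality $(-1)^{s+\ell(D)}=\sgn(\sigma_D)$ is exactly \myref{Lemma}{lemma sign of Brauer diagram} applied to $D$ regarded as an $r$--strand Brauer diagram, so the task reduces to proving the first equality $\sgn(D)=(-1)^{s+\ell(D)}$, where $\sgn(D)$ is the walled sign of \myref{Definition}{sign of walled Brauer algebra}. The first step is to make the permutation underlying $\sgn(D)$ explicit. The walled conditions say precisely that every strand of $D$ joins a vertex of $X:=\{\text{top-left vertices}\}\cup\{\text{bottom-right vertices}\}$ to a vertex of $Y:=\{\text{bottom-left vertices}\}\cup\{\text{top-right vertices}\}$: a top horizontal strand joins $X$ to $Y$, a bottom horizontal joins $Y$ to $X$, a left vertical joins $X$ to $Y$, and a right vertical joins $Y$ to $X$. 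Ordering $X$ and $Y$ from left to right within each of their two blocks, $D$ is thus the graph of a bijection $\pi_D\colon X\to Y$, and $\sgn(D)=\sgn(\pi_D)$ --- this is just a restatement of exchanging the top and bottom vertices to the right of the wall.

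Next I would show that both $D\mapsto\sgn(D)$ and $D\mapsto(-1)^{s+\ell(D)}$ are equivariant under the subgroup $\mathfrak S_a\times\mathfrak S_b$ of permutation diagrams in $B_{a,b}$: for $w_1,w_2\in\mathfrak S_a\times\mathfrak S_b$ and $f$ either of these two functions, $f(w_1 D w_2)=\sgn(w_1)\,f(D)\,\sgn(w_2)$. For $f=\sgn(-)$ this is immediate from the $X/Y$ description, since left multiplication by $w_1$ relabels the top-left block of $X$ and the top-right block of $Y$, and right multiplication by $w_2$ relabels the bottom-left block of $Y$ and the bottom-right block of $X$, so $\pi_{w_1 D w_2}$ is obtained from $\pi_D$ by pre-- and post--composition with relabelings whose signs multiply to $\sgn(w_1)$ and $\sgn(w_2)$. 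For $f=(-1)^{s+\ell(-)}$ one observes that the corank $s$ is unchanged by composition with permutation diagrams, that no closed loops are created, and that the crossing count of the stacked figure $w_1\ast D\ast w_2$ (with each factor drawn reduced) equals $\ell(w_1)+\ell(D)+\ell(w_2)$ and can be brought down to $\ell(w_1 D w_2)$ only by removing crossings in pairs; combined with $(-1)^{\ell(w_i)}=\sgn(w_i)$ this yields $\ell(w_1 D w_2)\equiv\ell(w_1)+\ell(D)+\ell(w_2)\pmod 2$ and hence the equivariance. (One could instead phrase this step throughout in terms of $\sgn(\sigma_{(-)})$ using \myref{Lemma}{lemma sign of Brauer diagram}, tracking coset representatives in $\mathfrak S_{2r}$.)

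With equivariance in hand it suffices to verify $\sgn(D)=(-1)^{s+\ell(D)}$ for one representative of each $(\mathfrak S_a\times\mathfrak S_b)$--bi--orbit on corank--$s$ walled diagrams, and every such orbit contains the standard diagram $E_s$ whose top horizontal strands are $\{\p i,\p{a+i}\}$ and bottom horizontal strands are $\{\pbar i,\pbar{a+i}\}$ for $1\le i\le s$, all remaining strands being vertical of the form $\{\p j,\pbar j\}$. Indeed, one uses left multiplication by a suitable element of $\mathfrak S_a\times\mathfrak S_b$ to put the top horizontal strands into this shape, then right multiplication to do the same for the bottom horizontal strands, and finally the residual freedom to permute the non--horizontal bottom vertices within each block to straighten the verticals. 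For $E_s$ the flip sends it to the permutation $\prod_{i=1}^{s}(i,a+i)$, so $\sgn(E_s)=(-1)^s$; and in the reduced drawing the $s$ top horizontal strands contribute $\binom s2$ mutual crossings and $s(a-s)$ crossings with the $a-s$ left vertical strands, and likewise for the bottom, so that $\ell(E_s)=s(s-1)+2s(a-s)\equiv 0\pmod 2$ and $(-1)^{s+\ell(E_s)}=(-1)^s=\sgn(E_s)$. Combining this with the equivariance step completes the proof.

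I expect the main obstacle to be the parity identity $\ell(w_1 D w_2)\equiv\ell(w_1)+\ell(D)+\ell(w_2)\pmod 2$ used in the equivariance step: everything else --- the $X/Y$ reformulation, the reduction to the single representative $E_s$, and the two explicit computations on $E_s$ --- is routine bookkeeping, whereas this point requires a careful argument that the crossing number of a planar Brauer diagram changes only by even amounts under planar isotopy and under composition with a permutation diagram.
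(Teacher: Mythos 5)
The paper records this lemma without any proof, so there is no argument of the authors' to compare against; your proposal supplies one. Its architecture --- reduce to the first equality via \myref{Lemma}{lemma sign of Brauer diagram}, establish $(\mathfrak S_a\times\mathfrak S_b)$--bi--equivariance of both $\sgn(\cdot)$ and $(-1)^{s+\ell(\cdot)}$, and verify the normal form $E_s$ --- is sound, and the $X/Y$ reformulation of $\sgn(D)$, the orbit reduction, and the two computations on $E_s$ are all correct.

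The genuine weak point is exactly the step you flag: the parity identity $\ell(w_1Dw_2)\equiv\ell(w_1)+\ell(D)+\ell(w_2)\pmod 2$. The principle you propose to prove it with --- that the crossing count of the stacked figure can only be reduced in pairs --- is \emph{false} for Brauer diagrams in general: in $B_2$ one has $s_1e_1=e_1$, with $\ell(s_1)+\ell(e_1)=1$ but $\ell(s_1e_1)=0$. The failure mechanism is that two distinct, mutually crossing strands of $w_1$ can merge into a single strand of the product (via a top horizontal strand of $D$), converting their mutual crossings into self-crossings, which are then removed one at a time by Reidemeister I moves. Your identity does hold in the walled setting, but the proof must use the wall: the two strands of $w_1$ (resp.\ $w_2$) that merge through a top (resp.\ bottom) horizontal strand of a walled $D$ lie on opposite sides of the wall, and since $w_1,w_2\in\mathfrak S_a\times\mathfrak S_b$ preserve the wall, such a pair never crosses. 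Hence the stacked figure has no self-crossings, each pair of distinct composite strands meets in a number of points whose parity is a homotopy invariant rel endpoints, and the identity follows. Without this observation the equivariance step has a gap. (An alternative that bypasses crossing numbers entirely: the flip is the right action on vertices of $\rho=\prod_{j=a+1}^{a+b}(j,r+j)\in\mathfrak S_{2r}$; passing from $\sigma_D\rho$ to the canonical representative $\sigma_{D\rho}$ reverses the orientation of exactly $b$ edges (the $b-s$ right vertical strands and the $s$ bottom horizontal strands), so $\sgn(\sigma_{D\rho})=(-1)^b\sgn(\sigma_D)\sgn(\rho)=\sgn(\sigma_D)$, while $\sgn(\sigma_P)=\sgn(\pi)$ for a permutation diagram $P$; combined with \myref{Lemma}{lemma sign of Brauer diagram} this yields the statement with no topology at all.)
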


Having fixed $(a, b)$,  for any sequence of $2r$ vectors $w_1, \dots , w_{2r}$ in $V$,  let
$(x_1, \dots, x_r) = (w_1, \dots, w_a,  w_{r + a + 1}, \dots, w_{2r})$ and
$(y_1, \dots, y_r) = (w_{r+1}, \dots, w_{r+a}, w_{a + 1}, \dots, w_r)$.   Then for an $(a, b)$-walled Brauer diagram $D$ and the corresponding permutation $\pi$, we have
$$
\prod_{(i, j) \in D}  ( w_i, w_j) =  \prod_{i = 1}^r  (x_i,  y_{i \cdot \pi}). 
$$
We have defined $\mathfrak d_{a,b} = \sum_D \sgn(D)\  D$ in 
\myref{Definition}{diagrammatic Pfaffians 1},
where the sum is over all    $(a, b)$-walled Brauer diagrams.   Hence we have
$$
\begin{aligned}
(\eta\circ A\inv \circ \Psi(\mathfrak d_{a, b} ))(w_1 \otimes \cdots \otimes w_{2r}) &=  \sum_D  \sgn(D)  \prod_{(i, j) \in D}  ( w_i, w_j)  \\
&=    \sum_\pi  \sgn(\pi)  \prod_{i = 1}^r  (x_i,  y_{i \cdot \pi}) = \det((x_i, y_j)).  
\end{aligned}
$$
Because of this, Gavarini called $\mathfrak d_{a, b}$ a ``diagrammatic minor",  \cite[Definition 3.4]{MR1670662}.  

If we now take $r = N+1$, then $(x_1, \dots, x_r)$ is linearly dependent and the matrix $(x_i, y_j)$ is singular.  Hence for any choice of $(a, b)$ such that $a + b = N+1$,  we have $\eta\circ A\inv \circ \Psi(\mathfrak d_{a, b} ) = 0$, or equivalently,
$\mathfrak d_{a, b} \in  \ker(\Psi)$.    This proves \myref{Proposition}{orthogonal fund theorem 1}.

\subsection{The walled Brauer algebra} \label{walled Brauer fund thm appendix}
Let $V$ be an $N$--dimensional vectors space over a field $\Bbbk$, and let 
$A : V^* \otimes V \to \End(V)$ be the linear isomorphism determined by $A(v^* \otimes v)(w) = v^*(w) v$.
For any basis $\{v_i\}$ of $V$ and the dual basis $\{v^*_i\}$ ov $V^*$,  $A\inv(\id_V) = \sum_i v^*_i \otimes v_i$.  Write $\omega^* =  \sum_i v^*_i \otimes v$ and $\omega = \sum_i v_i \otimes v^*_i$. 
Let $E \in \End(V \otimes V^*)$  be determined by $E(v \otimes v^*) = v^*(v) \omega$.  

Fix $r, s \ge 0$ and set $V^{\otimes (r, s)} = V^{\otimes r} \otimes (V^*)^{\otimes s}$.   As in 
\myref{Appendix}{section Brauer walled tensor},
 there is a homomorphism $\Theta$ from the walled Brauer algebra $B_{r, s}(\Bbbk; N)$ to $\End(V^{\otimes (r, s)})$, which sends permutations in $\mathfrak S_r \times \mathfrak S_s$ to place permutations of $V^{\otimes (r, s)}$  and sends the generators $e_{a, b}$ to $E_{a, b}$, which is $E$ acting in the $-a$ and $b$ tensor positions.

We write $A$ also for the linear isomorphism $A :  (V^{\otimes (r, s)})^* \otimes V^{\otimes (r, s)} \to \End(V^{\otimes (r, s)})$, and we set 
$$
\tau = A\inv \circ \Theta : B_{r, s}(\Bbbk; N) \to  (V^{\otimes (r, s)})^* \otimes V^{\otimes (r, s)}  = 
(V^*)\tensor r \otimes V\tensor s \otimes V\tensor r \otimes (V^*)\tensor s.
$$
We will determine $\tau$ explicitly.  For purposes of this discussion, label the vertices of an $(r, s)$--walled Brauer diagram as follows:  the top vertices are labelled from left to right by 
$$1', 2', \dots, r'; (r+1), (r+2), \dots, (r+s),$$ and the bottom vertices by
$$1, 2, \dots, r; (r+1)', (r+2)', \dots, (r+s)'.$$  Each strand of a walled Brauer diagram connects a vertex $j'$ to a vertex $i$.   Now the result is $\tau(D) = \bigotimes_{(i, j') \in D}  \omega(i, j')$.  We regard 
$\tau(D)$ as a linear functional on $V\tensor{(r,s)} \otimes (V\tensor{(r,s)})^*$.   Write
$$
\bm x \otimes \bm y^* = 
(x_{1 } \otimes \cdots \otimes x_{r }) \otimes 
(y^*_{r+1} \otimes \cdots \otimes y^*_{r+s}) \otimes
(y^*_{1} \otimes \cdots \otimes y^*_{r}) \otimes
(x_{r+1} \otimes \cdots \otimes x_{r+s }).
$$
Then, $\tau(D)(\bm x \otimes \bm y) = \prod_{(i, j') \in D}  y^*_i(x_{j})$.  Now we use the correspondence between $(r, s)$--walled Brauer diagrams and permutations.  If $\pi \in \mathfrak S_{r+s}$ is the permutation corresponding to $D$,  then  $\tau(D)(\bm x \otimes \bm y) = \prod_{j=1}^{r+s}  y^*_{j\cdot \pi}(x_{j})$.

Now we recall that $\mathfrak d_{r, s} = \sum_D \sgn(D) \ D$,  with the sum over all $(r, s)$ walled Brauer diagrams, and that the sign of $D$ is the same as the sign of the corresponding permutation.  Thus
$$
\tau(\mathfrak d_{r, s})(\bm x \otimes \bm y^*)  = \sum_D \sgn(D)  \prod_{(i, j') \in D}   y^*_i(x_{j}) = 
\sum_\pi  \sgn(\pi) \prod_{j=1}^{r+s}  y^*_{j\cdot \pi}(x_{j}) = \det((y^*_i(x_{j}))).
$$
Finally, we observe that if $r+s = N+1$, then the matrix $(y^*_i(x_{j'}))$ is singular, for any choice of $\bm x \otimes \bm y^*$, and therefore $\mathfrak d_{r, s} \in \ker(\tau) = \ker(\Theta)$.  So we have proved:

\begin{lem} If $r + s = N+1$, then $\mathfrak d_{r, s}  \in \ker(\Theta)$.  
\end{lem}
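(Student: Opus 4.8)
The plan is to reduce the statement to the vanishing of a determinant, exactly paralleling the symplectic and orthogonal cases treated earlier in this appendix. Recall that we have already computed the composite $\tau = A\inv \circ \Theta$ explicitly on the basis of $(r,s)$--walled Brauer diagrams: for a diagram $D$ with corresponding permutation $\pi \in \mathfrak S_{r+s}$ and for a simple tensor
$$
\bm x \otimes \bm y^* = (x_1 \otimes \cdots \otimes x_r) \otimes (y^*_{r+1} \otimes \cdots \otimes y^*_{r+s}) \otimes (y^*_1 \otimes \cdots \otimes y^*_r) \otimes (x_{r+1} \otimes \cdots \otimes x_{r+s})
$$
in $V\tensor{(r,s)} \otimes (V\tensor{(r,s)})^*$, one has $\tau(D)(\bm x \otimes \bm y^*) = \prod_{j=1}^{r+s} y^*_{j \cdot \pi}(x_j)$. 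So the first step is simply to quote this formula.

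Next I would sum this over all $(r,s)$--walled Brauer diagrams. Since $\mathfrak d_{r,s} = \sum_D \sgn(D)\, D$, since the sign of $D$ coincides with the sign of the associated permutation, and since $D \mapsto \pi$ is a bijection onto $\mathfrak S_{r+s}$, linearity of $\tau$ gives
$$
\tau(\mathfrak d_{r,s})(\bm x \otimes \bm y^*) = \sum_{\pi \in \mathfrak S_{r+s}} \sgn(\pi) \prod_{j=1}^{r+s} y^*_{j \cdot \pi}(x_j) = \det\big((y^*_i(x_j))_{1 \le i, j \le r+s}\big).
$$
Then I would specialize to $r + s = N+1$: the vectors $x_1, \dots, x_{N+1}$ all lie in the $N$--dimensional space $V$, hence are linearly dependent, so the $(N+1)\times(N+1)$ matrix $(y^*_i(x_j))$ is singular and its determinant vanishes for every choice of $\bm x \otimes \bm y^*$. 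Therefore $\tau(\mathfrak d_{r,s})$ is the zero functional, i.e. $\mathfrak d_{r,s} \in \ker(\tau)$; and since $A$ is a linear isomorphism, $\ker(\tau) = \ker(A\inv \circ \Theta) = \ker(\Theta)$, which is the claim. The deduction of \myref{Proposition}{mixed fund theorem 1} then follows by tensoring with the appropriate antisymmetrizers and the idempotent $e_{r,s}\power l$, exactly as in the symplectic and orthogonal cases.

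The one genuinely nontrivial ingredient is the explicit description of $\tau$ on diagrams, which is carried out in the preceding subsection; the hard part there is the bookkeeping — tracking which pairs of tensor factors carry a copy of $\omega$, matching the combinatorics of $(r,s)$--walled Brauer diagrams with permutations of $\{1,\dots,r+s\}$, and verifying that the diagrammatic sign equals the permutation sign. Once that formula is in hand, the determinant argument above is immediate and the lemma requires no further work.
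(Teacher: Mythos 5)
Your proof is correct and follows essentially the same route as the paper: quote the explicit formula for $\tau(D)$ on simple tensors, use the diagram--permutation bijection and the matching of signs to identify $\tau(\mathfrak d_{r,s})(\bm x \otimes \bm y^*)$ with $\det((y^*_i(x_j)))$, and observe that this $(N+1)\times(N+1)$ matrix is singular since the $x_j$ lie in an $N$--dimensional space. Nothing is missing.
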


 
 }  

\bibliographystyle{amsplain}

\bibliography{seminormalformbrauer}

\def\cprime{$'$} \def\Dbar{\leavevmode\lower.6ex\hbox to 0pt{\hskip-.23ex
  \accent"16\hss}D} \def\cprime{$'$} \def\cprime{$'$}
\providecommand{\bysame}{\leavevmode\hbox to3em{\hrulefill}\thinspace}
\providecommand{\MR}{\relax\ifhmode\unskip\space\fi MR }
\providecommand{\MRhref}[2]{%
  \href{http://www.ams.org/mathscinet-getitem?mr=#1}{#2}
}
\providecommand{\href}[2]{#2}
\begin{thebibliography}{10}

\bibitem{MR3708257}
Henning~Haahr Andersen, Catharina Stroppel, and Daniel Tubbenhauer,
  \emph{Cellular structures using {$U_q$}-tilting modules}, Pacific J. Math.
  \textbf{292} (2018), no.~1, 21--59. \MR{3708257}

\bibitem{MR2235339}
S.~Ariki, A.~Mathas, and H.~Rui, \emph{Cyclotomic {N}azarov-{W}enzl algebras},
  Nagoya Math. J. \textbf{182} (2006), 47--134.

\bibitem{MR1280591}
G.~Benkart, M.~Chakrabarti, T.~Halverson, R.~Leduc, C.~Lee, and J.~Stroomer,
  \emph{Tensor product representations of general linear groups and their
  connections with {B}rauer algebras}, J. Algebra \textbf{166} (1994), no.~3,
  529--567.

\bibitem{BDE}
C.~Bowman, M.~De~Visscher, and J.~Enyang, \emph{Simple modules for the
  partition algebra and monotone convergence of {K}ronecker coefficients},
  International Mathematics Research Notices (to appear).

\bibitem{BEG}
C.~Bowman, J.~Enyang, and F.~M. Goodman, \emph{Diagram algebras, dominance
  triangularity, and skew cell modules}, Journal of the Australian Mathematical
  Society (to appear).

\bibitem{MR1503378}
R.~Brauer, \emph{On algebras which are connected with the semisimple continuous
  groups}, Ann. of Math. (2) \textbf{38} (1937), no.~4, 857--872.

\bibitem{MR2955190}
Jonathan Brundan and Catharina Stroppel, \emph{Gradings on walled {B}rauer
  algebras and {K}hovanov's arc algebra}, Adv. Math. \textbf{231} (2012),
  no.~2, 709--773. \MR{2955190}

\bibitem{MR2417984}
A.~Cox, M.~De~Visscher, S.~Doty, and P.~Martin, \emph{On the blocks of the
  walled {B}rauer algebra}, J. Algebra \textbf{320} (2008), no.~1, 169--212.

\bibitem{MR3581213}
M.~De~Visscher and P.~P. Martin, \emph{On {B}rauer algebra simple modules over
  the complex field}, Trans. Amer. Math. Soc. \textbf{369} (2017), no.~3,
  1579--1609.

\bibitem{MR2342000}
R.~Dipper, S.~Doty, and J.~Hu, \emph{Brauer algebras, symplectic {S}chur
  algebras and {S}chur-{W}eyl duality}, Trans. Amer. Math. Soc. \textbf{360}
  (2008), no.~1, 189--213 (electronic).

\bibitem{MR3181742}
R.~Dipper, S.~Doty, and F.~Stoll, \emph{The quantized walled {B}rauer algebra
  and mixed tensor space}, Algebr. Represent. Theory \textbf{17} (2014), no.~2,
  675--701.

\bibitem{MR812444}
R.~Dipper and G.~James, \emph{Representations of {H}ecke algebras of general
  linear groups}, Proc. London Math. Soc. (3) \textbf{52} (1986), no.~1,
  20--52.

\bibitem{MR2500869}
S.~Doty and J.~Hu, \emph{Schur-{W}eyl duality for orthogonal groups}, Proc.
  Lond. Math. Soc. (3) \textbf{98} (2009), no.~3, 679--713.

\bibitem{EG:2017}
J.~Enyang and F.~M. Goodman, \emph{The seminormal representations of the
  {B}rauer algebras}, in preparation.

\bibitem{EG:2012}
\bysame, \emph{Cellular {B}ases for {A}lgebras with a {J}ones {B}asic
  {C}onstruction}, Algebr. Represent. Theory \textbf{20} (2017), no.~1,
  71--121.

\bibitem{MR1670662}
F.~Gavarini, \emph{A {B}rauer algebra-theoretic proof of {L}ittlewood's
  restriction rules}, J. Algebra \textbf{212} (1999), no.~1, 240--271.

\bibitem{MR2434475}
\bysame, \emph{On the radical of {B}rauer algebras}, Math. Z. \textbf{260}
  (2008), no.~3, 673--697.

\bibitem{MR2336039}
M.~Geck, \emph{Hecke algebras of finite type are cellular}, Invent. Math.
  \textbf{169} (2007), no.~3, 501--517.

\bibitem{MR2266962}
Meinolf Geck, \emph{Kazhdan-{L}usztig cells and the {M}urphy basis}, Proc.
  London Math. Soc. (3) \textbf{93} (2006), no.~3, 635--665. \MR{2266962}

\bibitem{MR3065998}
T.~Geetha and F.~M. Goodman, \emph{Cellularity of wreath product algebras and
  {$A$}-{B}rauer algebras}, J. Algebra \textbf{389} (2013), 151--190.

\bibitem{MR2794027}
F.~M. Goodman and J.~Graber, \emph{Cellularity and the {J}ones basic
  construction}, Adv. in Appl. Math. \textbf{46} (2011), no.~1-4, 312--362.

\bibitem{MR2774622}
\bysame, \emph{On cellular algebras with {J}ucys {M}urphy elements}, J. Algebra
  \textbf{330} (2011), 147--176.

\bibitem{MR1376244}
J.~J. Graham and G.~I. Lehrer, \emph{Cellular algebras}, Invent. Math.
  \textbf{123} (1996), no.~1, 1--34.

\bibitem{MR2293327}
R.~M. Green and P.~P. Martin, \emph{Constructing cell data for diagram
  algebras}, J. Pure Appl. Algebra \textbf{209} (2007), no.~2, 551--569.

\bibitem{MR1680384}
M.~H{\"a}rterich, \emph{Murphy bases of generalized {T}emperley-{L}ieb
  algebras}, Arch. Math. (Basel) \textbf{72} (1999), no.~5, 337--345.

\bibitem{MR2430306}
J.~Hu, \emph{Specht filtrations and tensor spaces for the {B}rauer algebra}, J.
  Algebraic Combin. \textbf{28} (2008), no.~2, 281--312.

\bibitem{MR2671176}
J.~Hu and A.~Mathas, \emph{Graded cellular bases for the cyclotomic
  {K}hovanov-{L}auda-{R}ouquier algebras of type {$A$}}, Adv. Math.
  \textbf{225} (2010), no.~2, 598--642.

\bibitem{MR2725207}
J.~Hu and Z.~Xiao, \emph{On tensor spaces for {B}irman-{M}urakami-{W}enzl
  algebras}, J. Algebra \textbf{324} (2010), no.~10, 2893--2922.

\bibitem{MR2946824}
\bysame, \emph{On a theorem of {L}ehrer and {Z}hang}, Doc. Math. \textbf{17}
  (2012), 245--270.

\bibitem{MR513828}
G.~D. James, \emph{The representation theory of the symmetric groups}, Lecture
  Notes in Mathematics, vol. 682, Springer, Berlin, 1978.

\bibitem{MR1461487}
Thomas Jost, \emph{Morita equivalence for blocks of {H}ecke algebras of
  symmetric groups}, J. Algebra \textbf{194} (1997), no.~1, 201--223.

\bibitem{MR560412}
David Kazhdan and George Lusztig, \emph{Representations of {C}oxeter groups and
  {H}ecke algebras}, Invent. Math. \textbf{53} (1979), no.~2, 165--184.
  \MR{560412}

\bibitem{MR991410}
K.~Koike, \emph{On the decomposition of tensor products of the representations
  of the classical groups: by means of the universal characters}, Adv. Math.
  \textbf{74} (1989), no.~1, 57--86.

\bibitem{MR1427801}
R.~Leduc and A.~Ram, \emph{A ribbon {H}opf algebra approach to the irreducible
  representations of centralizer algebras: the {B}rauer, {B}irman-{W}enzl, and
  type {A} {I}wahori-{H}ecke algebras}, Adv. Math. \textbf{125} (1997), no.~1,
  1--94.

\bibitem{MR2979865}
G.~Lehrer and R.~Zhang, \emph{The second fundamental theorem of invariant
  theory for the orthogonal group}, Ann. of Math. (2) \textbf{176} (2012),
  no.~3, 2031--2054.

\bibitem{MR1227098}
G.~Lusztig, \emph{Introduction to quantum groups}, Progress in Mathematics,
  vol. 110, Birkh\"auser Boston, Inc., Boston, MA, 1993.

\bibitem{MR2414949}
A.~Mathas, \emph{Seminormal forms and {G}ram determinants for cellular
  algebras}, J. Reine Angew. Math. \textbf{619} (2008), 141--173, With an
  appendix by Marcos Soriano.

\bibitem{MR2531227}
\bysame, \emph{A {S}pecht filtration of an induced {S}pecht module}, J. Algebra
  \textbf{322} (2009), no.~3, 893--902.

\bibitem{MR1194316}
G.~E. Murphy, \emph{On the representation theory of the symmetric groups and
  associated {H}ecke algebras}, J. Algebra \textbf{152} (1992), no.~2,
  492--513.

\bibitem{MR1327362}
\bysame, \emph{The representations of {H}ecke algebras of type {$A\sb n$}}, J.
  Algebra \textbf{173} (1995), no.~1, 97--121.

\bibitem{MR1398116}
M.~Nazarov, \emph{Young's orthogonal form for {B}rauer's centralizer algebra},
  J. Algebra \textbf{182} (1996), no.~3, 664--693.

\bibitem{MR2251346}
P.~P. Nikitin, \emph{A description of the commutant of the action of the group
  {${\rm GL}_n(\Bbb C)$} in mixed tensors}, Zap. Nauchn. Sem. S.-Peterburg.
  Otdel. Mat. Inst. Steklov. (POMI) \textbf{331} (2006), no.~Teor. Predst. Din.
  Sist. Komb. i Algoritm. Metody. 14, 170--198, 224.

\bibitem{MR2265844}
C.~Procesi, \emph{Lie groups, an approach through invariants and
  representations}, Springer, New York, 2007.

\bibitem{MR1144939}
A.~Ram and Hans Wenzl, \emph{Matrix units for centralizer algebras}, J. Algebra
  \textbf{145} (1992), no.~2, 378--395.

\bibitem{MR899903}
J.~R. Stembridge, \emph{Rational tableaux and the tensor algebra of {${\rm
  gl}_n$}}, J. Combin. Theory Ser. A \textbf{46} (1987), no.~1, 79--120.

\bibitem{MR3473643}
F.~Stoll and M.~Werth, \emph{A cell filtration of mixed tensor space}, Math. Z.
  \textbf{282} (2016), no.~3-4, 769--798.

\bibitem{MR1024455}
V.~G. Turaev, \emph{Operator invariants of tangles, and {$R$}-matrices}, Izv.
  Akad. Nauk SSSR Ser. Mat. \textbf{53} (1989), no.~5, 1073--1107, 1135.

\bibitem{MR951511}
Hans Wenzl, \emph{On the structure of {B}rauer's centralizer algebras}, Ann. of
  Math. (2) \textbf{128} (1988), no.~1, 173--193.

\bibitem{Werth:2015}
M.~Werth, \emph{{Murphy bases for endomorphism rings of tensor space}}, Ph.D.
  thesis, Universit{\"a}t Stuttgart, 2015.

\bibitem{MR2510062}
B.~W. Westbury, \emph{Invariant tensors and cellular categories}, J. Algebra
  \textbf{321} (2009), no.~11, 3563--3567.

\bibitem{MR0000255}
H.~Weyl, \emph{The {C}lassical {G}roups. {T}heir {I}nvariants and
  {R}epresentations}, Princeton University Press, Princeton, N.J., 1939.

\end{thebibliography}

\end{document}